\definecolor{bubbles}{rgb}{0.91, 1.0, 1.0}
\definecolor{aquamarine}{rgb}{0.5, 1.0, 0.83}
\definecolor{bubblegum}{rgb}{0.99, 0.76, 0.8}
\definecolor{blackbell}{rgb}{0.64, 0.64, 0.82}
\definecolor{dollarbill}{rgb}{0.72, 0.93, 0.6}
\tikzset{
    partial ellipse/.style args={#1:#2:#3}{
        insert path={+ (#1:#3) arc (#1:#2:#3)}
    }
  }
\newcommand{\tikzmark}[1]{\tikz[overlay,remember picture] \node (#1) {};}
\newcommand{\DrawBox}[4][]{%
    \tikz[overlay,remember picture]{%
        \coordinate (TopLeft)     at ($(#2)+(-0.2em,0.9em)$);
        \coordinate (BottomRight) at ($(#3)+(0.2em,-0.3em)$);
        \path (TopLeft); \pgfgetlastxy{\XCoord}{\IgnoreCoord};
        \path (BottomRight); \pgfgetlastxy{\IgnoreCoord}{\YCoord};
        \coordinate (LabelPoint) at ($(\XCoord,\YCoord)!0.5!(BottomRight)$);
        \draw [red,#1] (TopLeft) rectangle (BottomRight);
        \node [below, #1, fill=none, fill opacity=1] at (LabelPoint) {#4};
    }
}
\theoremstyle{plain}
\newtheorem*{theorem*}{Theorem}
\newtheorem{theorem}{Theorem}[section]
\newtheorem{lemma}[theorem]{Lemma}
\newtheorem{proposition}[theorem]{Proposition}
\newtheorem*{corollary*}{Corollary}
\newtheorem{corollary}[theorem]{Corollary}
\theoremstyle{remark}
\newtheorem{defn}{Definition}[section]
\newtheorem{remark}[theorem]{Remark}
\newtheorem{example}[theorem]{Example}
\newcommand{\Gr}{{\mathrm{Gr}}}
\newcommand{\Sol}{\mathcal{S}\!{\it o}\ell}
\newcommand{\bX}{\prescript{{\bm b}}{}\!X}
\newcommand{\Bl}{\mathrm{Bl}}
\newcommand{\Disc}{\mathcal{D}}
\begin{document}

\title{Motivic geometry of two-loop Feynman integrals}
\author[C. F. Doran, A. Harder, P. Vanhove
(with an appendix by E. Pichon-Pharabod)]{Charles F. Doran$^{(a,b,c)}$, Andrew Harder$^{(d)}$, Pierre Vanhove$^{(e)}$\\
(with an appendix by Eric Pichon-Pharabod$^{(e,f)}$)}
\address{(a) \textnormal{Department of Mathematical and Statistical
    Sciences, 632 CAB, University of Alberta, Edmonton, AB, T6G 2G1,
    Canada.}}
\address{(b) \textnormal{Bard College, Annandale-on-Hudson, NY 12571, USA.}}
\address{(c) \textnormal{Center of Mathematical Sciences and Applications, Harvard University, 20 Garden Street, Cambridge, MA 02138, USA.}}
	\address{(d)
	\textnormal{Department of Mathematics, Lehigh University, Chandler--Ullmann Hall, 17 Memorial Drive E., Bethlehem, PA 18015, USA.}
}
	\address{(e)
	\textnormal{Institut de Physique Théorique, Université Paris-Saclay, CEA, CNRS, F-91191 Gif-sur-Yvette Cedex, France.}
      }
      	\address{(f)
	\textnormal {Universit\'e Paris-Saclay, Inria, 91120 Palaiseau, France.}
	}
\date{\today --IPhT-T2022/64}

\begin{abstract}
We study the geometry and Hodge theory of the cubic hypersurfaces attached to two-loop
Feynman integrals for generic physical parameters.
We show that the Hodge structure attached to planar two-loop Feynman
graphs  decomposes into  mixed Tate pieces and the Hodge structures of families of hyperelliptic, elliptic, or
rational curves depending on the space-time dimension. For two-loop graphs with a small number of edges, we give more precise results. In particular, we recover a result of Bloch~\cite{Bloch:2021hzs} that in the well-known double box example, there is an underlying family of elliptic curves, and we give a concrete description of these elliptic curves.

We argue that the motive for the non-planar two-loop tardigrade graph
is that of a K3 surface of Picard number 11 and determine the generic
lattice polarization. Lastly, we show that generic members of the ice cream cone family of graph hypersurfaces correspond to pairs of sunset Calabi--Yau varieties.
\end{abstract}
\maketitle

\tableofcontents
\section{Introduction}

\subsection{Description of the problem}

The goal of this paper is to understand the geometry of certain
hypersurfaces attached to two-loop Feynman graphs. We view this as
a first step toward understanding the motivic geometry of the
associated Feynman integrals.

\begin{defn}
    A {\em Feynman} graph $\Gamma$ is a finite collection of vertices
    $V(\Gamma)$, edges $E(\Gamma)$, and half-edges $H(\Gamma)$
    satisfying the usual definitions; edges are adjacent to two
    vertices, and half-edges are adjacent to a single vertex, and
    allowing multiple edges between pairs of vertices. We let $e(\Gamma) = |E(\Gamma)|$. To each edge of
    $\Gamma$ there is a mass variable $m_e \in \mathbb{R}$ and to
    each half-edge there is a momentum vector $p_h \in
    \mathbb{R}^{1,D-1}$ in the $D$-dimensional  Minkowski space equipped with a metric
    of  signature $(1,D-1)$.
To each
half edge of $\Gamma$ attach a vector $p_h \in \mathbb{C}^{D}$ subject to the so-called momentum conservation relation
\begin{equation}
\sum_{h \in H(\Gamma)} p_h = 0.
\end{equation}
For
physical processes these vectors belong to of the
$D$-dimensional Minkowski space $\mathbb{R}^{1,D-1}$. The
analytic properties of the Feynman integrals are studied by using
analytic continuation in the multi-dimensional complex plane. 
\end{defn}

Henceforward, this general setup is simplified by the assumption that each vertex of $\Gamma$ has a single outgoing half-edge. Therefore, one may view $\Gamma$ as a graph in the usual sense, allowing multiple edges between vertices. To simplify notation, view momenta as being attached to vertices, and write $p_v$ instead of $p_h$. Furthermore, we consider only the completely massive case with $m_e^2>0$ and all external vectors are of non-zero norm
$p_v\cdot p_v\neq 0$. Often, we will view $m_e, p_v$ as having complex values instead of real values to simplify the algebro-geometric arguments in this paper.

We associate to the graph $\Gamma$ two polynomials which  are defined as follows~\cite{nakanishi1971graph,Weinzierl:2022eaz}.
Let $\{ x_e \mid e \in e(\Gamma) \}$ be variables attached to all edges of
$\Gamma$. A spanning tree of $\Gamma$ is a subgraph ${\sf T}$ of $\Gamma$ which contains all vertices of $\Gamma$, and so that $b_1(\mathsf{T}) =0$ and $b_0(\mathsf{T})=1$. For each spanning tree $\mathsf{T}$ of $\Gamma$ we attach the
monomial $x^{\mathsf{T}} = \prod_{e\notin {\mathsf{T}}} x_e$. The {\em first Symanzik polynomial} is the polynomial
\begin{equation}
{\bf U}_\Gamma = \sum_{\substack{ \text{Spanning} \\ \text{ trees of } \Gamma}} x^{\mathsf{T}}\,.
\end{equation}
A spanning $k$-forest of $\Gamma$ is a subgraph $\mathsf{F}$ of $\Gamma$ containing all vertices of $\Gamma$ and so that $h_1(\mathsf{F}) =0$ and $h_0(\mathsf{F}) = k$.  We attach the polynomial $x^{\mathsf{F}} =
\prod_{e \notin \mathsf{F}} x_i$ to each spanning 2-forest. A 2-forest
is a disjoint union of two sub-trees $\mathsf{F}=\mathsf{T}_1\cup \mathsf{T}_2$,  and we define $s_\mathsf{F} = \sum_{(v_1,v_2) \in \mathsf{F}=\mathsf{T}_1\cup \mathsf{T}_2} p_{v_1}\cdot p_{v_2}$ where the ${}\cdot{}$-product is the
scalar product on $\mathbb {C}^{D}$. Then 
\begin{equation}\label{e:VFdef}
{\bf V}_{\Gamma;D} = \sum_{\substack{ \text{Spanning} \\ \text{ 2-forests
      of } \Gamma}}s_{\mathsf{F}} x^{\mathsf{F}}  ,\qquad {\bf F}_{\Gamma;D}(\vec{s}, \vec{m}) = {\bf U}_\Gamma\left( \sum_{e \in e(\Gamma)} m_e^2 x_e\right) - {\bf V}_{\Gamma;D}\,.
\end{equation}
The polynomial ${\bf F}_{\Gamma;D}(\vec{s},\vec{m})$ is called the {\em second Symanzik
  polynomial} of $\Gamma$. This is a homogeneous polynomial of degree
$L+1$ in the variables $x_e : e \in e(\Gamma)$, where $L =
b_1(\Gamma)$. This $L$  is often called the {\em loop order} of
$\Gamma$. Henceforward, we will write instead ${\bf F}_{\Gamma;D}$ to
simplify our notation. We define the vanishing loci  for the Symanzik
polynomials attached to the graph $\Gamma$:
\begin{equation}
    X_{\Gamma;D} = V({\bf
  F}_{\Gamma;D});  \qquad Y_\Gamma = V({\bf U}_\Gamma).
\end{equation}
Notice that the vanishing
locus for ${\bf F}_{\Gamma;D}$ depends on the space-time dimension $D$
through the linear relations between the external momenta.

For fixed dimension $D$ to the graph $\Gamma$ one attaches the Feynman
integral  (up to a normalising constant)
\begin{equation}\label{e:FeynmanIntegral}
I_\Gamma (\vec s,\vec{m}) = \int_\Delta 
\omega_{\Gamma;D},\quad \omega_{\Gamma;D}:= \dfrac{{\bf U}_\Gamma^{e(\Gamma) - {(L+1)D\over2}}}{{\bf F}_{\Gamma;D}^{e(\Gamma) - {LD\over2}}} \Omega_0,\quad \Omega_0 = \sum_{e\in E(\Gamma)} \bigwedge_{e'\neq e}dx_{e'}.
\end{equation}
The integrand is  a differential form representing a class of
$\mathrm{H}^{e(\Gamma)-1}(\mathbb{P}^{e(\Gamma)-1}-  Z_{\Gamma;D})$
where $Z_{\Gamma;D}$ is the singular locus of the integrand. We see that if $e(\Gamma) - {(L+1)D\over2}<0$, $Z_{\Gamma;D} = Y_{\Gamma}$ and that if and $e(\Gamma) - {LD\over2}>0$ then $Z_{\Gamma;D} = X_{\Gamma;D}$. If neither of these inequalities is satisfied, then $Z_{\Gamma;D} = X_{\Gamma;D} \cup Y_\Gamma$.
The domain of integration is $\Delta:=\{[x_0: \dots : x_N] \mid x_i
\in \mathbb{R}_{\geq 0}\}$. The Feynman integral is a function of the mass parameters and kinematic invariants, respectively:
\begin{equation}
\vec{m}:=\left\{m_1^2,\dots,m_{e(\Gamma)}^2\right\}\in \mathbb
R_{>0}^{e(\Gamma)}, \qquad \vec s=\{p_i\cdot
p_j, i,j\in v(\Gamma)\}.
\end{equation}
When $|V(\Gamma)|>D$, not all the scalar
products are independent, and the number of independent variables
satisfies certain Gram determinant conditions~\cite{Asribekov:1962tgp}. We will give a geometric
interpretation of this condition in this work in
Section~\ref{sect:stime}.

As defined in~\eqref{e:FeynmanIntegral}, the Feynman integral is not
 a ``proper'' period integral as the domain of integration
usually intersects $X_{\Gamma;D}$ and $Y_{\Gamma}$. The works of
Bloch--Esnault--Kreimer~\cite{bek} and Brown~\cite{Brown:2015fyf}, which we review in Section~\ref{sect:BEK-Brown} below,
explain how to get around this issue by taking appropriate linear blow ups of
$\mathbb{P}^{N}$, and finally to interpret this integral as a period of a mixed Hodge structure related to $\mathrm{H}^*(Z_{\Gamma';D};\mathbb{Q})$, where $\Gamma'$ ranges over all graph contractions of $\Gamma$. We use the notation
\begin{equation}\label{e:fmot}
    \mathrm{H}^{e(\Gamma)-1}(\mathbb{P}_\Gamma - \prescript{\bm b}{}Z_{\Gamma;D}; B  - (B \cap \prescript{\bm b}{}Z_{\Gamma;D}))
\end{equation}
to indicate this mixed Hodge structure.

To summarize: we are interested in the integral~\eqref{e:FeynmanIntegral}, which is a period of the mixed Hodge structure~\eqref{e:fmot}. This mixed Hodge structure is in turn constructed from the mixed Hodge structures on $\mathrm{H}^*(Z_{\Gamma';D};\mathbb{Q})$ where $\Gamma'$ ranges over subgraphs of $\Gamma$. So a first step toward understanding~\eqref{e:FeynmanIntegral} and its motivic context is to describe the mixed Hodge structure on $\mathrm{H}^*(Z_{\Gamma';D};\mathbb{Q})$. 

As is likely obvious from the name, Feynman integrals defined
in~\eqref{e:FeynmanIntegral} arise as amplitudes in quantum field
theory. The relation between Feynman integrals and cohomology is
relatively old, first appearing in the 1960s
(e.g.~\cite{pham2011singularities,Hwa1966HomologyAF,golubeva1970investigation}),
however the relationship to Hodge theory and the theory of periods is
considerably more modern, appearing only in the 1990s. For instance,
in 1997, Kontsevich conjectured, based on the appearance of zeta-values in the evaluation of Feynman integrals, that the cohomology of $\mathbb{P}^{e(\Gamma)-1}- Y_\Gamma$ is mixed Tate. Kontsevich's conjecture was stated in combinatorial language, and garnered a significant amount of interest (see e.g.~\cite{stanley,stembridge}) but was ultimately proven false by Belkale--Brosnan~\cite{belkale-brosnan}. 

Mathematical interest in the subject has continued
(e.g.~\cite{bek,brown2009massless,brown2012k3,aluffi2009feynman,marcolli2010feynman,doryn2011one})
with varying levels of intensity since that point. The work of
Bloch--Esnault--Kreimer in~\cite{bek} is notable for solidifying the
link between the physical and mathematical literature. The authors of
{\em op. cit.} focus on the case of primitively divergent graphs,
where $D=4$ and $e(\Gamma) = 2 b_1(\Gamma)$, and thus
$\omega_{\Gamma;D} = \Omega_0/{\bf U}_{\Gamma}^2$. We note that ${\bf
  U}_\Gamma$ is parameter independent. Therefore, their focus was upon
understanding the geometry of the vanishing locus $Y_\Gamma=V({\bf U}_\Gamma)$. Among other things, they show that if $\Gamma$ is the wheel with $n$-spokes graph, then $\mathrm{H}^{2n-1}(Y_\Gamma;\mathbb{Q})$ is pure Tate, and they compute the class of the form $\omega_{\Gamma;D}$ in this case. Around the same time and shortly thereafter,  Brown and collaborators obtained many other beautiful results for graph polynomials of primitively divergent graphs (e.g.~\cite{Brown:2009ta,bd,brown2012k3}).

In the case where $\Gamma$ is not primitively divergent, one is led to
instead consider families of Feynman integrals depending on mass and
momentum parameters. Even in the most basic examples of such
integrals, for the $n$-sunset family of graphs, one observes that the
vanishing locus of ${\bf F}_{\Gamma;D}$ is generically a Calabi--Yau
$(n-2)$-fold. Therefore, the associated Feynman integral is in fact a
period of an extension class in a variation of mixed Hodge
structure. Therefore, the Feynman integral can be viewed as a solution to a particular class of inhomogeneous differential equations.  Work of M\"uller-Stach et al.~\cite{muller2014picard,muller2012second} make this explicit for the two-loop sunset integral. Soon after, this was followed by the detailed mathematical analysis of Bloch--Vanhove~\cite{bloch2015elliptic}, and subsequently work of Bloch--Kerr--Vanhove~\cite{Bloch:2014qca}. Work of Brown provided a general motivic structure to this more general class of Feynman integrals~\cite{Brown:2015fyf}.

Throughout the same period, physicists have continued to study Feynman
integrals in a more direct manner.  It has become clear that, in
certain families of examples with few edges and vertices, the
resulting Feynman integrals tend to be composed of a limited
collection of building blocks including multiple
polylogarithms~\cite{bogner2015multiple}, elliptic functions, elliptic
polylogarithms~\cite{bourjaily2018elliptic, broedel2019elliptic}, and
more generally motivic periods of (singular) Calabi--Yau
varieties~\cite{Brown:2009ta,Bloch:2014qca,Bloch:2016izu,Bourjaily:2018ycu,Bourjaily:2019hmc,Bourjaily:2018yfy,Klemm:2019dbm,Bonisch:2020qmm,Bonisch:2021yfw,Bourjaily:2022bwx,Forum:2022lpz,Duhr:2022pch,Cao:2023tpx,McLeod:2023qdf}. This
suggests that the periods attached to the graphs studied in the works
listed above in this paragraph are, up to mixed Tate factors, related
to elliptic curves and Calabi--Yau varieties. This has been checked
for the double box by Bloch~\cite{Bloch:2021hzs}, and in various other
cases in unpublished work of Kerr~\cite{KerrLetter}.  The purpose of
this work is to describe the mixed Hodge structure one obtains for
families of two-loop graphs and the multi-loop ice cream cone graphs,
extending the computations of Bloch  and
Kerr.

\subsection{Main results}

In this work, we focus our attention almost solely on two-loop Feynman graphs, particularly those of $(a,b,c)$ type as defined below. An example of such a graph is depicted in
Figure~\ref{fig:a1cgraphs}.
\begin{defn}
    Let $(a,b,c)$ denote a graph with $a+b+c-1$ vertices and $e(\Gamma)=a+b+c$ edges, so that two of these vertices are trivalent and connected by three chains of edges containing $a,b,$ and $c$ edges respectively.
\end{defn}

\begin{figure}
\begin{tikzpicture}[scale=0.6]
\filldraw [color = black, fill=none, very thick] (0,0) circle (2cm);
\draw [black,very thick] (-2,0) to (2,0);
\filldraw [black] (2,0) circle (2pt);
\filldraw [black] (-2,0) circle (2pt);
\filldraw [black] (0,2) circle (2pt);
\filldraw [black] (0,-2) circle (2pt);
%\filldraw [black] (0,0) circle (2pt);
\filldraw [black] (1.414,1.414) circle (2pt);
\filldraw [black] (-1.414,1.414) circle (2pt);
\filldraw [black] (0.5,0) circle (2pt);
\filldraw [black] (-.5,0) circle (2pt);
\draw [black,very thick] (-2,0) to (-3,0);
\draw [black,very thick] (2,0) to (3,0);
\draw [black,very thick] (0,2) to (0,3);
\draw [black,very thick] (1.414,1.414) to (2.25,2.25);
\draw [black,very thick] (-1.414,1.414) to (-2.25,2.25);
\draw [black,very thick] (0,-2) to (0,-3);
\draw [black,very thick] (0.5,0) to (0.5,-1);
\draw [black,very thick] (-.5,0) to (-.5,-1);
\end{tikzpicture}
\caption{A two-loop graphs with $a=4$, $b=3$ and $c=2$.}\label{fig:a1cgraphs}
\end{figure}
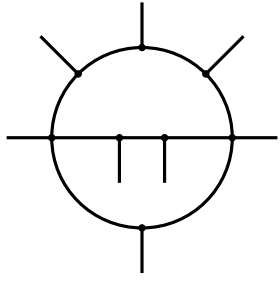

In this case $L = 2$, if $a+b+c\leq D$  the vanishing locus of ${\bf U}_{(a,b,c)}$ is
a quadric hypersurface in $\mathbb{P}^{a+b+c-1}$, and the periods of
the mixed Hodge structure in~\eqref{e:fmot} are rather simple, in the
sense that the mixed Hodge structure is mixed Tate. In $D=4$
dimensions this was shown by Brown in~\cite{Brown:2009ta}. On the other hand, if $a+b+c>D$ then the denominator of the integrand in~\eqref{e:FeynmanIntegral} is ${\bf F}_{\Gamma;D}$, which is a cubic hypersurface. The cohomology of a cubic hypersurface need not have mixed Tate cohomology, so from the perspective of Hodge theory, this is a more complicated situation. The first step toward understanding this is to study the mixed Hodge structure on $\mathrm{H}^*(X_{(a,b,c);D};\mathbb{Q})$ when $a+b+ c > D$. Our main result is about the graphs of the type $(a,1,c)$. The mixed Hodge structures of
$(a,1,c)$ graph hypersurfaces are simple, in the sense that they come
from hyperelliptic curves.

\begin{defn}
\begin{enumerate}[(1)]
\item Let ${\bf MHS}_\mathbb{Q}$ denote the abelian category of $\mathbb{Q}$-mixed Hodge structures. 

\item The largest extension-closed subcategory of ${\bf MHS}_\mathbb{Q}$ containing the Tate twists of $\mathrm{H}^1(C;\mathbb{Q})$ for every {hyperelliptic curve} $C$ is called ${\bf MHS}_\mathbb{Q}^\mathrm{hyp}$.

\item The largest extension-closed subcategory of ${\bf MHS}_\mathbb{Q}$ containing the Tate twists of $\mathrm{H}^1(E;\mathbb{Q})$ for every {elliptic curve} $E$ is called ${\bf MHS}_\mathbb{Q}^\mathrm{ell}$.
\end{enumerate}
\end{defn}

\begin{theorem*}[Theorem~\ref{thm:sunsetmot}]
For any positive integers $a,c$ and any space-time dimension $D$,
\[
\mathrm{H}^{a+c-1}(X_{(a,1,c);D}; \mathbb{Q}) \in {\bf MHS}^\mathrm{hyp}_\mathbb{Q}.
\]
\end{theorem*}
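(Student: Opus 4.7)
The plan is to exhibit $X_{(a,1,c);D}$, up to birational modification, as a variety fibered in hyperelliptic curves (in fact, elliptic curves), and then to invoke the Leray spectral sequence. The starting point is the structural observation that ${\bf F}_{(a,1,c);D}$ is quadratic in the variable $x_0$ attached to the unique middle edge $e_0$. Since the Kirchhoff polynomial of any graph is multilinear, a direct enumeration of spanning trees yields
\[
{\bf U}_{(a,1,c)} = x_0(X+Y) + XY, \qquad X = \sum_{i=1}^{a} x_i,\quad Y = \sum_{j=1}^{c} y_j,
\]
which is linear in $x_0$. Similarly ${\bf V}_{(a,1,c)}$ is at most linear in $x_0$, as a spanning 2-forest removes three edges and at most one of them can be $e_0$. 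Writing ${\bf F}_{(a,1,c);D} = A\,x_0^{2} + B\,x_0 + C$ in the remaining $a+c$ variables, one finds $A = m_0^{2}(X+Y)$ a nonzero linear form, $B$ of degree $2$, and $C = {\bf F}_{\Gamma/e_0;D}$ of degree $3$.

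Because $A \neq 0$, the point $P = [1:0:\cdots:0]$ lies on $X_{(a,1,c);D}$ as a smooth point with tangent hyperplane $V(A)$. After blowing up $P$, projection from $P$ extends to a proper degree-$2$ morphism $\pi: \tilde X \to \mathbb{P}^{a+c-1}$ branched over the quartic hypersurface $\Delta = V(B^{2} - 4AC)$. Composing with a generic linear projection $q: \mathbb{P}^{a+c-1} \dashrightarrow \mathbb{P}^{a+c-2}$ and resolving indeterminacy yields a smooth projective variety $\bar X$, birational to $X_{(a,1,c);D}$, together with a proper flat morphism $f: \bar X \to B$ onto a smooth blowup $B$ of $\mathbb{P}^{a+c-2}$. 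The generic fibre of $f$ is the $\pi$-preimage of a generic line $\ell \subset \mathbb{P}^{a+c-1}$, i.e.\ a double cover of $\mathbb{P}^{1}$ branched at the four points $\ell \cap \Delta$, which is a smooth elliptic curve and, in particular, a smooth hyperelliptic curve of genus one.

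By the decomposition theorem of Beilinson--Bernstein--Deligne, $\mathbf{R}f_{*}\mathbb{Q}$ splits as a direct sum of shifts of intersection cohomology complexes of local systems on $B$ that are either trivial (contributing mixed Tate summands) or subquotients of the variation $R^{1}f_{*}\mathbb{Q}$ of first cohomologies of the elliptic fibres. Every summand of $\mathrm{H}^{a+c-1}(\bar X;\mathbb{Q})$ therefore lies in $\mathbf{MHS}^{\mathrm{hyp}}_\mathbb{Q}$. Finally, since the birational morphism $\bar X \to X_{(a,1,c);D}$ is a composition of blowups whose centres are smooth subvarieties of mixed Tate type, the blowup formula presents $\mathrm{H}^{a+c-1}(X_{(a,1,c);D};\mathbb{Q})$ as a direct summand of $\mathrm{H}^{a+c-1}(\bar X;\mathbb{Q})$ modulo mixed Tate contributions from the exceptional divisors, whence the desired containment. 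The main technical hurdle is to analyse carefully the singular fibres of $f$ over the discriminant of $q|_{\Delta}$ as well as the geometry of the resolutions entering the construction of $\bar X$, in order to confirm that for generic physical parameters all exceptional contributions remain mixed Tate and that the $R^{1}f_{*}\mathbb{Q}$-summands genuinely arise from a family of smooth hyperelliptic curves.
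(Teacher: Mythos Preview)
Your argument has a fundamental gap at the decomposition-theorem step. You assert that because the generic fibre of $f:\bar X\to B$ is an elliptic curve, every summand of $\mathrm{H}^{a+c-1}(\bar X;\mathbb{Q})$ lies in $\mathbf{MHS}^{\mathrm{hyp}}_\mathbb{Q}$. This is false. The decomposition theorem expresses the cohomology of $\bar X$ in terms of pieces of the form $IH^{j}(B,\mathcal{L})$ for local systems $\mathcal{L}$ that are subquotients of $R^{i}f_{*}\mathbb{Q}$, but \emph{the Hodge structure on $IH^{j}(B,R^{1}f_{*}\mathbb{Q})$ has no reason to be built from $\mathrm{H}^{1}$ of any individual hyperelliptic curve}. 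It encodes the global variation of the family, not the cohomology of a single fibre. The simplest counterexample: an elliptic K3 surface $f:S\to\mathbb{P}^{1}$ has generic fibre an elliptic curve, yet $\mathrm{H}^{1}(\mathbb{P}^{1},R^{1}f_{*}\mathbb{Q})$ contains the transcendental lattice of $S$, a pure weight-$2$ Hodge structure with $h^{2,0}=1$, which cannot lie in $\mathbf{MHS}^{\mathrm{hyp}}_\mathbb{Q}$ (every pure weight-$2$ object there is Tate, since $\mathrm{H}^{1}(C)(n)$ always has odd weight). Over your higher-dimensional base $B\cong\mathrm{Bl}(\mathbb{P}^{a+c-2})$ the situation is only worse. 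Indeed, the paper shows that the tardigrade hypersurface $X_{(2,2,2);D}$ has a genuine K3 contribution, so ``fibred in elliptic curves'' cannot by itself force membership in $\mathbf{MHS}^{\mathrm{hyp}}_\mathbb{Q}$.

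The paper's proof works for a very different reason. By an explicit Cremona-type birational transformation it converts $X_{(a,1,c);D}$ into a cubic $X'_{(a,1,c);D}$ containing a \emph{codimension-one} linear subspace; blowing that up produces a \emph{quadric} fibration over $\mathbb{P}^{1}$. The crucial point is that quadrics have purely Tate cohomology except for a single extra class in middle degree, so the only non-Tate Leray contribution is $\mathrm{H}^{1}(\mathbb{P}^{1},\mathcal{M})$ for a rank-one local system $\mathcal{M}$, computed by the double cover of $\mathbb{P}^{1}$ ramified along the discriminant---a single hyperelliptic curve (Corollary~\ref{cor:mhshyp}). The strata added and removed by the birational map are themselves cubics containing codimension-one linear subspaces or unions of quadrics and hyperplanes, so the same mechanism handles them. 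In short: the paper reduces to quadric bundles over a \emph{one-dimensional} base, where Leray is tame; your elliptic fibration is over an $(a+c-2)$-dimensional base, where it is not. Your secondary claims (mixed-Tate exceptional loci, applicability of the blow-up formula to the singular $X_{(a,1,c);D}$) are also unjustified, but the argument already fails before those become relevant.
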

\noindent It is interesting to compare this to the results of Marcolli--Tabuada for the two-loop sunset~\cite{marcolli2019feynman}.

\noindent In other words, the cohomology of $X_{(a,b,c);D}$ is obtained by iterated extension involving Tate twists of the cohomology of hyperelliptic curves, and the Tate Hodge structure. 
\begin{theorem*}[Theorem~\ref{thm:main2}]\label{c:mainintro}
If $3D/2 \leq a + c$ then 
\[
\mathrm{H}^{a+c}(\mathbb{P}_\Gamma - \bX_{(a,1,c);D}; B_\Gamma  - (B_\Gamma \cap \bX_{(a,1,c);D})) \in {\bf MHS}^\mathrm{hyp}_\mathbb{Q}.
\]
\end{theorem*}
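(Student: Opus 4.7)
The plan is to fit the relative cohomology of interest into long exact sequences whose other terms are controlled by Theorem~\ref{thm:sunsetmot}, by mixed Tate computations for iterated blow-ups of projective space along coordinate subspaces, and by a strong induction on $a+c$. The hypothesis $3D/2\le a+c$ implies $e(\Gamma) = a+c+1 > (L+1)D/2$ and $a+c+1 > D$, so only $\bX_{(a,1,c);D}$ contributes to the singular locus $Z_{\Gamma;D}$ of the integrand; the first Symanzik hypersurface $Y_{(a,1,c)}$ plays no role in the pair.

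Setting $U = \mathbb{P}_\Gamma - \bX_{(a,1,c);D}$ and $W = B_\Gamma - (B_\Gamma\cap \bX_{(a,1,c);D})$, the long exact sequence of the pair together with extension-closedness of $\mathbf{MHS}^{\mathrm{hyp}}_{\mathbb{Q}}$ reduces the claim to showing $\mathrm{H}^{k}(U;\mathbb{Q})$ and $\mathrm{H}^{k}(W;\mathbb{Q})$ lie in $\mathbf{MHS}^{\mathrm{hyp}}_{\mathbb{Q}}$ for every $k$. The cohomology of $U$ is controlled by the Gysin sequence
\[
\cdots\to \mathrm{H}^{k-2}(\bX_{(a,1,c);D};\mathbb{Q})(-1)\to \mathrm{H}^{k}(\mathbb{P}_\Gamma;\mathbb{Q})\to \mathrm{H}^{k}(U;\mathbb{Q})\to \mathrm{H}^{k-1}(\bX_{(a,1,c);D};\mathbb{Q})(-1)\to\cdots.
\]
The space $\mathbb{P}_\Gamma$, being an iterated blow-up of $\mathbb{P}^{a+c}$ along coordinate linear subspaces, has mixed Tate cohomology. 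The strict transform $\bX_{(a,1,c);D}$ is itself an iterated blow-up of $X_{(a,1,c);D}$, whose cohomology is in $\mathbf{MHS}^{\mathrm{hyp}}_{\mathbb{Q}}$ by Theorem~\ref{thm:sunsetmot}; the exceptional divisors of the blow-up process are projective bundles over the strict transforms of intersections $X_{(a,1,c);D}\cap L_I$ with coordinate subspaces $L_I$, and each such intersection is (up to further blow-up) the graph hypersurface of a contraction $(a,1,c)/I$. A strong induction on $a+c$, applying Theorem~\ref{thm:sunsetmot} and its combinatorial analogues to the contracted graphs, places $\mathrm{H}^{*}(\bX_{(a,1,c);D};\mathbb{Q})$ in $\mathbf{MHS}^{\mathrm{hyp}}_{\mathbb{Q}}$.

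For $\mathrm{H}^{*}(W;\mathbb{Q})$, the Mayer--Vietoris spectral sequence attached to the strict normal crossings decomposition $B_\Gamma = \bigcup_i B_i$,
\[
E_1^{p,q} = \bigoplus_{|I|=p+1} \mathrm{H}^{q}\bigl(B_I - (B_I\cap \bX_{(a,1,c);D});\mathbb{Q}\bigr) \Rightarrow \mathrm{H}^{p+q}(W;\mathbb{Q}),
\]
combined with a Gysin sequence on each stratum analogous to the one above, reduces the problem to controlling the cohomology of $B_I$ (a blow-up of a coordinate linear subspace, hence mixed Tate) and of $B_I\cap \bX_{(a,1,c);D}$ (the graph hypersurface, up to blow-up, of a contraction of $(a,1,c)$), both again handled by the induction.

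The main obstacle is the inductive step for contractions that are not of $(a',1,c')$ shape: contracting the middle edge of $(a,1,c)$ produces a two-loop graph obtained by joining two cycles at a common vertex, and iterated contractions produce still further combinatorial types. For these I expect a direct argument parallel to Theorem~\ref{thm:sunsetmot}, exploiting that $\mathbf{F}_{\Gamma';D}$ remains quadratic in a distinguished edge variable so that the graph hypersurface is a conic fibration whose discriminant cuts out a family of hyperelliptic curves, to place each relevant $\mathrm{H}^{*}(X_{\Gamma'};\mathbb{Q})$ in $\mathbf{MHS}^{\mathrm{hyp}}_{\mathbb{Q}}$. One must also carefully track the Tate twists produced by the Gysin shifts and blow-up corrections throughout the induction, so that the final relative cohomology lies in $\mathbf{MHS}^{\mathrm{hyp}}_{\mathbb{Q}}$ rather than in some larger category.
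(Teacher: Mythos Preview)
Your architecture is the same as the paper's (see Theorem~\ref{thm:bek-b}): long exact sequence of the pair, then Mayer--Vietoris over the components of $B_\Gamma$, reducing everything to the cohomology of $\bX_{\Gamma';D}$ for contractions $\Gamma'$ of $\Gamma$. Two simplifications remove the obstacles you flag.

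First, the ``main obstacle'' evaporates. Contracting the middle edge of $(a,1,c)$ gives exactly the graph $(a,0,c)$ of two cycles joined at a vertex, and further edge contractions stay within the families $(a',1,c')$ and $(a',0,c')$. The $(a',0,c')$ hypersurface is a cubic containing the codimension-one linear subspace $\sum x_i=\sum z_j=0$, so Proposition~\ref{prop:cubic-hyp} applies directly and places its cohomology in $\mathbf{MHS}^{\mathrm{hyp}}_\mathbb{Q}$ (this is Lemma~\ref{l:a0c}). No new conic-fibration argument or strong induction on graph types is needed; Theorem~\ref{thm:sunsetmot} already covers every $(a',1,c')$ uniformly.

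Second, the blow-up is much tamer than your description suggests. For $(a,1,c)$ there are exactly three motic subgraphs, so $\mathbb{P}_\Gamma$ is the blow-up of $\mathbb{P}^{a+c}$ along three pairwise \emph{disjoint} linear subspaces $L_x,L_y,L_z$. The exceptional divisors of $\bX_{(a,1,c);D}\to X_{(a,1,c);D}$ are not projective bundles over graph hypersurfaces of contractions; they are computed explicitly in Lemma~\ref{l:blup} (and similarly in Lemma~\ref{l:a0c}) and turn out to be unions of hyperplanes and quadrics, hence mixed Tate outright. This eliminates the need to track towers of projective bundles or Tate twists: $\mathbf{MHS}^{\mathrm{hyp}}_\mathbb{Q}$ is closed under Tate twist by definition, so the Gysin shifts are harmless.
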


\noindent In particular, this means that the Feynman integrals in all of these cases can be constructed from algebraic functions and periods of hyperelliptic curves.

\medskip

In Section~\ref{sec:Motive313} we apply the techniques used to prove Theorem~\ref{thm:sunsetmot} to analyze the double box graph hypersurface, $X_{(3,1,3);D}$. Recall that for a projective algebraic variety $X$, $\mathrm{H}^*(X;\mathbb{Q})$ is equipped with a canonical mixed Hodge structure, and that $\Gr^W_j\mathrm{H}^i(X;\mathbb{Q})\cong 0$ if $j > i$. 
\begin{theorem*}[Theorem~\ref{thm:doublebox}]
For arbitrary kinematic parameters, and arbitrary space-time dimension $D$, $W_{4}\mathrm{H}^5(X_{(3,1,3);D};\mathbb{Q})$ is mixed Tate.
\begin{enumerate}
    \item If $D\geq 6$ then $\Gr^W_5\mathrm{H}^5(X_{(3,1,3);D};\mathbb{Q}) \cong \mathrm{H}^1(C;\mathbb{Q})(-2)$ for a curve $C$ which has genus 2 for generic kinematic parameters.
    \item If $D = 4$ then $\Gr^W_5\mathrm{H}^5(X_{(3,1,3);D};\mathbb{Q}) \cong \mathrm{H}^1(E;\mathbb{Q})(-2)$ for a curve $E$ which is elliptic for generic kinematic parameters.
    \item If $D \leq 4$ then $\mathrm{H}^5(X_{(3,1,3);E};\mathbb{Q})$ is mixed Tate.
\end{enumerate}
\end{theorem*}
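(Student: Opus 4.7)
The plan is to specialize the strategy behind Theorem~\ref{thm:sunsetmot} to the case $(a,c)=(3,3)$, identify the resulting curve explicitly, and then analyse how it degenerates as the space-time dimension decreases by tracking the Gram conditions on the external momenta.

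A direct spanning tree count gives
\[
{\bf U}_{(3,1,3)} = x_0(A+C) + AC,
\]
which is linear in the middle edge variable $x_0$, where $A$ and $C$ are the sums of edge variables along the two length-three chains. Substituting into~\eqref{e:VFdef} then gives
\[
{\bf F}_{(3,1,3);D} = \alpha\, x_0^2 + \beta\, x_0 + \gamma,
\]
with $\alpha = m_0^2(A+C)$ linear, $\beta$ quadratic, and $\gamma$ cubic in $x_1,\dots,x_6$, with coefficients depending polynomially on the masses and kinematic invariants. Projection from $[1:0:\cdots:0]\in\mathbb{P}^6$ realises $X_{(3,1,3);D}$ as (a partial blow-down of) a double cover of $\mathbb{P}^5$ branched along the quartic $\Disc := V(\beta^2 - 4\alpha\gamma)$.

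Using the Mayer--Vietoris-type decomposition underlying Theorem~\ref{thm:sunsetmot}, the weight $\leq 4$ contributions to $\mathrm{H}^5(X_{(3,1,3);D};\mathbb{Q})$ come from cohomology of the exceptional and singular strata of the double-cover resolution. These strata are cohomologies of graph hypersurfaces of proper edge-contractions of $(3,1,3)$, and are mixed Tate by inductive application of earlier results (the contracted graphs have fewer edges and smaller chains). This proves that $W_4 \mathrm{H}^5(X_{(3,1,3);D};\mathbb{Q})$ is mixed Tate. For the pure weight-five piece, the anti-invariant summand of $\mathrm{H}^5$ of the resolved double cover is a Tate twist of the primitive $\mathrm{H}^4$ of a resolution of $\Disc$. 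The special factorisation of $\Disc$ (with $\alpha$ linear and $\gamma$ cubic) admits a further pencil structure, and iterating the double-cover argument (again as in the sunset case) identifies
\[
\Gr^W_5 \mathrm{H}^5(X_{(3,1,3);D};\mathbb{Q}) \cong \mathrm{H}^1(C;\mathbb{Q})(-2),
\]
where $C \to \mathbb{P}^1$ is the hyperelliptic curve whose branch locus is the zero set of a polynomial $p(t)$ with coefficients depending polynomially on $\vec m$ and $\vec s$. A generic degree count yields $\deg p = 6$, so $g(C) = 2$ for generic kinematic parameters, giving case~(1).

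The remaining step is to track $C$ as $D$ varies. The graph $(3,1,3)$ has six vertices, so the six momenta $p_v$ subject to $\sum p_v = 0$ leave five independent vectors, whose Gram matrix has rank at most $\min(5, D)$. For $D \geq 6$ no Gram relations hold and $p$ is generic. For $D = 4$, the single Gram relation $\det(p_i\cdot p_j)_{i,j=1}^{5} = 0$---interpreted geometrically via Section~\ref{sect:stime}---specialises the coefficients of $p$ so that exactly one pair of branch points coincide, giving $g(C) = 1$ and an elliptic curve generically. For $D \leq 4$ the remaining Gram conditions force the other branch points to collide in pairs, so $C$ becomes rational, $\mathrm{H}^1(C;\mathbb{Q})=0$, and combining with the $W_4$-claim we conclude that $\mathrm{H}^5(X_{(3,1,3);D};\mathbb{Q})$ is mixed Tate.

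The main obstacle is the final step: verifying the precise branch-root pattern of $p$ under the successive Gram specialisations, and in particular that at $D = 4$ the relation produces exactly one simple double root rather than a higher-multiplicity degeneration. I expect the cleanest route to be via the geometric interpretation of the Gram conditions in Section~\ref{sect:stime}, viewing them as degeneracy loci in parameter space, rather than by direct discriminant computation.
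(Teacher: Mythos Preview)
Your approach differs substantially from the paper's, and the central step is missing.

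You propose projecting from the point corresponding to the middle edge variable, obtaining a double cover of $\mathbb{P}^5$ branched along a quartic fourfold $\Disc=V(\beta^2-4\alpha\gamma)$. From there you write that ``iterating the double-cover argument (again as in the sunset case)'' identifies a hyperelliptic curve $C$ with branch polynomial $p(t)$ of degree~6. But Theorem~\ref{thm:sunsetmot} does not proceed by iterated double covers: it applies the explicit birational map $\phi$ of~\eqref{eqbir} to pass from $X_{(a,1,c);D}$ to the cubic $X'_{(a,1,c);D}$, which contains a codimension~1 linear subspace $V(z,\sum x_i+\sum y_i)$; blowing that up yields (via Section~\ref{sn:cubic}) a quadric fibration $\mathcal{Q}\to\mathbb{P}^1$ directly. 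The curve $C$ is then the double cover of this $\mathbb{P}^1$ branched along the discriminant of $\mathcal{Q}$. There is no analogue in the paper of reducing a quartic fourfold in $\mathbb{P}^5$ to a curve, and you have not supplied one; the phrase ``further pencil structure'' and the claimed degree count $\deg p=6$ are unsupported. This is the genuine gap.

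A second, smaller issue: you assert that the $W_4$ strata are ``cohomologies of graph hypersurfaces of proper edge-contractions of $(3,1,3)$''. That is not what Proposition~\ref{prop:strat} and the proof of Theorem~\ref{thm:doublebox} use. The strata are the specific hyperplane sections $H_1=V(z)$, $H_2=V(z+\sum x_i)$ (and their primed analogues on $X'$), together with the exceptional divisor of the blow-up $\mathbf{b}$; only $H_1$ is an edge-contraction, while $H_2$ and $H_2'$ are cubics containing codimension~1 linear subspaces whose mixed-Tate-ness is checked individually via further quadric fibrations (step~(4) of the proof).

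The paper's route also makes the $D$-dependence transparent in a way your sketch does not: the discriminant of $\mathcal{Q}\to\mathbb{P}^1$ is computed to have the form $Xz^3(A_4X^4+\cdots+A_0z^4)$, and the constant term $A_0$ is \emph{exactly} the $5\times 5$ Gram determinant~\eqref{e:GramDoubleBox}. Thus when $D=4$ one root of the quartic factor moves to $z=0$, collapsing the branch locus from six effective points to four, and the genus drops from~2 to~1 without any delicate discriminant analysis. Your ``main obstacle'' is resolved in the paper by this direct identification, not by tracking collisions of roots under abstract Gram specialisations.
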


\noindent We get sharper results for the motive of the vanishing locus of ${\bf F}_{(a,1,c);D}$ in the case
where $c=1,2$ for graphs of type $(a,1,1)$ in Figure~\ref{fig:a11graphs} and
$(a,1,2)$ in Figure~\ref{fig:a12graphs}.

\begin{theorem*}[Theorems~\ref{t:mainc=1} and~\ref{thm:212}]
Suppose $a\leq 2$ or $c\leq 2$. Then 
\[
\mathrm{H}^{a+c-1}(X_{(a,1,c);D};\mathbb{Q}) \in {\bf MHS}^\mathrm{ell}_\mathbb{Q}.
\]
\end{theorem*}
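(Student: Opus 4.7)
The plan is to upgrade Theorem~\ref{thm:sunsetmot}, which already places $\mathrm{H}^{a+c-1}(X_{(a,1,c);D};\mathbb{Q})$ in ${\bf MHS}^{\mathrm{hyp}}_\mathbb{Q}$, by verifying that in the small-chain regimes $c \leq 2$ (which by the symmetry $(a,1,c)\simeq(c,1,a)$ also covers $a\leq 2$) the hyperelliptic curves appearing in that construction are in fact of genus at most $1$. We begin by recording the polynomial structure common to both cases: letting $y$ be the variable on the middle length-one chain, an explicit spanning-tree enumeration yields
\[
{\bf U}_{(a,1,c)} = y(X+Z) + XZ, \qquad X = \sum_{i=1}^a x_i, \ Z = \sum_{j=1}^c z_j,
\]
which is linear in $y$. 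Because $\sum_e m_e^2 x_e$ is linear in $y$ and ${\bf V}_{\Gamma;D}$ is linear in $y$ (each spanning $2$-forest contains the edge $y$ at most once), the polynomial ${\bf F}_{(a,1,c);D}$ is exactly quadratic in $y$, so we may write ${\bf F}_{(a,1,c);D} = \alpha y^2 + \beta y + \gamma$ with $\alpha = m_y^2(X+Z)$ linear and $\beta,\gamma$ of degrees $2$ and $3$. The projection away from the $y$-axis then realises $X_{(a,1,c);D}$ as a double cover of $\mathbb{P}^{a+c-1}$ branched along the quartic hypersurface $\Delta = \{\beta^2 - 4\alpha\gamma = 0\}$; this is the construction that already underlies Theorem~\ref{thm:sunsetmot}.

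For $c=1$ the polynomial ${\bf F}_{(a,1,1);D}$ is also quadratic in $z=z_1$, and a direct degree count gives $\deg_z \alpha = 1$, $\deg_z \beta \leq 2$, $\deg_z \gamma \leq 2$, hence $\deg_z(\beta^2 - 4\alpha\gamma)\leq 4$ with equality for generic kinematics. The iterated projection $X_{(a,1,1);D}\dashrightarrow \mathbb{P}^{a-1}$ (forgetting first $y$, then $z$) therefore has generic fibre the double cover $\{w^2 = \beta^2 - 4\alpha\gamma\}$ of $\mathbb{P}^1_z$ branched at four points, namely an elliptic curve. The Leray spectral sequence for this elliptic fibration, combined with the Tate cohomology of the base $\mathbb{P}^{a-1}$, then exhibits $\mathrm{H}^{a}(X_{(a,1,1);D};\mathbb{Q})$ as an iterated extension of Tate twists of $\mathrm{H}^1$ of elliptic curves by Tate Hodge structures, placing it in ${\bf MHS}^{\mathrm{ell}}_\mathbb{Q}$. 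The case $c=2$ is analogous: ${\bf F}_{(a,1,2);D}$ is quadratic separately in each of $y,z_1,z_2$, and after the $y$-double cover we perform a further birational reduction that exploits the bidegree-$(2,2)$ structure in $(z_1,z_2)$ to obtain, over a rational base, a fibration whose generic fibre is again a degree-four double cover of $\mathbb{P}^1$, hence elliptic.

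The hard part is to control the degenerate fibres of these elliptic fibrations: over certain codimension-one loci of the base the quartic branch divisor may drop in degree or become reducible, and the Leray-style argument requires the motivic contribution of these loci to remain in ${\bf MHS}^{\mathrm{ell}}_\mathbb{Q}$. For $c=1$ I expect to handle this by induction on $a$ with base cases coming from the contracted graphs $(a-1,1,1)$. For $c=2$ the additional delicate point is verifying that the birational reduction to a plane-quartic double cover preserves the generic genus, which forces one to track the exceptional divisors of this birational map. A secondary technical point is that $X_{(a,1,c);D}$ is generically singular; the blow-up apparatus of Section~\ref{sect:BEK-Brown} supplies the compatible resolution needed in order that the motivic statement make sense.
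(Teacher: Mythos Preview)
Your plan has a genuine structural gap that is not about degenerate fibres but about the direction of the fibration.  You propose to project away from the middle edge $y$ (and then, for $c=1$, also from $z$) to obtain an elliptic fibration over $\mathbb{P}^{a-1}$, and then invoke the Leray spectral sequence.  But an elliptic fibration over a high–dimensional base does \emph{not} force the total cohomology into ${\bf MHS}^{\mathrm{ell}}_\mathbb{Q}$.  The relevant Leray term is $H^p(U,R^1\pi_*\underline{\mathbb{Q}})$ for a rank–$2$ weight–$1$ variation over an open $U\subset\mathbb{P}^{a-1}$, and such cohomology groups are in general \emph{not} iterated extensions of Tate twists of $\mathrm{H}^1(E;\mathbb{Q})$: already over a $1$–dimensional base they produce the transcendental $H^2$ of an elliptic surface, which for an elliptic K3 has Hodge type $(1,\ast,1)$ in weight~$2$ and hence lies outside ${\bf MHS}^{\mathrm{ell}}_\mathbb{Q}$ (every weight–$2$ object of ${\bf MHS}^{\mathrm{ell}}_\mathbb{Q}$ is pure Tate).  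So even if every fibre of your map were a smooth elliptic curve, the conclusion would not follow; controlling degenerate fibres is a secondary issue.  Your claim that this double–cover picture ``already underlies Theorem~\ref{thm:sunsetmot}'' is also incorrect: that theorem proceeds via the birational map~\eqref{eqbir} and Proposition~\ref{prop:cubic-hyp}, not via projection from the middle edge.

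The paper's argument goes in the opposite direction: it uses the \emph{long} chain of $a$ edges (Lemma~\ref{lemma:quadfib}) to produce a quadric fibration of large relative dimension over a \emph{low}–dimensional base, namely $\mathbb{P}^1_{x,z}$ for $c=1$ and $\mathbb{P}^2_{z,x_1,x_2}$ for $c=2$.  The point is that Corollary~\ref{cor:mhshyp} and Proposition~\ref{prop:quadfib} reduce the cohomology of a quadric bundle over $\mathbb{P}^1$ (resp.\ $\mathbb{P}^2$) to that of a single double cover of the base, and a direct discriminant computation (equations~\eqref{e:disca1} and~\eqref{eq:quartic}) shows this cover is ramified in at most five points (resp.\ along a quartic of geometric genus $\leq 1$), forcing genus $\leq 1$.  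For $c=2$ one then finishes by an induction on $a$ that peels off two $y$–variables at a time, with base case the quartic double cover of $\mathbb{P}^2$ handled by Proposition~\ref{p:quarticcovers}.  The moral is that the bound on the genus comes from keeping the base of the fibration small, not from keeping the fibre small.
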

\begin{corollary}
If $3D/2 \leq a + c$ and either $a\leq 2$ or $c\leq 2$ then 
\[
\mathrm{H}^{a+c}(\mathbb{P}_\Gamma - \bX_{(a,1,c);D}; B  - (B_\Gamma \cap \bX_{(a,1,c);D})) \in {\bf MHS}^\mathrm{ell}_\mathbb{Q}.
\]
\end{corollary}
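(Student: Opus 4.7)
The plan is to mirror the proof of Theorem~\ref{thm:main2}, simply upgrading the input on the graph hypersurface cohomology from Theorem~\ref{thm:sunsetmot} (hyperelliptic category) to Theorems~\ref{t:mainc=1} and~\ref{thm:212} (elliptic category). The hypothesis $3D/2 \leq a+c$ ensures $e(\Gamma) - (L+1)D/2 \geq 1$, so that only $X_{\Gamma;D}$ appears in the polar divisor of $\omega_{\Gamma;D}$. The Bloch--Esnault--Kreimer and Brown construction of Section~\ref{sect:BEK-Brown} then presents the relative mixed Hodge structure
\[
\mathrm{H}^{a+c}\bigl(\mathbb{P}_\Gamma - \bX_{(a,1,c);D};\, B_\Gamma - (B_\Gamma \cap \bX_{(a,1,c);D})\bigr)
\]
as an iterated extension of Tate twists of $\mathrm{H}^*(X_{\Gamma';D};\mathbb{Q})$, where $\Gamma'$ ranges over the graph contractions arising from the successive linear blow-ups. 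The proof of Theorem~\ref{thm:main2} concludes by invoking Theorem~\ref{thm:sunsetmot} on each contraction and closing under extensions in ${\bf MHS}^\mathrm{hyp}_\mathbb{Q}$.

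What needs to be checked for the corollary is that every relevant contraction $\Gamma'$ continues to satisfy $\mathrm{H}^*(X_{\Gamma';D};\mathbb{Q}) \in {\bf MHS}^\mathrm{ell}_\mathbb{Q}$. Contracting any edge in one of the two outer chains of $\Gamma = (a,1,c)$ produces a graph of the same topological type $(a',1,c')$ with $a' \leq a$ and $c' \leq c$; the hypothesis $a \leq 2$ or $c \leq 2$ is preserved, and Theorem~\ref{t:mainc=1} or Theorem~\ref{thm:212} applies directly. Contracting the central edge of length $1$ produces a vertex-wedge of two cycles, whose Symanzik polynomials factor through the vertex-sum decomposition; the cohomology of the associated hypersurface reduces via a Mayer--Vietoris argument to the one-loop (banana) case, where both Symanzik polynomials define mixed Tate loci, placing the resulting motive in ${\bf MHS}^\mathrm{ell}_\mathbb{Q}$ as well.

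The main obstacle is the wedge case: one must carry out the product-type decomposition carefully and track the Tate twists to confirm that the combined motive lies in ${\bf MHS}^\mathrm{ell}_\mathbb{Q}$. Once this is verified, the statement of the corollary follows immediately from extension-closure of ${\bf MHS}^\mathrm{ell}_\mathbb{Q}$ in ${\bf MHS}_\mathbb{Q}$, exactly as in the conclusion of Theorem~\ref{thm:main2}.
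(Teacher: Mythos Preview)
Your overall plan matches the paper's intended argument: the corollary follows by rerunning the proof of Theorem~\ref{thm:bek-b} with Theorems~\ref{t:mainc=1} and~\ref{thm:212} in place of Theorem~\ref{thm:sunsetmot}. (Your reference to Theorem~\ref{thm:main2} is a slip---that label points to the explicit Hodge-number computation for $(a,1,1)$; the paper's introduction contains the same cross-referencing error.) You are right that the boundary strata of type $(a',1,c')$ inherit the hypothesis $c'\leq 2$ (or $a'\leq 2$), so Theorems~\ref{t:mainc=1} and~\ref{thm:212} apply to them directly.

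The gap is in your wedge case $(a',0,c')$. Only the first Symanzik polynomial factors: ${\bf U}_{(a,0,c)}=(\sum x_i)(\sum z_j)$, but from the formulas in Lemma~\ref{l:a0c} one has ${\bf F}_{(a,0,c);D}=(\sum z_j)\,Q_a(x)+(\sum x_i)\,Q_c(z)$ for quadrics $Q_a,Q_c$ in disjoint variable sets, which is not a product. So $X_{(a,0,c);D}$ is not a union of one-loop hypersurfaces, and the Mayer--Vietoris reduction to bananas you sketch does not go through. The conclusion you need is still true, but by a different argument. For $c'=1$ the polynomial ${\bf F}_{(a',0,1);D}$ does factor as $z_1$ times a quadric, so the hypersurface is a hyperplane-plus-quadric and mixed Tate. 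For $c'=2$, apply Lemma~\ref{lemma:quadfib} to the $x$-chain: the resulting quadric fibration over $\mathbb{P}^1_{z}$ has discriminant a constant multiple of $(z_1+z_2)^{a'-1}Q_c(z)^2$; the even-order zeros at the roots of $Q_c$ carry trivial monodromy, so effectively only the fibre at $z_1+z_2=0$ is bad, and Corollary~\ref{cor:mhshyp} together with Remark~\ref{r:leq2} gives mixed Tate cohomology. With this repair to the $(a',0,c')$ step, the extension-closure argument you outline completes the proof.
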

\noindent This means that the mixed Hodge structure $\mathrm{H}^{a+c}(\mathbb{P}_\Gamma - \bX_{(a,1,c);D}; B  - (B \cap \bX_{(a,1,c);D}))$ is constructed by taking iterated extensions of $\mathrm{H}^1(E;\mathbb{Q})(-a)^{r_1}$ and $\mathbb{Q}(-b)^{r_2}$ for different values of $a,b,r_1,$ and $r_2$, and with various possibly different elliptic curves. Therefore the Feynman integrals in these cases are built from algebraic and elliptic functions.

\begin{remark}
All of the results in this paper are expressed in the category of mixed Hodge structures, rather than the category of motives. This is done in order to make closer contact with discussions of periods and Feynman integrals appearing in the physics literature. However, the geometric tools used to obtain these results are compatible with motivic constructions. 
\end{remark}

\subsection{Relation with work of Lairez--Vanhove~\cite{Lairez:2022zkj}}

This paper is partially written as a companion to recent work of Lairez--Vanhove~\cite{Lairez:2022zkj} which applies work of Lairez~\cite{lairez2016computing} to compute the Picard--Fuchs differential equations attached to pencils of particular graph hypersurfaces. Namely, they take the $t$-dependent family of graph hypersurfaces
\[
{\bf F}_{\Gamma;D}(t) = {\bf U}_\Gamma\left(\sum_{e\in E(\Gamma)} m_e^2x_e\right) - t {\bf V}_{\Gamma;D}.
\]
This can be viewed as a particular choice of pencil in the family of
all graph hypersurfaces attached to $(\Gamma,D)$. In this case we
denote $X_{\Gamma;D}(t)=V({\bf F}_{\Gamma;D}(t))$.

As $t$ varies, and for fixed kinematic parameters, we obtain a family of hypersurfaces over $\mathbb{A}^1_t$. For each $t$, the algebraic differential form $\omega_{\Gamma;D}(t)$ determines an element in $\mathrm{H}^{e(\Gamma)-1}_\mathrm{dR}(\mathbb{P}^{e(\Gamma)-1} - X_{\Gamma;D}(t))$. There is a nonempty open subset $M$ of $\mathbb{A}^1_t$ over which $\mathbb{P}^{e(\Gamma)-1} - X_{\Gamma;D}(t)$ forms a locally trivial family, therefore the cohomology groups $\cup_{t\in M} \mathrm{H}^{e(\Gamma)-1}_{\mathrm{dR}}(\mathbb{P}^{e(\Gamma)-1}-X_{\Gamma;D}(t))$ underlie a variation of mixed Hodge structure which we denote $\mathcal{H}_{\Gamma,D}$. Observe that in fact we have a family of variations of mixed Hodge structure depending on mass and kinematic parameters, and $\omega_{\Gamma;D}(t)$ forms a section of $\mathcal{H}_{\Gamma;D}\otimes \mathcal{O}_M$. Given a flat family of homological cycles $\{\gamma_t : t \in M\}$ one obtains period functions
\begin{equation}
{\bm \pi}_{\gamma_t}(t) = \int_{\gamma_t} \omega_{\Gamma;D}(t)
\end{equation}
which are annihilated by a linear differential operator
$\mathscr{L}_{\Gamma;D}$.  In Section~\ref{sec:PFGriffithDwork} we observe that the local system $\Sol(\mathscr{L}_{\Gamma;D})$ is a quotient of $\mathcal{H}_{\Gamma;D}$, and that factorizations of the operator $\mathscr{L}_{\Gamma;D}$ are closely related to the weight filtration on $\mathcal{H}_{\Gamma;D}$. Furthermore, if $\mathscr{L}_{\Gamma;D}$ factors as a product of differential operators, then the monodromy representation of $\mathscr{L}_{\Gamma;D}$ is block-upper triangular, and its diagonal blocks correspond to the monodromy representations of each individual factor (Propositions~\ref{p:ab-1} and~\ref{p:ab-2}). This allows us to explain the appearance of certain factorisations of $\mathscr{L}_{\Gamma;D}$ observed by Lairez--Vanhove. In the next section, we will describe a collection of examples that we compute in this paper and how they relate to~\cite{Lairez:2022zkj}. Our main result in this direction is the following.
\begin{theorem*}[Theorem~\ref{p:fact-diff}]
For any $a,c$, the operator $\mathscr{L}_{(a,1,c);D}$ admits a factorisation
\[
\mathscr{L}_{(a,1,c);D}=\mathscr{L}_1\mathscr{L}_2 \dots \mathscr{L}_k
\]
where $\Sol(\mathscr{L}_i)$ is either:
\begin{enumerate}[(a)]
    \item a local system with finite order monodromy, or
    \item a subquotient of the local system underlying a family of hyperelliptic curves over a Zariski open subset of $\mathbb{A}^1$.
\end{enumerate}
In particular, if $a$ or $c$ is $\leq 2$ then the monodromy representation of $\Sol(\mathscr{L}_i)$ is either 
\begin{enumerate}[(a)]
    \item finite, or
    \item a finite index subgroup of $\mathrm{SL}_2(\mathbb{Z})$. 
\end{enumerate}
\end{theorem*}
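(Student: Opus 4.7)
The plan is to combine the Hodge-theoretic description of the fibres of $\mathcal{H}_{(a,1,c);D}$ coming from Theorem~\ref{thm:sunsetmot} with the dictionary between the weight filtration on $\mathcal{H}_{\Gamma;D}$ and factorisations of the Picard--Fuchs operator $\mathscr{L}_{\Gamma;D}$ that is set up in Section~\ref{sec:PFGriffithDwork}. By Theorem~\ref{thm:sunsetmot}, for every fibre we have $\mathrm{H}^{a+c-1}(X_{(a,1,c);D}(t);\mathbb{Q})\in \mathbf{MHS}^{\mathrm{hyp}}_\mathbb{Q}$, so after possibly shrinking the base to a Zariski open $M\subset\mathbb{A}^1_t$, the variation $\mathcal{H}_{(a,1,c);D}$ admits a filtration $0=\mathcal{W}_0\subset \mathcal{W}_1\subset\cdots\subset\mathcal{W}_k=\mathcal{H}_{(a,1,c);D}$ whose successive quotients $\mathcal{W}_i/\mathcal{W}_{i-1}$ are either Tate local systems $\mathbb{Q}(-n_i)$, or Tate twists $R^1\pi_{i,\ast}\mathbb{Q}(-n_i)$ of the middle cohomology of a family $\pi_i\colon \mathcal{C}_i\to U_i$ of hyperelliptic curves, pulled back from an open subset $U_i\subset\mathbb{A}^1$.

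Next I would transfer this filtration to the cyclic $\mathcal{D}$-module generated by the Feynman form $\omega_{(a,1,c);D}(t)$. Since by Section~\ref{sec:PFGriffithDwork} the local system $\Sol(\mathscr{L}_{(a,1,c);D})$ is a quotient of $\mathcal{H}_{(a,1,c);D}$, the induced filtration there is by $\mathcal{D}$-submodules. By the correspondence between $\mathcal{D}$-module filtrations and right factorisations made explicit in Propositions~\ref{p:ab-1} and~\ref{p:ab-2}, this produces a factorisation $\mathscr{L}_{(a,1,c);D}=\mathscr{L}_1\mathscr{L}_2\cdots\mathscr{L}_k$ in which $\Sol(\mathscr{L}_i)$ is exactly the $i$-th graded piece (or a non-zero quotient of it, if $\omega_{(a,1,c);D}$ fails to generate the whole graded piece as a cyclic vector). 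The pieces coming from a Tate graded quotient have trivial geometric monodromy and at worst finite monodromy from Tate twists, giving alternative~(a); the pieces coming from a hyperelliptic graded quotient are by construction subquotients of the local system underlying a family of hyperelliptic curves over a Zariski open subset of $\mathbb{A}^1$, giving alternative~(b).

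The ``in particular'' statement uses Theorems~\ref{t:mainc=1} and~\ref{thm:212} in place of Theorem~\ref{thm:sunsetmot}: when $a\leq 2$ or $c\leq 2$ we instead land in $\mathbf{MHS}^{\mathrm{ell}}_\mathbb{Q}$, so each hyperelliptic family in the argument above is replaced by an elliptic family $\pi_i\colon \mathcal{E}_i\to U_i$. The monodromy of $R^1\pi_{i,\ast}\mathbb{Q}$ factors through $\mathrm{SL}_2(\mathbb{Z})$ via the modular interpretation, and since the one-parameter family $\mathcal{E}_i\to U_i$ is non-isotrivial (its $j$-invariant is non-constant, as can be verified by direct inspection of the explicit defining equations produced in the proofs of Theorems~\ref{t:mainc=1} and~\ref{thm:212}) the image is a finite-index subgroup of $\mathrm{SL}_2(\mathbb{Z})$.

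The main technical obstacle I anticipate is the transfer step: checking that the weight filtration on $\mathcal{H}_{(a,1,c);D}$, established fibrewise through the geometric constructions underlying Theorem~\ref{thm:sunsetmot}, descends to a filtration by $\mathcal{D}$-submodules of the cyclic module generated by $\omega_{(a,1,c);D}(t)$ and that each graded piece is actually realised (rather than killed) in $\Sol(\mathscr{L}_{(a,1,c);D})$. Concretely, this requires showing that the image of $\omega_{(a,1,c);D}(t)$ in each graded piece $\mathcal{W}_i/\mathcal{W}_{i-1}$ is non-zero, which may force one to shrink $M$ further to exclude apparent singularities where some $\mathscr{L}_i$ could degenerate, and to invoke the general formalism of Section~\ref{sec:PFGriffithDwork} to glue the pieces back together into a single factorisation of $\mathscr{L}_{(a,1,c);D}$.
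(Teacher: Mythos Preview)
Your overall strategy matches the paper's: invoke Theorem~\ref{thm:sunsetmot} (respectively Theorems~\ref{thm:212} and~\ref{t:mainc=1}) to control the weight-graded pieces of $\mathcal{H}_{(a,1,c);D}$, then use the correspondence of Propositions~\ref{p:ab-1} and~\ref{p:ab-2} between filtrations on $\Sol(\mathscr{L})$ and factorisations of $\mathscr{L}$. Two points are worth correcting.

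First, the ``main technical obstacle'' you flag is not an obstacle at all. By Lemma~\ref{l:qotapp}, $\Sol(\mathscr{L}_{(a,1,c);D})$ is a quotient of $\mathcal{H}_{(a,1,c);D}^\vee$ (not of $\mathcal{H}_{(a,1,c);D}$ itself, though this is harmless since the relevant graded pieces are self-dual up to Tate twist). The weight filtration on $\mathcal{H}_{(a,1,c);D}^\vee$ induces a filtration on this quotient, and Proposition~\ref{p:ab-1} turns that directly into a factorisation. If a graded piece happens to die in the quotient, the corresponding step in the filtration is trivial and contributes no factor; there is nothing to check about whether $\omega_{(a,1,c);D}(t)$ ``realises'' each piece. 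Indeed the paper observes (just before the theorem) that any complete factorisation of $\mathscr{L}_{(a,1,c);D}$ into irreducibles has its Jordan--H\"older factors among those of $\mathcal{H}_{(a,1,c);D}^\vee$, which is all that is needed.

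Second, your treatment of the ``in particular'' clause is slightly off. You attempt to verify non-isotriviality of the elliptic families by inspection, but this is both unjustified in general (the theorem is stated for arbitrary parameters, and for special choices the family may well be isotrivial) and unnecessary. Proposition~\ref{p:mono} handles both cases at once: a pure Tate polarised variation has finite monodromy; a non-isotrivial family of elliptic curves has monodromy a finite-index subgroup of $\mathrm{SL}_2(\mathbb{Z})$; and an isotrivial family of elliptic curves again has finite monodromy. So each irreducible factor $\Sol(\mathscr{L}_i)$ automatically satisfies one of the two alternatives, with no case analysis on the $j$-invariant required.
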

\begin{remark}
Since it is often difficult to detect whether a differential operator has finite monodromy, we instead check whether an operator is Liouvillian. Liouvillian differential operators are those whose differential Galois group is solvable, thus they admit factorisations 
$
\mathscr{L} = \mathscr{L}_1\mathscr{L}_2\dots \mathscr{L}_k
$
so that $\Sol(\mathscr{L}_i)$ have abelian monodromy representations, since the monodromy representation of a differential operator is a subgroup of the differential Galois group. Knowing that a differential operator is Liouvillian implies that its monodromy group is not a finite index subgroup of $\mathrm{SL}_2(\mathbb{Z})$. Combining this with Theorem~\ref{p:fact-diff} allows us to conclude that if $c\leq 2$, then any Liouvillian factor of $\mathscr{L}_{(a,1,c);D}$ in fact has finite order monodromy and corresponds to a subquotient of a mixed Tate variation of Hodge structure.
\end{remark}

\subsection{Examples computed in this paper}
In addition to the double box example listed above, we obtain finer
results in several specific cases. These results can be compared with
the results obtained by applying the extended Griffiths--Dwork algorithm
presented  in~\cite{Lairez:2022zkj}.
\begin{itemize}[$\bullet$]
    \item (Section~\ref{sec:pentabox}) We show, in the pentabox graph $(3,1,4)$ case, that the mixed Hodge structure on  $\mathrm{H}^6(X_{(3,1,4);D};\mathbb{Q})$ takes the shape
    \begin{equation}
    \begin{tabular}{c|cccc}
      3 & 0 & 0 & $v$  & $*$\\
      2 & 0 & 0 & $*$  & $v$ \\
      1 & 0 &$*$& 0    &  0\\
      0 & 0 & 0 & 0    &  0 \\
         \hline 
       $I^{p,q}$ & 0 & 1 & 2 & 3
    \end{tabular}
    \end{equation}
    where $v=1$ if $D \geq 4$ and 0 otherwise. In particular, for a
    generic $X_{(3,1,4);4}$, we see that
    $\Gr^W_5\mathrm{H}^5(X_{(3,1,4);4};\mathbb{Q}) \cong
    \mathrm{H}^1(E;\mathbb{Q})(-2)$ for an elliptic curve depending on
    kinematic parameters. However, the Picard--Fuchs operator  $\mathscr{L}_{(3,1,4);4}$ derived in~\cite{Lairez:2022zkj} was found to be  irreducible of order 4 and  Liouvillian. Therefore it  does not detect the elliptic curve in the middle cohomology (see Remark~\ref{r:pentabox}).

    \item (Section~\ref{sec:Motive213}) We show, in the house graph $(2,1,3)$ case, that in the $(2,1,3)$ case, the mixed Hodge structure on $\mathrm{H}^4(X_{(2,1,3);4};\mathbb{Q})$ is of the form 
    \begin{equation}
    \begin{tabular}{c|cccc}
         3 & 0 & 0    & 0   & 0 \\ 
         2 & 0 & $1$  & $*$ & 0 \\
         1 & 0 & 0    & $1$ & 0 \\
         0 & 0 & 0    &  0  & 0 \\
         \hline 
         $I^{p,q}$ & 0 & 1 & 2 & 3 
    \end{tabular}
    \end{equation}
    The Picard--Fuchs operator $\mathscr{L}_{(2,1,3);4}$ derived using   the extended Griffiths--Dwork algorithm~\cite{Lairez:2022zkj} is factorisable with a right factor given by the Picard--Fuchs operator of the elliptic curve attached to the graph hypersurface $X_{(2,1,3);4}$.

    \item (Section~\ref{sec:kite}) We show, in the  kite graph $(2,1,2)$
      case, that  the mixed Hodge structure on $\mathrm{H}^3(X_{(2,1,2);4} \cup Y_{(2,1,2)};\mathbb{Q})$ is of the form 
    \begin{equation}
    \begin{tabular}{c|ccc}
        2 & 0 & $1$  & $0$ \\
        1 & 0 & $*$ & $1$ \\
        0 & $*$ & 0    &  0 \\
         \hline 
        $I^{p,q}$ & $0$ & $1$ & 2 
    \end{tabular}
    \end{equation}
    Here, $Y_{(2,1,2)}$ denotes the vanishing locus of ${\bf
      U}_{(2,1,2)}$. A kite elliptic curve is   found in the middle
    cohomology in agreement with the analysis in~\cite{Bloch:2021hzs}
    and~\cite{KerrLetter}. A comparison with the Picard--Fuchs
    operator derived in~\cite{Lairez:2022zkj} suggests a splitting of
    the  Hodge structure (see Remark~\ref{r:kite}).  
   
        \item (Section~\ref{sec:icecream}) We show, in the ice cream
          cone graph $(2,1,1)$ case, that the mixed Hodge structure on $\mathrm{H}^2(X_{(2,1,1);2};\mathbb{Q})$ is pure Tate of rank 4. Furthermore, we describe in detail the variation of mixed Hodge structure on the varying family of cohomology attached to the family of hypersurfaces $X_{(2,1,1);2}(t)$, computing from first principles the underlying local systems. We compare to $\Sol(\mathscr{L}_{(2,1,1);2})$ computed in~\cite{Lairez:2022zkj}, and explain how $\mathscr{L}_{(2,1,1);2}$ can be constructed from a pair of copies of $\mathscr{L}_{(1,1);2}$.

    \item (Section~\ref{sec:tardigrade}) We show, in the tardigrade
      graph $(2,2,2)$ case, that the mixed Hodge structure on $\mathrm{H}^4(X_{(2,2,2);4};\mathbb{Q})$ is of the form 
    \begin{equation}
    \begin{tabular}{c|cccc}
        3 & 0 & 1    & 0    & 0 \\ 
        2 & 0 & $0$  & $*$  & 0 \\
        1 & 0 & $0$  & $0$  & 1 \\
        0 & 0 & 0    &  0   & 0 \\
         \hline 
        $I^{p,q}$ & $0$ & $1$ & $2$ & $3$ 
    \end{tabular}
    \end{equation}
    Note that this example is distinct from the others presented above
    in the sense that it is not related to the results of
    Theorems~\ref{thm:sunsetmot} or~\ref{thm:212} and~\ref{t:mainc=1}. However, the techniques used to prove the results of Section~\ref{sec:tardigrade} are similar to the proof of Theorems~\ref{thm:212} and~\ref{t:mainc=1}. Note that the $(2,2,2)$ graph is the graph with the fewest edges which is not assured to have $\mathrm{H}^{a+b+c-2}(X_{(a,b,c);D};\mathbb{Q})$ in ${\bf MHS}^\mathrm{hyp}_\mathbb{Q}$, and indeed $\Gr^W_4\mathrm{H}^4(X_{(2,2,2);4};\mathbb{Q})$ is isomorphic to a direct summand of $\mathrm{H}^2(S;\mathbb{Q})(-1)$ for a K3 surface $S$ depending on kinematic and mass parameters, so $\mathrm{H}^{a+b+c-2}(X_{(a,b,c);D};\mathbb{Q})$ is not in ${\bf MHS}^\mathrm{hyp}_\mathbb{Q}$. The Picard--Fuchs operator annihilating the fundamental period of this K3 surface has the same order and singularities as the Picard--Fuchs operator derived using the extended Griffiths--Dwork algorithm~\cite{Lairez:2022zkj}. 
    In Appendix~\ref{appendix:Eric}, Eric Pichon--Pharabod shows that
    for a generic choice of parameters, this K3 surface has Picard
    rank 11.  For generic choices of the physical parameters, the
    Picard lattice is determined using an numerical evaluation of the
    period integrals of this K3 surface.
    \item (Section~\ref{sec:multiscoop}) We show that for the multi-scoop ice cream cone families, $(2, [1]^k)$ of graph hypersurfaces (See Figure~\ref{fig:icecreammultiscoop}(A) on pp.~\pageref{fig:icecreammultiscoop}), there is a conic fibration on $X_{(2,[1]^k);D}$, and that the discriminant locus this conic fibration is a union of two Calabi--Yau $(k-2)$-folds associated to the $(k-1)$-loop sunset graph (see Figure~\ref{fig:icecreammultiscoop}(B) on pp.~\pageref{fig:icecreammultiscoop}). Therefore, we expect that $\Gr_k^W\mathrm{H}^k(X_{(2,[1]^k);2};\mathbb{Q})$ is in some sense constructed from 
    \begin{equation}
    \mathrm{H}^{k-2}\left(X^{(1)}_{([1]^k);2};\mathbb{Q}\right) \oplus \mathrm{H}^{k-2}\left(X^{(2)}_{([1]^k);2};\mathbb{Q}\right)
  \end{equation}
    where $X^{(1)}_{([1]^k);2}$ and $X^{(2)}_{([1]^k);2}$ are distinct
    $(k-1)$-loop sunset Calabi--Yau $(k-2)$-folds. In the $k=3$ case, this is supported,
    for instance, by the numerical computations for generic physical parameters in Section~5.3 of~\cite{Lairez:2022zkj} which shows that the rank of the Picard--Fuchs operator annihilating $\omega_{(2,[1]^3);2}$ is of rank 4. In this case $X_{([1]^3);2}^{(1)}$ and $X_{([1]^3);2}^{(2)}$ are elliptic curves, so the rank of $\mathscr{L}_{(2,[1]^3);2}$ agrees with the rank of $\mathrm{H}^1(X_{([1]^3);2}^{(1)};\mathbb{Q}) \oplus \mathrm{H}^1(X_{([1]^3);2}^{(2)};\mathbb{Q}).$
    
\end{itemize}

%------------------------------------------------------------------------

\subsection{Further directions}

The techniques used in this paper are very broadly applicable to the graph hypersurfaces $X_{\Gamma;D}$ where $\Gamma$ admits a chain of bivalent vertices. In this case, there is a particular linear subspace $L$ in $X_{\Gamma;D}$ so that if $\widetilde{X}_{\Gamma;D} = \Bl_LX_{\Gamma;D}$ then there is a quadric fibration $\pi :\widetilde{X}_{\Gamma;D}\rightarrow \mathbb{P}^n$ (see Lemma~\ref{lemma:quadfib} below). Without much effort, this observation leads to the following consequences.
\begin{itemize}[$\bullet$]
    \item For graphs of type $(2n,2,2)$, and $n$ an arbitrary positive integer, there is a motivic contribution coming from a  K3 surface. This is an extension of Theorem~\ref{t:tardigrade}.
    \item For the extended family of ice cream cone graphs, studied in Section~\ref{sec:multiscoop}, $(k,[1]^n)$ there is a major motivic contribution to $\mathrm{H}^{N-1}(X_{(k,[1]^n);D};\mathbb{Q})$ coming from a union of two sunset Calabi--Yau $(n-2)$-folds if $k$ is even, and from a single sunset Calabi--Yau $(n-1)$-fold if $k$ is odd. 

\end{itemize}
We intend to explore these families of examples further in future work.

Another main question that we have only begun to study is the behavior of the differential form $\omega_{\Gamma;D}$ with respect to quadric fibrations. This quadric fibration technique is somewhat reminiscent of the denominator reduction techniques employed by Brown~\cite{Brown:2009ta,bd}, so we expect that one can relate the form $\omega_{\Gamma;D}$ to a form on the base of the quadric fibration $\pi$ with poles along the discriminant locus of $\pi$.

%------------------------------------------------------------------------
\subsection{Organisation of the paper}

In Sections~\ref{s:background},~\ref{sec:PFGriffithDwork}, and~\ref{s:back} we introduce the major background results necessary for to prove the main theorems in the paper.  We introduce a very minimal amount of Hodge theoretic background in Section~\ref{sect:gen}, and we discuss the Bloch--Esnault--Kreimer approach to Feynman integrals in \ref{sect:BEK-Brown}. In Section \ref{sec:PFGriffithDwork} we discuss the relationship between Picard--Fuchs equations and variations of mixed Hodge structure. In Section~\ref{s:back} we prove several results that will be used in the rest of the paper. First, we show that chains of edges on $\Gamma$ can be used to produce quadric fibrations on $X_{\Gamma;D}$, or more precisely, on a particular blow up of $X_{\Gamma;D}$ (Lemma~\ref{lemma:quadfib}). and apply it in Section~\ref{s:qfib} to characterize the mixed Hodge structure on the cohomology of a quadric fibration, with emphasis on the case where the base of the quadric fibration is smooth and has dimension 1. Finally, we prove two auxilliary results on the cohomology of cubic hypersurfaces containing codimension 1 linear subspaces, and quartic double cover surfaces, proving that their cohomology is in ${\bf MHS}^\mathrm{hyp}_\mathbb{Q}$ and ${\bf MHS}^\mathrm{ell}_\mathbb{Q}$ respectively. 

\smallskip
In Section~\ref{s:mainthm}, we prove some of the main results of the
paper for the graphs of type $(a,1,c)$. In Section~\ref{sect:strat},
we first prove Theorem~\ref{thm:sunsetmot}. This is done by applying a
well-chosen birational transformation to $X_{(a,1,c);D}$ and observing
its effects on cohomology, then applying some results from
Section~\ref{s:back}. We extend these results to prove
Corollary~\ref{c:mainintro} in Section~\ref{sect:mot}. We complete the
section by looking at the case of the double-box graph of type
$(3,1,3)$ in Section~\ref{sec:Motive313} and the pentabox graph of
type $(4,1,3)$ in Section~\ref{sec:pentabox}.

\smallskip
In Section~\ref{sec:Ia12}, we analyze the case where $c=2$ in great
detail. One obtains directly a quadric
fibration on a mild blow up $\widetilde{X}_{(a,1,2);D}$ of
$X_{(a,1,2);D}$, whose base is now $\mathbb{P}^2$ and we use this to prove the main results of the section. We then look at two examples in
Sections~\ref{sec:Motive213} and~\ref{sec:kite}, of the $(2,1,3)$ and
$(2,1,2)$ graphs respectively, comparing with the computational
results for the Picard--Fuchs operators obtained in~\cite{Lairez:2022zkj}.

\smallskip
In Section~\ref{s:Ellmot}, we analyze the case where $c = 1$ in great
detail. In thise case, one obtains directly a quadric fibration on a
mild blow up $\widetilde{X}_{(a,1,1);D}$ of $X_{(a,1,1);D}$ over
$\mathbb{P}^1$. This allows us to circumvent
some of the complicated birational geometry of
Section~\ref{s:mainthm} and to give a concise proof of the main result of this section. We then proceed in
Section~\ref{sect:stime} to look at more specific examples. First we
show that in low space-time dimension, the quadric fibration on
$\widetilde{X}_{(a,1,1);D}$ is degenerate (Proposition~\ref{p:stime}), 
forcing the relevant part of its mixed Hodge structure to be mixed
Tate. In the case where the quadric fibration is nondegenerate, we
give a very precise description of the cohomology of
$\widetilde{X}_{(a,1,1);D}$ (Theorem~\ref{thm:main2}). Finally, we
study the case of $X_{(2,1,1);2}$ in great detail, describing
precisely the corresponding variation of Hodge structure, and its
relations to the Picard--Fuchs operators derived using the extended Griffiths--Dwork
algorithm in~\cite{Lairez:2022zkj}.

\smallskip
In Sections~\ref{sec:tardigrade} and~\ref{sec:multiscoop}, we analyze
two examples to which Theorem~\ref{thm:sunsetmot} does not apply; the
tardigrade, of type $(2,2,2)$, and the multi-scoop ice cream cone of
type $(2,[1]^n)$
respectively. In both of these cases, an application of
Lemma~\ref{lemma:quadfib} and Proposition~\ref{prop:quadfib} allows us
to obtain a partial description of the mixed Hodge structure on the
corresponding graph hypersurface. In the tardigrade case, in Section~\ref{sec:tardigrade}, we see
clearly that there is a underlying K3 surface. This confirms
the numerical analysis done in~\cite{Lairez:2022zkj}. In
Appendix~\ref{appendix:Eric}, by Eric Pichon--Pharabod, presents a study of
the Picard lattice of the associated K3 surface using an algorithm to
compute certified numerical approximations of periods of varieties.
In the multi-scoop
ice cream cone case, in Section~\ref{sec:multiscoop}, we see that there are two sunset-type Calabi--Yau
$(n-2)$-folds whose cohomology is important for describing the motive
of $X_{(2,[1]^n);D}$. Both of these computations clarify some of the
results obtained by applying the extended Griffiths--Dwork
algorithm in~\cite{Lairez:2022zkj}.

%------------------------------------------------------------------------
\section*{Acknowledgements}
We thank Pierre Lairez for discussions and use of the computer
resources at INRIA.
C.F.D has received research support from the Natural Sciences and Engineering Research Council of Canada.  A.H. has received travel support from the Simons Foundation.  The research of P.V. has received funding from the ANR grant ``SMAGP''
ANR-20-CE40-0026-01. 
%%%%%%%%%%%%%%%%%%%%%%%%%%%%%%%%%%%%%%%%%%%%%%%%%%%%%%%%%%%%%%%%%%

\section{Background}\label{s:background}

\subsection{Hodge-theoretic notions and notation}\label{sect:gen}
Here we provide, with few explanations, background necessary for many of the Hodge theoretic computations later on in the paper. We deal only with mixed Hodge structures over $\mathbb{Q}$ in this paper. We assume that the reader has a basic familiarity with mixed Hodge structures.  The reader may consult~\cite{peters-steenbrink} for a comprehensive reference. The main fact that we use repeatedly is the following.
\begin{theorem}[Deligne~\cite{deligne-iii}, or Corollary 3.6, 3.8~in~\cite{peters-steenbrink}]
Morphisms of mixed Hodge structure are strict. Consequently, given an exact sequence of mixed Hodge structures,
\[
H_1 \rightarrow H_2 \rightarrow H_3
\]
for each $k$ there is an exact sequence of pure Hodge structures
\[
\Gr^W_k H_1 \rightarrow \Gr^W_kH_2 \rightarrow \Gr^W_kH_3.
\]
\end{theorem}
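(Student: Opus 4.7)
The plan is to reduce both strictness and the exactness of graded pieces to Deligne's canonical bigrading $H \otimes \mathbb{C} = \bigoplus_{p,q} I^{p,q}(H)$ attached to every mixed Hodge structure $H$ over $\mathbb{Q}$. This bigrading simultaneously splits both filtrations, via $W_k H \otimes \mathbb{C} = \bigoplus_{p+q \leq k} I^{p,q}(H)$ and $F^p H \otimes \mathbb{C} = \bigoplus_{r \geq p} I^{r,s}(H)$, and it is built by induction on the length of the weight filtration, using that each $\Gr^W_k H$ is pure of weight $k$ and so admits the usual $(p,q)$ decomposition.

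The first step is to verify that any morphism $f : H_1 \to H_2$ of mixed Hodge structures satisfies $f(I^{p,q}(H_1)) \subset I^{p,q}(H_2)$. This I would prove by induction on the length of the weight filtration, combined with the elementary observation that morphisms between pure Hodge structures of the same weight automatically preserve the Hodge decomposition (since $f$ preserves both $F^p$ and $\overline{F^q}$ in the pure case). This step is the one genuinely technical point, and where I expect the bulk of the argument to sit; everything else is essentially bookkeeping.

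With bigrading-preservation established, strictness for $W$ is purely formal: given $x \in W_k H_2 \cap f(H_1)$, pick any preimage $y \in H_1 \otimes \mathbb{C}$, decompose it as $y = \sum y_{p,q}$ along $I^{\bullet,\bullet}(H_1)$, and discard the summands with $p + q > k$; the truncation still maps to $x$ and now lies in $W_k H_1 \otimes \mathbb{C}$, and a Galois-descent argument recovers a rational preimage. The identical argument with $F^p$ in place of $W_k$ delivers Hodge strictness. To deduce the exactness of the graded sequence from $H_1 \xrightarrow{f} H_2 \xrightarrow{g} H_3$, I would combine strictness of $f$, which gives $f(W_k H_1) = W_k H_2 \cap \mathrm{im}(f) = W_k H_2 \cap \ker(g)$, with strictness of $g$, which gives $\ker(\Gr^W_k g) = (W_k H_2 \cap \ker g)/(W_{k-1} H_2 \cap \ker g)$; equating the two yields exactness at $\Gr^W_k H_2$. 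The hard part is constructing the bigrading canonically and showing morphisms respect it, which is the substance of Deligne's original argument; the strictness consequence and the graded exactness then fall out formally.
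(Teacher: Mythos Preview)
The paper does not prove this statement at all: it is quoted as a background fact with citations to Deligne and to Peters--Steenbrink, in the section explicitly labelled as providing ``with few explanations, background necessary for many of the Hodge theoretic computations later on in the paper.'' There is therefore no proof in the paper to compare against.

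That said, your outline is essentially the standard proof via Deligne's canonical splitting $I^{p,q}$, and it is correct in broad strokes. One small comment: in your strictness step you write ``a Galois-descent argument recovers a rational preimage,'' but this is unnecessary. Strictness is the assertion $f(H_1) \cap W_kH_2 = f(W_kH_1)$ at the level of the rational filtered vector spaces; since both sides are $\mathbb{Q}$-subspaces, it suffices to check the equality after tensoring with $\mathbb{C}$, and your bigrading argument already does that. No descent is needed. Otherwise the sketch is sound and is exactly the argument one finds in the cited references.
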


\begin{defn}
    The Tate Hodge structure $\mathbb{Q}(-k)$ is the unique pure Hodge structure of weight $2k$ whose underlying rational vector space is isomorphic to $\mathbb{Q}$. A pure Hodge structure is called {\em pure Tate} if it is $\mathbb{Q}(-k)^{\oplus r}$ for some non-negative integer $r$. A mixed Hodge structure is called {\em mixed Tate} if it is an iterated extension of pure Tate Hodge structures or, alternatively, if $\Gr^W_{2k+1}H \cong 0$ for all $k,$ and $\Gr^W_{2k}$ is pure Tate for all $k$.
\end{defn}

\begin{lemma}[Definition 5.52~in~\cite{peters-steenbrink}]
Let be a projective variety and suppose that $Z$ is a closed subvariety. Then there is a long exact sequence of mixed Hodge structures,
\[
\cdots \longrightarrow \mathrm{H}^i_c(X - Z;\mathbb{Q}) \longrightarrow \mathrm{H}^i(X;\mathbb{Q}) \longrightarrow \mathrm{H}^i(Z;\mathbb{Q})\longrightarrow \cdots 
\]
\end{lemma}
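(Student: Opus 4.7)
The plan is to derive this long exact sequence from the standard open-closed triangle together with the existence of mixed Hodge structures on compactly supported cohomology, using the fact that projectivity of $X$ identifies ordinary and compactly supported cohomology on $X$ and on $Z$.

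First I would set up notation: let $j : U \hookrightarrow X$ denote the open immersion of $U := X - Z$, and let $i : Z \hookrightarrow X$ denote the complementary closed immersion. At the level of sheaves on $X$ there is the usual short exact sequence $0 \to j_!\mathbb{Q}_U \to \mathbb{Q}_X \to i_*\mathbb{Q}_Z \to 0$, giving the long exact sequence of hypercohomology
\[
\cdots \longrightarrow \mathrm{H}^i_c(U;\mathbb{Q}) \longrightarrow \mathrm{H}^i(X;\mathbb{Q}) \longrightarrow \mathrm{H}^i(Z;\mathbb{Q}) \longrightarrow \mathrm{H}^{i+1}_c(U;\mathbb{Q}) \longrightarrow \cdots
\]
using that $X$ and $Z$ are compact (so $\mathrm{H}^i_c(X;\mathbb{Q})=\mathrm{H}^i(X;\mathbb{Q})$ and $\mathrm{H}^i_c(Z;\mathbb{Q}) = \mathrm{H}^i(Z;\mathbb{Q})$, both valid because $X$ is projective and $Z \subseteq X$ is closed hence also projective).

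Next I would lift this to the category of mixed Hodge structures. Each of the three terms is known to carry a canonical mixed Hodge structure: for the projective varieties $X$ and $Z$ (which may be singular) this is Deligne's construction via a smooth simplicial hyperresolution, as recorded in Chapter~5 of Peters--Steenbrink, and for $\mathrm{H}^i_c(U;\mathbb{Q})$ with $U$ quasi-projective this is likewise Deligne's construction (see Definition~5.52 in Peters--Steenbrink). The necessary compatibility is that the open-closed triangle refines to a distinguished triangle $j_!\mathbb{Q}_U^{\mathrm{Hdg}} \to \mathbb{Q}_X^{\mathrm{Hdg}} \to i_*\mathbb{Q}_Z^{\mathrm{Hdg}} \xrightarrow{+1}$ in the derived category of mixed Hodge complexes on $X$, whose long exact sequence of hypercohomology is a long exact sequence of mixed Hodge structures. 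This is precisely what is proven, together with the construction of these mixed Hodge complexes, in the reference cited above.

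The main (and essentially only) obstacle is bookkeeping: verifying that the connecting homomorphism is indeed a morphism of mixed Hodge structures. This reduces to checking that the boundary map in the hypercohomology long exact sequence of a distinguished triangle of mixed Hodge complexes is strictly compatible with the weight and Hodge filtrations, which is part of the definition of the triangulated category of mixed Hodge complexes and is not a calculation we need to redo. Given this, the statement follows immediately by taking $Z$ closed in projective $X$ and combining the two identifications above. In short, this lemma is a direct specialization of Definition~5.52 and the surrounding discussion in Peters--Steenbrink, which I would cite rather than reprove.
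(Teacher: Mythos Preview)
Your proposal is correct. The paper does not give its own proof of this lemma at all: it simply records the statement with a citation to Definition~5.52 of Peters--Steenbrink, treating it as a standard input. Your sketch via the open--closed triangle $j_!\mathbb{Q}_U \to \mathbb{Q}_X \to i_*\mathbb{Q}_Z$ lifted to mixed Hodge complexes is exactly the content of that reference, so there is nothing to compare.
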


\begin{defn}\label{d:ht}
We say that two mixed Hodge stuctures $H_1$ and $H_2$ {\em agree up to mixed Tate factors} if there is a morphism of mixed Hodge structures $\varphi : H_1\rightarrow H_2$ so that the kernel and cokernel of $\varphi$ is mixed Tate. 
\end{defn}
\begin{lemma}\label{l:compcoh}
Suppose $X$ is a complex projective variety, and that $Z$ is a closed subvariety of $X$ so that $\mathrm{H}^*(Z;\mathbb{Q})$ is mixed Tate. Then $\mathrm{H}^*(X;\mathbb{Q})$ agrees with $\mathrm{H}^*_c(U;\mathbb{Q})$ up to mixed Tate factors.
\end{lemma}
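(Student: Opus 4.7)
The proof plan is to extract the comparison morphism directly from the long exact sequence of the pair $(X,Z)$ stated just before this lemma, and then observe that its kernel and cokernel are forced to be mixed Tate.

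First, I would write down the relevant four-term portion of the long exact sequence of mixed Hodge structures, namely
\[
\mathrm{H}^{i-1}(Z;\mathbb{Q}) \longrightarrow \mathrm{H}^i_c(U;\mathbb{Q}) \stackrel{\varphi_i}{\longrightarrow} \mathrm{H}^i(X;\mathbb{Q}) \longrightarrow \mathrm{H}^i(Z;\mathbb{Q}),
\]
where $U = X - Z$. By strictness of morphisms of mixed Hodge structures, the kernel of $\varphi_i$ is a quotient of $\mathrm{H}^{i-1}(Z;\mathbb{Q})$ and the cokernel of $\varphi_i$ is a subobject of $\mathrm{H}^i(Z;\mathbb{Q})$, both in the category of mixed Hodge structures.

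Next I would invoke the standard fact that the full subcategory of mixed Tate Hodge structures inside ${\bf MHS}_\mathbb{Q}$ is abelian and closed under subobjects and quotients; this follows immediately from the characterisation that $H$ is mixed Tate if and only if $\Gr^W_{2k+1}H = 0$ for all $k$ and $\Gr^W_{2k}H$ is a direct sum of copies of $\mathbb{Q}(-k)$, a property manifestly preserved under passage to sub- and quotient objects (one uses strictness again to control weight-graded pieces). Since $\mathrm{H}^{i-1}(Z;\mathbb{Q})$ and $\mathrm{H}^i(Z;\mathbb{Q})$ are mixed Tate by hypothesis, both $\ker \varphi_i$ and $\mathrm{coker}\,\varphi_i$ are mixed Tate.

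Finally, assembling over all $i$, the direct sum $\varphi = \bigoplus_i \varphi_i : \mathrm{H}^*_c(U;\mathbb{Q}) \to \mathrm{H}^*(X;\mathbb{Q})$ is a morphism of mixed Hodge structures whose kernel and cokernel, being finite direct sums of mixed Tate Hodge structures, are mixed Tate. By Definition~\ref{d:ht}, this is precisely the statement that $\mathrm{H}^*(X;\mathbb{Q})$ and $\mathrm{H}^*_c(U;\mathbb{Q})$ agree up to mixed Tate factors. There is no real obstacle here: the entire argument is a formal consequence of the excision long exact sequence together with strictness, and the only subtlety is the bookkeeping that subquotients of mixed Tate Hodge structures remain mixed Tate, which is immediate from the weight-graded description.
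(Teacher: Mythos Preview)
Your proof is correct and is precisely the argument the paper has in mind: the lemma is stated without proof immediately after the long exact sequence for the pair $(X,Z)$ and Definition~\ref{d:ht}, and is meant to follow formally from those two ingredients together with the closure of mixed Tate Hodge structures under subquotients, exactly as you wrote.
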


\begin{corollary}\label{c:blowup}
Let $X$ be a complex projective variety, and let $L$ be a linear subspace of $X$. Let $E$ be the exceptional divisor of $\Bl_LX$. If $E$ has mixed Tate cohomology, then $\mathrm{H}^*(X;\mathbb{Q})$ agrees with $\mathrm{H}^*(\Bl_LX;\mathbb{Q})$ up to mixed Tate factors.
\end{corollary}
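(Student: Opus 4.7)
The plan is to derive the statement from two applications of Lemma~\ref{l:compcoh}, combined with the fact that a blow-up is an isomorphism away from its center. Let $\pi : \widetilde{X} := \Bl_L X \to X$ denote the blow-up morphism and set $U := X \setminus L$. A basic property of blow-ups gives $\pi^{-1}(U) = \widetilde{X} \setminus E$ and $\pi|_{\widetilde{X}\setminus E}$ an isomorphism onto $U$, so there is a canonical identification $\mathrm{H}^*_c(\widetilde{X}\setminus E;\mathbb{Q}) \cong \mathrm{H}^*_c(U;\mathbb{Q})$ of mixed Hodge structures.

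Next I would apply Lemma~\ref{l:compcoh} twice. First, since $L$ is a projective linear subspace its cohomology is pure Tate, so the lemma applied to the pair $(X,L)$ gives that $\mathrm{H}^*(X;\mathbb{Q})$ agrees with $\mathrm{H}^*_c(U;\mathbb{Q})$ up to mixed Tate factors. Second, by hypothesis $\mathrm{H}^*(E;\mathbb{Q})$ is mixed Tate, so the lemma applied to $(\widetilde{X},E)$ gives that $\mathrm{H}^*(\widetilde{X};\mathbb{Q})$ agrees with $\mathrm{H}^*_c(\widetilde{X}\setminus E;\mathbb{Q}) \cong \mathrm{H}^*_c(U;\mathbb{Q})$ up to mixed Tate factors. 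Up to this point the argument is a soft formal consequence of the two applications of the lemma.

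To upgrade this into a relationship between $\mathrm{H}^*(X;\mathbb{Q})$ and $\mathrm{H}^*(\widetilde{X};\mathbb{Q})$ realised by the pullback $\pi^*$, I would arrange the two pair long exact sequences of Deligne into a commutative ladder with the identity on the compactly-supported term, $\pi^*$ in the middle, and $(\pi|_E)^* : \mathrm{H}^*(L;\mathbb{Q}) \to \mathrm{H}^*(E;\mathbb{Q})$ on the right. Commutativity is immediate from the functoriality of Deligne's pair sequence applied to $\pi$ together with the equality $\pi^{-1}(U) = U$. A routine diagram chase then realises $\ker(\pi^*)$ and $\mathrm{coker}(\pi^*)$ as iterated extensions of sub-quotients of the mixed Tate mixed Hodge structures $\mathrm{H}^*(L;\mathbb{Q})$ and $\mathrm{H}^*(E;\mathbb{Q})$, and hence as mixed Tate themselves by strictness of morphisms in ${\bf MHS}_{\mathbb{Q}}$. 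The only real obstacle is this final bookkeeping step, which identifies $\pi^*$ concretely as a morphism of mixed Hodge structures with mixed Tate kernel and cokernel, matching Definition~\ref{d:ht}.
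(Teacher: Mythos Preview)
Your argument is correct and matches the paper's intended approach: the corollary is stated immediately after Lemma~\ref{l:compcoh} with no proof, and the implicit argument is precisely your two applications of that lemma, one to the pair $(X,L)$ and one to $(\widetilde{X},E)$, linked through the common open complement $U$.

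Your final paragraph in fact goes further than the paper does. Definition~\ref{d:ht} asks for an actual morphism $\varphi$ with mixed Tate kernel and cokernel, whereas the paper is content to note that both $\mathrm{H}^*(X;\mathbb{Q})$ and $\mathrm{H}^*(\widetilde{X};\mathbb{Q})$ agree with $\mathrm{H}^*_c(U;\mathbb{Q})$ up to mixed Tate factors and leaves it at that. Your ladder argument, using functoriality of the pair sequence under the proper map $\pi$ (with $\pi^{-1}(L)=E$) and the fact that the left vertical map is an isomorphism while the right vertical map has mixed Tate source and target, cleanly produces $\pi^*$ as the required witness. This is a genuine improvement in rigor over the paper's treatment, at no real extra cost.
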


\begin{lemma}
Suppose $X$ is a complex projective variety, and that $Z$ is a closed subvariety of $X$. If two of $\mathrm{H}^*(X;\mathbb{Q}), \mathrm{H}^*_c(U;\mathbb{Q})$ or $\mathrm{H}^*(Z;\mathbb{Q})$ is in ${\bf MHS}^\mathrm{hyp}_\mathbb{Q}$ then the third is as well. 
\end{lemma}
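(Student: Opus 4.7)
The plan is to use the long exact sequence of mixed Hodge structures for the pair $(X,Z)$ recorded in the preceding lemma, namely
\[
\cdots \longrightarrow \mathrm{H}^i_c(U;\mathbb{Q}) \longrightarrow \mathrm{H}^i(X;\mathbb{Q}) \longrightarrow \mathrm{H}^i(Z;\mathbb{Q}) \longrightarrow \mathrm{H}^{i+1}_c(U;\mathbb{Q}) \longrightarrow \cdots
\]
with $U=X-Z$. Strictness of morphisms of mixed Hodge structures guarantees that each map in this sequence has a mixed Hodge substructure as its image, so defining
\[
A^i=\mathrm{im}\!\left(\mathrm{H}^{i-1}(Z;\mathbb{Q})\to \mathrm{H}^i_c(U;\mathbb{Q})\right), \quad B^i=\mathrm{im}\!\left(\mathrm{H}^i_c(U;\mathbb{Q})\to \mathrm{H}^i(X;\mathbb{Q})\right),
\]
\[
C^i=\mathrm{im}\!\left(\mathrm{H}^i(X;\mathbb{Q})\to \mathrm{H}^i(Z;\mathbb{Q})\right)
\]
produces, for every $i$, three short exact sequences of mixed Hodge structures
\[
0\to A^i\to \mathrm{H}^i_c(U;\mathbb{Q})\to B^i\to 0, \quad 0\to B^i\to \mathrm{H}^i(X;\mathbb{Q})\to C^i\to 0, \quad 0\to C^i\to \mathrm{H}^i(Z;\mathbb{Q})\to A^{i+1}\to 0.
\]

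Next I would record the closure properties of ${\bf MHS}^\mathrm{hyp}_\mathbb{Q}$. By definition it is closed under extensions. It is also closed under subobjects and quotients: given $M\in{\bf MHS}^\mathrm{hyp}_\mathbb{Q}$ equipped with a filtration whose graded pieces are Tate twists of $\mathrm{H}^1(C;\mathbb{Q})$ for various hyperelliptic curves $C$ together with mixed Tate summands, any sub or quotient mixed Hodge structure inherits a filtration whose graded pieces are subquotients of the same building blocks, which remain in the generating class. Consequently ${\bf MHS}^\mathrm{hyp}_\mathbb{Q}$ is a Serre subcategory of ${\bf MHS}_\mathbb{Q}$.

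With these two ingredients the lemma reduces to a three-way case analysis. For example, if $\mathrm{H}^\ast(X;\mathbb{Q})$ and $\mathrm{H}^\ast(Z;\mathbb{Q})$ both lie in ${\bf MHS}^\mathrm{hyp}_\mathbb{Q}$, then sub/quotient closure applied to the middle sequence places $B^i$ and $C^i$ in the category, the rightmost sequence then places $A^{i+1}$ in the category, and the leftmost sequence finally exhibits $\mathrm{H}^i_c(U;\mathbb{Q})$ as an extension of objects already in ${\bf MHS}^\mathrm{hyp}_\mathbb{Q}$, so it lies there by extension closure. The other two cases, where the assumption involves the pair $\{X,U\}$ or $\{U,Z\}$, are entirely symmetric and proceed by reading the three exact sequences in a different order.

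The only point demanding any real care is the Serre-closure assertion for ${\bf MHS}^\mathrm{hyp}_\mathbb{Q}$: once that has been pinned down from the definition of the category, the remainder of the argument is a purely formal diagram chase through the long exact sequence. The degree-wise nature of the argument shows that the conclusion for $\mathrm{H}^\ast$ follows from the assumption on the other two cohomology groups in all degrees simultaneously.
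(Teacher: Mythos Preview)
The paper states this lemma without proof, so your approach---break the long exact sequence into short exact sequences and invoke a two-out-of-three property for a suitable subcategory---is almost certainly the intended one.

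The genuine gap is in your claim that ${\bf MHS}^\mathrm{hyp}_\mathbb{Q}$ is closed under subobjects and quotients. You assert that subquotients of the building blocks $\mathrm{H}^1(C;\mathbb{Q})(n)$ ``remain in the generating class'', but this is not justified and appears to be false as written: by semisimplicity of polarizable pure weight-one Hodge structures, a sub-Hodge structure of $\mathrm{H}^1(C;\mathbb{Q})$ is a direct summand, hence corresponds to an isogeny factor of the Jacobian $J(C)$. There is no reason an isogeny factor of a hyperelliptic Jacobian should again be (a product of) hyperelliptic Jacobians---for instance a three-dimensional factor could well be the Jacobian of a non-hyperelliptic genus-$3$ curve. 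So from the paper's literal definition of ${\bf MHS}^\mathrm{hyp}_\mathbb{Q}$ as merely an extension-closed subcategory, Serre closure does not follow.

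The fix is to note that the paper's phrase ``largest extension-closed subcategory'' is already problematic (read literally it gives all of ${\bf MHS}_\mathbb{Q}$), and that the evidently intended object is the \emph{Serre} subcategory generated by the Tate objects $\mathbb{Q}(n)$ together with Tate twists of $\mathrm{H}^1(C;\mathbb{Q})$ for hyperelliptic $C$. Under that reading your long-exact-sequence argument is correct and complete; but you should flag the reinterpretation explicitly rather than claim to deduce Serre closure from the definition as written.
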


\begin{proposition}[Mayer--Vietoris]\label{p:mayer-vietoris}

If $X$ is an algebraic set with components $X_1,X_2$, then there is a long exact sequence of mixed Hodge structures,
\[
\cdots \rightarrow \mathrm{H}^i(X;\mathbb{Q})\rightarrow \mathrm{H}^i(X_1;\mathbb{Q})\oplus \mathrm{H}^i(X_2;\mathbb{Q}) \rightarrow \mathrm{H}^{i}
(X_1\cap X_2;\mathbb{Q})\rightarrow \cdots
\]

\end{proposition}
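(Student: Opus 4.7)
The plan is to deduce this Mayer--Vietoris sequence from two applications of the long exact sequence of a pair (recorded above from Peters--Steenbrink Definition~5.52), spliced together via an excision-type identification. First, I would apply the pair sequence to the closed subvariety $X_2 \subset X$, obtaining
\[
\cdots \to \mathrm{H}^i_c(X \setminus X_2; \mathbb{Q}) \to \mathrm{H}^i(X; \mathbb{Q}) \to \mathrm{H}^i(X_2; \mathbb{Q}) \to \cdots,
\]
and then apply it again to the closed subvariety $X_1 \cap X_2 \subset X_1$, obtaining
\[
\cdots \to \mathrm{H}^i_c(X_1 \setminus (X_1 \cap X_2); \mathbb{Q}) \to \mathrm{H}^i(X_1; \mathbb{Q}) \to \mathrm{H}^i(X_1 \cap X_2; \mathbb{Q}) \to \cdots.
\]
Since $X = X_1 \cup X_2$, the set-theoretic identity $X \setminus X_2 = X_1 \setminus (X_1 \cap X_2)$ holds as reduced subschemes, and this induces a canonical isomorphism between the leftmost terms of the two displayed sequences in the category of MHS (the MHS on compactly supported cohomology of an open subvariety being intrinsic).

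Using this identification, I would splice the two long exact sequences into a ladder and apply the standard Barratt--Whitehead argument. Explicitly, the map $\mathrm{H}^i(X;\mathbb{Q}) \to \mathrm{H}^i(X_1;\mathbb{Q})\oplus \mathrm{H}^i(X_2;\mathbb{Q})$ is $(i_1^\ast, i_2^\ast)$ for the closed inclusions $i_k : X_k \hookrightarrow X$, the map $\mathrm{H}^i(X_1;\mathbb{Q}) \oplus \mathrm{H}^i(X_2;\mathbb{Q}) \to \mathrm{H}^i(X_1 \cap X_2;\mathbb{Q})$ is $(j_1^\ast, -j_2^\ast)$ for the inclusions $j_k : X_1 \cap X_2 \hookrightarrow X_k$, and the connecting morphism is obtained by composing the connecting morphism from the $(X_1, X_1 \cap X_2)$-sequence with the excision isomorphism and the connecting morphism from the $(X,X_2)$-sequence. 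Exactness of the resulting sequence is a purely formal diagram chase.

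The main obstacle is not exactness but verifying that every map in the spliced sequence is a morphism of mixed Hodge structures, rather than just of the underlying rational vector spaces. This follows because each ingredient of the construction is already MHS-compatible: pullbacks along closed immersions induce MHS morphisms on cohomology, the pair sequences above are sequences of MHS morphisms by the quoted Peters--Steenbrink result, and the excision isomorphism is literally the identity on a common open subvariety. Hence the connecting morphism produced by the splicing, being a composite of MHS morphisms, is itself a morphism of MHS.

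As an alternative, more uniform argument, one could invoke Deligne's cubical hyperresolution associated to the closed cover $\{X_1, X_2\}$ of $X$: the associated cohomological descent spectral sequence for this two-term cubical diagram degenerates on the $E_2$-page to precisely the Mayer--Vietoris long exact sequence, and it is manifestly a sequence in the category ${\bf MHS}_\mathbb{Q}$ by construction. Either route yields the claim.
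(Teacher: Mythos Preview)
Your argument is correct. The paper does not actually prove this proposition; it is stated without proof as a standard background fact from mixed Hodge theory (it follows directly from, e.g., Peters--Steenbrink, Corollary~5.37). Your derivation via two applications of the pair sequence and the Barratt--Whitehead splice, together with the observation that all maps involved are morphisms of MHS, is a valid and self-contained proof, and your alternative via the cubical hyperresolution of the cover $\{X_1,X_2\}$ is essentially how the result is proved in the reference the paper cites. So you have supplied more than the paper does here.
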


\noindent According to Griffiths~\cite{griffiths1968periods} a rational algebraic $N$-form on $\mathbb{P}^N$ can be expressed as 
\begin{equation}
\dfrac{f}{g^m}\Omega_0,\qquad \Omega_0 = \sum_{i=0}^N (-1)^i x_i \left(\bigwedge_{j\neq i} dx_j\right)
\end{equation}
where $f$ and $g$ are homogeneous polynomials so that $\deg(f) + n+1= m\deg g$. Let $X = V(g)$ the vanishing locus of $g$. Such a form is automatically closed in $\Omega_{\mathbb{P}^N}(*V(g))$, hence it defines a class in $\mathrm{H}_\mathrm{dR}^N(\mathbb{P}^N- X) \cong \mathrm{H}^N(\mathbb{P}^N-X;\mathbb{C}).$ By Poincar\'e duality we may identify
\begin{equation}
\mathrm{H}^N(\mathbb{P}^N-X;\mathbb{C}) \cong \mathrm{H}^N_c(\mathbb{P}^N-X;\mathbb{C}).
\end{equation}
Since $\mathrm{H}^N(\mathbb{P}^N;\mathbb{C})\rightarrow \mathrm{H}^N(X;\mathbb{C})$ is injective we identify
\begin{equation}
\mathrm{H}^N_c(\mathbb{P}^N -X ;\mathbb{C}) \cong \mathrm{H}^{N-1}_{\mathrm {prim}}(X;\mathbb{C}) = \mathrm{coker}\left(\mathrm{H}^{N-1}(\mathbb{P}^N;\mathbb{C})\rightarrow \mathrm{H}^{N-1}(X;\mathbb{C})\right).
\end{equation}
Therefore, $\mathrm{H}^N(\mathbb{P}^N-X;\mathbb{Q})$ is isomorphic to $\mathrm{H}^n_{\mathrm{prim}}(X;\mathbb{Q})$, up to a pure Tate factor. We are most interested, in this paper, in studying the cohomology groups of $\mathrm{H}^N_{\mathrm{dR}}(\mathbb{P}^N - X_{\Gamma;D})$ up to mixed Tate factors. So it will be enough for us to study  $\mathrm{H}^{N-1}(X_{\Gamma;D};\mathbb{C})$.

\subsection{Feynman integrals according to Bloch--Esnault--Kreimer and Brown} \label{sect:BEK-Brown}

There is a recipe provided by Bloch--Esnault--Kreimer~\cite{bek}
(generalized by Brown~\cite{Brown:2015fyf} to the case of most
interest to us) which provide a rigorous definition of the Feynman integral in terms of Hodge theory. As discussed in the introduction
we would like to define the Feynman
integral in~\eqref{e:FeynmanIntegral}
however, in general, the form $\omega_{\Gamma;D}$ has poles along $\Delta$. This is solved by blowing up $\mathbb{P}^{e(\Gamma)-1}$ in coordinate linear subspaces, and taking a subset $\widetilde{\Delta}$ of $\mathbb{P}_\Gamma$ which maps surjectively onto $\Delta$ but does not intersect the polar locus of $\omega_{\Gamma,D}$. This is described as follows.
\begin{itemize}[\quad$\bullet$]
\item To each subgraph $\Gamma'$ of $\Gamma$, attach a linear subspace $H_{\Gamma'} = V(x_e \mid e \in \Gamma')$. If $\Gamma'$ contains a cycle, then it is easy to see that $H_{\Gamma'}$ is contained in $X_{\Gamma,D}$. The converse is also true for generic kinematic parameters; precisely, if a coordinate linear subspaces of $\mathbb{P}^{e(\Gamma)-1}$ is contained in $X_{\Gamma,D}$ then $H = H_{\Gamma'}$ for a subgraph $\Gamma'$ with $b_1(\Gamma') \geq 1$. 

\item Define a {\em motic} subgraph of $\Gamma$ to be a subgraph
  $\Gamma'$ so that for any edge $e$ of $\Gamma'$, $b_1(\Gamma' - e) < b_1(\Gamma')$. 

\item Order the set of all motic subgraphs by inclusion. This ordering is such that for two maximal subgraphs $\Gamma',\Gamma''$, the linear subspaces $H_{\Gamma'}$ and $H_{\Gamma''}$ do not intersect.

\item Blowing up, dimension-by-dimension, starting at the highest possible codimension, one obtains a {\em canonical} birational morphism ${\bm b} : \mathbb{P}_\Gamma\rightarrow \mathbb{P}^{e(\Gamma)-1}$. We note that this is an iterated toric blow up, therefore it is itself toric. 

\item Furthermore ${\bm b}^*\omega_{\Gamma,D}$ does not have poles along the exceptional divisors of the blow up.

\item Let $B_\Gamma$ be the preimage of the toric boundary of $\mathbb{P}^{e(\Gamma)-1}$. There is a canonical choice of cycle $\widetilde{\Delta}$ in $\mathbb{P}_\Gamma$ with boundary in $B_\Gamma$ which does not intersect the polar divisor of ${\bm b}^*\omega_{\Gamma;D}$.
\end{itemize}
Then we have the following well-defined relative period integral
\begin{equation}
  I_\Gamma (\vec s,\vec{m};t)
= \int_{\widetilde{\Delta}}{\bm b}^*\omega_{\Gamma;D}(t).
\end{equation}
The cycle $\widetilde{\Delta}$ is a well-defined element of
$\mathrm{H}_{e(\Gamma)-1}(\mathbb P_\Gamma - \prescript{{\bm b}}{}X_{\Gamma,D}; B_\Gamma
\cap (\mathbb P_\Gamma - \prescript{{\bm b}}{}  X_{\Gamma,D}))$, and ${\bm
  b}^*\omega_{\Gamma;D}$ is a global section of $\Omega_{\mathbb{P}_\Gamma - \bX_{\Gamma;D}}^N$. The relative de Rham cohomology of $(\mathbb{P}_\Gamma - \bX_{\Gamma;D}, B_\Gamma)$ can be computed from the following complex of sheaves
  \[
  \Omega^\bullet(\mathbb{P}_\Gamma - \bX_{\Gamma;D}, B_\Gamma) := \mathrm{Tot}\left( \Omega_{\mathbb{P}_\Gamma}^\bullet (*\bX_{\Gamma;D}) \rightarrow \bm{a}_{0*}\Omega_{\widetilde{B}^{(0)}}^\bullet(* \bX_{\Gamma;D}) \rightarrow \bm{a}_{1*}\Omega_{\widetilde{B}^{(1)}}^\bullet(*\bX_{\Gamma;D}) \rightarrow \dots \right)
  \]
  where $B^{(i)}$ is the subvariety of $B_\Gamma$ consisting of points in the intersection of at least $i+1$ irreducible components of $B_\Gamma$, and $\bm{a}_{i} : \widetilde{B}^{(i)} \rightarrow B^{(i)}$ denotes the normalization map. Since $\omega_{\Gamma;D}$ is a global section of $\Omega^{e(\Gamma)-1}_{\mathbb{P}_\Gamma- \bX_{\Gamma;D}}$ its restriction to $\bm{a}_{0*}\Omega_{\widetilde{B}^{(0)}}^\bullet$ is trivial for dimension reasons. Therefore it defines a global section of 
  \[
  \mathbb{H}^{e(\Gamma)-1}(\Omega^\bullet(\mathbb{P}_\Gamma - \bX_{\Gamma;D}, B_\Gamma))\cong \mathrm{H}^{e(\Gamma)-1}(\mathbb P_\Gamma - \prescript{{\bm b}}{}X_{\Gamma,D}; B_\Gamma
\cap (\mathbb P_\Gamma - \prescript{{\bm b}}{}  X_{\Gamma,D}))
  \]
for each $t$. Consequently, $I_\Gamma (\vec s,\vec{m};t)$ is a period of the mixed Hodge structure 
\begin{example}
    In the case of two-loop graphs, there are precisely three motic subgraphs, coming from any cycle in $\Gamma$. The union of any two of these three subgraphs is the entire graph. Letting $x_1,\dots, x_c, y_1,\dots, y_a, z_1,\dots, z_b$ be variables corresponding to the edges in each distinct chain of edges, we see that $\mathbb{P}_{(a,b,c)}$ is nothing but the blow up of $\mathbb{P}^{a+b+c-1}$ at the three linear subspaces 
    \begin{align}
    L_x &= V(y_1,\dots, y_a, z_1, \dots, z_b),\cr L_y &= V(x_1,\dots, x_c, z_1, \dots, z_b),\\
      \nonumber L_z&= V(x_1,\dots, x_c, y_1,\dots, y_a).
    \end{align}
    Later on we will see in Lemma~\ref{lemma:quadfib} that any two-loop graph admits three of quadric fibrations, one attached to each of the three blow-ups involved in the map ${\bm b} : \bX_{(a,b,c);D}\rightarrow X_{(a,b,c);D}$. 
\end{example}

\section{Variations of mixed Hodge structures and differential operators}\label{sec:PFGriffithDwork}

If $\mathcal{H}_\mathbb{Q}$ is the local system underlying a variation of mixed Hodge structure over a 1-dimensional base $M$, and ${\bm s}$ is a meromorphic section of $\mathcal{H}_\mathbb{Q}\otimes \mathcal{O}_M$ then there is a minimal differential equation $\mathscr{L}_{\bm s}$ annihilating the period functions attached to ${\bm s}$. We explain this in detail below and study the relationship between the solution sheaf of $\mathscr{L}_{\bm s}$ and the weight-graded pieces of the variation of the original mixed Hodge structure. To each Feynman graph, we attach an operator $\mathscr{L}_{\Gamma;D}$ and we use Theorem~\ref{thm:sunsetmot} to describe the irreducible factors of $\mathscr{L}_{\Gamma;D}$ when $\Gamma$ is a planar two-loop graph.

\subsection{Variation of mixed Hodge structure and ODEs}\label{s:vmhsode} 

We now give a brief description of how one may obtain an ODE starting with a variation of mixed Hodge structure along with a holomorphic section of the underlying local system.

\begin{defn}
A ($\mathbb{Q}$-)variation of mixed Hodge structure of weight $n$ consists of several pieces of data
\begin{enumerate}[(1)]
\item A $\mathbb{Q}$-local system $\mathcal{H}_\mathbb{Q}$ over a complex manifold $M$,
\item An increasing weight filtration by $\mathbb{Q}$-local systems $\mathcal{W}_0\subseteq \mathcal{W}_1\subseteq \dots \subseteq \mathcal{W}_{2n} = \mathcal{H}_\mathbb{Q}$,
\item A decreasing Hodge filtration $\mathcal{F}^n \subseteq \mathcal{F}^{n-1}\subseteq \dots \subseteq \mathcal{F}^0 = \mathcal{H}_\mathbb{C} = \mathcal{H}_\mathbb{Q}\otimes_{\underline{\mathbb{Q}}_M} \underline{\mathbb{C}}_M$,
\item A flat connection $\nabla : \mathcal{H}_\mathbb{C} \otimes \mathcal{O}_M \rightarrow \mathcal{H}_\mathbb{C} \otimes \Omega_M^1$ so that $\nabla(\mathcal{F}^i) \subseteq \mathcal{F}^{i-1}$,
\end{enumerate}
so that on each fibre $\mathcal{H}_{\mathbb{Q}},$ the data $(\mathcal{H}_{\mathbb{Q}},\mathcal{F}^\bullet_t,\mathcal{W}_{\bullet})$ is a mixed Hodge structure. 
\end{defn}

\noindent Given a local section ${\bm s}$ of $\mathcal{H}_\mathbb{C}\otimes \mathcal{O}_M$, and a local parameter $t$ on $M$, we can construct local (or multivalued) period functions
\begin{equation}\label{eq:pair}
{\bm \pi}_{\bm s}(t) = \langle {\bm s}, \gamma_t \rangle
\end{equation}
for a flat section $\gamma_t$ of $\mathcal{H}^\vee_{\mathbb{Q}}$. For us, we will often take ${\bm s} = \omega_{\Gamma;D}(t)$ and let $\mathcal{H}^\vee_\mathbb{Q}$ is the homology bundle underlying the family of varieties $\mathbb{P}^{e(\Gamma)-1}-X_{\Gamma;D}(t)$, in which case the pairing is integration.

Given a variation of mixed Hodge structure, $(\mathcal{H}_\mathbb{Q}, \mathcal{W}_\bullet, \mathcal{F}^\bullet)$ over $M\subseteq \mathbb{A}^1$ with Gauss--Manin connection $\nabla$, we have differential operators $\nabla_{\partial_t} : \mathcal{H}\otimes \mathcal{O}_M \rightarrow \mathcal{H}\otimes \mathcal{O}_M, [\omega]\mapsto \nabla([\omega])(\partial_t)$ where $\partial_t$ denotes the vector field corresponding to a choice of variable $t$. The pairing satisfies
\begin{equation}
\dfrac{d}{dt }\langle {\bm s}, \gamma_t\rangle = \langle \nabla_{\partial_t}(\omega), \gamma_t \rangle.
\end{equation}
Consequently, there is a minimal collection of elements $\{f_0(t),\dots, f_n(t)\}$ in the the $\mathbb{C}(t)$-vector space $\Gamma(\mathcal{H}\otimes \mathcal{O}_M)\otimes \mathbb{C}(t)$ so that 
\begin{equation}
\left[f_n(t)\nabla_{\partial_t}^n + f_{n-1}(t)\nabla_{\partial_t}^{n-1}  + \dots + f_1(t) \nabla_{\partial_t} + f_0(t)\right] {\bm s} = 0
\end{equation}
and thus there is a linear differential operator
\begin{equation}
\mathscr{L}_{\bm s} = f_n(t)\dfrac{d^n}{dt^n} + f_{n-1}(t)\dfrac{d^{(n-1)}}{dt^{(n-1)}}  + \dots + f_1(t) \dfrac{d}{dt} + f_0(t)
\end{equation}
whose solutions are the period functions ${\bm \pi}_{\bm s}(t)$. The local system $\mathcal{H}_\mathbb{Q}^\vee$ is equipped with a weight filtration $\mathcal{W}_{\bullet}^*$ dual to the weight filtration on $\mathcal{H}_{\Gamma;D}(t)$ determined by $\mathcal{W}_{i}^* = (\mathcal{W}_{-i-1})^\vee$. The pairing \eqref{eq:pair} induces a map from $\mathcal{H}_\mathbb{Q}^\vee$ to $\mathcal{O}_M$ whose image is $\Sol(\mathscr{L}_{\bm s})$, the local system of solutions of $\mathscr{L}_{\bm s}$. Therefore, $\mathcal{W}_i^*$ induces a filtration on $\Sol(\mathscr{L}_{\bm s})$.
\begin{lemma}\label{l:qotapp}
The local system $\Sol({\mathscr{L}}_{{\bm s}})$ is a quotient of the dual local system $\mathcal{H}_{\mathbb{Q}}^\vee$ by a sub-local system $\mathbb{K}_{\bm s}$. If $\bm{s} \in \mathcal{W}_i\otimes \mathcal{O}_M$ then $\mathcal{W}_i^* \subseteq \mathbb{K}_{\bm s}$.
\end{lemma}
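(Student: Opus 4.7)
My strategy is to interpret the period pairing $(\bm s, \gamma)\mapsto \langle \bm s,\gamma\rangle$ as a morphism of sheaves on $M$
\[
\Phi_{\bm s} : \mathcal{H}_{\mathbb{Q}}^\vee \longrightarrow \mathcal{O}_M,\qquad \gamma \longmapsto \langle \bm s, \gamma\rangle,
\]
where on the right the pairing is viewed as a (multi-valued) holomorphic function on $M$. Compatibility of the Gauss--Manin connection with the pairing gives, for any flat section $\gamma$ and local parameter $t$, $\tfrac{d}{dt}\langle \bm s, \gamma\rangle = \langle \nabla_{\partial_t}\bm s, \gamma\rangle$, which iterated yields $\mathscr{L}_{\bm s}\Phi_{\bm s}(\gamma) = \langle \mathscr{L}_{\bm s}\bm s, \gamma\rangle = 0$ since by definition $\mathscr{L}_{\bm s}\bm s = 0$ in $\mathcal{H}\otimes\mathcal{O}_M$. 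Hence $\mathrm{Im}(\Phi_{\bm s})\subseteq \Sol(\mathscr{L}_{\bm s})$, and setting $\mathbb{K}_{\bm s}:=\ker(\Phi_{\bm s})$ produces an injection of local systems $\mathcal{H}_{\mathbb{Q}}^\vee/\mathbb{K}_{\bm s}\hookrightarrow\Sol(\mathscr{L}_{\bm s})$.

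To promote this embedding to an isomorphism I would invoke the minimality of $\mathscr{L}_{\bm s}$. Suppose for contradiction that $\mathrm{Im}(\Phi_{\bm s})$ has rank $r$ strictly less than the order $n$ of $\mathscr{L}_{\bm s}$. A local basis $f_1,\ldots,f_r$ of $\mathrm{Im}(\Phi_{\bm s})$ is annihilated by the order-$r$ Wronskian operator $\mathscr{L}'$ they generate, and for every flat section $\gamma$ of $\mathcal{H}^\vee_\mathbb{Q}$ one has $\langle \mathscr{L}' \bm s, \gamma\rangle = \mathscr{L}'\langle \bm s, \gamma\rangle = 0$. Since $\mathbb{Q}$-flat sections span each fiber of $\mathcal{H}_{\mathbb{C}}^\vee$ after tensoring with $\mathbb{C}$ and the fiberwise pairing is perfect, this forces $\mathscr{L}'\bm s = 0$ in $\mathcal{H}\otimes\mathcal{O}_M$, contradicting the definition of $\mathscr{L}_{\bm s}$ as the minimal annihilator. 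Hence $\mathrm{Im}(\Phi_{\bm s})=\Sol(\mathscr{L}_{\bm s})$, completing the first claim. I expect this rank-versus-minimality step to be the main technical point; the remainder of the argument is formal.

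For the second claim I would simply unpack the dual weight filtration. By the definition $\mathcal{W}_i^* = (\mathcal{W}_{-i-1})^\vee$, the sub-local system $\mathcal{W}_i^*\subseteq \mathcal{H}^\vee_\mathbb{Q}$ is identified fiberwise with the annihilator, under the perfect pairing $\mathcal{H}\otimes\mathcal{H}^\vee\to\mathbb{Q}$, of the complementary weight piece of $\mathcal{H}$, aligned so that it kills the weight piece in which $\bm s$ lives. Consequently, if $\bm s$ is a section of $\mathcal{W}_i\mathcal{H}\otimes\mathcal{O}_M$, then for every local flat section $\gamma$ of $\mathcal{W}_i^*$ the function $\langle \bm s, \gamma\rangle$ vanishes identically, so $\gamma\in\mathbb{K}_{\bm s}$ and $\mathcal{W}_i^*\subseteq \mathbb{K}_{\bm s}$. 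Once the first claim is in hand, this second inclusion is a tautological consequence of the duality of weight filtrations, with the only subtlety being index-bookkeeping; all of the real work sits in the first two paragraphs.
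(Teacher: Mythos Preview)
Your approach is essentially the same as the paper's: define the period map $\Phi_{\bm s}:\mathcal{H}_{\mathbb{Q}}^\vee\to\mathcal{O}_M$, identify its image with $\Sol(\mathscr{L}_{\bm s})$, and read off the second claim from the definition of the dual weight filtration. The paper's proof is extremely terse (two sentences), and you have correctly supplied the main content the paper leaves implicit---namely, the minimality argument showing $\mathrm{Im}(\Phi_{\bm s})$ is all of $\Sol(\mathscr{L}_{\bm s})$ rather than a proper sub-local system.
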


\begin{proof}
The image of the induced map $\int_{(-)}{\bm s} : \mathcal{H}_{\mathbb{Q}}^\vee \rightarrow \mathcal{O}_M$ obtained by the pairing~(\ref{eq:pair}) is isomorphic to $\Sol(\mathscr{L}_{\bm s})$. This proves the first result. The second follows by definition.
\end{proof} 

Recall from Definition~\ref{d:ht} that a mixed Hodge structure is mixed Tate if it is an iterated extension of the Tate Hodge structure. We will also say that a variation of mixed Hodge structure is mixed Tate if all fibres are mixed Tate.

\subsection{Filtrations and ODEs}\label{s:filode}
We now discuss the relationship between filtrations on the local system $\Sol(\mathscr{L})$ and factorisations of $\mathscr{L}$. The following Propositions are certainly well-known, but we offer proofs for the sake of convenience. 

\begin{proposition}\label{p:ab-1}
Let $\mathscr{L}$ be a differential operator on $\mathcal{O}_M$ for $M$ an open subset of $\mathbb{A}^1$. There is a bijection between 
\begin{enumerate}
\item Increasing filtrations on $\Sol(\mathscr{L})$
\item Factorisations of $\mathscr{L}$ in $\mathbb{C}[M]\langle \partial_t \rangle$.
\end{enumerate}
\end{proposition}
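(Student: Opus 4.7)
The plan is to set up the bijection concretely in both directions, using the fact that $\mathbb{C}(t)\langle\partial_t\rangle$ is an Ore (in fact right and left Euclidean) domain and the classical Wronskian construction to pass between sub-local systems of $\Sol(\mathscr{L})$ and monic right-factors of $\mathscr{L}$.

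\emph{Single-step correspondence.} First I would prove the analogous statement for a single sub-local system $\mathbb{V}\subseteq\Sol(\mathscr{L})$: namely, $\mathbb{V}$ is recovered as $\Sol(\mathscr{M})$ for a unique monic differential operator $\mathscr{M}$ that right-divides $\mathscr{L}$, i.e.\ $\mathscr{L}=\mathscr{N}\mathscr{M}$ for a unique $\mathscr{N}$. Given $\mathbb{V}$ of rank $r$ with local $\mathbb{C}$-basis $f_1,\dots,f_r$, define $\mathscr{M}$ by the usual determinantal/Wronskian formula
\[
\mathscr{M}(y)=\frac{1}{W(f_1,\dots,f_r)}\det\!\begin{pmatrix}y&f_1&\cdots&f_r\\y'&f_1'&\cdots&f_r'\\\vdots&\vdots&&\vdots\\y^{(r)}&f_1^{(r)}&\cdots&f_r^{(r)}\end{pmatrix}\!,
\]
which is monic of order $r$ with $\Sol(\mathscr{M})=\mathbb{V}$; the coefficients are globally defined on $M$ because $\mathbb{V}$ is a sub-local system (so that $W(f_1,\dots,f_r)$ is a nowhere-vanishing section of a rank-one local system on $M$, or we shrink $M$ as needed so that it lies in $\mathbb{C}[M]$). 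Next, using right Euclidean division in $\mathbb{C}(t)\langle\partial_t\rangle$, write $\mathscr{L}=\mathscr{N}\mathscr{M}+\mathscr{R}$ with $\mathrm{ord}(\mathscr{R})<\mathrm{ord}(\mathscr{M})$. Every $f\in\mathbb{V}$ satisfies $\mathscr{R}(f)=\mathscr{L}(f)-\mathscr{N}(\mathscr{M}(f))=0$, so $\Sol(\mathscr{R})\supseteq\mathbb{V}$, forcing $\dim\Sol(\mathscr{R})\geq r>\mathrm{ord}(\mathscr{R})$, which is only possible when $\mathscr{R}=0$. This gives the factorization $\mathscr{L}=\mathscr{N}\mathscr{M}$; uniqueness of $\mathscr{N}$ and $\mathscr{M}$ is clear from the uniqueness of monic right division.

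\emph{Iteration.} To handle a full increasing filtration $0=\mathbb{V}_0\subseteq\mathbb{V}_1\subseteq\dots\subseteq\mathbb{V}_k=\Sol(\mathscr{L})$, apply the single-step statement to obtain, for each $i$, a monic operator $\mathscr{M}_i$ of order $\mathrm{rk}\,\mathbb{V}_i$ with $\Sol(\mathscr{M}_i)=\mathbb{V}_i$, and then apply the single-step statement again inside $\mathbb{C}[M]\langle\partial_t\rangle$ to the inclusion $\Sol(\mathscr{M}_{i-1})\subseteq\Sol(\mathscr{M}_i)$ to write $\mathscr{M}_i=\mathscr{L}_i\mathscr{M}_{i-1}$. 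Composing yields
\[
\mathscr{L}=\mathscr{M}_k=\mathscr{L}_k\mathscr{L}_{k-1}\cdots\mathscr{L}_1,
\]
a factorization whose right-most partial products are exactly the $\mathscr{M}_i$. Conversely, given a factorization $\mathscr{L}=\mathscr{L}_k\cdots\mathscr{L}_1$, set $\mathbb{V}_i:=\Sol(\mathscr{L}_i\mathscr{L}_{i-1}\cdots\mathscr{L}_1)$; since the solutions of a right factor always solve $\mathscr{L}$, this is a genuine increasing filtration on $\Sol(\mathscr{L})$, and it is a sub-local system of the appropriate rank $\mathrm{ord}(\mathscr{L}_i\cdots\mathscr{L}_1)$. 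The two constructions are mutually inverse by the uniqueness in the single-step correspondence.

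\emph{Main obstacle.} The only real subtlety is ensuring that all factors lie in $\mathbb{C}[M]\langle\partial_t\rangle$ rather than merely in $\mathbb{C}(t)\langle\partial_t\rangle$. The Wronskian construction naturally produces operators with coefficients in the fraction field, whose potential poles lie at points where $W(f_1,\dots,f_r)$ vanishes. To control this I would use the hypothesis that $\mathbb{V}_i$ is a sub-local system on $M$: a flat $\mathcal{O}_M$-basis exists locally, and by a standard argument the Wronskian trivialises a rank-one sub-local system of $\det\mathcal{H}\otimes\mathcal{O}_M$, so after possibly shrinking $M$ (or passing to the initial $M$ on which $\mathscr{L}$ was defined) the coefficients of $\mathscr{M}_i$, and hence of the $\mathscr{L}_i$, are regular on $M$. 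This is the step where care is needed; everything else is formal manipulation in the Ore algebra $\mathbb{C}(t)\langle\partial_t\rangle$.
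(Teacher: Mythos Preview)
Your proof is correct and follows the same strategy as the paper: filtrations come from the solution sheaves of right partial products, and factorizations are obtained by producing a right factor from each monodromy-invariant subspace and iterating. The paper's argument is a one-paragraph sketch that simply asserts ``a monodromy invariant subspace of $\Sol(\mathscr{L})$ provides a factorisation $\mathscr{L}=\mathscr{L}_k\mathscr{L}'$'' without further detail, whereas you supply the Wronskian construction and the Euclidean-division step explicitly and even flag the regularity-of-coefficients issue (factors lying in $\mathbb{C}[M]\langle\partial_t\rangle$ rather than $\mathbb{C}(t)\langle\partial_t\rangle$), which the paper does not address.
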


\begin{proof}
Given a factorization of $\mathscr{L} = \mathscr{L}_1 \dots \mathscr{L}_k$ we obtain a filtration of $\Sol(\mathscr{L})$,
\[
\Sol(\mathscr{L}_k) \subseteq \Sol(\mathscr{L}_{k-1} \mathscr{L}_{k})\subseteq \dots \subseteq \Sol(\mathscr{L}_1\dots \mathscr{L}_k).
\]
Given a filtration $\mathcal{W}_{0}\subseteq \dots \subseteq \mathcal{W}_k$ of $\Sol(\mathscr{L})$, we obtain, for $\mathcal{W}_{k-1}$ a monodromy-invariant subspace $\mathscr{V}_{k-1} \subseteq \Sol(\mathscr{L})$. A monodromy invariant subspace of $\Sol(\mathscr{L})$ provides a factorisation $\mathscr{L} = \mathscr{L}_k\mathscr{L}'$ so that $\Sol(\mathscr{L}') = \mathscr{V}_k$. Iterating this we obtain the desired factorisation. 
\end{proof}

\begin{proposition}\label{p:ab-2}
Suppose $\mathscr{L}$ is an ODE on a Zariski open subset $M\subseteq \mathbb{A}^1$ and $\mathscr{L} = \mathscr{L}_1\dots \mathscr{L}_k$. The monodromy representation of $\mathscr{L}$ can be written in block upper-triangular form whose diagonal blocks are the monodromy representations of $\Sol(\mathscr{L}_i)$.
\end{proposition}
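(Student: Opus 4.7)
The plan is to combine Proposition~\ref{p:ab-1} with the elementary fact that an invariant flag in a representation yields a block upper-triangular form. Specifically, I will first extract from the factorisation $\mathscr{L} = \mathscr{L}_1 \cdots \mathscr{L}_k$ the increasing filtration
\[
0 \subseteq \Sol(\mathscr{L}_k) \subseteq \Sol(\mathscr{L}_{k-1}\mathscr{L}_k) \subseteq \cdots \subseteq \Sol(\mathscr{L}_1\cdots \mathscr{L}_k) = \Sol(\mathscr{L})
\]
from Proposition~\ref{p:ab-1}. By construction each term of this filtration is a monodromy-invariant local subsystem of $\Sol(\mathscr{L})$, since it is itself the solution sheaf of a differential operator on $M$.

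The second step is to identify the successive quotients. For each $j$, applying the operator $\mathscr{L}_{j+1}\cdots \mathscr{L}_k$ to a solution $f$ of $\mathscr{L}_j \cdots \mathscr{L}_k$ produces a solution of $\mathscr{L}_j$; this defines a morphism of local systems
\[
\mathscr{L}_{j+1}\cdots\mathscr{L}_k : \Sol(\mathscr{L}_j\cdots\mathscr{L}_k) \longrightarrow \Sol(\mathscr{L}_j)
\]
whose kernel is exactly $\Sol(\mathscr{L}_{j+1}\cdots \mathscr{L}_k)$. A dimension count (the order of a composition of linear ODEs is the sum of their orders) shows the induced map on the quotient is an isomorphism, and local surjectivity follows because any solution of $\mathscr{L}_j$ can be pulled back to a local solution of the equation $\mathscr{L}_{j+1}\cdots\mathscr{L}_k f = g$. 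Since $\mathscr{L}_{j+1}\cdots \mathscr{L}_k$ is a $\mathbb{C}$-linear differential operator, it commutes with analytic continuation along paths, so this isomorphism is one of monodromy representations.

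Finally, I choose a $\mathbb{C}$-basis of the fibre of $\Sol(\mathscr{L})$ at a base point $t_0 \in M$ adapted to the flag above — that is, the first $\mathrm{ord}(\mathscr{L}_k)$ vectors span the fibre of $\Sol(\mathscr{L}_k)$, the next $\mathrm{ord}(\mathscr{L}_{k-1})$ extend to a basis of the fibre of $\Sol(\mathscr{L}_{k-1}\mathscr{L}_k)$, and so on. Because each step of the flag is monodromy-invariant, parallel transport around any loop in $\pi_1(M,t_0)$ preserves the flag and hence acts by a block upper-triangular matrix in this basis; the diagonal block in the $j$-th slot is, by the identification of the previous step, precisely the monodromy of $\Sol(\mathscr{L}_j)$.

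There is no real obstacle here beyond bookkeeping; the only point requiring care is verifying the identification of the graded pieces with $\Sol(\mathscr{L}_j)$, which reduces to the dimension count for orders of composed linear ODEs and the fact that linear differential operators intertwine monodromy.
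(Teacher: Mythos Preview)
Your proof is correct and takes essentially the same approach as the paper's: both use the filtration $\Sol(\mathscr{L}_k)\subseteq \Sol(\mathscr{L}_{k-1}\mathscr{L}_k)\subseteq\cdots$ and identify the graded pieces with $\Sol(\mathscr{L}_j)$ via the fact that applying a differential operator commutes with analytic continuation. The paper treats the case $k=2$ explicitly by choosing particular solutions $p_i$ with $\mathscr{L}_2 p_i = f_i$ and then inducts, whereas you phrase the same computation as a morphism of local systems $\mathscr{L}_{j+1}\cdots\mathscr{L}_k:\Sol(\mathscr{L}_j\cdots\mathscr{L}_k)\to\Sol(\mathscr{L}_j)$ and handle general $k$ directly; the content is identical.
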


\begin{proof}
We show that if $\mathscr{L} = \mathscr{L}_1\mathscr{L}_2$ then the monodromy representation is an extension of $\Sol(\mathscr{L}_1)$ by $\Sol(\mathscr{L}_2)$. As noted above, there is an injection $\Sol(\mathscr{L}_2)\subseteq \Sol(\mathscr{L}_1\mathscr{L}_2)$. Near a point $b \in M$, choose a basis of solutions $\{f_1(t),\dots, f_k(t)\}$ of $\mathscr{L}_1$, choose particular solutions $\{p_1(t),\dots, p_k(t)\}$ of $\mathscr{L}$, so that $\mathscr{L}_1p_i(t) = f_i(t)$. For any $f(t) = a_1f_1(t) +\dots + a_k f_k(t)$ let $p_f(t) = a_1p_{1}(t) + \dots + a_kp_k(t)$. Then 
\[
\Sol(\mathscr{L})_b = \mathrm{span}\{p_f(t) \mid {f \in \Sol(\mathscr{L}_1)_b}\} + \Sol(\mathscr{L}_2)_b.
\]
Given a loop $\gamma \in \pi_1(M,b)$ and multi-valued function $h$ on $M$, let $\gamma \cdot h(t)$ denote the action of monodromy on $h(t)$. Then
\[
\mathscr{L}_2\left(\gamma \cdot p_f(t)\right) = \gamma \cdot (\mathscr{L}_2 p_i(t)) = \gamma\cdot f_i(t) 
\]
Therefore $\gamma \cdot p_f(t)  \equiv p_{\gamma \cdot f}(t) \bmod \Sol(\mathscr{L}_2)$. The claim follows by induction. 
\end{proof}

\noindent The weight filtration on $\mathcal{H}_{\Gamma;D}\otimes \underline{\mathbb{C}}_M$ may be extended to a maximal filtration, $\mathcal{W}_\bullet^\mathrm{max}$. The graded pieces of this maximal filtration are local systems which we denote $\mathbb{L}_1,\dots, \mathbb{L}_n$. By the Jordan--H\"older theorem, these are independent of the choice of maximal filtration on $\mathcal{H}_{\Gamma;D}$. In particular, since $\Sol(\mathscr{L}_{\bm s})$ is isomorphic to a quotient of $\mathcal{H}_{\Gamma;D}\otimes \underline{\mathbb{C}}_M$, we obtain the following result.
\begin{proposition}
Suppose $\mathscr{L}_{\bm s} =\mathscr{L}_1\dots \mathscr{L}_k$ is a factorisation of $\mathscr{L}_{\bm s}$ so that each $\mathscr{L}_i$ is irreducible. Then for each $i$ there is an index $j_i$ so that $\Sol(\mathscr{L}_i) \cong \mathbb{L}_{j_i}$.
\end{proposition}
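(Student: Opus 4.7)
The statement is essentially a Jordan--H\"older claim for local systems, so the plan is to produce one maximal filtration of $\mathcal{H}_{\Gamma;D}\otimes \underline{\mathbb{C}}_M$ in which every $\Sol(\mathscr{L}_i)$ occurs as a graded piece, and then invoke independence of the composition series.

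First, apply Lemma~\ref{l:qotapp} to identify $\Sol(\mathscr{L}_{\bm s}) \cong (\mathcal{H}_{\Gamma;D}\otimes \underline{\mathbb{C}}_M)/\mathbb{K}_{\bm s}$ for a sub-local system $\mathbb{K}_{\bm s}$, and write $\pi$ for the quotient map. Next, apply Proposition~\ref{p:ab-1} to the factorization $\mathscr{L}_{\bm s}=\mathscr{L}_1\cdots \mathscr{L}_k$ to obtain the filtration
\[
0 \subseteq \Sol(\mathscr{L}_k) \subseteq \Sol(\mathscr{L}_{k-1}\mathscr{L}_k) \subseteq \cdots \subseteq \Sol(\mathscr{L}_1\cdots \mathscr{L}_k) = \Sol(\mathscr{L}_{\bm s}),
\]
whose $i$-th associated graded piece is $\Sol(\mathscr{L}_i)$, identified via the surjection $u \mapsto \mathscr{L}_{i+1}\cdots \mathscr{L}_k u$. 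Irreducibility of $\mathscr{L}_i$ forces $\Sol(\mathscr{L}_i)$ to be an irreducible local system: by the converse direction of Proposition~\ref{p:ab-1}, any non-trivial monodromy-invariant sub-local system of $\Sol(\mathscr{L}_i)$ would yield a non-trivial factorization of $\mathscr{L}_i$.

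Now pull this filtration back along $\pi$ and prepend $\mathbb{K}_{\bm s}$ at the bottom. This produces a filtration of $\mathcal{H}_{\Gamma;D}\otimes \underline{\mathbb{C}}_M$ whose successive quotients are $\mathbb{K}_{\bm s}$ together with $\Sol(\mathscr{L}_k),\dots,\Sol(\mathscr{L}_1)$. Refine the $\mathbb{K}_{\bm s}$-step to any Jordan--H\"older filtration; since every $\Sol(\mathscr{L}_i)$ is already irreducible, the result is a maximal filtration of $\mathcal{H}_{\Gamma;D}\otimes \underline{\mathbb{C}}_M$. Invoking Jordan--H\"older in the abelian, finite-length category of finite-rank $\mathbb{C}$-local systems on $M$ (equivalently, finite-dimensional complex $\pi_1(M)$-representations), the multiset of irreducible graded pieces of this maximal filtration agrees, up to reordering and isomorphism, with $\mathbb{L}_1,\dots,\mathbb{L}_n$. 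In particular, each $\Sol(\mathscr{L}_i)$ is isomorphic to some $\mathbb{L}_{j_i}$.

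The only subtle ingredient is the irreducibility of $\Sol(\mathscr{L}_i)$, which uses Proposition~\ref{p:ab-1} in the reverse direction; beyond that, the argument is standard bookkeeping with the Jordan--H\"older theorem for local systems of finite rank, and I do not anticipate any genuine obstacle.
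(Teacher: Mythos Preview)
Your proposal is correct and follows essentially the same approach as the paper: the paper does not give a separate proof of this proposition at all, but simply states it as a consequence of the Jordan--H\"older theorem and the fact that $\Sol(\mathscr{L}_{\bm s})$ is a quotient of $\mathcal{H}_{\Gamma;D}\otimes\underline{\mathbb{C}}_M$. Your argument fills in the details the paper leaves implicit --- constructing the filtration via Proposition~\ref{p:ab-1}, identifying the graded pieces with $\Sol(\mathscr{L}_i)$ (which is essentially Proposition~\ref{p:ab-2}), checking irreducibility, and pulling back to a composition series of $\mathcal{H}_{\Gamma;D}\otimes\underline{\mathbb{C}}_M$.

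One minor remark: Lemma~\ref{l:qotapp} actually presents $\Sol(\mathscr{L}_{\bm s})$ as a quotient of the \emph{dual} local system $\mathcal{H}_\mathbb{Q}^\vee$, not of $\mathcal{H}_{\Gamma;D}\otimes\underline{\mathbb{C}}_M$ itself; the paper's own text preceding the proposition is slightly inconsistent on this point. This does not affect the argument, since dualizing permutes the Jordan--H\"older factors (replacing each $\mathbb{L}_j$ by $\mathbb{L}_j^\vee$), and the $\mathbb{L}_j$ are only defined up to isomorphism and reordering anyway.
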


\noindent Precisely, we look at many variations of mixed Hodge structure in this paper where the graded pieces $\Gr^{\mathcal{W}}_i$ are either isomorphic to polarized pure Tate variations of Hodge structure or variations of Hodge structure underlying families of elliptic curves.
\begin{proposition}\label{p:mono}
\begin{enumerate}[(1)]
    \item Suppose $(\mathcal{H}_\mathbb{Z},\mathcal{F}^\bullet,\mathcal{W}_\bullet)$ is a polarized pure Tate variation of Hodge structure. Then the monodromy representation of the local system is finite.
    \item Suppose $(\mathcal{H}_\mathbb{Z},\mathcal{F}^\bullet,\mathcal{W}_\bullet)$ is a polarized variation of pure Hodge structure underlying a non-isotrivial family of elliptic curves. Then $\mathcal{H}$ is a simple local system.
    \item Suppose $(\mathcal{H}_\mathbb{Z},\mathcal{F}^\bullet,\mathcal{W}_\bullet)$ is a polarized variation of pure Hodge structure underlying an isotrivial family of elliptic curves. Then $\mathcal{H}$ is has finite monodromy.
\end{enumerate}

\end{proposition}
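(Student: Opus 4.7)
The plan is to handle the three parts separately, each by exploiting a different rigidity property of polarized variations of Hodge structure. Parts (1) and (3) produce finite monodromy by distinct mechanisms (definiteness of the polarization, versus factoring through a finite quotient of $\pi_1$), while part (2) follows from Deligne's semisimplicity theorem combined with a direct Hodge-type obstruction to rank-one sub-VHS.

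For part (1), every fibre is isomorphic, as a polarized Hodge structure, to $\mathbb{Q}(-k)^{\oplus r}$, so the weight is $2k$ and the polarization restricts to a symmetric bilinear form $Q$ on the integral local system $\mathcal{H}_\mathbb{Z}$. Since the Hodge decomposition on each fibre is concentrated in bidegree $(k,k)$, the Hodge--Riemann bilinear relation $i^{p-q} Q(v,\bar v) > 0$ reduces to $Q(v,v) > 0$ for every nonzero real vector $v$; that is, $Q$ is positive definite on $\mathcal{H}_\mathbb{Z}\otimes \mathbb{R}$. Monodromy preserves both $\mathcal{H}_\mathbb{Z}$ and $Q$, so its image lies in $\mathrm{O}(\mathcal{H}_\mathbb{Z},Q)$, which is finite because the isometry group of a positive-definite integral quadratic form is finite.

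For part (2), assume for contradiction that $\mathcal{H}$ is not simple; then there is a proper nonzero sub-local-system $\mathcal{K}\subset \mathcal{H}$. By Deligne's semisimplicity theorem for polarized variations of Hodge structure, $\mathcal{K}$ is automatically a sub-VHS and admits a polarized complement, so $\mathcal{H}$ splits as $\mathcal{K}\oplus \mathcal{K}'$ with $\mathcal{K}$ and $\mathcal{K}'$ both of rank one and weight one. But a rank-one weight-one polarized $\mathbb{Q}$-Hodge structure $\mathcal{K}$ satisfies $\mathcal{K}_\mathbb{C} = \mathcal{K}^{1,0}\oplus \mathcal{K}^{0,1}$ with $\overline{\mathcal{K}^{1,0}} = \mathcal{K}^{0,1}$, hence $\dim \mathcal{K}^{1,0} = \dim \mathcal{K}^{0,1}$, which is incompatible with $\dim \mathcal{K} = 1$. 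This contradiction shows that $\mathcal{H}$ is simple. For part (3), an isotrivial family of elliptic curves corresponds to a constant period map $M \to \mathbb{H}/\mathrm{SL}_2(\mathbb{Z})$; lifting to the universal cover, the monodromy must fix a point in $\mathbb{H}$, and so lies in the stabilizer in $\mathrm{SL}_2(\mathbb{Z})$ of that point, which is finite.

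The main obstacle lies in part (2): one needs the fact that any $\mathbb{Q}$-sub-local-system of a polarized VHS is automatically a sub-VHS admitting a polarized complement. This is a non-trivial but standard consequence of Deligne's theorem of the fixed part together with the semisimplicity of polarized VHS over smooth complex bases; once this input is in place, the Hodge-type parity obstruction used above closes out the argument.
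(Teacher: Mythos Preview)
Your arguments for parts (1) and (3) are correct. Part (1) is essentially identical to the paper's proof. Part (3) is phrased slightly differently---the paper says an isotrivial family trivializes after a finite base change, you say the period map is constant so monodromy lies in a point-stabilizer in $\mathrm{SL}_2(\mathbb{Z})$---but these are equivalent and both valid.

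Part (2) is where you diverge, and there is a genuine gap. The assertion that \emph{any} $\mathbb{Q}$-sub-local-system of a polarizable VHS is automatically a sub-VHS is false. Deligne's semisimplicity (combined with the theorem of the fixed part applied to $\mathrm{End}(\mathcal{H})$) only guarantees that each \emph{isotypic component} of the underlying local system is a sub-VHS. The easy counterexample is a constant VHS $H\oplus H$: every $\mathbb{Q}$-subspace is a sub-local-system, but most are not sub-Hodge-structures. In your situation, if the two rank-one summands $\mathcal{K},\mathcal{K}'$ happen to be isomorphic as local systems, then $\mathcal{H}$ is a single isotypic block and you cannot conclude that $\mathcal{K}$ carries the induced Hodge filtration. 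Notice that your argument, as written, never uses the non-isotriviality hypothesis---that is the symptom of the gap.

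The fix is short: in the case $\mathcal{K}\cong\mathcal{K}'$ the monodromy of $\mathcal{H}$ is scalar, and since it preserves the symplectic polarization it lies in $\{\pm 1\}$, hence is finite; but finite monodromy forces the family to be isotrivial, contradicting the hypothesis. With this case added, your Hodge-parity argument is complete. By contrast, the paper argues purely group-theoretically: a rank-one invariant line forces the monodromy group to be upper-triangular, hence solvable, while the monodromy of a non-isotrivial family is a finite-index subgroup of $\mathrm{SL}_2(\mathbb{Z})$ and therefore not solvable. Your route, once patched, has the advantage of not relying on the finite-index claim (which is not literally true over an arbitrary base); the paper's route avoids invoking the fixed-part theorem.
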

\begin{proof}
A pure Tate polarized variation of Hodge structure admits a flat, positive definite, symmetric bilinear pairing $(\bullet,\bullet):\mathcal{H}_\mathbb{Z} \otimes_\mathbb{Z} \mathcal{H}_\mathbb{Z}\rightarrow \underline{\mathbb{Z}}_M$. Therefore, its monodromy representation is a subgroup of the orthogonal group of a positive definite lattice. Consequently, its monodromy group is finite.

If the monodromy representation of a non-isotrivial family of elliptic curves admits a nontrivial subrepresentation on $\mathbb{Z}^2$, the rank implies that the monodromy group is solvable. The monodromy representation of a non-isotrivial family of elliptic curves is a finite index subgroup of $\mathrm{SL}_2(\mathbb{Z})$. Since $\mathrm{SL}_2(\mathbb{Z})$ is not solvable no finite index subgroup of $\mathrm{SL}_2(\mathbb{Z})$ is solvable. 

If a family of elliptic curves is isotrivial then it becomes trivial
after a finite base-change and hence the monodromy is finite.
\end{proof}

\subsection{Differential operators attached to pencils of graph hypersurfaces}

At points throughout this paper, we have discussed how cohomology of the graph hypersurfaces relate to the exploratory and computational work done by Lairez--Vanhove~\cite{Lairez:2022zkj}. In that article, Lairez and the third author study particular pencils of graph hypersurfaces of the form
\begin{equation}
{\bf F}_\Gamma(t) = {\bf U}_\Gamma \left(\sum_{e}m_e x_e \right) - t {\bf V}_\Gamma 
\end{equation}
varying with a parameter $t$ and sometimes randomly chosen kinematic and parameters. Recall from the introduction that, attached to this pencil, there is a variation of mixed Hodge structure over an open subset $M$ of $\mathbb{A}^1_t$ which we denote $\mathcal{H}_{\Gamma;D}$, and that the differential form
\begin{equation}
\omega_{\Gamma,D}(t) =\dfrac{{\bf U}_\Gamma^{e(\Gamma) - (L+1)D/2}}{({\bf F}_\Gamma(t))^{e(\Gamma) - LD/2}} \Omega_0,
\end{equation}
determines a section of $\mathcal{H}_{\Gamma;D}\otimes \mathcal{O}_M$.

%%%%%%%%%%%%%%%%%%%%%%%%%%%%%%%%%%%%%%%%%%%%%%%%%%%%%%%%%%%%%%%%%

\begin{defn}
    Let ${\mathscr{L}}_{\Gamma;D}$ denote the minimal
    differential operator in $\mathbb{C}[M]\langle \partial_t\rangle$ which
    annihilates the form $\omega_{\Gamma;D}(t)$ in
    $\mathcal{H}_{\Gamma;D} \otimes \mathcal{O}_M$. 
\end{defn}

\noindent Many of the operators $\mathscr{L}_{\Gamma;D}$ discovered by Lairez--Vanhove admit factorisations, as computed by the factorisation algorithm of the Ore algebra package~\cite{chyzak2022symbolic,goyer2021sage} in SAGE~\cite{sagemath}. The discussion in Sections~\ref{s:filode} and~\ref{s:vmhsode} allow us to interpret these factorisations. We summarize our discussion.
\begin{enumerate}[(1)]
    \item The local systems $\Sol(\mathscr{L}_{\Gamma;D})$ are quotients of $\mathcal{H}_{\Gamma;D}^\vee$.
    \item The filtration induced by $\mathcal{W}^*_\bullet$ corresponds to a factorisation of $\mathscr{L}_{\Gamma;D}$, however there may be factorisations of $\mathscr{L}_{\Gamma;D}$ which do not correspond to $\mathcal{W}^*_\bullet$ (Proposition~\ref{p:ab-1}).
    \item The monodromy representation of $\Sol(\mathscr{L}_{\Gamma;D})$ is upper triangular with diagonal blocks equal to the monodromy representations of the factors of $\mathscr{L}_{\Gamma;D}$ (Proposition~\ref{p:ab-2}).
\end{enumerate}  
Combining these facts with Theorems~\ref{thm:sunsetmot},~\ref{thm:212},~\ref{t:mainc=1}, and Proposition~\ref{p:mono} we obtain the following result.
\begin{theorem}\label{p:fact-diff}
For any $a,c$, the operator $\mathscr{L}_{(a,1,c);D}$ admits a factorisation
\[
\mathscr{L}_1\mathscr{L}_2 \dots \mathscr{L}_k
\]
where $\Sol(\mathscr{L}_i)$ is either:
\begin{enumerate}[(a)]
    \item a local system with finite order monodromy or
    \item a subquotient of the local system underlying a family of hyperelliptic curves over a Zariski open subset of $\mathbb{A}^1$.
\end{enumerate}
In particular, if $a$ or $c$ is $\leq 2$ then the monodromy representation of $\Sol(\mathscr{L}_i)$ is either 
\begin{enumerate}[(a)]
    \item finite, or
    \item a finite index subgroup of $\mathrm{SL}_2(\mathbb{Z})$. 
\end{enumerate}
\end{theorem}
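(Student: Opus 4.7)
The plan is to bootstrap the pointwise Hodge-theoretic results of Theorems~\ref{thm:sunsetmot},~\ref{thm:212}, and~\ref{t:mainc=1} into a statement about the variation $\mathcal{H}_{(a,1,c);D}$, then feed this into the factorisation machinery of Section~\ref{s:filode}. First I would transfer Theorem~\ref{thm:sunsetmot} to a statement about the variation: since $e((a,1,c))=a+c+1$, the Poincar\'e duality discussion at the end of Section~\ref{sect:gen} identifies the local system underlying $\mathcal{H}_{(a,1,c);D}$, up to pure Tate summands, with the variation of $\mathrm{H}^{a+c-1}_{\mathrm{prim}}(X_{(a,1,c);D}(t);\mathbb{Q})$. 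The constructions used in the proof of Theorem~\ref{thm:sunsetmot} (the birational modification, Gysin/Mayer--Vietoris sequences, and the realization of the non-Tate graded pieces as Tate twists of $\mathrm{H}^1$ of hyperelliptic curves) are algebraic in $t$, so we obtain a filtration of $\mathcal{H}_{(a,1,c);D}$ over a Zariski open $M \subseteq \mathbb{A}^1_t$ whose graded pieces are either polarized pure Tate variations or subquotients of variations underlying algebraic families of hyperelliptic curves.

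Next I would refine this filtration to a maximal filtration $\mathcal{W}^{\mathrm{max}}_\bullet$ of $\mathcal{H}_{(a,1,c);D}$, as in Section~\ref{s:filode}, with simple graded pieces $\mathbb{L}_1,\ldots,\mathbb{L}_n$. By the Jordan--H\"older theorem these are independent of the choice of maximal refinement, so each $\mathbb{L}_i$ is either a polarized pure Tate local system (hence of finite monodromy by Proposition~\ref{p:mono}(1)) or an irreducible subquotient of a local system underlying a family of hyperelliptic curves. Applying Lemma~\ref{l:qotapp} realizes $\Sol(\mathscr{L}_{(a,1,c);D})$ as a quotient of $\mathcal{H}_{(a,1,c);D}^\vee$, so the dual filtration descends to a filtration of $\Sol(\mathscr{L}_{(a,1,c);D})$ whose simple graded pieces lie in the dual collection. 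Proposition~\ref{p:ab-1} then converts this filtration into the desired factorisation $\mathscr{L}_{(a,1,c);D} = \mathscr{L}_1\mathscr{L}_2\cdots\mathscr{L}_k$ in $\mathbb{C}[M]\langle\partial_t\rangle$ with each $\Sol(\mathscr{L}_i)$ isomorphic to one of these simple quotients, giving the dichotomy (a)/(b) of the statement.

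For the sharpened conclusion when $a \leq 2$ or $c \leq 2$, the same argument applies verbatim with Theorem~\ref{thm:sunsetmot} replaced by Theorems~\ref{thm:212} and~\ref{t:mainc=1}, so that the non-Tate simple factors $\mathbb{L}_i$ are now irreducible subquotients of variations underlying families of elliptic curves. Such a variation is a rank-$2$ weight-$1$ polarized VHS, so it is either isotrivial (hence of finite monodromy by Proposition~\ref{p:mono}(3)) or non-isotrivial (hence with monodromy a finite index subgroup of $\mathrm{SL}_2(\mathbb{Z})$ by Proposition~\ref{p:mono}(2)), giving the refined alternative. The main obstacle I anticipate is the first step: upgrading the pointwise membership $\mathrm{H}^{a+c-1}(X_{(a,1,c);D};\mathbb{Q}) \in \mathbf{MHS}^{\mathrm{hyp}}_\mathbb{Q}$ to a filtration of the variation by sub-local systems whose non-Tate graded pieces literally come from a flat family of hyperelliptic curves over $M$. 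This requires inspecting the proof of Theorem~\ref{thm:sunsetmot} to verify that the curves produced there vary algebraically as $t$ varies, rather than being extracted fibre by fibre; all other steps are formal consequences of Section~\ref{s:filode} and Proposition~\ref{p:mono}.
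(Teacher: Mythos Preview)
Your proposal is correct and follows essentially the same route as the paper: the paper simply states that the result follows by combining the enumerated facts (1)--(3) of Section~\ref{s:filode} (i.e., Lemma~\ref{l:qotapp} and Propositions~\ref{p:ab-1},~\ref{p:ab-2}) with Theorems~\ref{thm:sunsetmot},~\ref{thm:212},~\ref{t:mainc=1} and Proposition~\ref{p:mono}, which is exactly the chain of reasoning you have written out. You have also correctly identified the one non-formal step the paper leaves implicit, namely that the birational constructions and hyperelliptic curves produced in the proof of Theorem~\ref{thm:sunsetmot} are algebraic in $t$ and hence assemble into sub-variations over a Zariski open subset of $\mathbb{A}^1_t$.
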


\begin{remark}\label{r:solquot}
Starting with the definition of a Feynman integral given by~\cite{bek} and~\cite{Brown:2015fyf} and explained in Section~\ref{sect:BEK-Brown}, it need not be the case that the Feynman integral of a graph $\Gamma$ satisfies an inhomogeneous differential equation $\mathscr{L}_{\Gamma;D}I_{\Gamma;D}(t) = h(t)$ where $h(t)$ is a collection of periods taken on the boundary $B_\Gamma$. The form $\beta$ will exist on $\mathbb{P}^{e(\Gamma) - 1}$, and its polar locus is contained $X_{\Gamma;D}$ but the polar locus of $\bm{b}^*\beta$ may include the exceptional divisor $E$ of $\bm{b}$. If $\beta$ has no poles on $E$ then it is true that we indeed have 
\begin{equation}
\mathscr{L}_{\Gamma;D} \Omega_{\Gamma;D} = d\widetilde{\beta}
\end{equation}
and therefore 
\begin{equation}
\mathscr{L}_{\Gamma;\Delta}\int_{\widetilde{\Delta}} {\bm b}^*\omega_{\Gamma;D} = \int_{\widetilde{\Delta}} \mathscr{L}_{\Gamma;D} {\bm b}^*\omega_{\Gamma;D} = \sum_i \int_{(\partial \widetilde{\Delta})_i} {\bm b}^*\beta =: h(t)
\end{equation}
where $(\partial\widetilde{\Delta})_i$ denote the components of the
boundary of $\widetilde{\Delta}$.  However, if $\beta$ has poles along
some component of $E$ then the final integral is undefined. We
suggest two possible solutions to this problem. The first is to check directly that
the form $\beta \in \Omega^n_{\mathbb{P}^n}(*X_{\Gamma;D})$ computed by
Lairez--Vanhove in~\cite{Lairez:2022zkj} does not acquire extra poles under the blow up map. The second is to use the methods developed by Lairez~\cite{lairez2016computing} directly on the toric variety $\mathbb{P}_\Gamma$.

\end{remark}

%%%%%%%%%%%%%%%%%%%%%%%%%%%%%%%%%%%%%%%%%%%%%%%%%%%%%%%%%%%%%%%%%%
\section{General computations}\label{s:back}
In this section we give some background results on the mixed Hodge structure on the cohomology groups of quadric fibrations, quartic double covers, and cubic hypersurfaces containing a codimension 1 linear subvariety.
These computations will be used in the main text of the paper. The reader may prefer to start with Section~\ref{s:mainthm} and refer back to this section as needed.

%-------------------------------------------------------------------------
\subsection{Quadric fibrations on graph hypersurfaces}

The following lemma is the starting point for some of the computations in this paper. 
\begin{lemma}\label{lemma:quadfib}
Let $\Gamma$ be a graph of loop order greater than 1 and assume that we have a chain of edges in $\Gamma$, that is a sequence of edges $e_0,\dots, e_n$ so that for each $i=1,\dots, n$ there is a bivalent vertex $v_i$ to which both $e_{i-1}$ and $e_i$ are adjacent. Let $L$ be the linear subspace determined by $x_e = 0$ with $e \notin \{e_0,\dots , e_n\}$. Then $L \subseteq X_{\Gamma;D}$ and $\mathrm{Bl}_{L}X_{\Gamma;D}$ admits a quadric fibration over $\mathbb{P}^{e(\Gamma) - n-1}$. 
\end{lemma}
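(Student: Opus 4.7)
The plan is to reduce the lemma to a combinatorial bidegree computation on the Symanzik polynomials $\mathbf{U}_\Gamma$ and $\mathbf{V}_{\Gamma;D}$, separating the chain variables $\{x_{e_0},\dots,x_{e_n}\}$ from the remaining variables, which I will call $\{y_j\}$, of which there are $e(\Gamma)-n-1$.

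First I would establish the two bounds
\[
\deg_{(x_{e_0},\dots,x_{e_n})}\mathbf{U}_\Gamma \leq 1, \qquad \deg_{(x_{e_0},\dots,x_{e_n})}\mathbf{V}_{\Gamma;D} \leq 2.
\]
For any spanning tree $\mathsf{T}$ of $\Gamma$, the monomial $x^{\mathsf{T}}=\prod_{e\notin\mathsf{T}}x_e$ has chain degree equal to $|\mathsf{T}^c \cap \{e_0,\dots,e_n\}|$. Because every interior bivalent vertex $v_i$ is adjacent only to the chain edges $e_{i-1}$ and $e_i$, removing two or more chain edges from $\mathsf{T}$ must either isolate some $v_i$ or leave an entire ``middle'' sub-chain of interior vertices with no connection to the rest of $\Gamma$, contradicting $\mathsf{T}$ being a spanning tree. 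Hence the chain degree of $x^{\mathsf{T}}$ is at most $1$. The same argument, adapted to spanning 2-forests $\mathsf{F}$, allows at most one disconnected middle sub-chain (one per extra connected component beyond the two anchored at the chain endpoints $v_0,v_{n+1}$), so $|\mathsf{F}^c \cap \{e_0,\dots,e_n\}|\leq 2$.

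Combining these bounds with the definition of $\mathbf{F}_{\Gamma;D}$ shows that $\mathbf{F}_{\Gamma;D}$ has chain degree at most $2$. Since $\mathbf{F}_{\Gamma;D}$ is homogeneous of degree $b_1(\Gamma)+1 \geq 3$, every monomial has $y$-degree at least $b_1(\Gamma)-1 \geq 1$; setting all $y_j=0$ therefore annihilates $\mathbf{F}_{\Gamma;D}$, proving $L\subseteq X_{\Gamma;D}$. For the quadric fibration, I would use the standard description of $\mathrm{Bl}_L\mathbb{P}^{e(\Gamma)-1}$ as a projective bundle over the projectivization of the normal directions to $L$, whose fiber over $[y^*]$ is the $\mathbb{P}^{n+1}$ spanned by $L$ and $[y^*]$ with affine parameter $t$ along the normal direction (so $y_j=ty^*_j$). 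Substituting into $\mathbf{F}_{\Gamma;D}$ and using the bidegree bounds above, one obtains a factorisation
\[
\mathbf{F}_{\Gamma;D}\bigl(x_{e_0},\dots,x_{e_n},ty^*\bigr) = t^{b_1(\Gamma)-1}\, Q(x_{e_0},\dots,x_{e_n},t;y^*),
\]
where $Q$ is homogeneous of degree $2$ in $(x_{e_0},\dots,x_{e_n},t)$, with coefficients depending polynomially on $y^*$. The factor $t^{b_1(\Gamma)-1}$ cuts out the exceptional divisor; discarding it gives the strict transform fiberwise as the quadric $\{Q=0\}\subseteq\mathbb{P}^{n+1}$, so $\mathrm{Bl}_L X_{\Gamma;D}$ is a quadric fibration over the stated base.

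The main obstacle will be the second half of the bidegree step, the bound $\deg_x \mathbf{V}_{\Gamma;D} \leq 2$, where one must rule out spanning 2-forests whose complement contains three or more chain edges; this requires carefully tracking that the chain endpoints $v_0,v_{n+1}$ are the only vertices through which an interior chain segment can reconnect to the rest of $\Gamma$, so that $k$ chain edges deleted from $\mathsf{F}$ produce at least $k-1$ middle sub-chains, each becoming its own connected component of $\mathsf{F}$. Once this is in hand, the rest of the proof is a bookkeeping exercise with the homogeneous structure of $\mathbf{F}_{\Gamma;D}$ in local blow-up coordinates.
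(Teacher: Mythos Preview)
Your proposal is correct and follows essentially the same approach as the paper: both arguments rest on the two combinatorial observations that a spanning tree can omit at most one chain edge and a spanning $2$-forest at most two, yielding $\deg_{x}\mathbf{U}_\Gamma\leq 1$ and $\deg_{x}\mathbf{V}_{\Gamma;D}\leq 2$, and then interpret the blow-up along $L$ as a projective bundle whose fibres meet the strict transform in quadrics. Your presentation is slightly more explicit in one respect: by writing the substitution $y_j=ty_j^*$ and factoring out $t^{b_1(\Gamma)-1}$ you make transparent why the fibre equation has degree exactly $2$ in $(x_{e_0},\dots,x_{e_n},t)$, whereas the paper simply asserts that the fibres are quadrics because $\mathbf{F}_{\Gamma;D}$ is quadric in the chain variables (invoking that the edge complement of the chain is connected so that the chain degree is attained); your coordinate computation handles this automatically without needing that extra observation.
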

\begin{proof}
We make the following observations.
\begin{enumerate}
\item If ${\sf T}$ is a spanning tree of $\Gamma$ then $E(\Gamma) - E({\sf T})$ cannot contain more than one of $e_0,\dots, e_n$,
\item If ${\sf T}$ is a spanning 2-forest of $\Gamma$ then $E(\Gamma) - E({\sf T})$ cannot contain more than two of $e_0,\dots, e_n$.
\end{enumerate}
An immediate consequence of this is that ${\bf U}_{\Gamma}$ has at most degree 1 in $x_{e_0},\dots, x_{e_n}$ and that ${\bf V}_{\Gamma;D}$ has at most degree $2$ in $x_{e_0},\dots, x_{e_n}$. Since $\Gamma$ has loop order greater than 1, the edge complement of $e_0,\dots, e_n$ is connected, therefore there is a spanning tree of $\Gamma$ which does not contain all of $e_0,\dots, e_n$, and consequently, ${\bf F}_{\Gamma;D}$ is quadric in $x_{e_0},\dots, x_{e_n}$. We then blow up along the linear subspace $V(x_e \mid e \neq e_0,\dots, e_n)$. The result of blowing up $\mathbb{P}^{e(\Gamma)-1}$ along a linear subspace is a projective bundle over the projective space of all linear subspaces of $\mathbb{P}^{e(\Gamma)-1}$ containing $L$ in codimension 1. Let $\widetilde{X}_{\Gamma;D}$ denote the blow up of $X_{\Gamma;D}$ along $L$. Then there is an induced map  $\pi : \widetilde{X}_{\Gamma;D}\rightarrow \mathbb{P}^{e(\Gamma) -n - 1}$. Since ${\bf F}_{\Gamma;D}$ is quadric in $x_{e_0},\dots, x_{e_n}$ we see that the fibres of the map $\pi$ are in fact quadrics of dimension $n$. 
\end{proof}

\begin{example}
Any two-loop $(a,b,c)$ graph hypersurfaces admit quadric fibration structures over $\mathbb{P}^{a+b-1}, \mathbb{P}^{a+c-1}$ and $\mathbb{P}^{b+c-1}$ obtained by first blowing up the linear subspaces 
    \begin{align}
    L_x &= V(y_1,\dots, y_a,z_1,\dots, z_b),\cr L_y &= V(x_1,\dots, x_c,z_1,\dots, z_b),\\
      \nonumber L_z&= V(x_1,\dots, x_c,y_1,\dots, y_a).
    \end{align}
    and taking the corresponding projection maps. 
\end{example}

It will be shown (Proposition~\ref{prop:quadfib} below) that the main contribution to the cohomology of a variety admitting a quadric fibration comes from the cohomology of a double cover of the degeneracy locus of the quadric fibration when the relative dimension of the quadric fibration is odd, and from the cohomology of a double cover of the base of the fibration when the  relative dimension of the fibration is even. This fact is well known; when the fibres of the quadric fibration are generically smooth this goes back at least to the work of Reid~\cite{reid}. However we were unable to locate precise results when the general fibre is singular.

\begin{remark}
For the sake of completeness, we observe that, under specialization, there is another structure associated to bivalent vertices on a Feynman graph $\Gamma$. Suppose $v$ is a bivalent vertex in a Feynman graph $\Gamma$, that is, $v$ has two adjacent edges $e_1,e_2$ and a single adjacent half-edge $h$. Suppose we let $m_{e_1} = m_{e_2}$ and $p_h = 0$. Let $\Gamma'$ be the graph in which $e_1$ and $e_2$ are replaced with a single edge $e'$. Under this specialization, we observe that 
\begin{equation}
    {\bf F}_{\Gamma;D}|_{m_{e_1} = m_{e_1}, \, p_h =0} = {\bf F}_{\Gamma';D}|_{x_{e'} = x_{e_1} + x_{e_2}}.
\end{equation}
Consequently, if one specializes to $m_{e_1} = m_{e_2}$ and $p_h = 0$, the hypersurface $X_{\Gamma;D}$ becomes a cone over $X_{\Gamma';D}$. One can compute without much difficulty that the cone structure is compatible with the quadric fibration constructed in Lemma~\ref{lemma:quadfib}, in the sense that if $\widetilde{X}_{\Gamma;D}$ is specialized so that $m_{e_1} = m_{e_2}$ and $p_h = 0$ then the quadric fibration $\pi$ is a quadric fibration whose fibres are cones over the fibres of $\widetilde{X}_{\Gamma';D}$.
\end{remark}

%-------------------------------------------------------------------------
\subsection{Cohomology of quadric fibrations}\label{s:qfib}

For us, a quadric fibration will be a hypersurface $\mathcal{Q}$ in a projective bundle $\mathbb{P}_B(\mathcal{E})$ over a smooth variety $B$, so that all fibres are quadric hypersurfaces. The morphism $\pi : \mathcal{Q} \rightarrow B$ is therefore proper. The corank of the quadric fibres of $\pi$ is constant over a nonempty Zariski open subset of $B$. We call this the {\em generic corank} of $\pi$, and denote it $s_\mathcal{Q}$. We let $n$ denote the relative dimension of $\pi$. We may always find a local analytic equation for $\mathcal{Q}$ of the form
\begin{equation}
\sum_{1\leq i \leq j \leq n+2 - s_\mathcal{Q}} f_{ij}(t_1,\dots, t_c) y_i y_j
\end{equation}
where $f_{ij}$ are analytic functions on $B$. The determinant of the
symmetric $(n+1 -s_\mathcal{Q}) \times (n + 1 -s_\mathcal{Q})$ matrix
corresponding to this quadratic form describes points at which the
corank of $\mathcal{Q}_{b}$ increases near $b\in B$.

The discriminant subscheme $\Delta \subseteq B$ is the subscheme along which $\mathrm{corank}(\mathcal{Q}_b)$ is not equal to $s_\mathcal{Q}$. Inside of $\Delta$, there is a (possibly empty) Zariski open subset $\Delta^{(1)}$ along which $\mathrm{corank}(\mathcal{Q}_b)$ is $s_\mathcal{Q} + 1$. In examples of interest, our base $B$ will have low dimension, and we will be most interested in the case where the generic corank is 0.   

Our first result in this section concerns the structure of cohomology of quadric fibrations. We recall that if a quadric has corank $s$ then its cohomology groups have ranks given by the following formula. 
\begin{equation}\label{e:quadcoh}
\mathrm{H}^i(Q;\mathbb{Q}) = \begin{cases} \mathbb{Q} & \text{ if } i \text{ is even and } i \neq n+s, \\
 \mathbb{Q}^2 & \text{ if } i \text{ is even and } i = n+s, \\
 0 & \text{ if } i \text{ is odd}.
 \end{cases}
\end{equation}
We need a small result about the monodromy of quadric fibrations which is surely well known but that we record and prove for the sake of completeness.
\begin{lemma}\label{l:monlem}
    Suppose $\pi : \mathcal{Q}\rightarrow B$ is a quadric fibration of generic corank $s_\mathcal{Q}$ and that $b \in B$ is such the corank of $\mathcal{Q}_b$ is $s_\mathcal{Q} + 1$, and the discriminant $\Delta_\mathcal{Q}$ vanishes to first order at $b$. If $\dim \mathcal{Q}_b + s_\mathcal{Q} =: d$ is even then the monodromy of $R^d\pi_*\underline{\mathbb{Q}}_\mathcal{Q}$ is represented by the matrix 
    \[
    \left(\begin{matrix}0 & 1 \\ 1 & 0  \end{matrix} \right).
    \]
    Otherwise monodromy is trivial.
\end{lemma}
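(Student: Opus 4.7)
The plan is to localise the question at $b$, peel off the cone structure of the generic fibres to reduce to a family of smooth quadrics, and then invoke the classical Picard--Lefschetz description of an ordinary quadric degeneration. Since the statement is local on $B$ and $\Delta_\mathcal{Q}$ has multiplicity $1$ at $b$, I will restrict to an analytic disc $D \subset B$ through $b$ transverse to $\Delta_\mathcal{Q}$, parametrised by a coordinate $t$; over $D^\ast := D - \{b\}$ the fibres of $\pi$ have corank $s_\mathcal{Q}$, and the central fibre has corank $s_\mathcal{Q}+1$.

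The case of $d$ odd is immediate. By the cohomology formula~\eqref{e:quadcoh}, $\mathrm{H}^d(\mathcal{Q}_t;\mathbb{Q}) = 0$ for every $t \in D$, since for $t\ne 0$ the parity of $d$ makes the group vanish, while for $t = 0$ the doubled degree is $n+s_\mathcal{Q}+1 = d+1 \ne d$. Hence $R^d\pi_*\underline{\mathbb{Q}}|_D = 0$ and the monodromy statement is vacuous.

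Now suppose $d$ is even. Shrinking $D$ and making a fibrewise linear change of variables together with an analytic change of parameter on $D$, I will bring the family into the local normal form $\{y_1^2 + \cdots + y_{k-1}^2 + t\, y_k^2 = 0\} \subset \mathbb{P}^{n+1}_D$, where $k = n+2-s_\mathcal{Q}$ and the coordinates $y_{k+1},\dots,y_{n+2}$ parametrise the constant family of cone vertices $\mathbb{P}^{s_\mathcal{Q}-1}\times D \subset \mathcal{Q}_D$. Each fibre is then a projective cone over the smooth quadric $Q'_t \subset \mathbb{P}^{k-1}$ of dimension $m := n - s_\mathcal{Q}$, and $m$ has the same parity as $d$, so $m$ is even. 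Blowing up the family of vertices exhibits the cone family as a Zariski-locally trivial $\mathbb{P}^{s_\mathcal{Q}-1}$-bundle over the subfamily $\pi' : Q'\to D$, and a Leray--Hirsch computation produces a monodromy-compatible isomorphism
\[
R^d\pi_*\underline{\mathbb{Q}}|_{D^*} \;\cong\; R^m\pi'_*\underline{\mathbb{Q}}|_{D^*}(-s_\mathcal{Q}).
\]
It therefore suffices to compute the monodromy of a family of smooth even-dimensional quadrics acquiring a single ordinary double point.

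This last step is classical Picard--Lefschetz. The local system $R^m\pi'_*\underline{\mathbb{Q}}|_{D^*}$ has rank $2$, and the natural basis consists of the two rulings $[A], [B]$ of maximal linear subspaces of a smooth even-dimensional quadric. The vanishing cycle is their difference $\delta = [A]-[B]$, which is the unique primitive class in the middle cohomology and which contracts as the quadric degenerates. A direct application of the Picard--Lefschetz formula, using the intersection numbers $\langle [A],[A]\rangle = \langle [B],[B]\rangle = 0$ and $\langle [A],[B]\rangle = 1$, yields $T([A]) = [B]$ and $T([B]) = [A]$, which is precisely the matrix $\begin{pmatrix} 0 & 1 \\ 1 & 0 \end{pmatrix}$ in the basis $([A],[B])$. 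The main technical point requiring care is verifying that the Leray--Hirsch identification of Paragraph 3 is genuinely monodromy-compatible; since the blow-up of the constant vertex locus is a Zariski-locally trivial projective bundle, this amounts to checking compatibility with the Gauss--Manin connection on $Q'$ and is standard.
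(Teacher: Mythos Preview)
Your argument is sound in outline and takes a genuinely different route from the paper's. You strip off the cone directions, reduce to a family of smooth even-dimensional quadrics, and invoke the classical Picard--Lefschetz reflection in the vanishing cycle $[A]-[B]$. The paper instead stays with the cone throughout: it diagonalises the family into the normal form $y_1y_2 + \cdots + y_{k-1}^2 - t\,y_k^2 = 0$ and simply writes down two explicit linear $\mathbb{P}^{d/2}$'s inside $\mathcal{Q}_t$ whose defining equations involve $\pm\sqrt{t}$, so the loop $\sqrt{t}\mapsto -\sqrt{t}$ visibly permutes them. The paper's approach is more elementary and self-contained; yours is more structural and explains \emph{why} the answer must be a reflection.

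Three details in your write-up need tightening. First, the blow-up of the cone along its vertex $L\cong\mathbb{P}^{s_\mathcal{Q}-1}$ is a $\mathbb{P}^{s_\mathcal{Q}}$-bundle over $Q'_t$, not a $\mathbb{P}^{s_\mathcal{Q}-1}$-bundle. Second, Leray--Hirsch on that blow-up computes $H^d(\widetilde{\mathcal{Q}}_t)$, which carries extra Tate summands; what you actually need is the identification $H^d(\mathcal{Q}_t)\cong H^m(Q'_t)(-s_\mathcal{Q})$ for the \emph{singular cone itself}. This follows directly by excising the constant vertex: $H^d(\mathcal{Q}_t)\cong H^d_c(\mathcal{Q}_t\setminus L)$ (since $d>2\dim L$), and $\mathcal{Q}_t\setminus L$ is an $\mathbb{A}^{s_\mathcal{Q}}$-bundle over $Q'_t$. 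Third, the intersection numbers you quote, $\langle[A],[A]\rangle=\langle[B],[B]\rangle=0$ and $\langle[A],[B]\rangle=1$, hold only when $m\equiv 2\pmod 4$; for $m\equiv 0\pmod 4$ the form is $\mathrm{diag}(1,1)$. The Picard--Lefschetz reflection in $\delta=[A]-[B]$ still yields $[A]\leftrightarrow[B]$ in both cases, so the conclusion is unaffected, but the stated numbers are not universally correct.
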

\begin{proof}
After choosing a transversal slice of $\Delta_\mathcal{Q}$ we may assume that $B$ is a small complex disc and $b = 0$. Under the conditions of the Lemma, we may choose local holomorphic coordinates for $\pi$ near $b$ so that $\mathcal{Q}$ is determined by a diagonal quadric,
\begin{equation}
f_1(t)y_1^2 + \dots + f_{n+1-s_\mathcal{Q}}(t)y_{n+1-s_\mathcal{Q}}^2 = 0
\end{equation}
for holomorphic $f_i(t)$. Under our assumptions, exactly one of $f_i(t)$ vanishes at 0 therefore the corank of $\mathcal{Q}_0$ is 1. This proves the first claim. 

For the second claim, assume $d$ is odd. After change of basis we may rewrite $\mathcal{Q}$ locally as
\begin{equation}
y_1y_2 + \dots + y_{d-2}y_{d-1} + y_{n-s_\mathcal{Q}}^2 - ty_{n+1 -s_\mathcal{Q}}^2 = 0.
\end{equation}
For $t\neq 0$ homological generators of
$\mathrm{H}_{d/2}(\mathcal{Q}_t;\mathbb{Q})$ are given by the
vanishing loci
\begin{equation}V(y_1, y_3, \dots, y_{n-s_\mathcal{Q}}-\sqrt{t}y_{n+1-s_\mathcal{Q}}),\quad V(y_2,y_4,\dots , y_{n-s_\mathcal{Q}} + \sqrt{t} y_{n+1-s_\mathcal{Q}})
\end{equation}
which are exchanged under monodromy. 
\end{proof}

\begin{proposition}\label{prop:quadfib}
Suppose $\pi : \mathcal{Q} \rightarrow \overline{B}$ is a quadric fibration with fibres of dimension $n$ and generic corank $s$. Suppose that there is an open subset $B$ of $\overline{B}$ and a smooth divisor $\Delta$ in $B$ along which the corank of $B$ increases to $s_\mathcal{Q} + 1$. Let $\pi^{-1}(B) = U$ and let $\pi' = \pi|_U$. Then the Leray spectral sequence for $\mathrm{H}_c^i(U;\mathbb{Q})$ has terms 
\[
E^{p,q}_2 = \mathrm{H}^q_c(S,R^p\pi_*'\underline{\mathbb{Q}}_U).
\]
The sheaves $R^p\pi'_*\underline{\mathbb{Q}}_U$ are constant of rank 1 except if $p = s_\mathcal{Q}$ or $p = s_\mathcal{Q} + 1$ is even. 
\item If $p = s_\mathcal{Q}$ is even then $R^p\pi'_*\underline{\mathbb{Q}}_U \cong R^0f_*\underline{\mathbb{Q}}_V$ where $f: V\rightarrow B$ is a double cover of $B$ ramified along $\Delta$. 
\item If $p = s_\mathcal{Q}+1$ is even then $R^p\pi'_*\underline{\mathbb{Q}}_U \cong \underline{\mathbb{Q}}_B\oplus \mathbb{M}^-$ where $\mathbb{M}^-$ is a rank 1 local system on $\Delta$ obtained as follows. There is an unramified double cover $f: Z \rightarrow \Delta$ determined by the monodromy of $R^{n+s_\mathcal{Q}}\pi_*\underline{\mathbb{Q}}_{\mathcal{Q}}|_\Delta$. The local system $R^0f_*\underline{\mathbb{Q}}_Z$ has a $\mathbb{Z}/2$ action and $\mathbb{M}^-$ is the anti-invariant part of this action.

\end{proposition}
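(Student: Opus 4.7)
The plan is to prove the proposition by combining proper base change with a stalk-and-monodromy analysis of the higher direct images $R^p\pi'_*\underline{\mathbb{Q}}_U$. The Leray spectral sequence statement is immediate since $\pi'$ is proper. First I would apply proper base change to identify the stalk of $R^p\pi'_*\underline{\mathbb{Q}}_U$ at $b\in B$ with $\mathrm{H}^p(\mathcal{Q}_b;\mathbb{Q})$; plugging the corank values ($s_\mathcal{Q}$ on $B\setminus\Delta$ and $s_\mathcal{Q}+1$ on $\Delta$) into \eqref{e:quadcoh} shows that odd-degree direct images vanish, while in even degrees every stalk has rank $1$ except for generic rank $2$ at $p=n+s_\mathcal{Q}$ (when this is even) and rank $2$ on $\Delta$ at $p=n+s_\mathcal{Q}+1$ (when this is even). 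In the uniformly rank-$1$ cases, the appropriate power of the relative hyperplane class $h\in\mathrm{H}^2(\mathcal{Q};\mathbb{Q})$, pulled back from the ambient projective bundle $\mathbb{P}_B(\mathcal{E})$, gives a nowhere-vanishing global section of the sheaf, forcing it to be the constant sheaf of rank $1$.

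Next I would treat the first critical case, $p=n+s_\mathcal{Q}$ even. Lemma~\ref{l:monlem} shows that the rank-$2$ local system $\mathcal{L}:=R^p\pi'_*\underline{\mathbb{Q}}_U|_{B\setminus\Delta}$ has local monodromy around each smooth point of $\Delta$ equal to the transposition exchanging the two rulings of the fiber. The associated \'etale double cover of $B\setminus\Delta$ extends uniquely to a branched double cover $f\colon V\to B$ ramified along $\Delta$, realized geometrically via the Stein factorization of the relative Fano scheme parametrizing a ruling of $\mathcal{Q}/B$. Both $R^p\pi'_*\underline{\mathbb{Q}}_U$ and $f_*\underline{\mathbb{Q}}_V$ restrict to $\mathcal{L}$ on $B\setminus\Delta$ and have rank $1$ stalks on $\Delta$ (the monodromy-invariants in either picture), and both are accordingly equal to $j_*\mathcal{L}$ along $j\colon B\setminus\Delta\hookrightarrow B$, yielding the claimed isomorphism.

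For the second critical case, $p=n+s_\mathcal{Q}+1$ even, the hyperplane class again produces a global section and splits off a constant summand $\underline{\mathbb{Q}}_B$; what remains is a rank-$1$ constructible summand supported on $\Delta$. Restricting $R^p\pi_*\underline{\mathbb{Q}}_{\mathcal{Q}}$ to $\Delta$ gives a rank-$2$ local system coming from the middle cohomology of the corank-$(s_\mathcal{Q}+1)$ quadric fibers; its monodromy around loops in $\Delta$ that encircle further corank jumps again swaps the two rulings by Lemma~\ref{l:monlem}, so this local system is $f_*\underline{\mathbb{Q}}_Z$ for an \'etale double cover $f\colon Z\to\Delta$ parametrizing a ruling of $\mathcal{Q}|_\Delta$. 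Decomposing $f_*\underline{\mathbb{Q}}_Z$ into $\pm$-eigensheaves of the deck involution yields $\underline{\mathbb{Q}}_\Delta\oplus\mathbb{M}^-$, and I would identify $\mathbb{M}^-$ with the supported-on-$\Delta$ summand of $R^p\pi'_*\underline{\mathbb{Q}}_U$ by matching both stalks and the local monodromy structure.

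The hard part will be the global construction of the double cover in each critical case as a genuine algebraic variety (the relative Fano scheme of maximal isotropic subspaces, whose Stein factorization supplies the cover) together with the verification that the associated pushforward sheaf coincides with the geometrically defined higher direct image. Locally this verification reduces, via stalks and monodromy, to the first-order corank-jump normal form of Lemma~\ref{l:monlem}; the delicate step is assembling these local identifications into a canonical global isomorphism of constructible sheaves, which requires some care with the deck involution and with the extension of the \'etale cover across $\Delta$ to a branched cover.
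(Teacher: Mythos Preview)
Your proposal is correct and follows essentially the same strategy as the paper: identify stalks of $R^p\pi'_*\underline{\mathbb{Q}}_U$ via the fibre cohomology formula~\eqref{e:quadcoh}, invoke Lemma~\ref{l:monlem} to pin down the local monodromy around $\Delta$, and then match the resulting sheaf with the pushforward from a double cover.

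The one noteworthy difference is that you propose to realise the double covers geometrically via the Stein factorisation of the relative Fano scheme of rulings, and you flag this global construction as ``the hard part''. The paper bypasses this entirely: it works purely at the level of local systems, observing that a rank-$1$ $\mathbb{Q}$-local system with monodromy in $\{\pm 1\}$ is automatically a summand of $R^0f_*\underline{\mathbb{Q}}$ for the associated double cover (existence of which is just the correspondence between $\mathbb{Z}/2$-representations of $\pi_1$ and double covers). So your anticipated difficulty evaporates once you drop the demand for an algebro-geometric model of $V$ and $Z$. Similarly, where you trivialise the rank-$1$ sheaves using powers of the relative hyperplane class, the paper instead notes that near $\Delta$ the fibration is locally a product $Q\times\Delta\to\Delta^2$ with a one-parameter quadric degeneration, which forces constancy in the non-critical degrees. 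Both arguments are valid; yours is slightly more global, the paper's slightly more local.
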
\label{prop:quad}
\begin{proof}
For the sake of simplicity we write $s$ instead of $s_\mathcal{Q}$ in this proof. First we compute the compactly supported cohomology of $U$, $\mathrm{H}^q_c(U;\mathbb{Q}) = \mathrm{H}^q(\mathcal{Q},Rk_!\underline{\mathbb{Q}}_{U})$. There is an obvious commutative diagram
\[
\begin{tikzcd}
U \ar[d,"\pi'"] \ar[r,"k"] & \mathcal{Q} \ar[d,"\pi"] \\ 
B  \ar[r,"j"] & \overline{B} 
\end{tikzcd}
\]
We apply the Leray spectral sequence for $\pi$ to $k_!\underline{\mathbb{Q}}_U$. The sheaves involved in this spectral sequence are $R^i\pi_*k_!\underline{\mathbb{Q}}_U \cong j_!R^i\pi'_*\underline{\mathbb{Q}}_U$ where isomorphism follows by exactness and commutativity of $k_!$ and $j_!$. Precisely; we have that $R\pi_*Rk_! \cong Rj_!R\pi'_*$ since both $\pi,\pi'$ are proper, thus $\pi_* = \pi_!, \pi_*' = \pi_!'$. Since $k$ and $j$ are open embeddings $j_!,k_!$ are exact and, $Rj_! = j_!$ and $Rk_!  = k_!$. By exactness again, we obtain the final claim. Therefore, the Leray spectral sequence for $\pi$ applied to $k_!\underline{\mathbb{Q}}_U$ has terms
\begin{equation}
E_2^{p,q} = \mathrm{H}^q_c(S,R^p\pi_*'\underline{\mathbb{Q}}_U).
\end{equation}
Now we may compute the local systems $R^p\pi_*'\underline{\mathbb{Q}}_U$. Since we have assumed that $\Delta$ is smooth, and that fibres have corank increasing to $s+1$ along $\Delta$, the map $\pi' : U\rightarrow B$ is locally homeomorphic to
\begin{equation}
f = g \times \mathrm{id}_\Delta : Q \times \Delta \longrightarrow \Delta^2
\end{equation}
where $g: Q\rightarrow \Delta$ is a one-parameter degeneration of quadrics of corank $s$ whose central fibre has corank $s+1$. Consequently, the sheaves $R^p\pi'_*\underline{\mathbb{Q}}_U$ are constant if $p \neq n+s$. 

If $p = n+s+1$ is even, then $R^p\pi'_*\underline{\mathbb{Q}}_U \cong \underline{\mathbb{Q}}_B \oplus j_*\mathbb{L}$ where $\mathbb{L}$ is a rank 1 rational local system on $B$. This local system can be either trivial or non-trivial. If it is trivial, then it coincides with $j_*\underline{\mathbb{Q}}_{\Delta}$. If it is not trivial, its monodromy group is contained in $\mathrm{GL}_1(\mathbb{Z}) = \{\pm 1\}$ and therefore there is a double cover $h: \widetilde{\Delta} \rightarrow \Delta$ so that the monodromy representation of $R^0h_*\underline{\mathbb{Q}}_{\Delta}$ has decomposition $\underline{\mathbb{Q}}_{\Delta} \oplus \mathbb{L}$. Therefore $\mathrm{H}^i_c(\Delta,\mathbb{L})$ is  the direct summand of $\mathrm{H}^i_c(\Delta,\mathbb{Q})$ invariant under the quotient involution on $\widetilde{\Delta}$ which exchanges the two sheets in the covering map.

If $p = n + s$ is even then $R^p\pi'_*\underline{\mathbb{Q}}_U$ decomposes as a union of $\underline{\mathbb{Q}}_B \oplus i_*\mathbb{M}$ for a rank 1 local system $\mathbb{M}$ on $B$ with non-trivial monodromy representation. If $h:\widetilde{B}\rightarrow B$ is a double cover ramified along $\Delta$ then $\mathbb{M}$ is a direct summand of $R^0 h_*\underline{\mathbb{Q}}_{\widetilde{B}} = \underline{\mathbb{Q}}_{\widetilde{B}}\oplus i_*\mathbb{M}$. It follows that $\mathrm{H}^i_c(B,\mathbb{M})$ is isomorphic to the direct summand of $\mathrm{H}^i_c(\overline{S};\mathbb{Q})$ which is invariant under the covering involution. 

In both cases, the identifications between cohomology groups are also identifications of mixed Hodge structures, since the local systems in question carry the same underlying variation of rational (pure) Hodge structures. The canonical mixed Hodge structure on the cohomology of the variation of Hodge structures must coincide, and also coincide with the geometric cohomology groups with which they are identified.
\end{proof}

\noindent We have the following important consequence.

\begin{corollary}\label{cor:mhshyp}
Let $\pi :\mathcal{Q}\rightarrow \mathbb{P}^1$ be a quadric fibration of relative dimension $n$ and generic corank $s_\mathcal{Q}$
\begin{enumerate}[(1)]
\item If $i = n+s_\mathcal{Q} + 1$ is odd then $\Gr^W_i\mathrm{H}^{i}(\mathcal{Q};\mathbb{Q})$ is isomorphic to $\mathrm{H}^1(C;\mathbb{Q})$ where $\mathbb{Q}$ is a hyperelliptic curve and $W_{i-1}\mathrm{H}^i(\mathcal{Q};\mathbb{Q})$ is Tate. 
\item Otherwise, $\mathrm{H}^i(\mathcal{Q};\mathbb{Q})$ is mixed Tate.  
\end{enumerate}
More coarsely, $\mathrm{H}^*(\mathcal{Q};\mathbb{Q})$ is contained in ${\bf MHS}_\mathbb{Q}^\mathrm{hyp}$. 
\end{corollary}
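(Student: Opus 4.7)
The plan is to reduce the problem to the setting of Proposition~\ref{prop:quadfib} via an excision argument, apply the Leray spectral sequence it provides, and then read off weights and Tate twists. Let $B \subseteq \mathbb{P}^1$ be the maximal Zariski open subset over which $\Delta_\mathcal{Q}$ is either empty or reduced, so that the corank of $\pi$ jumps by exactly $1$ at each point of $\Delta := \Delta_\mathcal{Q} \cap B$. The complement $\mathbb{P}^1 \setminus B$ is a finite set of points, over which the fibres of $\pi$ are quadric hypersurfaces of various coranks. By~\eqref{e:quadcoh}, each such fibre has mixed Tate cohomology, hence $\mathrm{H}^*(\pi^{-1}(\mathbb{P}^1 \setminus B);\mathbb{Q})$ is mixed Tate; Lemma~\ref{l:compcoh} then gives that $\mathrm{H}^*(\mathcal{Q};\mathbb{Q})$ agrees with $\mathrm{H}^*_c(U;\mathbb{Q})$ up to mixed Tate factors, where $U = \pi^{-1}(B)$. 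It thus suffices to establish the claims for $\mathrm{H}^*_c(U;\mathbb{Q})$.

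Next, I apply Proposition~\ref{prop:quadfib}, which provides the Leray spectral sequence
\[
E_2^{p,q} = \mathrm{H}^q_c(B, R^p \pi'_* \underline{\mathbb{Q}}_U) \;\Longrightarrow\; \mathrm{H}^{p+q}_c(U;\mathbb{Q}).
\]
The sheaves $R^p \pi'_* \underline{\mathbb{Q}}_U$ are constant Tate of rank one (or zero) except in the two distinguished degrees $p = n + s_\mathcal{Q}$ and $p = n + s_\mathcal{Q}+1$. When $n + s_\mathcal{Q}$ is even, the non-Tate part of $R^{n+s_\mathcal{Q}} \pi'_* \underline{\mathbb{Q}}_U$ is isomorphic to $R^0 f_* \underline{\mathbb{Q}}_V$ for the double cover $f : V \to B$ branched along $\Delta$. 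Since $B$ is open in $\mathbb{P}^1$ and $\Delta$ is a finite set of points, $V$ is an open subset of a hyperelliptic curve $C$, and properness of $f$ identifies $\mathrm{H}^q_c(B, R^0 f_* \underline{\mathbb{Q}}_V) \cong \mathrm{H}^q_c(V;\mathbb{Q})$, whose pure weight one part is $\mathrm{H}^1(C;\mathbb{Q})$. When $n + s_\mathcal{Q}+1$ is even, the non-Tate part of $R^{n+s_\mathcal{Q}+1} \pi'_* \underline{\mathbb{Q}}_U$ is the rank one local system $\mathbb{M}^-$ supported on the finite set $\Delta$; its compactly supported cohomology is mixed Tate.

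Since ${\bf MHS}^\mathrm{hyp}_\mathbb{Q}$ is closed under subquotients and extensions, and differentials of the Leray spectral sequence are morphisms of mixed Hodge structure by strictness, every $E_\infty^{p,q}$ term, and hence $\mathrm{H}^*_c(U;\mathbb{Q})$, lies in ${\bf MHS}^\mathrm{hyp}_\mathbb{Q}$. Combined with the excision reduction this yields the coarse assertion. For the refined statements: when $i = n + s_\mathcal{Q}+1$ is even, every non-trivial $E_2^{p,q}$ with $p+q=i$ is Tate by the paragraph above, which proves (2). When $i$ is odd, the only non-Tate contribution to $\mathrm{H}^i_c(U;\mathbb{Q})$ comes from $E_\infty^{n+s_\mathcal{Q},1}$. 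The stalks of $R^{n+s_\mathcal{Q}} \pi'_* \underline{\mathbb{Q}}_U$ over a smooth fibre are pure of Hodge type $((n+s_\mathcal{Q})/2,(n+s_\mathcal{Q})/2)$ by~\eqref{e:quadcoh}, so pairing with the pure weight one piece $\mathrm{H}^1(C;\mathbb{Q})$ of $\mathrm{H}^1_c(V;\mathbb{Q})$ contributes exactly $\mathrm{H}^1(C;\mathbb{Q})(-(n+s_\mathcal{Q})/2)$ to $\Gr^W_i$, while the weight zero and two parts of $\mathrm{H}^1_c(V;\mathbb{Q})$ are Tate and contribute only to $W_{i-1}$.

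The main technical obstacle I anticipate is careful bookkeeping of weights and Tate twists through the spectral sequence: one must confirm that the punctures of $V \subseteq C$ contribute only to Tate summands of $W_{i-1}$ and never to $\Gr^W_i$, and that no spectral sequence differential can mix a hyperelliptic class at $E_\infty^{n+s_\mathcal{Q},1}$ with a lower-weight Tate class. Both checks are routine given the strictness of morphisms of mixed Hodge structures, but they require attention to indexing, and one must also separately handle the (harmless) degenerate case in which $\pi$ has corank at least $s_\mathcal{Q}+1$ on every fibre, where $B$ may be empty and the claim follows directly from mixed-Tate-ness of each individual fibre.
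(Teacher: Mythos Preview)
Your proposal is correct and follows essentially the same route as the paper's proof: both excise the finitely many fibres where the corank exceeds $s_\mathcal{Q}+1$, apply Proposition~\ref{prop:quadfib} to identify the only potentially non-Tate $E_2$ term as $\mathrm{H}^1_c$ of a double cover of the punctured $\mathbb{P}^1$, extract $\mathrm{H}^1(C;\mathbb{Q})$ from its top weight, and then use the compactly supported cohomology sequence to pass back to $\mathcal{Q}$. Your version is a bit more explicit about weight bookkeeping and the degenerate case, but the argument is the same.
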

\begin{proof}
Removing fibres $F_1,\dots, F_n$ which do not have corank $s+1$, we obtain a fibration over $V = \mathbb{P}^1- \{p_1,\dots,p_n\}$. By Proposition~\ref{prop:quad}, we can compute the compactly supported cohomology of the complement of $\coprod F_i$. The only non-mixed Tate $E^{p,q}_2 = \mathrm{H}^q_c(V;R^p\pi'_*\underline{\mathbb{Q}}_U)$ is $\mathrm{H}^1_c(V;R^s\pi'_*\underline{\mathbb{Q}}_U)$ when $s$ is even, which is isomorphic to $\mathrm{H}^1_c(C;\mathbb{Q})$ for $C$ a double cover of $V$. This cohomology group is an extension of $\mathrm{H}^1(\overline{C};\mathbb{Q})$ by a Tate Hodge structure. Therefore, since the Leray spectral sequence degenerates at the $E_3$ term for dimension reasons, it follows that $\mathrm{H}^{s+1}_c(U;\mathbb{Q})$ is a Tate extension of $\mathrm{H}^1(\overline{C};\mathbb{Q})$. Finally, applying the exact sequence in cohomology for compactly supported cohomology, relating cohomology of $U$ to that of $\mathcal{Q}$, the result follows. 
\end{proof}
\begin{remark}\label{r:leq2}
If the discriminant locus of $\pi :\mathcal{Q}\rightarrow \mathbb{P}^1$ consists of 2 or fewer points, then the cohomology of $\mathcal{Q}$ is mixed Tate regardless of $n,s,$ and $i$.
\end{remark}

\subsection{Mixed Hodge structures of double quartic surfaces}

In Section~\ref{sec:Ia12}, we will need to compute the cohomology of a quartic double cover. The following proposition will be useful. A quartic double cover can be represented as a hypersurface in the projective bundle $\mathbb{P}(\mathcal{O}_{\mathbb{P}^2} \oplus \mathcal{O}_{\mathbb{P}^2}(-1))$ written in homogeneous coordinates in the form
\begin{equation}
y^2 + w^2Q(x_1,x_2,x_3).
\end{equation}
where $y,w$ are coordinates on the fibres of the projection map, and $x_1,x_2,x_3$ are coordinates on the base $\mathbb{P}^2$.

\begin{proposition}\label{p:quarticcovers}
Let $S_C$ be a double cover of $\mathbb{P}^2$ ramified along a quartic plane curve $C$.
\begin{enumerate}[(1)]
\item The surface $S_C$ is singular either at a union of isolated points, or at a union of rational curves.
\item The cohomology of $S_C$ is mixed Tate unless $C$ is a union of four lines meeting at a single point, in which case $\Gr^W_1\mathrm{H}^2(S_C;\mathbb{Q}) \cong \mathrm{H}^1(E;\mathbb{Q})$ for $E$ an elliptic curve, and $\Gr^W_2\mathrm{H}^2(S_C;\mathbb{Q})$ is Tate. 
\end{enumerate}
\end{proposition}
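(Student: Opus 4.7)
The plan is to first describe the singular locus of $S_C$ directly from its defining equation to establish (1), then handle (2) via a dichotomy based on the maximum multiplicity of $C$ at a singular point: either some singular point of $C$ has multiplicity at most $3$, in which case a pencil-of-lines argument shows $S_C$ is rational, or every singular point of $C$ has multiplicity $4$, in which case $C$ must be a union of four lines through a single point and $S_C$ is a projective cone.

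For (1), I would write $Q=Q_0^2Q_1$ with $Q_1$ squarefree (so that $V(Q_0)$ is the support of the non-reduced part of $C$) and compute the partial derivatives of $y^2+w^2Q$. The singular locus of $S_C$ decomposes as the union of the reduced preimage of $V(Q_0)$, lying inside $\{y=0\}$, together with finitely many isolated points above the isolated singular points of $V(Q_1)\setminus V(Q_0)$. Since $\deg Q_0\le 2$, the plane curve $V(Q_0)$ has degree at most two, so each of its irreducible components is a line or a smooth conic, hence rational.

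For (2), first suppose $C$ has a singular point $p$ with $k:=\mathrm{mult}_p(C)\le 3$. Blow up $\mathbb{P}^2$ at $p$ to obtain the Hirzebruch surface $\mathbb{F}_1\to\mathbb{P}^1$; the proper transform $\widetilde{C}$ meets a generic fibre in $4-k$ points. The double cover of $\mathbb{F}_1$ branched along $\widetilde{C}$ when $k$ is even, or along $\widetilde{C}+E_p$ when $k$ is odd, is well defined (the branch class is even) and its generic fibre over $\mathbb{P}^1$ is a double cover of $\mathbb{P}^1$ branched at exactly two points, hence a rational curve. After further blow-ups to resolve the remaining singularities of $C$ on $\mathbb{P}^2$, one obtains a smooth projective rational resolution $\widetilde{S}_C\to S_C$, whose cohomology is pure Tate. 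Together with (1) and the Mayer--Vietoris sequence
\[
\cdots\to H^{i-1}(E_\pi;\mathbb{Q})\to H^i(S_C;\mathbb{Q})\to H^i(\widetilde{S}_C;\mathbb{Q})\oplus H^i(\Sigma;\mathbb{Q})\to H^i(E_\pi;\mathbb{Q})\to\cdots
\]
for the resolution (cf.\ Proposition~\ref{p:mayer-vietoris}), this yields that $H^*(S_C;\mathbb{Q})$ is mixed Tate, provided each component of the exceptional divisor $E_\pi$ has mixed Tate cohomology. For the resolution above, each component of $E_\pi$ is either a $\mathbb{P}^1$-bundle over a rational curve of $\Sigma$ or a double cover of a rational curve branched at at most two points, and hence is itself rational.

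In the remaining case, every singular point of $C$ has multiplicity $4$. A homogeneous quartic vanishing to order $4$ at a point depends only on the linear forms vanishing there, so either $C$ is a quadruple line $4L$ (in which case $y^2+w^2L^4$ factors and $S_C$ is a union of two copies of $\mathbb{P}^2$ meeting along a line, with mixed Tate cohomology), or $C$ is a union of four lines through a single point $p$. If two or more of those lines coincide, the plane curve $E=V(y^2+w^2Q(x_1,x_2))\subset\mathbb{P}(1,1,2)$ is a singular rational curve and $S_C$ is a rational projective cone handled by Case A. If all four lines are distinct, Riemann--Hurwitz gives that $E$ is smooth of genus one, $S_C$ is the projective cone over $E$ with a single simply elliptic singularity at the apex, and its minimal resolution $\widetilde{S}_C$ is a $\mathbb{P}^1$-bundle over $E$ with the exceptional divisor isomorphic to $E$ (a section). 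Using the decomposition $H^*(\widetilde{S}_C;\mathbb{Q})\cong H^*(E;\mathbb{Q})\otimes H^*(\mathbb{P}^1;\mathbb{Q})$ and the Mayer--Vietoris sequence for this resolution, the only non-Tate graded piece of $H^*(S_C;\mathbb{Q})$ is isomorphic to a Tate twist of $H^1(E;\mathbb{Q})$, yielding the stated description. The main obstacle is checking that the exceptional divisor $E_\pi$ in Case A has mixed Tate cohomology, as a priori isolated singularities of $S_C$ could be non-rational surface singularities whose minimal resolutions involve elliptic or higher-genus curves; this is sidestepped by building the resolution from blow-ups on $\mathbb{P}^2$ followed by the induced double cover, so that every exceptional component is a double cover of a rational curve branched at at most two points.
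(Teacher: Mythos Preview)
Your approach is correct and takes a genuinely different route in the main case. Where the paper resolves $S_C$ by iterated point blow-ups on $\mathbb{P}^2$ and tracks the multiplicity of the branch curve at each center to bound the genus of every exceptional curve, you first observe that projecting from a singular point of multiplicity $2$ or $3$ exhibits $S_C$ birationally as a conic bundle over $\mathbb{P}^1$, hence rational, so any smooth model automatically has pure Tate cohomology. Both arguments still need the exceptional locus $E_\pi$ to consist of rational curves, and your justification for this is essentially the paper's: when the branch multiplicity $m$ at a blow-up center is odd the exceptional $\mathbb{P}^1$ lies in the branch locus and its preimage is again $\mathbb{P}^1$, while for $m=2$ the preimage is a double cover of $\mathbb{P}^1$ branched at two points. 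Your conic-bundle argument gives a cleaner conceptual reason for $\widetilde{S}_C$ to have Tate cohomology; the paper's approach is more uniform in that a single multiplicity bound handles both the resolution and the exceptional curves at once.

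Two small gaps remain. First, your dichotomy misses the case where $C$ is smooth: there is then no singular point, so Case A does not apply, yet your Case B argument assumes a point of multiplicity $4$ exists. This is easily patched, since a smooth quartic double cover is a del Pezzo surface of degree $2$ and hence rational. Second, the phrase ``a $\mathbb{P}^1$-bundle over a rational curve of $\Sigma$'' is dimensionally wrong for the exceptional locus of a surface resolution; over a one-dimensional component of $\Sigma$ the relevant exceptional curve is a finite (in fact double) cover of that rational curve, not a bundle over it.
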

\begin{proof}
Singularities of $S_C$ correspond directly to the singularities of $C$, so the first statement follows for degree reasons.

Assume that $C$ has only isolated singular points. We can resolve singularities of $S_C$ by resolving singularities of $C$, which can be done of course by repeated blow up of points in $\mathbb{P}^2$. Let $\phi : P \rightarrow \mathbb{P}^2$ be this iterated blow up. 

The cohomology of $S_C$ is mixed Tate as long as this resolution does not introduce curves of genus greater than 1. The exceptional divisors of each blow in $S_C$ up are double covers of the exceptional divisors of $\phi$ ramified in their intersections with the proper transform of $C$. The singular points of $C$ have multiplicity at most 4 since $C$ is quartic, and precisely 4 if and only if $C$ is a union of four lines meeting at a single point. Since blow up reduces multiplicity of singular points, the only situation in which an exceptional divisor can intersect the proper transform of $C$ in more than 3 points is if $C$ is a union of four lines meeting at a single point. In this case, the exceptional curve in the blow up is indeed elliptic and the last statement in (2) follows.

Assume that $C$ has a curve of singularities. The curves of singularity of $S_C$ can be one of (a) a line of double points (b) two double lines meeting in a point (c) a smooth double quadric (d) a quadruple line. In all of these cases, we first blow up along the singular curve to normalize. This process replaces a rational curve with two or more rational curves, and the blow up is a double cover of $\mathbb{P}^2$ ramified along a quadric. Repeating the argument above suffices to show that the cohomology of a double cover of $\mathbb{P}^2$ ramified in a quadric has mixed Tate cohomology. 
\end{proof}
\begin{corollary}
The cohomology of a quartic double cover is in ${\bf MHS}^\mathrm{ell}_\mathbb{Q}$.
\end{corollary}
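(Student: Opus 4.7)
The plan is to reduce the corollary directly to the case analysis of Proposition~\ref{p:quarticcovers}. Let $S_C$ be a quartic double cover of $\mathbb{P}^2$, ramified along a quartic plane curve $C$. We want to show that every $\mathrm{H}^i(S_C;\mathbb{Q})$ lies in ${\bf MHS}^\mathrm{ell}_\mathbb{Q}$. Since ${\bf MHS}^\mathrm{ell}_\mathbb{Q}$ is extension-closed and contains the Tate Hodge structures (being the smallest such subcategory containing Tate twists of $\mathrm{H}^1$ of elliptic curves), any mixed Tate Hodge structure automatically lies in ${\bf MHS}^\mathrm{ell}_\mathbb{Q}$.

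First I would dispose of the generic case: by part (2) of Proposition~\ref{p:quarticcovers}, unless $C$ is a union of four lines through a common point, the entire cohomology $\mathrm{H}^*(S_C;\mathbb{Q})$ is mixed Tate, hence lies in ${\bf MHS}^\mathrm{ell}_\mathbb{Q}$. The only remaining case is when $C$ consists of four concurrent lines. In that case, the proposition already tells us that $\Gr^W_1 \mathrm{H}^2(S_C;\mathbb{Q}) \cong \mathrm{H}^1(E;\mathbb{Q})$ for an elliptic curve $E$ (the doubly ramified exceptional curve produced when resolving the quadruple point) and $\Gr^W_2 \mathrm{H}^2(S_C;\mathbb{Q})$ is pure Tate; since $\mathrm{H}^2$ of a projective surface is supported in weights $\leq 2$, these two graded pieces together with $\Gr^W_0$ account for all of $\mathrm{H}^2$. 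Being an iterated extension of $\mathrm{H}^1(E;\mathbb{Q})$ and Tate pieces, $\mathrm{H}^2(S_C;\mathbb{Q})$ therefore lies in ${\bf MHS}^\mathrm{ell}_\mathbb{Q}$.

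For the cohomology groups in other degrees, I would argue that the resolution picture from the proof of Proposition~\ref{p:quarticcovers} keeps them mixed Tate. Specifically, the resolution $\widetilde{S}_C$ of $S_C$ is a smooth surface whose cohomology, outside of $\mathrm{H}^2$, is controlled by the resolution's rationality (double covers of $\mathbb{P}^2$ ramified in lines and quadrics are rational surfaces, and the only curves introduced in the exceptional locus that are not rational arise from the quadruple point); thus $\mathrm{H}^i(\widetilde{S}_C;\mathbb{Q})$ is Tate for $i \neq 2$. Combining this with the Mayer--Vietoris/blow-up sequences relating $\mathrm{H}^*(S_C;\mathbb{Q})$ to $\mathrm{H}^*(\widetilde{S}_C;\mathbb{Q})$ and to the (rational) exceptional curves and their images, one sees that $\mathrm{H}^i(S_C;\mathbb{Q})$ for $i \neq 2$ is mixed Tate.

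The main obstacle, and indeed the only one, is the bookkeeping required to assemble these pieces: one must be careful that the extension class in $\mathrm{H}^2(S_C;\mathbb{Q})$ between its weight-graded pieces is genuinely an extension in ${\bf MHS}^\mathrm{ell}_\mathbb{Q}$ and not one that falls outside this subcategory. This is automatic from the definition of ${\bf MHS}^\mathrm{ell}_\mathbb{Q}$ as an extension-closed subcategory, so the corollary follows immediately from Proposition~\ref{p:quarticcovers} with essentially no further work.
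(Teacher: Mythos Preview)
Your proposal is correct and follows the same approach as the paper, which states the corollary without proof as an immediate consequence of Proposition~\ref{p:quarticcovers}. You have supplied more detail than the paper does---in particular the treatment of $\mathrm{H}^i$ for $i\neq 2$ in the four-concurrent-lines case---but this is routine bookkeeping entirely in the spirit of the paper's argument.
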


\subsection{Cohomology of cubics containing a codimension 1 linear subspace}\label{sn:cubic}

We assume now that we have a cubic equation of the following form
\begin{equation}
x_0Q_1 + x_1Q_2.
\end{equation}
Then if $X$ is the vanishing locus of this equation, $X$ contains the linear subspace $L = Z(x_0,x_1)$. Furthermore, after blowing up at this linear subspace, we obtain a projection map onto $\mathbb{P}^1$ whose fibres are quadrics. Formally, we have a birational morphism
\begin{equation}
{\bm f}: \mathrm{Bl}_L\mathbb{P}^n\longrightarrow \mathbb{P}^n.
\end{equation}
The blow up $\mathrm{Bl}_L\mathbb{P}^n$ is toric with toric variables $x_0,\dots, x_n,w$, in which the morphism is expressed as a projection,
\begin{equation}
[x_0 :\dots : x_n :w]\longmapsto [x_0w : x_1 w : x_2: \dots : x_n]
\end{equation}
The preimage of the linear subspace $L$ under this map is the divisor determined by $w=0$, which we denote $H$. It is not difficult to see that $H$ is a hypersurface in $\mathbb{P}^{n-2}\times \mathbb{P}^1$ determined by the equation
\begin{equation}
x_0 (Q_1|_{x_0,x_1=0}) + x_1 (Q_2|_{x_0,x_1=0}).
\end{equation}
Therefore, it is either a quadric bundle over $\mathbb{P}^1$ if $Q_1|_{x_0,x_1 = 0}$ and $Q_2|_{x_0,x_1=0}$ are not proportional, and the vanishing locus of $(ax_0 + bx_1)Q$ if they are proportional. Summarizing this discussion: $X$ can be blown up in a linear subspace to produce a quadric bundle over $\mathbb{P}^1$, and the exceptional divisor of this blow up is itself a quadric bundle over $\mathbb{P}^1$ which we denote by $\mathcal{Q}$ in the proof of Proposition~\ref{prop:cubic-hyp} below. By the computations in the previous section, we have the following result.

\begin{proposition}\label{prop:cubic-hyp}
Suppose $X$ is a cubic containing a codimension 1 linear subspace $L$. Then the mixed Hodge structure on $\mathrm{H}^*(X;\mathbb{Q})$ is contained in ${\bf MHS}^\mathrm{hyp}_\mathbb{Q}$. 
\end{proposition}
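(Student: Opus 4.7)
The strategy is to package the geometric construction already spelled out in the two paragraphs preceding the statement, and then to apply the quadric fibration computation of Corollary~\ref{cor:mhshyp} together with the excision and blow-up machinery of Section~\ref{sect:gen}. First I would introduce $\widetilde{X} := \Bl_L X$, which, as described just above the statement, sits inside the toric blow-up $\Bl_L \mathbb{P}^n$ and projects to $\mathbb{P}^1_{[x_0:x_1]}$ with fibres equal to the vanishing loci of $x_0 Q_1 + x_1 Q_2$ restricted to the various hyperplanes $\{x_1 = \lambda x_0\}$. This exhibits $\widetilde{X}$ as a quadric fibration over $\mathbb{P}^1$, and Corollary~\ref{cor:mhshyp} immediately yields
\[
\mathrm{H}^*(\widetilde{X};\mathbb{Q}) \in {\bf MHS}^\mathrm{hyp}_\mathbb{Q}.
\]

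Next I would analyse the exceptional divisor $E$ of ${\bm b}: \widetilde{X} \to X$. As computed in the paragraph preceding the statement, $E$ embeds in $\mathbb{P}^1_{[x_0:x_1]} \times \mathbb{P}^{n-2}_{[x_2:\ldots:x_n]}$ as the vanishing locus of $x_0 \widetilde{Q}_1 + x_1 \widetilde{Q}_2$, where $\widetilde{Q}_i := Q_i|_{x_0 = x_1 = 0}$. If $\widetilde{Q}_1$ and $\widetilde{Q}_2$ are linearly independent, projecting to $\mathbb{P}^1$ realizes $E$ itself as a quadric fibration over $\mathbb{P}^1$, and a second application of Corollary~\ref{cor:mhshyp} gives $\mathrm{H}^*(E;\mathbb{Q}) \in {\bf MHS}^\mathrm{hyp}_\mathbb{Q}$. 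In the degenerate case $\widetilde{Q}_2 = \lambda \widetilde{Q}_1$ the defining equation factors as $(x_0 + \lambda x_1)\widetilde{Q}_1$, so $E$ is the union of a copy of $\mathbb{P}^{n-2}$ and $\mathbb{P}^1 \times \{\widetilde{Q}_1 = 0\}$ meeting along a point times a quadric; each piece and the intersection have mixed Tate cohomology, so Mayer--Vietoris (Proposition~\ref{p:mayer-vietoris}) yields that $\mathrm{H}^*(E;\mathbb{Q})$ is mixed Tate, and in particular lies in ${\bf MHS}^\mathrm{hyp}_\mathbb{Q}$.

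Finally I would descend from $\widetilde{X}$ to $X$. Since $\widetilde{X} \setminus E \cong X \setminus L$ and $L$ is a projective space (hence has mixed Tate, thus hyperelliptic-type, cohomology), I would apply the two-out-of-three principle of Section~\ref{sect:gen} twice: first to the pair $(\widetilde{X}, E)$, which combined with the previous two paragraphs places $\mathrm{H}^*_c(X \setminus L;\mathbb{Q})$ in ${\bf MHS}^\mathrm{hyp}_\mathbb{Q}$, and then to the pair $(X, L)$, which yields $\mathrm{H}^*(X;\mathbb{Q}) \in {\bf MHS}^\mathrm{hyp}_\mathbb{Q}$. The argument is essentially forced by the construction preceding the statement; the only genuine decision point is the case split on proportionality of $\widetilde{Q}_1$ and $\widetilde{Q}_2$, which is the closest thing to a real obstacle but is dispatched by the elementary Mayer--Vietoris argument above.
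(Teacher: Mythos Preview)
Your proposal is correct and follows essentially the same route as the paper's proof: blow up $L$ to obtain a quadric bundle $\mathcal{Q}=\widetilde{X}$ over $\mathbb{P}^1$ with exceptional divisor $H=E$, apply Corollary~\ref{cor:mhshyp} to both, and then run the two compactly-supported cohomology long exact sequences (for the pairs $(\mathcal{Q},H)$ and $(X,L)$) to descend to $X$. Your explicit treatment of the degenerate case $\widetilde{Q}_2=\lambda\widetilde{Q}_1$ via Mayer--Vietoris is a small refinement; the paper notes this case in the setup but simply absorbs it into the statement that $H$ has cohomology in ${\bf MHS}^\mathrm{hyp}_\mathbb{Q}$.
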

\begin{proof}
The blow up map induces an isomorphism between $X- L$ and $\mathcal{Q}- H$, hence $\mathrm{H}_c^*(X- L;\mathbb{Q})\cong \mathrm{H}^*(\mathcal{Q}- H;\mathbb{Q})$. There are two long exact sequences in cohomology. First
\[
\cdots \rightarrow \mathrm{H}^i_c(X- L;\mathbb{Q})\rightarrow \mathrm{H}^i(X;\mathbb{Q}) \rightarrow \mathrm{H}^i(L;\mathbb{Q})\rightarrow \cdots 
\]
implies that $\mathrm{H}^i_c(X- L;\mathbb{Q})$ is isomorphic to the cohomology of $X$ up to mixed Tate factors, since $L$ is $\mathbb{P}^{n-2}$. Second,
\[
\cdots \rightarrow \mathrm{H}^i_c(\mathcal{Q}- H;\mathbb{Q})\rightarrow \mathrm{H}^i(\mathcal{Q};\mathbb{Q}) \rightarrow \mathrm{H}^i(H;\mathbb{Q})\rightarrow \cdots 
\]
implies that $\mathrm{H}^i_c(\mathcal{Q}-H;\mathbb{Q})$ is contained in ${\bf MHS}_\mathbb{Q}^\mathrm{hyp}$, since $\mathrm{H}^*(\mathcal{Q};\mathbb{Q})$ and $\mathrm{H}^*(H;\mathbb{Q})$ are contained in ${\bf MHS}_\mathbb{Q}^{\mathrm{hyp}}$ by Corollary~\ref{cor:mhshyp}. 
\end{proof}

\begin{remark}\label{r:depth}
We are frequently interested in the cohomology group $\mathrm{H}^{n-1}(X;\mathbb{Q})$ where $X$ is an $(n-1)$-dimensional variety. Suppose the quadratic fibrations on $\mathcal{Q}$ and $H$ both have corank 0 (in other words, their generic fibres are smooth). Then the proof above tells us that either
\begin{enumerate}[\quad (a)]
    \item If $n-1$ is even, $\Gr_i^W\mathrm{H}^{n-1}(X,\mathbb{Q})$ is pure Tate except if $i = n-2$.
    \item  If $n-1$ is odd, $\Gr_i^W\mathrm{H}^{n-1}(X,\mathbb{Q})$ is pure Tate except if $i = n-1$.
    \end{enumerate}
  \end{remark}
  \begin{remark}\label{r:width}
Suppose $X$ is a cubic hypersurface  of dimension $(n-1)$ containing a linear subspace of codimension 1, which we may assume to be $V(x_0,x_1)$. Then $X$ is the vanishing locus of a polynomial of the form
\begin{equation}
ax_0^3 + bx_1^3 + x_0x_1L_1 + x_0^2L_2 + x_1^2L_3 + x_0Q_1 + x_2Q_2 = 0
\end{equation}
for constants $a,b$, linear polynomials $L_1,\dots, L_3$ and quadratic polynomials $Q_1,Q_2$ in variables $x_1,\dots, x_n$. The blow up, $\Bl_LX$ is a quadric bundle given by the vanishing locus of 
\begin{equation}
(ax_0^3 + bx_1^3)w^2 +  (x_0x_1L_1 + x_0^2L_2 + x_1^2L_3)w + x_0Q_1 + x_2Q_2 
\end{equation}
in a projective bundle with toric coordinates $[w:x_0: x_1:\dots : x_{n}]$. Therefore, the discriminant locus of $\pi : \mathcal{Q}\rightarrow \mathbb{P}^1$ consists of at most $n+3$ points. The hypersurface $H$ is a $(1,2)$ hypersurface in $\mathbb{P}^{n-2}\times \mathbb{P}^1$. Thus the discriminant of the quadric fibration $\pi|_H : H\rightarrow \mathbb{P}^1$ consists of at most $n-1$ points. Therefore, the curves whose cohomology appears in $\mathrm{H}^*(X;\mathbb{Q})$ have genus at most $g = n/2$. 
\end{remark}
 
%%%%%%%%%%%%%%%%%%%%%%%%%%%%%%%%%%%%%%%%%%%%%%%%%%%%%%%%%%%%%%%%%%
\section{Hyperelliptic motives for general $(a,1,c)$ graph hypersurfaces}\label{s:mainthm}
\begin{figure}[h]
\begin{tikzpicture}[scale=0.6]
\filldraw [color = black, fill=none, very thick] (0,0) circle (2cm);
\draw [black,very thick] (-2,0) to (2,0);
\filldraw [black] (2,0) circle (2pt);
\filldraw [black] (-2,0) circle (2pt);
\filldraw [black] (0,2) circle (2pt);
\filldraw [black] (1.414,1.414) circle (2pt);
\filldraw [black] (-1.414,1.414) circle (2pt);
\filldraw [black] (1.414,-1.414) circle (2pt);
\filldraw [black] (-1.414,-1.414) circle (2pt);
\draw [black,very thick] (-2,0) to (-3,0);
\draw [black,very thick] (2,0) to (3,0);
\draw [black,very thick] (0,2) to (0,3);
\draw [black,very thick] (1.414,1.414) to (2.25,2.25);
\draw [black,very thick] (-1.414,1.414) to (-2.25,2.25);
\draw [black,very thick] (1.414,-1.414) to (2.25,-2.25);
\draw [black,very thick] (-1.414,-1.414) to (-2.25,-2.25);
\end{tikzpicture}
\caption{A two-loop graphs of type $(a,1,c)$ with $a=4$ and $c=3$}\label{fig:a1cgraphsBis}
\end{figure}
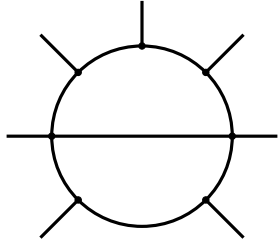
Here we address the general case of describing the motive of the hypersurface $X_{a,1,c}$. We will begin by proving Theorem~\ref{thm:sunsetmot}. Later we will apply the techniques used to prove Theorem~\ref{thm:sunsetmot} to describe the mixed Hodge structure on the cohomology of the double box and pentabox hypersurfaces.

\subsection{Birational transformations on $X_{(a,1,c);D}$ and their effect on cohomology}\label{sect:strat}

Our main theorem is the following, which will be proved later in this section.
\begin{theorem}\label{thm:sunsetmot}
For any values of $a,c$, the cohomology groups of $X_{(a,1,c);D}$ are contained in ${\bf MHS}_\mathbb{Q}^\mathrm{hyp}$.
\end{theorem}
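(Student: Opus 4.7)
The plan is to exploit the special feature of $(a,1,c)$: because the middle chain has a single edge, ${\bf F}_{(a,1,c);D}$ is quadratic in the middle-edge variable $y$, and each of the two long chains is a chain of edges in the sense of Lemma~\ref{lemma:quadfib} whenever it has length at least $2$. I would apply Lemma~\ref{lemma:quadfib} to (say) the $a$-chain by blowing up the linear subspace $L_x = V(y,z_1,\dots,z_c) \subset X_{(a,1,c);D}$, producing a quadric fibration $\pi : \widetilde X \to \mathbb{P}^c$ of relative dimension $a-1$. Since $L_x$ is a linear subspace with mixed Tate cohomology, Corollary~\ref{c:blowup} reduces the theorem to the statement $\mathrm{H}^*(\widetilde X;\mathbb{Q}) \in {\bf MHS}^{\mathrm{hyp}}_{\mathbb{Q}}$.

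Next, I would apply Proposition~\ref{prop:quadfib} to $\pi$. The outcome is that on the $E_2$-page of the Leray spectral sequence for $\pi$ all terms are mixed Tate except for one summand, which is the compactly supported cohomology of a double cover $V \to \mathbb{P}^c$ ramified along the discriminant hypersurface $\Delta$ of $\pi$. Since ${\bf MHS}^{\mathrm{hyp}}_{\mathbb{Q}}$ contains the mixed Tate category and is closed under extensions, the theorem reduces to the claim that $\mathrm{H}^*(V;\mathbb{Q}) \in {\bf MHS}^{\mathrm{hyp}}_{\mathbb{Q}}$. This claim is immediate when $c = 1$ because then $V$ is literally a (possibly singular) hyperelliptic curve, and indeed this case can be handled more directly using Corollary~\ref{cor:mhshyp} applied to the quadric fibration $\widetilde X \to \mathbb{P}^1$. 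The symmetric case $a=1$ is identical up to relabelling, and $(1,1,1)$ is immediate as $X_{(1,1,1);D}$ is a plane cubic.

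For the remaining case $a,c \geq 2$, I would proceed by induction on $c$. The inductive step should use a linear projection $\mathbb{P}^c \dashrightarrow \mathbb{P}^{c-1}$ in the $c$-chain directions to endow $V$ with a further fibration whose generic fibers are double covers of $\mathbb{P}^1$, i.e.\ hyperelliptic curves. Applying Leray to this secondary fibration and invoking the inductive hypothesis together with the extension-closedness of ${\bf MHS}^{\mathrm{hyp}}_{\mathbb{Q}}$ would close the argument.

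The hard part will be carrying out the inductive step: the discriminant $\Delta$ has degree $a+3$ in the base coordinates $[y:z_1:\dots:z_c]$ and, a priori, its zero locus does not fibre nicely over $\mathbb{P}^{c-1}$. The ``well-chosen birational transformation'' of $X_{(a,1,c);D}$ mentioned in the statement corresponds to selecting the correct coordinate projection, resolving the indeterminacy of the rational map by a suitable blow up, and verifying that the induced fibres on $V$ are honestly hyperelliptic; extracting this from the combinatorial form of $\Delta$ is where the bulk of the technical work lies.
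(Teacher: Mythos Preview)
Your strategy is genuinely different from the paper's, and the paper actually acknowledges your line of attack as a possible alternate route (see the remark following Proposition~\ref{p:prepa12}) while opting for something else it finds simpler.

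The paper does \emph{not} proceed by the quadric fibration $\widetilde X_{(a,1,c);D}\to\mathbb{P}^c$ and induction on $c$. Instead it writes down an explicit birational automorphism $\phi$ of $\mathbb{P}^{a+c}$ (equation~\eqref{eqbir}) under which the proper transform $X'_{(a,1,c);D}$ is a cubic containing the codimension~$1$ linear subspace $V(z,\ \sum x_i+\sum y_i)$. Proposition~\ref{prop:cubic-hyp} then gives $\mathrm{H}^*(X'_{(a,1,c);D};\mathbb{Q})\in{\bf MHS}^{\mathrm{hyp}}_{\mathbb{Q}}$ directly, because such a cubic blows up to a quadric fibration over $\mathbb{P}^1$. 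The remaining work (Proposition~\ref{prop:strat}) is to check that the hyperplane sections added and removed by $\phi$ are themselves cubics with codimension~$1$ linear subspaces or unions of a quadric and a hyperplane, hence also hyperelliptic or mixed Tate. No induction, no analysis of a double cover of $\mathbb{P}^c$.

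Your sketch has a real obstacle beyond the inductive step you already flag. Proposition~\ref{prop:quadfib} is stated for a quadric fibration whose corank jumps by exactly one along a \emph{smooth} divisor. For $\pi:\widetilde X_{(a,1,c);D}\to\mathbb{P}^c$ the discriminant is $(z+x_1+\dots+x_c)^{a-1}\cdot G$ with $G$ quartic (this is computed in the paper for $c=2$ and remarked in general), so the hypotheses fail badly along $V(z+\sum x_i)$ and you would first need an analogue of the birational correction $\bm h$ used in the proof of Theorem~\ref{thm:212}. Even granting that, the cleaner way to finish is not induction on $c$ via linear projection but the observation (again in that remark) that $V(G)$ is birational to an intersection of two quadrics in $\mathbb{P}^{c+1}$, whose cohomology is hyperelliptic by Reid~\cite{reid}. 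Your inductive projection $\mathbb{P}^c\dashrightarrow\mathbb{P}^{c-1}$ offers no evident control over the fibres of the double cover, whereas the intersection-of-quadrics description does the job in one step.
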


The general form of the polynomial ${\bf F}_{(a,1,c);D}$ is 
\begin{align}\label{eq:UVF}
{\bf U}_{(a,1,c)} & = \left(z +  \sum_{i=1}^c x_i\right) \left( \sum_{i=1}^a y_i \right) + z \left( \sum_{i=1}^c x_i\right) ,\\
{\bf V}_{(a,1,c);D} & = z\left( \sum_{i=1}^c\sum_{j=1}^a r_{ij}^2 x_i
                      y_j \right)+\left( z + \sum_{i=1}^a
                  y_i\right)\left(\sum_{1\leq i<j\leq c} p_{ij}^2 x_i
                  x_j\right) + \left(z+\sum_{i=1}^c x_i\right)\left(
                  \sum_{1\leq i<j\leq a}
                  q_{ij}^2 y_i y_j \right),\cr
\nonumber {\bf F}_{(a,1,c);D} & = {\bf U}_{(a,1,c)}\left( \sum_{i=1}^c m_{i+a}^2 x_i + \sum_{i=1}^a m^2_i y_i + m^2_{a+c+1} z\right) - {\bf V}_{(a,1,c);D}.
\end{align}
The mass parameters are non-vanishing real positive numbers
$m_r^2\in\mathbb{R}_{>0}$ with $1\leq r\leq a+c+1$, the coefficients
$r_{ij}^2$, $p_{ij}^2$ are $q_{ij}^2$ are the norm squared of the
linear combination of the external momenta. These coefficients satisfy
relations depending on the space-time dimension $D$ which will make precise
in Section~\ref{sect:stime}.

We let $X_{(a,1,c);D}$ denote the vanishing locus of ${\bf F}_{(a,1,c);D}$. Observe that the codimension two linear subspace
\begin{equation}
z = \sum_{i=1}^c x_i = \sum_{i=1}^a y_i,
\end{equation}
is contained in $X_{(a,1,c);D}$, therefore the hyperplane sections $z = 0, \sum_{i=1}^c x_i=0,$ and $\sum_{i=1}^a y_i = 0$ are cubic hypersurfaces which contain codimension 1 linear subspaces determined by the remaining two linear equations in the triple of linear equations above. We apply the birational transformation
\begin{align}\label{eqbir}
\phi&: \mathbb{P}_{{\bf y},z, {\bf x}}^{a+c} \dashrightarrow
      \mathbb{P}_{{\bf y},z, {\bf x}}^{a+c},\cr
      & y_i = y_i z,\qquad x_i = x_i\left(z + \sum_{j=1}^c x_j\right),\qquad z = z\left(z + \sum_{i=1}^c x_i\right). 
\end{align}
Under this map, we see that the graph polynomials
scale out a factor of $z(z+\sum_{i=1}^cx_i)^2$  as 
\begin{equation}
\left\{ {\bf U}_{(a,1,c)}, {\bf V}_{(a,1,c);D}, {\bf F}_{(a,1,c);D}\right\}\to
z\left(z+\sum_{i=1}^cx_i\right)^2 \left\{{\bf U}'_{(a,1,c)}, {\bf
  V}'_{(a,1,c);D}, {\bf F}'_{(a,1,c);D}\right\}
  \end{equation}
  and the proper transform 
of $X_{(a,1,c);D}$ is determined by equations
\begin{align}\label{e:Y}
{\bf U}'_{(a,1,c)} &= \left( \sum_{i=1}^c x_i + \sum_{i=1}^a y_i\right),\\
{\bf V}'_{(a,1,c);D} & = \left( \sum_{i=1}^c x_i + \sum_{i=1}^a
                           y_i\right) \left(\sum_{1\leq i<j\leq
                       c}p_{ij}^2x_ix_j \right)\cr
                       &+ z\left(\sum_{1\leq
                           i<j\leq c}p_{ij}^2x_ix_j  +\sum_{1\leq
                           i<j\leq a}q_{ij}^2y_iy_j
                           +\sum_{i=1}^c\sum_{j=1}^a r_{ij}^2x_iy_j\right), \cr
\nonumber {\bf F}'_{(a,1,c);D}& = {\bf U}'_{(a,1,c)} \left(z\sum_{i=1}^a m_{i}^2 y_i + \left(z + \sum_{i=1}^c x_i\right)\left(m_{a+c+1}^2z + \sum_{i=1}^c m_{i+a}^2 x_i\right)\right) 
                            - {\bf V}'_{(a,1,c);D}.
\end{align}
The vanishing locus of ${\bf F}'_{(a,1,c);D}$ is denoted $X'_{(a,1,c);D}$. The following result is a direct consequence of this discussion.

\begin{lemma}
Let $U = \mathbb{P}^{a+c} -  V(z)\cup V(z  + \sum x_i)$. The map $\phi : \mathbb{P}^{a+c} \dashrightarrow \mathbb{P}^{a+c}$ induces an isomorphism from $U$ to $U$.  Therefore, $\phi$ induces an isomorphism between $W = X_{(a,1,c);D} \cap U$ and $W'= X'_{(a,1,c);D} \cap U$.
\end{lemma}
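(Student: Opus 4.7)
My strategy is to write down an explicit inverse to $\phi$ as a rational self-map of $\mathbb{P}^{a+c}$, show that it defines a morphism $U\to U$ inverse to $\phi|_U$ by direct verification of both composites, and then transport the isomorphism to $W$ and $W'$ using the scaling identity recorded immediately before the statement of the lemma.

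The key observation that guides the construction of $\psi := \phi^{-1}$ is the following. Setting $\sigma := \sum_{j=1}^c x_j$ and writing the target coordinates of $\phi$ as $[Y : Z : X] = [yz : z(z+\sigma) : x(z+\sigma)]$, a short calculation gives
\[
Z + \sum_{j=1}^c X_j \;=\; z(z+\sigma) + \sigma(z+\sigma) \;=\; (z+\sigma)^{2},
\]
so $(z+\sigma)^{2}$ is actually a polynomial in the image coordinates. Combined with $Z = z(z+\sigma)$ and $X_i = x_i(z+\sigma)$, this allows one to solve for $z$, $x_i$, and $y_i$ in terms of the target coordinates up to a common projective rescaling, and, after clearing denominators, leads to the candidate
\[
\psi\bigl([Y : Z : X]\bigr) \;:=\; \Bigl[\, Y\bigl(Z + \textstyle\sum_{j} X_j\bigr) \;:\; Z^{2} \;:\; X\cdot Z\, \Bigr].
\]
I would then check by mechanical substitution that
\[
\psi \circ \phi \,=\, z(z+\sigma)^{2}\cdot \mathrm{id}_{\mathbb{P}^{a+c}}, \qquad \phi \circ \psi \,=\, Z^{2}\bigl(Z + \textstyle\sum_{j} X_j\bigr)\cdot \mathrm{id}_{\mathbb{P}^{a+c}}.
\]
Both scaling factors are exactly the products of the two defining equations of the complement of $U$ (with multiplicities), so both composites are the identity on $U$. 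Moreover $\phi$ and $\psi$ restrict to morphisms $U\to U$, since for $[y:z:x]\in U$ we have $Z = z(z+\sigma) \neq 0$ and $Z + \sum_j X_j = (z+\sigma)^2 \neq 0$, and the symmetric calculation holds for $\psi$. This produces the isomorphism $\phi|_{U}: U \xrightarrow{\sim} U$.

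The second assertion is then immediate from the identity
\[
\mathbf{F}_{(a,1,c);D}\bigl(\phi(y,z,x)\bigr) \;=\; z(z+\sigma)^{2}\; \mathbf{F}'_{(a,1,c);D}(y,z,x),
\]
established just above the lemma: on $U$ the factor $z(z+\sigma)^{2}$ is a unit, so $\phi(p) \in X_{(a,1,c);D}$ if and only if $p \in X'_{(a,1,c);D}$, and consequently $\phi|_U$ restricts to an isomorphism $W' \xrightarrow{\sim} W$. The only non-routine step in the argument is producing the correct formula for $\psi$; everything else is symbolic verification driven by the identity $(z+\sigma)^{2} = Z + \sum_{j} X_j$, and I do not anticipate any genuine obstacle.
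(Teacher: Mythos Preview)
Your proposal is correct. The paper itself omits any proof of this lemma, stating only that ``the following result is a direct consequence of this discussion,'' so there is no alternative argument to compare against; your explicit inverse $\psi([Y:Z:X]) = [Y(Z+\Sigma):Z^{2}:XZ]$ and the verification of the two composites constitute a complete and careful justification of what the paper leaves to the reader.
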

 We notice that $X'_{(a,1,c);D}$ contains the codimension 1 linear subspace determined by equations 
\begin{equation}
z = \sum_{i=1}^c x_i + \sum_{i=1}^a y_i.
\end{equation}
We now summarize the geometry of $X_{(a,1,c);D}$ and how it relates to that of $X'_{(a,1,c);D}$. Roughly speaking, $X'_{(a,1,c);D}$ is obtained from $X_{(a,1,c);D}$ by adding and removing either cubic hypersurfaces containing codimension 1 linear subspaces, or unions of codimension 1 linear subspaces and quadric hypersurfaces.
\begin{proposition}\label{prop:strat}
\begin{enumerate}[(1)]
\item The hyperplane sections $V(z) \cap X_{(a,1,c);D}$ and $V(z + \sum x_i)\cap X_{(a,1,c);D}$ are cubics containing a linear subspace of codimension 1.
\item The intersection $V(z) \cap V(z + \sum x_i) \cap X_{(a,1,c);D}$ is the union of a codimension 2 linear subspace and a codimension 2 quadric hypersurface.
\item The hyperplane section $V(z) \cap X'_{(a,1,c);D}$ is the union of a codimension 1 linear subspace and a codimension 1 quadric hypersurface.
\item The hyperplane section $V(z + \sum x_i)\cap X'_{(a,1,c);D}$ is a cubic hypersurface containing a codimension 1 linear subspace. 
\item The intersection $V(z+\sum x_i) \cap V(z) \cap X'_{(a,1,c);D}$ is the union of a codimension 2 linear subspace and a codimension 2 quadric hypersurface.
\end{enumerate}
\end{proposition}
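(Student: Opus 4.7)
The plan is to prove all five items by direct substitution into the explicit formulas~\eqref{eq:UVF} and~\eqref{e:Y}. The key observation I would exploit is that the codimension 3 linear subspace $L_0 := V(z, \sum x_i, \sum y_i)$ lies in both $X_{(a,1,c);D}$ and $X'_{(a,1,c);D}$: at any point where $z, \sum x_i,$ and $\sum y_i$ simultaneously vanish, each of $\mathbf{U}_{(a,1,c)}, \mathbf{V}_{(a,1,c);D}, \mathbf{U}'_{(a,1,c)}$ and $\mathbf{V}'_{(a,1,c);D}$ vanishes term-by-term, forcing both $\mathbf{F}_{(a,1,c);D}$ and $\mathbf{F}'_{(a,1,c);D}$ to vanish.

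For (1), I would substitute $z = 0$ to obtain a cubic polynomial vanishing on $L_0$, and note that $L_0$ has codimension 1 in the cubic $V(z) \cap X_{(a,1,c);D}$; since $L_0 \subseteq V(z + \sum x_i)$, the same argument covers $V(z + \sum x_i) \cap X_{(a,1,c);D}$. Claims (2) and (5) I would handle identically: direct substitution yields
\[
\mathbf{F}_{(a,1,c);D}|_{V(z,\sum x_i)} = \mathbf{F}'_{(a,1,c);D}|_{V(z,\sum x_i)} = -\bigl(\textstyle\sum y_i\bigr)\bigl(\textstyle\sum p_{ij}^2 x_i x_j\bigr),
\]
exhibiting the intersection as $L_0 \cup V(z,\sum x_i, \sum p_{ij}^2 x_i x_j)$, a codimension 2 linear subspace plus a codimension 2 quadric.

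The remaining items (3) and (4) exploit the fact that $\mathbf{U}'_{(a,1,c)} = \sum x_i + \sum y_i$ is independent of $z$. For (3), restricting $\mathbf{V}'_{(a,1,c);D}$ to $V(z)$ leaves $\mathbf{U}' \cdot \sum p_{ij}^2 x_i x_j$, so one obtains
\[
\mathbf{F}'_{(a,1,c);D}|_{V(z)} = \mathbf{U}' \cdot \Bigl[\bigl(\textstyle\sum x_i\bigr)\bigl(\textstyle\sum m_{i+a}^2 x_i\bigr) - \textstyle\sum p_{ij}^2 x_i x_j\Bigr],
\]
a product of a linear and a quadratic form, which gives the codimension 1 linear subspace $V(z, \sum x_i + \sum y_i)$ and the codimension 1 quadric. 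For (4), substituting $z = -\sum x_i$ into $\mathbf{F}'_{(a,1,c);D}$ yields a (generically irreducible) cubic in the remaining variables, and the inclusion $L_0 \subseteq V(z + \sum x_i) \cap X'_{(a,1,c);D}$ from the first paragraph supplies the required codimension 1 linear subspace. The hard part will only be bookkeeping; no conceptual difficulty arises beyond recognizing that $L_0$ sits inside every relevant variety.
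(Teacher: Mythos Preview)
Your proposal is correct and matches the paper's approach exactly: the paper does not give a separate formal proof of this proposition but treats it as a direct consequence of the explicit formulas~\eqref{eq:UVF} and~\eqref{e:Y}, having already observed just before the statement that $L_0 = V(z,\sum x_i,\sum y_i)$ lies in $X_{(a,1,c);D}$ and that $V(z,\sum x_i+\sum y_i)$ lies in $X'_{(a,1,c);D}$. Your explicit substitutions simply spell out this verification.
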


\noindent Now we prove Theorem~\ref{thm:sunsetmot}.
\begin{proof}[Proof of Theorem~\ref{thm:sunsetmot}]
By Mayer--Vietoris (Proposition \ref{p:mayer-vietoris}), and the fact that the cohomology of any quadric hypersurface or projective space is Tate, we see that the cohomology of a union of a hyperplane section and a quadric in $\mathbb{P}^{n-1}$ is mixed Tate. Therefore it is in ${\bf MHS}_\mathbb{Q}^\mathrm{hyp}$. By Proposition~\ref{prop:cubic-hyp}, ${\bf MHS}_\mathbb{Q}^\mathrm{hyp}$ contains the cohomology of $X'_{(a,1,c);D}$. Therefore, the compactly supported cohomology long exact sequence implies that $\mathrm{H}^*_c(X'_{(a,1,c);D} - V(z))$ is contained in ${\bf MHS}_\mathbb{Q}^\mathrm{hyp}$.  The same argument shows that $\mathrm{H}^*_c((V(z + \sum x_i) \cap X'_{(a,1,c);D}) - V(z))$  is contained in ${\bf MHS}_\mathbb{Q}^\mathrm{hyp}$. The long exact sequence
\begin{align*}
\cdots \rightarrow \mathrm{H}_c^i(W')& \rightarrow \mathrm{H}^i_c(X'_{(a,1,c);D} - V(z))  \rightarrow  \mathrm{H}^i_c((V(z + \sum x_i) \cap X'_{(a,1,c);D}) - V(z)) \rightarrow \cdots 
\end{align*}
then implies that $\mathrm{H}^i_c(W')$ is in ${\bf MHS}_\mathbb{Q}^\mathrm{hyp}$. 

Finally, the fact that $X_{(a,1,c);D}$ is isomorphic to the union of $X'_{(a,1,c);D} - V(z(z + \sum x_i))$ and $(V(z)\cup V(z+ \sum x_i)) \cap X_{(a,1,c);D}$, along with the fact that $V(z)\cap X_{(a,1,c);D}$ and $V(z + \sum x_i)\cap X_{(a,1,c);D}$, and $V(z,z+\sum x_i)\cap X_{(a,1,c);D}$ are either cubics containing codimension 1 subspaces or unions of a quadric hypersurface and a hyperplane allows us to apply a similar argument to see that $\mathrm{H}^{i}(X_{(a,1,c);D})$ is also contained in ${\bf MHS}_\mathbb{Q}^\mathrm{hyp}$.
\end{proof}

\begin{remark}
The proof of Theorem~\ref{thm:sunsetmot} is valid for arbitrary kinematic parameters, even without the restriction that $m_i^2 >0$ for all $i$.
\end{remark}

\begin{remark}\label{r:tot-depth}
The proof of Theorem~\ref{thm:sunsetmot} makes use of the combination of Propositions~\ref{prop:cubic-hyp} and~\ref{prop:strat}. In computations, it seems like for large enough values of space-time dimension $D$, all cubic hypersurfaces containing a codimension 1 linear subspace satisfy the conditions discussed in Remark~\ref{r:depth}. In that case, it follows from the proof of Theorem~\ref{thm:sunsetmot} that either
\begin{enumerate}[\qquad (a)]
    \item If $a+c$ is odd, $\Gr^W_i\mathrm{H}^{a+c}(X_{(a,1,c);D};\mathbb{Q})$ is pure Tate except perhaps if $i = a+c$ or $i=a+c-2$.
   \item If $a+c$ is even, $\Gr^W_i\mathrm{H}^{a+c}(X_{(a,1,c);D};\mathbb{Q})$ is pure Tate except perhaps if $i = a+c-1$ or $i=a+c-3$.
\end{enumerate}

\end{remark}

\begin{remark}\label{r:genusbound}
Following~\ref{r:width}, we can see that the curves  whose cohomology groups appear as graded components of  $\mathrm{H}^{a+c-1}(X_{(a,1,c);D};\mathbb{Q})$ have genus at most $(a+c)/2$. This upper bound is not sharp in general. For instance, in the $(3,1,3)$ case, the maximum genus obtained is 2, for $D\geq6$ (see Section~\ref{sec:Motive313} for details). On the other hand, in computations, the genus of the curves contributing to the cohomology of $X_{(a,1,c);D}$ appears to be unbounded when $a$ and $c$ are left to vary. Below (see Figures~\ref{fig:hneven} and~\ref{fig:hnodd}) we list the genus of the curve $C_{(a,1,c);D}$ appearing in $\Gr_{a+c-1}^W\mathrm{H}^{a+c-1}(X'_{(a,1,c);D};\mathbb{Q})$ and $\Gr_{a+c-2}^W\mathrm{H}^{a+c-1}(X'_{(a,1,c);D},\mathbb{Q})$. In our computations, the genus of $C_{(2m-1,1,2m-1);4m-2}$ seems to be generically equal to $m$.
\end{remark}
\begin{figure}[h]
\begin{center}
\begin{tabular}{|c| c|| c| c| c | c| c| c| c|}\hline
$a$ & $c$& $D=2$ & $D = 4$ &$D=6$ & $D= 8$ & $D=10$ & $D=12$& $D=14$ \\ \hline
3 & 3 & 0 & 1 & 2 & 2 & 2 & 2 &2\\ 
4 & 4 & 0 & 0 & 1 & 2 & 2 & 2 &2\\
5& 3 & $*$ & 0 & 1 & 2 & 2 & 2 &2\\
5 & 5 & $*$ & 0 & 1 & 2 & 3 & 3 &3\\
6 & 4 & $*$ & 0 & 0 &1 & 2 & 2 & 2\\
6 & 6 & $*$ & 0 & 0 & 1 & 2 & 3 &3\\
7 & 3 & $*$ & $*$ & 0 & 1 & 2 & 2 &2 \\ 
7 & 5 & $*$ & $*$ & 0 & 1 & 2 & 3  &3\\  
7 & 7 & $*$ & $*$ & 0 & 1 & 2 & 3 & 4\\
8 & 4 & $*$ & $*$ & 0 & 0 & 1 & 2 & 2 \\
8 & 6 & $*$ & $*$ & 0 & 0 & 1 & 2 & 3 \\
9& 5 & $*$ & $*$ & $*$ & 0 & 1 & 2 & 3 \\
\hline
\end{tabular}
\end{center}
\caption{Genus of curves determining Hodge numbers of $\Gr^W_{a+c-1}\mathrm{H}^{a+c-1}(X'_{(a,1,c);D};\mathbb{Q})$ with $a+c \leq 14, D\leq 14$ and $a+c$ even. The notation $*$ indicates that the corresponding quadric fibration is degenerate.}\label{fig:hneven}
\end{figure}

\begin{figure}[h]
\begin{center}
\begin{tabular}{|c| c|| c| c| c | c| c| c| c|}\hline
$a$ & $c$& $D=2$ & $D = 4$ &$D=6$ & $D= 8$ & $D=10$ & $D=12$& $D=14$ \\ \hline
4 & 3 & $*$ & 0 & 1 & 1 & 1 & 1 & 1  \\
5 & 4 & $*$ & 0 & 1 & 2 & 2 & 2 & 2 \\
7 & 4 & $*$ & $*$ & 0 & 1 & 2 & 2 & 2 \\
7 & 6 & $*$ & $*$ & 0 & 1 & 2 & 3 & 3 \\
8 & 3 & $*$ & $*$ & 0 & 0 & 1 & 2 & 2 \\
8 & 5 & $*$ & $*$ & 0 & 0 & 1 & 2 & 2 \\
8 & 7 & $*$ & $*$ & 0 & 0 & 1 & 2 & 3 \\
9 & 4 & $*$ & $*$ & $*$ & 0 & 1 & 2 & 2 \\
9 & 6 & $*$ & $*$ & $*$ & 0 & 1 & 2 & 3 \\
10 & 3 & $*$ & $*$ & $*$ & 0 & 0 & 1 & 1 \\
10 & 5 & $*$ & $*$& $*$ & 0 & 0 & 1 & 2 \\
11 & 4 & $*$ & $*$ & $*$ & $*$ & 0 & 1 & 2 \\\hline

\end{tabular}
\caption{Genus of curves determining Hodge numbers of $\Gr^W_{a+c-2}\mathrm{H}^{a+c-1}(X'_{(a,1,c);D};\mathbb{Q})$ with $a+c \leq 15, D\leq 14$ and $a+c$ odd. The notation $*$ indicates that the corresponding quadric fibration is degenerate.}\label{fig:hnodd}
\end{center}
\end{figure}

%-----------------------------------------------------------------------
\subsection{Motives for $(a,1,c)$ graphs}\label{sect:mot}

As a consequence of Theorem~\ref{thm:sunsetmot}, we may describe the
motive attached to an $(a,1,c)$ type graph of Figure~\ref{fig:a1cgraphsBis}, as described by Bloch--Esnault--Kreimer~\cite{bek} and Brown~\cite{Brown:2015fyf}. We will assume that $a$ and $c$ are chosen so that the numerator and denominator of 
\begin{equation}
\omega_{(a,1,c);D}=\dfrac{{\bf U}_{(a,1,c)}^{a+c+1 - 3D/2}}{{\bf F}_{(a,1,c);D}^{a+c + 1 -D}}\Omega_0
\end{equation}
have non-negative exponents, that is, that $a+c + 1 \geq  3D/2$. 
\begin{lemma}\label{l:blup}
    Let $a,1,c$ be positive integers. The exceptional divisors of a blow up along $L_x$, $L_y$, and $L_z$ have mixed Tate cohomology. Therefore, $\mathrm{H}^{a+c}(\bX_{(a,1,c);D};\mathbb{Q})$ agrees with $\mathrm{H}^{a+c}(X_{(a,1,c);D};\mathbb{Q})$ up to mixed Tate factors. In particular, $\mathrm{H}^{a+c}(\bX_{(a,1,c);D};\mathbb{Q})$ is in ${\bf MHS}^\mathrm{hyp}_\mathbb{Q}$.
    \end{lemma}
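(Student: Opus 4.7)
The plan is to handle the three exceptional divisors $E_x$, $E_y$, $E_z$ of the blow ups at $L_x$, $L_y$, $L_z$ individually, show each is mixed Tate, and conclude via three applications of Corollary~\ref{c:blowup} combined with Theorem~\ref{thm:sunsetmot}. A direct check of the defining equations shows $L_x$, $L_y$, $L_z$ are pairwise disjoint in $\mathbb{P}^{a+c}$ (their intersection requires all projective coordinates to vanish), so the three blow ups commute and each exceptional divisor may be analysed in isolation. The case of $L_z$ is immediate: $L_z$ is the single point $[0{:}\dots{:}0{:}1]$, and setting $z=1$ in~\eqref{eq:UVF} one finds that the constant term of ${\bf F}_{(a,1,c);D}$ vanishes (as expected since $L_z\subset X_{(a,1,c);D}$) while its linear term is $m_{a+c+1}^2\bigl(\sum_i x_i + \sum_j y_j\bigr)$. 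Since $m_{a+c+1}^2 > 0$, this linear form is nonzero, so $X_{(a,1,c);D}$ is smooth at $L_z$, and $E_z$ is the projectivized tangent space, a copy of $\mathbb{P}^{a+c-2}$, which is manifestly mixed Tate.

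For $L_x = V(y_1,\dots,y_a,z)\cong \mathbb{P}^{c-1}$ the main computation is to extract the degree-one part $F_1$ of ${\bf F}_{(a,1,c);D}$ in the normal variables $(y_1,\dots,y_a,z)$. Keeping track of the $(y,z)$-degree of each monomial in ${\bf U}_{(a,1,c)}$, ${\bf V}_{(a,1,c);D}$, and the linear form $M := m^2_{a+c+1}z + \sum_i m_i^2 y_i + \sum_i m_{i+a}^2 x_i$ from~\eqref{eq:UVF} should yield the clean factorization
\[
F_1 \;=\; \Bigl(z + \sum_{i=1}^a y_i\Bigr)\,\Biggl(\Bigl(\sum_{i=1}^c x_i\Bigr)\Bigl(\sum_{i=1}^c m_{i+a}^2 x_i\Bigr) - \sum_{1\leq i<j\leq c} p_{ij}^2 x_i x_j\Biggr) \;=\; L(y,z)\cdot P(x),
\]
a product of a linear form $L$ in the normal variables and a quadric $P$ in $x$. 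Since the normal bundle of $L_x$ in $\mathbb{P}^{a+c}$ is $\mathcal{O}_{L_x}(1)^{\oplus(a+1)}$, the projectivized normal bundle is a trivializable $\mathbb{P}^a$-bundle $L_x\times\mathbb{P}^a$, and $E_x$ is the zero locus of $F_1$ inside it. The factorization writes $E_x = (L_x\times V(L))\cup(V(P)\times\mathbb{P}^a)$ with intersection $V(P)\times V(L)$; each is a product of a projective space with either a projective space or a projective quadric, so each has mixed Tate cohomology, and Mayer--Vietoris (Proposition~\ref{p:mayer-vietoris}) gives that $\mathrm{H}^*(E_x;\mathbb{Q})$ is mixed Tate. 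The case of $E_y$ follows by the symmetry $(x_i,y_j)\leftrightarrow (y_j,x_i)$, $(a,c)\leftrightarrow(c,a)$ of the graph polynomials.

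Having shown each of $E_x$, $E_y$, $E_z$ is mixed Tate, three applications of Corollary~\ref{c:blowup} show that $\mathrm{H}^{a+c}(\bX_{(a,1,c);D};\mathbb{Q})$ agrees with $\mathrm{H}^{a+c}(X_{(a,1,c);D};\mathbb{Q})$ up to mixed Tate factors. The latter lies in ${\bf MHS}^\mathrm{hyp}_\mathbb{Q}$ by Theorem~\ref{thm:sunsetmot}, and since this subcategory is extension-closed and contains the mixed Tate structures, so does the former. The only real obstacle is the factorization of $F_1$; this amounts to a bookkeeping exercise in expanding the explicit formulas~\eqref{eq:UVF}, after which the argument is just an assembly of results already proved in the paper.
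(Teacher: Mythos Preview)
Your proposal is correct and follows essentially the same approach as the paper: compute the exceptional divisor over each of $L_x,L_y,L_z$ explicitly by extracting the lowest-order part of ${\bf F}_{(a,1,c);D}$ in the normal variables, observe that the result is a hyperplane (for $L_z$) or a union of a hyperplane factor and a quadric factor (for $L_x,L_y$) inside a product of projective spaces, hence mixed Tate, and then conclude via Corollary~\ref{c:blowup} and Theorem~\ref{thm:sunsetmot}. The paper only writes out the $L_z$ case in detail and says ``repeating similar arguments'' for $L_x,L_y$; your explicit factorization $F_1 = (z+\sum y_i)\cdot P(x)$ for the $L_x$ case is exactly what that phrase stands in for, and your verification of it from~\eqref{eq:UVF} is accurate.
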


    \begin{proof}
Taking Equation~\eqref{eq:UVF} as our starting point, we may view the blow up along $L_z$ as a hypersurface in the toric variety $\mathrm{Bl}_{L_z}\mathbb{P}^{a+c}$ with homogeneous equations 
\begin{align}\label{eq:UVF2}
{\bf U}_{(a,1,c)}' & = \left(z +  w\sum_{i=1}^c x_i\right) \left( \sum_{i=1}^a y_i \right) + z \left( \sum_{i=1}^c x_i\right), \cr
{\bf V}_{(a,1,c);D}' & = \left( z + w\sum_{i=1}^a
                   y_i\right)\left(\sum_{1\leq i<j\leq c} p_{ij}^2 x_i
                   x_j\right)
                    + \left(z+w\sum_{i=1}^c x_i\right)\left(
                   \sum_{1\leq i<j\leq a} q_{ij}^2 y_i y_j \right) \cr
                   &+ z\left( \sum_{i=1}^c\sum_{j=1}^a r_{ij}^2 x_i y_j \right),\cr
{\bf F}_{(a,1,c);D}' & = {\bf U}_{(a,1,c)}'\left( w\sum_{i=1}^c m_{i+a}^2 x_i + w\sum_{i=1}^a m^2_i y_i + m^2_{a+c+1} z\right) - w{\bf V}_{(a,1,c);D}'.
\end{align}
where $z$ has homogeneous weight $(1,0)$, $w$ has homogeneous weight $(1,-1)$, and $x_i, y_i$ all have homogeneous weight $(0,1)$. The exceptional divisor of the blow up is the vanishing locus of $w$, which is precisely the vanishing locus of $m_{a+c+1}^2 z0(\sum x_i + \sum y_i)$ in $\mathbb{P}^{a+c-1}$ which has Tate cohomology. Repeating similar arguments for $L_x$ and $L_y$ indicates that $\bX_{(a,1,c);D}$ has cohomology in ${\bf MHS}_\mathbb{Q}^\mathrm{hyp}$. 
    \end{proof}

Recall the well-known fact (e.g.,~\cite{Brown:2015fyf}) that if $e$ is an edge of the graph
polynomial ${\bf F}_{\Gamma;D}(t)$ then ${\bf F}_{\Gamma;D}(t)|_{x_e =
  0} = {\bf F}_{\Gamma/e;D}$ where $\Gamma/e$ denotes graph
contraction along the edge $e$. For instance, if $\Gamma$ is a two-loop graph $(a,b,c)$ then the contractions of $\Gamma$ are $(a-1,b,c), (a,b-1,c), (a,b,c-1)$.
\begin{lemma}\label{l:a0c}
For arbitrary kinematic and mass parameters, $\bX_{(a,0,c);D}$ has cohomology in ${\bf MHS}^\mathrm{hyp}_\mathbb{Q}$. 
\end{lemma}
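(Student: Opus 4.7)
The plan is to exploit the fact that the $(a,0,c)$ graph is a wedge of two cycles at a single identified vertex $v^*$, which forces the Symanzik polynomials to factor. Writing $U_1 := \sum_{i=1}^{a} y_i$ and $U_2 := \sum_{j=1}^{c} x_j$, every spanning tree must break exactly one edge from each cycle, giving ${\bf U}_{(a,0,c)} = U_1 U_2$. Every spanning $2$-forest must cut at least one edge from each cycle, and the forests split into two families according to which cycle is cut twice. In each family, momentum conservation forces $s_{\sf F}$ to depend only on the pair of doubly-cut edges and not on the singly-cut edge in the other cycle, so summing over that single-edge choice pulls out a factor of $U_1$ or $U_2$. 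This gives
\[
{\bf V}_{(a,0,c);D} \;=\; U_1\, Q(x) \;+\; U_2\, Q'(y)
\]
for quadratic forms $Q$ in $x$ and $Q'$ in $y$. Setting $A := U_2 M - Q(x)$ and $B := Q'(y)$, where $M := \sum_e m_e^2 x_e$, one obtains ${\bf F}_{(a,0,c);D} = U_1 A - U_2 B$, which vanishes identically on the codimension $2$ linear subspace $\Lambda := V(U_1, U_2) \subset \mathbb{P}^{a+c-1}$.

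I would then blow up $\mathbb{P}^{a+c-1}$ along $\Lambda$, realising the ambient blow up as the $(1,1)$-hypersurface $\{uU_2 = vU_1\}$ in $\mathbb{P}^{a+c-1} \times \mathbb{P}^1_{[u:v]}$. A chart computation (multiplying ${\bf F}$ by $u$ or $v$ and substituting the blow up relation) shows that the strict transform $\widetilde{X}$ of $X_{(a,0,c);D}$ is cut out in this ambient blow up by $uA - vB = 0$. Projection to the $\mathbb{P}^1$-factor realises $\widetilde{X} \to \mathbb{P}^1$ as a quadric fibration of relative dimension $a+c-3$ (each fiber being a hyperplane section of a quadric in $\mathbb{P}^{a+c-1}$). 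The exceptional divisor $E := \widetilde{X} \cap (\Lambda \times \mathbb{P}^1)$ is cut out in $\mathbb{P}^{a+c-3} \times \mathbb{P}^1$ by $u\, Q(x)|_\Lambda + v\, Q'(y)|_\Lambda = 0$ and is itself a quadric fibration over $\mathbb{P}^1$. Corollary~\ref{cor:mhshyp} then places both $\mathrm{H}^*(\widetilde{X};\mathbb{Q})$ and $\mathrm{H}^*(E;\mathbb{Q})$ in ${\bf MHS}^{\mathrm{hyp}}_\mathbb{Q}$, while $\mathrm{H}^*(\Lambda;\mathbb{Q})$ is pure Tate. Chaining the long exact sequences for $(\widetilde{X}, E)$ and $(X_{(a,0,c);D}, \Lambda)$ through the isomorphism $\mathrm{H}^*_c(\widetilde{X} - E) \cong \mathrm{H}^*_c(X_{(a,0,c);D} - \Lambda)$ yields $\mathrm{H}^*(X_{(a,0,c);D};\mathbb{Q}) \in {\bf MHS}^{\mathrm{hyp}}_\mathbb{Q}$.

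To pass from $X_{(a,0,c);D}$ to $\bX_{(a,0,c);D}$, I would note that the only proper motic subgraphs of $(a,0,c)$ are the two individual cycles, whose coordinate centres $V(y_1,\dots,y_a)$ and $V(x_1,\dots,x_c)$ are disjoint in $\mathbb{P}^{a+c-1}$ and each contained in $X_{(a,0,c);D}$. The lowest-degree-in-$y$ part of ${\bf F}$ along $V(y_1,\dots,y_a)$ is $U_1\,(U_2 M_x - Q(x))$ with the two factors involving disjoint sets of variables, so the exceptional divisor of the strict transform is a union of a hyperplane and a quadric inside a product of projective spaces, and hence has mixed Tate cohomology by Mayer--Vietoris; the same holds for the other centre by symmetry. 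Corollary~\ref{c:blowup} then propagates the result from $X_{(a,0,c);D}$ to $\bX_{(a,0,c);D}$.

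The hardest step will be verifying the factorisation ${\bf V}_{(a,0,c);D} = U_1 Q(x) + U_2 Q'(y)$. While this is transparent in the small case $(2,0,2)$ by direct enumeration, the general case requires carefully grouping spanning $2$-forests by which cycle is cut twice and then invoking momentum conservation to eliminate the dependence of $s_{\sf F}$ on the singly-cut edge, so that summation over that choice pulls out the desired linear factor.
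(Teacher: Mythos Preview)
Your proposal is correct and follows essentially the same approach as the paper. The paper simply states the factorised form of the Symanzik polynomials rather than deriving it from the spanning-tree/forest combinatorics, and it packages your blow-up along $\Lambda = V(U_1,U_2)$ and the resulting quadric fibration over $\mathbb{P}^1$ into a direct citation of Proposition~\ref{prop:cubic-hyp} (``cubic containing a codimension~1 linear subspace''), whereas you unwind that proposition explicitly; the treatment of the motic blow-ups producing $\bX_{(a,0,c);D}$ is then identical. Your concern that the factorisation ${\bf V}_{(a,0,c);D} = U_1 Q(x) + U_2 Q'(y)$ is the hardest step is somewhat misplaced: it is immediate from the combinatorics you describe and the paper treats it as a given formula.
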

\begin{proof}
We have the following expressions for the Symanzik polynomials of $(a,0,c)$-graphs.
\begin{align}
{\bf U}_{(a,0,c)} & = \left(\sum_{i=1}^a x_i\right) \left( \sum_{j=1}^c z_j\right), \cr
{\bf V}_{(a,0,c);D}&= \left(\sum_{i=1}^a x_i \right)\left( \sum_{1\leq
                     i< j\leq c} p_{ij}^2 z_i z_j\right)+
                     \left(\sum_{i=1}^c z_i \right)\left( \sum_{1\leq
                     i< j\leq a} q_{ij}^2 x_i x_j\right), \cr
{\bf F}_{(a,0,c);D} &= {\bf U}_{(a,0,c)}\left( \sum_{i=1}^a m_i^2 x_i + \sum_{j=1}^c m_{j+a}^2 z_j\right) - t {\bf V}_{(a,0,c);D}. 
\end{align}
Note that this is a cubic containing a codimension 1 linear subspace 
\begin{equation}
\sum_{i=1}^a x_i = \sum_{j=1}^c z_j = 0.
\end{equation}
Therefore the cohomology of $X_{(a,0,c);D}$ is in ${\bf MHS}^\mathrm{hyp}_\mathbb{Q}$ by Proposition~\ref{prop:cubic-hyp}. To obtain $\bX_{(a,0,c);D}$, we blow up in the disjoint linear subspaces $L_a = V(x_i \mid 1 \leq i \leq a)$ and $L_c = V(z_j \mid 1 \leq j \leq c)$. The exceptional divisors of the first blow up is written as
\begin{equation}
\left( \sum_{j=1}^c z_j \right)\left(\left( \sum_{i=1}^a m_i^2 x_i
  \right) \left(\sum_{i=1}^a x_i\right) + \sum_{1\leq i< j\leq a} q_{ij}^2 x_i x_j \right) = 0
\end{equation}
which is the union of a quadric and a hyperplane, thus its cohomology is mixed Tate. Therefore, by Corollary~\ref{c:blowup}, $\mathrm{H}^*(\bX_{(a,0,c);D};\mathbb{Q})$ agrees with $\mathrm{H}^*(X_{(a,0,c);D};\mathbb{Q})$ up to mixed Tate factors and thus it too is in ${\bf MHS}^\mathrm{hyp}_\mathbb{Q}$.
\end{proof}

\begin{theorem}\label{thm:bek-b}
Suppose $\Gamma$ is an $(a,1,c)$ graph and that $3D/2 \leq a + c$. The mixed Hodge structure of $\mathrm{H}^{a+c}(\mathbb{P}_\Gamma - \bX_{\Gamma;D}, B_\Gamma - \bX_{\Gamma;D};\mathbb{Q})$ is contained in ${\bf MHS}_\mathbb{Q}^\mathrm{hyp}$ .
\end{theorem}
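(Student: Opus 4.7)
\medskip

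\noindent\textbf{Proof plan.} My strategy is to fit the relative cohomology group into the long exact sequence of the pair
\[
\cdots \to \mathrm{H}^{i-1}(B_\Gamma - B_\Gamma\cap\bX_{\Gamma;D};\mathbb{Q}) \to \mathrm{H}^{i}(\mathbb{P}_\Gamma - \bX_{\Gamma;D}, B_\Gamma - B_\Gamma\cap\bX_{\Gamma;D};\mathbb{Q}) \to \mathrm{H}^{i}(\mathbb{P}_\Gamma - \bX_{\Gamma;D};\mathbb{Q})\to \cdots
\]
and reduce the theorem to showing that both the absolute open-cohomology $\mathrm{H}^{*}(\mathbb{P}_\Gamma - \bX_{\Gamma;D};\mathbb{Q})$ and the boundary cohomology $\mathrm{H}^{*}(B_\Gamma - B_\Gamma\cap\bX_{\Gamma;D};\mathbb{Q})$ lie in ${\bf MHS}^\mathrm{hyp}_\mathbb{Q}$. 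Since ${\bf MHS}^\mathrm{hyp}_\mathbb{Q}$ is extension-closed and closed under Tate twists, the conclusion then follows.

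For the absolute piece, the variety $\mathbb{P}_\Gamma$ is a smooth toric variety, so $\mathrm{H}^*(\mathbb{P}_\Gamma;\mathbb{Q})$ is mixed Tate; combined with Lemma~\ref{l:blup} (which puts $\mathrm{H}^*(\bX_{\Gamma;D};\mathbb{Q})$ into ${\bf MHS}^\mathrm{hyp}_\mathbb{Q}$), the long exact sequence for compactly supported cohomology of the pair $(\mathbb{P}_\Gamma,\bX_{\Gamma;D})$ places $\mathrm{H}^*_c(\mathbb{P}_\Gamma-\bX_{\Gamma;D};\mathbb{Q})$ in ${\bf MHS}^\mathrm{hyp}_\mathbb{Q}$, and Poincaré duality on the smooth quasi-projective variety $\mathbb{P}_\Gamma-\bX_{\Gamma;D}$ transfers this to the usual cohomology.

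For the boundary piece, the key observation is that $B_\Gamma$ is a simple normal crossings divisor whose irreducible components come in two flavors. The first are the proper transforms of the coordinate hyperplanes $V(x_e)$: by the functoriality of the Bloch--Esnault--Kreimer construction, such a component is canonically isomorphic to $\mathbb{P}_{\Gamma/e}$, and its intersection with $\bX_{\Gamma;D}$ is $\bX_{\Gamma/e;D}$. The contractions of an $(a,1,c)$ graph are of type $(a-1,1,c)$, $(a,1,c-1)$, or $(a,0,c)$; the first two are handled by induction on $a+c$ (with base case a small $(a,1,c)$ verified by Theorem~\ref{thm:sunsetmot} together with the fact that the argument above gives absolute cohomology in ${\bf MHS}^\mathrm{hyp}_\mathbb{Q}$), and the third is handled by Lemma~\ref{l:a0c} together with a blow-up analysis matching that of Lemma~\ref{l:blup}. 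The second flavor are the exceptional divisors of the blow ups along the motic subgraphs, which as in the computation of Lemma~\ref{l:blup} are explicit quadric/hyperplane unions in toric bundles whose intersections with $\bX_{\Gamma;D}$ have mixed Tate cohomology. Feeding these strata into the Mayer--Vietoris spectral sequence associated to the normal crossings stratification of $B_\Gamma$ (intersected with the complement of $\bX_{\Gamma;D}$), and using that ${\bf MHS}^\mathrm{hyp}_\mathbb{Q}$ is extension-closed, places $\mathrm{H}^*(B_\Gamma - B_\Gamma\cap\bX_{\Gamma;D};\mathbb{Q})$ in ${\bf MHS}^\mathrm{hyp}_\mathbb{Q}$.

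The principal technical obstacle will be the bookkeeping for the inductive Mayer--Vietoris step: one must check that the restriction of $\bX_{\Gamma;D}$ to each boundary stratum agrees with the corresponding $\bX_{\Gamma';D}$ for a contracted graph $\Gamma'$ (and not with some more complicated singular object acquiring extra components from the blow-ups), and that all the exceptional-divisor intersections really are mixed Tate. The condition $3D/2 \leq a+c$ plays no role in the Hodge-theoretic argument itself; it only ensures that $\omega_{\Gamma;D}$ gives a well-defined form on $\mathbb{P}_\Gamma - \bX_{\Gamma;D}$ so that the relative cohomology group in the statement is genuinely the one carrying this period, and so it is used implicitly only at the level of set-up.
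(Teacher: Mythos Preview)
Your proposal is correct and follows essentially the same route as the paper: reduce via the long exact sequence of the pair to the absolute open cohomology (handled by toricity of $\mathbb{P}_\Gamma$ together with Lemma~\ref{l:blup}) and the boundary cohomology, then analyze the latter via the Mayer--Vietoris spectral sequence for the normal-crossings divisor $B_\Gamma$, checking that each stratum $D_I$ meets $\bX_{\Gamma;D}$ either in $\bX_{\Gamma';D}$ for a contracted graph $\Gamma'$ (Lemmas~\ref{l:blup} and~\ref{l:a0c}) or in a mixed-Tate exceptional locus. The one minor difference is that you set up an induction on $a+c$ for the contracted-graph strata, whereas the paper simply applies Lemmas~\ref{l:blup} and~\ref{l:a0c} directly---since those lemmas are stated for all positive $a,c$, no induction is actually required.
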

\begin{proof}
This is a straightforward application of the Mayer--Vietoris spectral sequence for the pair $(\mathbb{P}_\Gamma - \bX_{\Gamma;D}, B_\Gamma \cap (\mathbb{P}_\Gamma - \bX_{\Gamma;D}))$ along with the lemmas above.  First observe that, since $\mathbb{P}_\Gamma$ is a toric variety,  $\mathrm{H}^*(\mathbb{P}_\Gamma;\mathbb{Q})$ is pure Tate, and hence by standard exact sequences of mixed Hodge structures,  $\mathrm{H}^{*}(\mathbb{P}_\Gamma - \bX_{(a,1,c);D};\mathbb{Q})$ is also in ${\bf MHS}^\mathrm{hyp}_\mathbb{Q}$. From the relative cohomology exact sequence for the pair $(\mathbb{P}_\Gamma - \bX_{\Gamma;D}, B_\Gamma \cap (\mathbb{P}_\Gamma - \bX_{\Gamma;D}))$ it will suffice to show that $\mathrm{H}^*(B_\Gamma \cap (\mathbb{P}_\Gamma - \bX_{\Gamma;D});\mathbb{Q})$ is in ${\bf MHS}^\mathrm{hyp}_\mathbb{Q}$. 

The subscheme $B_\Gamma$ consists of $a + c+ 4$ divisors, $D_0,\dots D_{a+c+3}$ where $D_i$ are the proper transforms of $x_i, y_i$ or $z_i=0$ in $\mathbb{P}^{a+c}$ under blow up if $i=0,\dots,a+c$ and $D_{a+c+1},D_{a+c+2}, D_{a+c+3}$ are the exceptional divisors of ${\bm b}$. For a subset $I$ of $\{0,\dots, a+c+3\}$ let $D_I = \cap_{i \in I} D_i$.  The Mayer--Vietoris spectral sequence for the cohomology of $B_\Gamma \cap (\mathbb{P}_\Gamma - \bX_{\Gamma;D})$ is a spectral sequence of mixed Hodge structures with
\begin{equation}
E_2^{p,q} = \bigoplus_{|I| = p} \mathrm{H}^q(D_I\cap \bX_{(a,1,c);D};\mathbb{Q})
\end{equation}
so it suffices to see that $\mathrm{H}^{q}(D_I\cap \bX_{(a,1,b)};\mathbb{Q})$ are in ${\bf MHS}^\mathrm{hyp}_\mathbb{Q}$.

\smallskip

\noindent (1) {\em $D_I$ where $a+c+1,a+c+2,a+c+3 \notin I$.} For such strata $Z$, the intersection $Z \cap X_{(a,1,c);D}$ is simply $X_{(a',1,c');D}$ or $X_{(a',0,c');D}$ for $a' \leq a$ and $c' \leq c$. By functoriality of the blow up, the intersection of $\bX_{(a,1,c);D}$ with the proper transform of $Z$ under ${\bm b} : \mathbb{P}_{\Gamma} \rightarrow \mathbb{P}^{a+c}$ is $\bX_{(a',1,c');D}$ or $\bX_{(a'0,c');D}$. Since $D_I$ is toric, Lemmas~\ref{l:blup} and~\ref{l:a0c} show that $\mathrm{H}^*(\bX_{(a,1,c)}\cap D_I;\mathbb{Q})$ is in ${\bf MHS}^\mathrm{hyp}_\mathbb{Q}$.

\smallskip

\noindent (2) {\em $D_I$ where $a+c+1,a+c+2,a+c+3 \in I$.} Note that since the exceptional divisors of $\bm{b}$ do not intersect, $I$ contains at most one of $a+c+1, a+ c+2,$ or $a+c+3$.  Suppose $a+ c+ 1 \in I$ without loss of generality. In this case, we see that $D_I \cap \bX_{(a,1,c);D}$ is is the exceptional divisor of the proper transform of the intersection between $X_{(a,1,c);D}$ and the boundary divisors of $\mathbb{P}^{a+c}$ corresponding to $I - \{a+c+1\}$ under the blow up map. We argued in the proofs of Lemmas~\ref{l:blup} and~\ref{l:a0c} that such varieties have mixed Tate cohomology.
\end{proof}
\begin{remark}
Theorem~\ref{thm:bek-b} applies nearly verbatim to the case where masses $m_e : e\in e(\Gamma)$ are allowed to vanish. In this case, the resolution $\mathbb{P}_\Gamma$ taken in Section~\ref{sect:BEK-Brown} is more subtle (see Section~3 of~\cite{Brown:2015fyf}). In this more general situation, one replaces the blow up ${\bm b} : \mathbb{P}_\Gamma \rightarrow \mathbb{P}^{a+c}$ along the three linear subspaces $L_x,L_y,L_z$ with blow ups along only a subset of $L_x,L_y,L_z$.
\end{remark}

\begin{remark}
As a consequence of the arguments in the proof of Theorem~\ref{thm:bek-b}, and Remark~\ref{r:genusbound}, we see that, under the condition that $3D/2 \leq a+c$, the $\mathrm{H}^*(\mathbb{P}_\Gamma - \bX_{\Gamma;D}, B_\Gamma \cap (\mathbb{P}_\Gamma - \bX_{\Gamma;D}))$ is contained in the subcategory of ${\bf MHS}^\mathrm{hyp}_\mathbb{Q}$ generated by the Tate Hodge structure and $\mathrm{H}^1(C;\mathbb{Q})$ of hyperelliptic curves of genus $\leq (a+c)/2$.

As we saw in Figures~\ref{fig:hneven} and~\ref{fig:hnodd} above, this bound is not particularly effective. As we see in Sections~\ref{sec:Ia12} and~\ref{s:Ellmot}, restricting to the case where $c=1,2,$ produces much sharper bounds. We believe that in general, the genus of curves which contribute to the motive attached to an $(a,1,c)$ graph is bounded by a constant which depends only on $\min\{a,c\}$.
\end{remark}

%-----------------------------------------------------------------------

\subsection{The double box motive, $(3,1,3)$}\label{sec:Motive313}

\begin{figure}[h]
\begin{tikzpicture}[scale=0.6]
\filldraw [color = black, fill=none, very thick] (0,0) circle (2cm);
\draw [black,very thick] (-2,0) to (2,0);
\filldraw [black] (2,0) circle (2pt);
\filldraw [black] (-2,0) circle (2pt);
\filldraw [black] (1.414,1.414) circle (2pt);
\filldraw [black] (-1.414,1.414) circle (2pt);
\filldraw [black] (1.414,-1.414) circle (2pt);
\filldraw [black] (-1.414,-1.414) circle (2pt);
\draw [black,very thick] (-2,0) to (-3,0);
\draw [black,very thick] (2,0) to (3,0);
\draw [black,very thick] (1.414,1.414) to (2.25,2.25);
\draw [black,very thick] (-1.414,1.414) to (-2.25,2.25);
\draw [black,very thick] (1.414,-1.414) to (2.25,-2.25);
\draw [black,very thick] (-1.414,-1.414) to (-2.25,-2.25);
\end{tikzpicture}
\caption{The double-box graph}\label{fig:doublebox}
\end{figure}
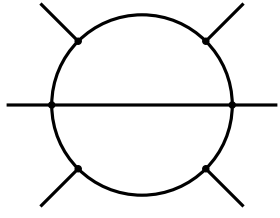

In this section we look at the double box graph depicted in Figure~\ref{fig:doublebox} from the
perspective of the discussion in the previous section.
The graph polynomials read 
    \begin{align}\label{eq:UVF313}
{\bf U}_{(3,1,3)} & = \left(  \sum_{i=1}^3 x_i\right) \left( \sum_{i=1}^3 y_i \right) + z \left( \sum_{i=1}^3 x_i+ \sum_{i=1}^3 y_i\right) ,\cr
{\bf V}_{(3,1,3);D} & = \left( z + \sum_{i=1}^3
                  y_i\right)\left(p_2^2 x_1
                  x_2+(p_2+p_3)^2x_1x_3+p_3^2x_2x_3\right)  \cr&
                                                                 + \left(z+\sum_{i=1}^3 x_i\right)\left(
                  p_6^2y_1 y_2+(p_5+p_6)^2y_1y_3+p_5^2y_2y_3 \right) \cr&
                                                + z\left(
                                                                          \sum_{i=1}^3\sum_{j=1}^3
                                                                          \left(\sum_{s=7-i}^6
                                                                          p_{s}+\sum_{r=1}^j
                                                                          p_r\right)^2
                                                                          x_i y_j \right),\cr
{\bf F}_{(3,1,3);D}& = {\bf U}_{(3,1,3)}\left( \sum_{i=1}^3 m^2_i y_i +\sum_{i=1}^3 m_{i+3}^2 x_i + m^2_{7} z\right)- {\bf V}_{(3,1,3);D}.
\end{align}
where $p_1,p_2,p_3,p_5,p_6$ are vectors of $\mathbb{C}^D$.

This example has been analyzed by Bloch in~\cite{Bloch:2021hzs} where it is proven that there is a ``motivic'' elliptic curve in the double box hypersurface when $D=4$, however Bloch's results shed no light on the geometry of this curve. In this section we give a geometric realization of this family of elliptic curves. Furthermore, we will prove the following theorem.

\begin{theorem}\label{thm:doublebox}
Let $X_{(3,1,3);D}$ be the double box hypersurface, and assume that kinematic parameters are chosen generically. If $D> 4$ then there is a genus 2 curve $C$ so that $\Gr^W_5\mathrm{H}^5(X_{(3,1,3);D};\mathbb{Q})\cong \mathrm{H}^1(C;\mathbb{Q})(-2)$. If $D = 4$ then there is a smooth elliptic curve $E$ so that $\Gr^W_5\mathrm{H}^5(X_{(3,1,3);D};\mathbb{Q})\cong \mathrm{H}^1(E;\mathbb{Q})(-2)$. If $D < 4$ then $\Gr^W_5\mathrm{H}^5(X_{(3,1,3);D};\mathbb{Q})$ is trivial. In all cases, $W_4\mathrm{H}^5(X_{(3,1,3);D};\mathbb{Q})$ is mixed Tate.
\end{theorem}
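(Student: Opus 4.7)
The plan is to reduce the double box to the setting of Section~\ref{sn:cubic} via the birational transformation $\phi$ of Section~\ref{sect:strat}, and then to identify the relevant hyperelliptic curve by analyzing the discriminant of the resulting quadric fibration. The argument proceeds in three steps.

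First, I would apply the birational transformation $\phi$ to pass from $X_{(3,1,3);D}$ to its proper transform $X'_{(3,1,3);D}$, and then run a weight-graded version of the argument proving Theorem~\ref{thm:sunsetmot}. All of the hyperplane and intersection pieces enumerated in Proposition~\ref{prop:strat} are of dimension at most four, so their cohomology contributes to $\mathrm{H}^5$ only through classes of weight at most four. The compactly supported long exact sequences then simultaneously show that $W_4 \mathrm{H}^5(X_{(3,1,3);D};\mathbb{Q})$ is mixed Tate and that the natural map identifies $\Gr^W_5 \mathrm{H}^5(X_{(3,1,3);D};\mathbb{Q})$ with $\Gr^W_5 \mathrm{H}^5(X'_{(3,1,3);D};\mathbb{Q})$.

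Second, I would use that $X'_{(3,1,3);D}$ contains the codimension one linear subspace $L = V(z,\,{\bf U}'_{(3,1,3)})$, evident from Equation~\eqref{e:Y}, so that ${\bf F}'_{(3,1,3);D}$ admits the presentation $-zQ_1 + {\bf U}'_{(3,1,3)}Q_2$ of Section~\ref{sn:cubic}. Blowing up $L$ produces a quadric fibration $\pi \colon \Bl_L X'_{(3,1,3);D} \to \mathbb{P}^1$ of relative dimension four with exceptional divisor $H$, itself a quadric fibration of relative dimension three. Applying Corollary~\ref{cor:mhshyp} to $H$ (for which $n + s_\mathcal{Q} + 1 = 4$ is even) shows that $\mathrm{H}^*(H;\mathbb{Q})$ is mixed Tate, while Corollary~\ref{cor:mhshyp}(1) applied to $\pi$ yields $\Gr^W_5 \mathrm{H}^5(\Bl_L X'_{(3,1,3);D};\mathbb{Q}) \cong \mathrm{H}^1(C;\mathbb{Q})(-2)$, where $C$ is the hyperelliptic double cover of $\mathbb{P}^1$ branched at the discriminant locus of $\pi$. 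Combined with the long exact sequences of Proposition~\ref{prop:cubic-hyp}, this identifies $\Gr^W_5 \mathrm{H}^5(X'_{(3,1,3);D};\mathbb{Q})$ with $\mathrm{H}^1(C;\mathbb{Q})(-2)$, and hence by the first step $\Gr^W_5 \mathrm{H}^5(X_{(3,1,3);D};\mathbb{Q}) \cong \mathrm{H}^1(C;\mathbb{Q})(-2)$.

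The main obstacle is to determine the genus of $C$ as a function of $D$. For this I would extract the $6 \times 6$ symmetric matrix $M(s,t)$ encoding the fibre quadric of $\pi$ over $[s:t] \in \mathbb{P}^1$ directly from Equation~\eqref{e:Y} and analyze its effective discriminant, obtained from $\det M(s,t)$ after dividing out the spurious factor coming from the rank-one $s^2$-coefficient, which is essentially the square of the linear form ${\bf U}'_{(3,1,3)}$ and corresponds to fibres outside the regime of Proposition~\ref{prop:quadfib}. For generic kinematic parameters with $D \geq 6$, the effective discriminant has six simple zeros, so $C$ is a smooth genus two hyperelliptic curve, yielding case (1); for $D = 4$, the Gram determinant constraints among the external momenta specialize the discriminant further so that the smooth model of $C$ has only four branch points, making $C$ elliptic for generic parameters, yielding case (2); for $D < 4$, the Gram relations collapse the effective branch locus to at most two simple zeros, rendering $C$ rational and $\Gr^W_5 \mathrm{H}^5(X_{(3,1,3);D};\mathbb{Q})$ trivial, yielding case (3). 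A final consistency check would compare the elliptic curve $E$ so extracted for $D = 4$ with the one identified by Bloch in~\cite{Bloch:2021hzs}.
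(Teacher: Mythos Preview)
Your overall route coincides with the paper's: pass to $X'_{(3,1,3);D}$ via $\phi$, blow up the codimension-one linear subspace $L=V(z,{\bf U}'_{(3,1,3)})$ to obtain a quadric fibration $\mathcal{Q}\to\mathbb{P}^1$, and read off the curve from its discriminant. Your step~3 is also close to the paper's computation, where the discriminant is found to be $Xz^3$ times a quartic whose constant coefficient $A_0$ is precisely the $5\times5$ Gram determinant of the external momenta; this is what forces the branch locus to drop from six points ($D\geq6$) to four ($D=4$) to at most two ($D<4$).

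The genuine gap is in step~1. The assertion that pieces ``of dimension at most four'' contribute to $\mathrm{H}^5$ only in weight $\leq4$ is false as a general principle: a projective fourfold $Z$ can have $\Gr^W_5\mathrm{H}^5(Z;\mathbb{Q})\neq0$. In the compactly supported long exact sequence you invoke, the term $\Gr^W_5\mathrm{H}^5(H_1\cup H_2;\mathbb{Q})$ is not killed by dimension alone, and the cubic-fourfold pieces $H_1,H_2,H_2'$ are only known a priori to lie in ${\bf MHS}^\mathrm{hyp}_\mathbb{Q}$ via Proposition~\ref{prop:cubic-hyp}. Concretely, blowing up the codimension-one plane in $H_i$ gives a quadric fibration $\widetilde{H}_i\to\mathbb{P}^1$ of relative dimension three; if this fibration had generic corank~$1$, Corollary~\ref{cor:mhshyp} would place a hyperelliptic contribution exactly in $\Gr^W_5\mathrm{H}^5(\widetilde{H}_i;\mathbb{Q})$, spoiling both the identification of $\Gr^W_5$ and the mixed-Tateness of $W_4$. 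The paper's proof therefore devotes its steps~(2)--(4) to checking, by direct computation in the accompanying \textsc{Sage} worksheet, that each auxiliary quadric fibration attached to $H_1,H_2,H_2'$ and to the exceptional divisor of ${\bm b}$ is generically nondegenerate with a discriminant locus of at most three points, whence all of these strata have mixed Tate cohomology. Your step~2 similarly assumes corank~$0$ for both $\pi$ and $\pi|_H$ without justification; the paper verifies this in its steps~(1) and~(2).
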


\begin{proof}
We trace through the computations in the previous section in this particular example and show that the only non-mixed Tate contribution to the cohomology of $X_{(3,1,3);D}$ is from a smooth curve. We then show that this smooth curve has the appropriate genus and we provide a description of it. 

We summarize the geometric transformation involved in the proof of Theorem~\ref{thm:sunsetmot} by
\begin{equation}
X_{(3,1,3);D}\,\, -\!\dashrightarrow  X'_{(3,1,3);D} \xleftarrow{\quad {\bf b}\quad} \mathrm{Bl}_{L} X'_{(3,1,3);D} =: \mathcal{Q}
\end{equation}
where $\mathcal{Q}$ admits a quadric fibration over
$\mathbb{P}^1$. The birational isomorphism between $X_{(3,1,3);D}$ and
$X'_{(3,1,3);D}$ replaces a pair of hyperplane sections $H_1 = V(z)$ and
$H_2 = V(z + x_1 + x_2 + x_3)$ with two distinct hyperplane sections
$H_1' = V(z), H_2' = V(z + x_1 + x_2 + x_3)$. The blow-up map replaces
a codimension 2 linear subspace $L = V(z,\sum_{i=1}^c x_i + \sum y_i)$
in $X'_{(3,1,3);4}$ with a projective hypersurface. The goal of this proof
is to show that all of the strata added and removed in this birational
transformation have mixed Tate cohomology, therefore there is an
isomorphism between the cohomology of $X_{(3,1,3);D}$ and that of
$\mathcal{Q}$ up to mixed Tate factors.

\smallskip
\noindent (1) {\em The cohomology of $\mathcal{Q}$ is determined
        by a curve $C$ of the appropriate genus, up to mixed Tate
        factors}. Let $X,z$ be parameters on the base $\mathbb{P}^1$
      of the quadratic fibration on $\mathcal{Q}$. The
      quadratic fibration $\mathcal{Q}$ has discriminant locus
      of the form $V(Xz^3(A_4X^4 + A_3X^3z + A_2X^2z^2 +A_1Xz^3 + A_0z^4))$
      where $A_4,A_3,A_2,A_1,A_0$ are complicated expressions in the kinematic
      parameters.\footnote{ The computations that are involved in this proof may be found in the SAGE worksheet linked~\href{https://nbviewer.org/github/pierrevanhove/MotivesFeynmanGraphs/blob/main/Double-Box.ipynb}{Double-Box.ipynb}.} For generic choice of kinematic parameters the
      factor $A_4X^4 + A_3X^3z + A_2X^2z^2 +A_1Xz^3 + A_0z^4$ is separable
      and has roots distinct from $0$ and $\infty$. Local analysis
      shows that monodromy on the cohomology of the quadric fibres
      around each point in the discriminant locus is non-trivial,
      therefore $\mathrm{H}^5(\mathcal{Q};\mathbb{Q})$ is
      isomorphic to $ \mathrm{H}^1(C;\mathbb{Q})$ where $C$ is a genus
      2 curve, by Proposition~\ref{prop:quadfib}. Specializing our
      kinematic parameters to $D = 6$, we see that this situation
      persists; the genus of $C$ remains 2.
      In doing this, we notice
      that the coefficient $A_0$ is {precisely} the classical Gram
      determinant factor~\cite{Asribekov:1962tgp} for the double box
      \begin{equation}\label{e:GramDoubleBox}
 \det \begin{pmatrix}
   p_1^2& p_1\cdot p_2&p_1\cdot p_3&p_1\cdot p_4&p_1\cdot p_5\cr
   p_1\cdot p_2& p_2^2&p_2\cdot p_3&p_2\cdot p_4&p_2\cdot p_5\cr
   p_1\cdot p_3& p_2\cdot p_3&p_3^2&p_3\cdot p_4&p_3\cdot p_5\cr
  p_1\cdot p_4& p_2\cdot p_4&p_3\cdot p_4&p_4^2&p_4\cdot p_5\cr
  p_1\cdot p_5& p_2\cdot p_5&p_3\cdot p_5&p_5\cdot p_4&p_5^2   
 \end{pmatrix}=0.
\end{equation}
Therefore, as $D$ changes from
      $6$ to $4$, the genus of $C$ changes from 2 to 1. Similarly, as
      $D$ changes from $4$ to $2$, the genus of $C$ changes from $1$
      to $0$. 

\smallskip      
\noindent (2) {\em The blow up ${\bm b}$ replaces a line with a quadric fibration over $\mathbb{P}^1$ whose cohomology is mixed Tate.} This is a direct computation. We compute that the exceptional divisor of this blow up is a relative hyperplane section $E$ of the quadric fibration structure on $\mathcal{Q}$, which is also fibred by quadrics over $\mathbb{P}^1$. Direct computation shows that a generic fibre of $\pi|_E$ is smooth. Since $\dim E = 4$, its cohomology is mixed Tate (Proposition~\ref{prop:quadfib}).

\smallskip
    \noindent (3) {\em $H'_1, H_2'$ and their intersection have mixed Tate cohomology.} Computation shows that $H_1'$ is a union of a quadric and a hyperplane, so its cohomology is mixed Tate. Similarly, $H_2'$ is a cubic containing a codimension 1 linear subspace inherited from the fact that $X'_{(3,1,3);D}$ contains a codimension 1 linear subspace. Their intersection is a union of a quadric and a hyperplane, therefore it too has mixed Tate cohomology. 

    \smallskip
    \noindent (4) {\em $H_1, H_2$ and their intersection have mixed Tate cohomology.} The computations here are similar; for each of $H_1, H_2$ and $H_1\cap H_2$ we obtain a cubic threefold. The cubic threefolds $H_1$ and $H_2$ contain codimension 1 linear subspaces $L_1 : y_1 + y_2 + y_3 = x_1 + x_2 + x_3 = 0$ and $L_2: z = x_1 + x_2 + x_3 = 0$ respectively. We blow up these subspaces ${\bm c}_i: \widetilde{H}_i \rightarrow H_i$ to obtain quadric fibrations $p_i : \widetilde{H}_i \rightarrow \mathbb{P}^1, i=1,2$. These quadric fibrations are nondegenerate, and $\widetilde{H}_1,\widetilde{H}_2$ are fourfolds, so it follows that their cohomology groups are all mixed Tate (Corollary~\ref{cor:mhshyp}). Similarly, the exceptional divisors of each blow up admit quadric fibrations over $\mathbb{P}^1$ by the induced projection. For these quadric fibrations, we check that the generic fibre is smooth, and that the discriminant locus consists of fewer than four points in each case. Therefore, again by Corollary~\ref{cor:mhshyp} their cohomology groups are mixed Tate. Consequently, $H_1,H_2$ have mixed Tate cohomology. Finally, $H_1\cap H_2$ is a union of a copy of $\mathbb{P}^3$ and a 3-dimensional quadric, thus its cohomology is mixed Tate. 
\end{proof}
\begin{remark}
We notice that if $D > 4$ then the differential form $\omega_{(3,1,3);D}$ has denominator which contains a power of ${\bf U}_{(3,1,3);D}$. Therefore, in ``physically interesting'' situations, the curve $C$ described in the proof above is at worst an elliptic curve. On the other hand, in dimension $D=6$ the hypersurface $X_{(3,1,3);D}$ and its periods appear as boundary terms in the computation of Feynman integrals attached to $(a,1,c)$ graphs where $a + c \geq 3D/2$. 
\end{remark}
\begin{remark}\label{r:db-ec}
The construction in Section~\ref{s:mainthm} is symmetric in the sense that it can also be carried out with the $x$- and $y$-variables exchanged. Note that this symmetry is not a symmetry of the hypersurface itself. Carrying out the construction in Theorem~\ref{thm:doublebox} with the $x$- and $y$-variables exchanged produces another family of elliptic curves $\widetilde{E}$ so that $\mathrm{H}^1(\widetilde{E};\mathbb{Q})(-2)\cong \Gr^W_5\mathrm{H}^5(X_{(3,1,3);4};\mathbb{Q})$. Since $\mathrm{H}^1(\widetilde{E};\mathbb{Q})\cong \mathrm{H}^1(E;\mathbb{Q})$, it is guaranteed that $E$ and $\widetilde{E}$ are isogenous. One can check that they are in fact isomorphic.
\end{remark}

%---------------------------------------------------------------------
\subsubsection{Picard--Fuchs operators}\label{sec:PF313}

We can make contact with the analysis of Picard--Fuchs operator
$\mathscr L_{(3,1,3);D}$ derived in~\cite{Lairez:2022zkj}. To the
double-box graphs of type $(3,1,3)$ in Figure~\ref{fig:doublebox} one
associates the differential form
\begin{equation}
  \label{e:omega313}
  \omega_{(3,1,3);D}(t)= {{\bf U}_{(3,1,3)}^{7-{3D\over2}}\over ({\bf
      F}_{(3,1,3);D}(t))^{7-D}} \Omega_0 
\end{equation}
the graph polynomials are defined in eq.~\eqref{eq:UVF313}.

In four dimensions $D=4$, $\omega_{(3,1,3);4}(t)$ defines a rational
differential form in $H^6(\mathbb{P}^6- V({\bf F}_{(3,1,3);D}))$
on the complement of the vanishing locus of ${\bf F}_{(3,1,3);D}$. The
five vectors are constrained by the Gram determinant
condition given in~\eqref{e:GramDoubleBox}.
The extended Griffiths--Dwork algorithm mentioned
in Section~\ref{sec:PFGriffithDwork} applied to this case
in Section~6.3 of~\cite{Lairez:2022zkj}  leads to a Picard--Fuchs operator $\mathscr
L_{(3,1,3);4}$ such that
\begin{equation}
    \mathscr L_{(3,1,3);4} \omega_{(3,1,3);4}(t)= d\beta_{(3,1,3);4}.
\end{equation}
of second order 
\begin{equation}
       \mathscr L_{(3,1,3);4} = q_2(t) \left(d\over dt\right)^2+q_1(t)
       {d\over dt}+q_0(t).
     \end{equation}
For several generic choices of kinematic parameters, 
this Picard--Fuchs operator deduced by using the extended Griffiths--Dwork
reduction  matches the Picard--Fuchs operator from the non-mixed Tate
contribution to the cohomology of $X_{(3,1,3);4}$. The  comparison is
presented on this worksheet~\href{https://nbviewer.org/github/pierrevanhove/MotivesFeynmanGraphs/blob/main/Double-Box.ipynb}{Double-Box.ipynb}.

If one relaxes the condition from the Gram determinant
in eq.~\eqref{e:GramDoubleBox} the resulting Picard--Fuchs operator deduced
by using the extended Griffiths--Dwork is found to be of order 4 in
agreement with the analysis of the cohomology of $\mathcal Q$ in Section~\ref{sec:Motive313}.
 
%---------------------------------------------------------------------
\subsection{The pentabox graph motive, $(3,1,4)$}\label{sec:pentabox}

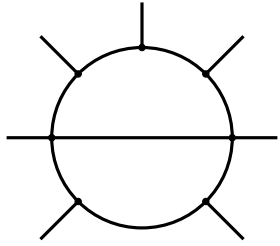
\begin{figure}[h]
\begin{tikzpicture}[scale=0.6]
\filldraw [color = black, fill=none, very thick] (0,0) circle (2cm);
\draw [black,very thick] (-2,0) to (2,0);
\filldraw [black] (2,0) circle (2pt);
\filldraw [black] (0,2) circle (2pt);
\filldraw [black] (-2,0) circle (2pt);
\filldraw [black] (1.414,1.414) circle (2pt);
\filldraw [black] (-1.414,1.414) circle (2pt);
\filldraw [black] (1.414,-1.414) circle (2pt);
\filldraw [black] (-1.414,-1.414) circle (2pt);
\draw [black,very thick] (-2,0) to (-3,0);
\draw [black,very thick] (2,0) to (3,0);
\draw [black,very thick] (0,2) to (0,3);
\draw [black,very thick] (1.414,1.414) to (2.25,2.25);
\draw [black,very thick] (-1.414,1.414) to (-2.25,2.25);
\draw [black,very thick] (1.414,-1.414) to (2.25,-2.25);
\draw [black,very thick] (-1.414,-1.414) to (-2.25,-2.25);
\end{tikzpicture}
\caption{The pentabox graph}\label{fig:pentabox}
\end{figure}

In this case we apply similar techniques to those employed in the previous section to compute the relevant part of the cohomology of the pentabox hypersurfaces. We note that the Feynman integral in question is with respect to the algebraic differential form
\begin{equation}
\omega_{(3,1,4);D}(t)= \dfrac{{\bf U}_{(3,1,4)}^{8-3D/2}}{({\bf F}_{(3,1,4);D}(t))^{8- D}}\Omega_0
\end{equation}
so the only values of $D$ for which $X_{(3,1,4);D}$ is the polar locus are $2$ and $4$.

\begin{theorem}\label{thm:pentabox}
Let $X_{(3,1,4);D}$ be the pentabox hypersurface, and assume that
kinematic parameters are chosen generically. Then
$\Gr^W_6\mathrm{H}^6(X_{(3,1,4);D};\mathbb{Q})$ is pure Tate and
$W_4\mathrm{H}^6(X_{(3,1,4);D};\mathbb{Q})$ is mixed Tate. If $D \geq 4$ then there
is a smooth elliptic curve $E$, depending on kinematic and mass parameters, so that
$\Gr^W_5\mathrm{H}^6(X_{(3,1,4);D};\mathbb{Q})\cong \mathrm{H}^1(E;\mathbb{Q})(-2)$. If $D < 4$
then $\Gr^W_5\mathrm{H}^6(X_{(3,1,4);D};\mathbb{Q})$ is trivial.
\end{theorem}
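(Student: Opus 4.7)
The plan is to follow the template of the proof of Theorem~\ref{thm:doublebox}, while paying attention to the parity shift: for the pentabox $a+c=7$ is odd, so $X_{(3,1,4);D}$ is a cubic sixfold in $\mathbb{P}^7$, and the non-mixed-Tate content is pushed one weight down (from the top weight to $\Gr^W_5\mathrm{H}^6$), in accordance with case~(a) of Remark~\ref{r:depth}.

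First I would apply the birational transformation $\phi$ of Equation~\eqref{eqbir} to pass from $X := X_{(3,1,4);D}$ to $X' := X'_{(3,1,4);D}$, and then invoke Proposition~\ref{prop:strat}, Proposition~\ref{prop:cubic-hyp}, and Mayer--Vietoris exactly as in the proof of Theorem~\ref{thm:sunsetmot}. The hyperplane sections added and removed by $\phi$ are unions of cubics containing a codimension 1 linear subspace, together with unions of a quadric and a hyperplane; each contributes only mixed Tate terms in degree $6$, so $\mathrm{H}^6(X;\mathbb{Q})$ and $\mathrm{H}^6(X';\mathbb{Q})$ agree up to mixed Tate factors and I am reduced to analysing $X'$.

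Next, since $X'$ contains the codimension $1$ linear subspace $L$ cut out by the two linear forms appearing in Equation~\eqref{e:Y}, the construction of Section~\ref{sn:cubic} yields a quadric fibration $\pi : \mathcal{Q} := \Bl_L X' \to \mathbb{P}^1$ of relative dimension $n=5$, whose exceptional divisor $H$ is a $(1,2)$-hypersurface in $\mathbb{P}^1 \times \mathbb{P}^5$, and is itself a quadric fibration $\pi|_H : H \to \mathbb{P}^1$ of relative dimension $4$. For generic kinematic and mass parameters I would verify that both fibrations have generic corank $s=0$ and smooth discriminants consisting of isolated points of $\mathbb{P}^1$. Proposition~\ref{prop:quadfib} and Corollary~\ref{cor:mhshyp} then give the following dichotomy: on the $\mathcal{Q}$-side, since $n=5$ is odd, $\mathrm{H}^6(\mathcal{Q};\mathbb{Q})$ is pure Tate of weight~$6$, which yields the claim that $\Gr^W_6\mathrm{H}^6(X;\mathbb{Q})$ is pure Tate; on the $H$-side, since the relative dimension $4$ is even, we obtain
\[
\Gr^W_5\mathrm{H}^5(H;\mathbb{Q}) \cong \mathrm{H}^1(C;\mathbb{Q})(-2)
\]
for $C$ a double cover of $\mathbb{P}^1$ branched along the discriminant $\Delta_H$ of $\pi|_H$, with the remaining weight pieces of $\mathrm{H}^5(H;\mathbb{Q})$ mixed Tate. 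Using excision $\mathcal{Q}-H \cong X'-L$ together with the long exact sequences of the pairs $(\mathcal{Q},H)$ and $(X',L)$, and the fact that $L\cong \mathbb{P}^5$ has pure Tate cohomology, the non-Tate piece of $\mathrm{H}^6(X';\mathbb{Q})$ is identified with $\mathrm{H}^1(C;\mathbb{Q})(-2)$ sitting in $\Gr^W_5$, while $W_4\mathrm{H}^6(X';\mathbb{Q})$ remains mixed Tate.

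The main obstacle is to pin down the genus of $C$ as a function of $D$. Writing $X'$ in the form $zQ_1+uQ_2=0$ for the two linear forms defining $L$ and appropriate quadrics $Q_1,Q_2$, the fibration $\pi|_H$ realises the pencil of quadrics in $\mathbb{P}^5$ spanned by $Q_1|_L$ and $Q_2|_L$, and $\Delta_H$ is the zero set of the binary sextic $\det(x_0 M_1 + x_1 M_2)$ in the pencil parameters, where $M_i$ is the symmetric matrix of $Q_i|_L$. Following the pattern observed in Section~\ref{sec:Motive313}, I expect this binary sextic to factor as a kinematics-independent monomial in the pencil variables times a residual binary form, one of whose leading coefficients is proportional to a Gram determinant of the pentabox external momenta. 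Vanishing of this Gram determinant in low space-time dimension is expected to lower the degree of the residual form so that $C$ is a smooth elliptic curve for $D \geq 4$ and degenerates to a rational curve for $D < 4$, eliminating the weight-$5$ content. This step reduces to a symbolic computation of the sextic and its factorisation, analogous to the worksheet cited in Section~\ref{sec:Motive313}; identifying the correct pentabox Gram determinant as a factor of $\det(x_0 M_1 + x_1 M_2)$ and tracking how the kinematic constraints specific to the pentabox collapse its roots is where the technical difficulty lies.
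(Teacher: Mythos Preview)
Your proposal follows exactly the paper's approach: the paper's proof is one sentence (``similar to that of Theorem~\ref{thm:doublebox}'') plus a reference to a SAGE worksheet, and you have correctly unpacked the double-box template, including the key parity observation that for $a+c$ odd the non-Tate contribution migrates from $\mathcal{Q}$ itself to the exceptional divisor $H$ of $\Bl_L X'$.

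There is, however, one genuine gap. You assert that the hyperplane sections $H_1=V(z)\cap X$, $H_2=V(z+\sum x_i)\cap X$, and $H'_2=V(z+\sum x_i)\cap X'$ contribute only mixed Tate terms to $\mathrm{H}^6$, but for the pentabox this is \emph{not} automatic. These are cubic $5$-folds containing codimension-$1$ linear subspaces; their blow-ups $\widetilde{H}_i$ are quadric fibrations over $\mathbb{P}^1$ of relative dimension $4$, so $n+s_{\mathcal{Q}}+1=5$ is odd and Corollary~\ref{cor:mhshyp} allows $\Gr^W_5\mathrm{H}^5(H_i;\mathbb{Q})$ to carry a hyperelliptic-curve summand. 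That summand feeds into $\Gr^W_5\mathrm{H}^6(X;\mathbb{Q})$ through the connecting map of the pair $(X,H_1\cup H_2)$. In the double box (steps~(3) and~(4) of Theorem~\ref{thm:doublebox}) the parity was reversed: the $\widetilde{H}_i$ were $4$-folds with even $n+s+1$, hence automatically mixed Tate, and only their $3$-dimensional exceptional divisors needed a discriminant count. For the pentabox the roles swap, and you must compute the discriminants of the quadric fibrations on $\widetilde{H}_1,\widetilde{H}_2,\widetilde{H}'_2$ and verify the associated curves are rational. So the ``technical difficulty'' you flag is not confined to the sextic attached to $H$; there are several parallel discriminant computations to do, each of which could in principle introduce or cancel a curve in $\Gr^W_5$.
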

\begin{proof}
The proof is similar to that of Theorem~\ref{thm:doublebox}, so we suppress the details and refer the reader to the {SAGE} worksheet~\href{https://nbviewer.org/github/pierrevanhove/MotivesFeynmanGraphs/blob/main/Pentabox-Graph.ipynb}{Pentabox-Graph.ipynb}. 
\end{proof}

\begin{remark}\label{r:pentabox}
In~\cite{Lairez:2022zkj} it is shown that, in numerical examples, the differential equation $\mathscr{L}_{(4,1,3);4}$ is irreducible and Liouvillian, or in other words, the elliptic curve $E$ in Theorem~\ref{thm:pentabox} does not seem to be detected by $\mathscr{L}_{(3,1,4);4}$. According to Lemma~\ref{l:qotapp}, if $[\omega_{(3,1,4);4}] \in W_7\mathrm{H}^7(\mathbb{P}^6-X_{(3,1,4);4};\mathbb{Q})$, then $\Sol(\mathscr{L}_{(3,1,4);4})$ is a sub-local system of $\Gr^\mathcal{W}_{0} \mathcal{H}_{(4,1,3);4}^\vee$, which is irreducible and is Liouvillian, hence it has at worst abelian monodromy. This leads us to conjecture that $[\omega_{(3,1,4);4}] \in W_7\mathrm{H}^7(\mathbb{P}^6-X_{(3,1,4);4};\mathbb{Q})$.

This conjecture is supported by discriminant computations (we refer to the worksheet\break \href{https://nbviewer.org/github/pierrevanhove/MotivesFeynmanGraphs/blob/main/Pentabox-singularities.ipynb}{Pentabox-singularities.ipynb} for some numerical checks). The quadric fibration on the six-fold $\mathcal{Q} \rightarrow \mathbb{P}^1$ constructed in the proof of Theorem~\ref{thm:sunsetmot} has discriminant a set of five points located at $[1:0], [0:1], [1:-1]$ and at the roots of a quadric polynomial whose coefficients depend on $t$. In numerical examples, one can compute that the regular singular points of $\mathscr{L}_{(3,1,4);4}$ agree with the values of $t$ so that the discriminant of the map $\mathcal{Q}\rightarrow \mathbb{P}^1$ collapses to fewer than 5 points. Furthermore, despite the fact that there is a family of elliptic curves determining $\Gr^W_5\mathrm{H}^6(X_{(3,1,4);4};\mathbb{Q})$, the degeneration of these elliptic curves is not detected by the regular singular points of $\mathscr{L}_{(3,1,4);4}$.
\end{remark}

%%%%%%%%%%%%%%%%%%%%%%%%%%%%%%%%%%%%%%%%%%%%%%%%%%%%%%%%%%%%%%%%%%%

\section{Elliptic motives for $(a,1,2)$ graph hypersurfaces}\label{sec:Ia12}

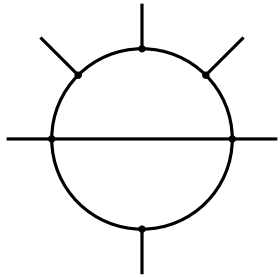
\begin{figure}[h]
\begin{tikzpicture}[scale=0.6]
\filldraw [color = black, fill=none, very thick] (0,0) circle (2cm);
\draw [black,very thick] (-2,0) to (2,0);
\filldraw [black] (2,0) circle (2pt);
\filldraw [black] (-2,0) circle (2pt);
\filldraw [black] (0,2) circle (2pt);
\filldraw [black] (0,-2) circle (2pt);
\filldraw [black] (1.414,1.414) circle (2pt);
\filldraw [black] (-1.414,1.414) circle (2pt);
\draw [black,very thick] (-2,0) to (-3,0);
\draw [black,very thick] (2,0) to (3,0);
\draw [black,very thick] (0,2) to (0,3);
\draw [black,very thick] (0,-2) to (0,-3);
\draw [black,very thick] (1.414,1.414) to (2.25,2.25);
\draw [black,very thick] (-1.414,1.414) to (-2.25,2.25);
\end{tikzpicture}
\caption{A two-loop graph of type $(a,1,2)$ with $a=4$}\label{fig:a12graphs}
\end{figure}
In this section, we analyze the graphs of type $(a,1,2)$ represented
in Figure~\ref{fig:a12graphs} and prove Theorems~\ref{thm:212}, which states that
motives of $X_{(a,1,2);D}$ come from elliptic curves or are mixed Tate.
 The polynomial associated to such graphs are given by the following equations
\begin{align}\label{e:UVFa12}
{\bf U}_{(a,1,2)} & = \left(z +  x_1 + x_2\right) \left( \sum_{ i=1}^a y_i \right) + z \left(x_1 + x_2\right), \\
{\bf V}_{(a,1,2);D} & = p^2 x_1 x_2\left( z + \sum_{ i=1}^a y_i\right)  +
                  \left(z+x_1 + x_2\right)\left( \sum_{1\leq i<j\leq
                  a} q_{ij}^2 y_i y_j \right)
                  + z\left( \sum_{i=1}^2 \sum_{j=1}^a r_{ij}^2 x_i y_j \right),\cr
\nonumber {\bf F}_{(a,1,2);D} & = {\bf U}_{(a,1,2)}\left( \sum_{i=1}^a m_i^2 y_i + m_{a+1}^2x_1 + m_{a+2}^2x_2 + m^2_{a+3} z\right) - {\bf V}_{(a,1,2);D},
\end{align}
where the mass parameters are real non-vanishing positive number
$m_i\in\mathbb{R}_{>0}$ for $1\leq i\leq a+3$, and $q_{ij}^2=\left(\sum_{r=i}^{j-1} q_r\right)^2$ with $1\leq
i<j\leq a$, $r_{1i}^2=\left(k^2+\sum_{j=1}^{i-1} q_j\right)^2$ and
$r_{2i}^2=\left(k+p+\sum_{j=1}^{i-1}q_j\right)^2$ for $1\leq i\leq a$.

Following the approach outlined in Lemma \ref{lemma:quadfib}, we blow up $X_{a,1,2}$ along the codimension 2 linear subspace $z = x_1 = x_2 = 0$ to obtain a hypersurface in $\mathbb{P}(\mathcal{O}_{\mathbb{P}^2}^{a} \oplus  \mathcal{O}_{\mathbb{P}^2}(-1))$ with equations given by 
\begin{align}
\widetilde{\bf U}_{(a,1,2)} & = \left(z +  x_1 + x_2\right) \left( \sum_{i=1}^a y_i \right) + zw \left(x_1 + x_2\right), \\
\widetilde{\bf V}_{(a,1,2);D} & = p^2 w x_1 x_2\left(w z + \sum_{i=1}^a y_i\right)
                            + \left(z+x_1 + x_2\right)\left(
                            \sum_{1\leq i,j\leq a} q_{ij}^2 y_i y_j
                            \right)
                            + zw\left( \sum_{i=1}^2\sum_{j=1}^a r_{ij}^2 x_i y_j \right),\cr
\nonumber \widetilde{\bf F}_{(a,1,2);D} & = \widetilde{\bf U}_{(a,1,2)}\left( \sum_{i=1}^a m_i^2 y_i + w(m_{a+1}^2x_1 + m_{a+2}^2x_2 + m^2_{a+3} z)\right) - \widetilde{\bf V}_{(a,1,2);D}
\end{align}
As a consequence of Lemma~\ref{l:blup}, we have the following observation.
\begin{proposition}\label{prop:21ared}
Up to mixed Tate factors, the cohomology of $X_{(a,1,2);D}$ agrees with that of $\widetilde{X}_{(a,1,2);D}$.
\end{proposition}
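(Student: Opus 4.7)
The plan is to apply Corollary~\ref{c:blowup} directly to the blow-up $\widetilde{X}_{(a,1,2);D}= \Bl_L X_{(a,1,2);D}$, where $L = V(z,x_1,x_2)$. First I would verify that $L \subset X_{(a,1,2);D}$: inspection of~\eqref{e:UVFa12} shows that every monomial appearing in ${\bf U}_{(a,1,2)}$ and ${\bf V}_{(a,1,2);D}$ contains at least one factor from $\{z,x_1,x_2\}$, so ${\bf F}_{(a,1,2);D}$ vanishes identically on $L$. Note also that $L$ coincides with the subspace $L_y$ from the general $(a,1,c)$ construction in Lemma~\ref{l:blup}, so the present blow-up is exactly one of the three coordinate blow-ups entering the definition of $\bX_{(a,1,c);D}$; the argument below is therefore the $L_y$-case of Lemma~\ref{l:blup} made explicit in the specialization $c=2$.

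The key step is to compute the exceptional divisor and show its cohomology is mixed Tate. Since $\Bl_L \mathbb{P}^{a+2} \cong \mathbb{P}_{\mathbb{P}^2}(\mathcal{O}_{\mathbb{P}^2}^{a} \oplus \mathcal{O}_{\mathbb{P}^2}(-1))$, its exceptional divisor is $V(w) \cong \mathbb{P}^2 \times \mathbb{P}^{a-1}$ with coordinates $[z:x_1:x_2]$ on the first factor and $[y_1:\dots:y_a]$ on the second. Setting $w=0$ kills every term carrying a factor of $w$ in the equations defining $\widetilde{X}_{(a,1,2);D}$, and a short calculation gives
\[
\widetilde{\bf F}_{(a,1,2);D}\big|_{w=0} = (z+x_1+x_2)\left[\left(\sum_{i=1}^{a} y_i\right)\!\left(\sum_{i=1}^{a} m_i^2 y_i\right) - \sum_{1\leq i<j\leq a} q_{ij}^2\, y_i y_j\right].
\]
Thus the exceptional divisor of $\widetilde{X}_{(a,1,2);D}\to X_{(a,1,2);D}$ is the union of a hyperplane component $\mathbb{P}^1\times\mathbb{P}^{a-1}$ (from the linear factor pulled back from the base $\mathbb{P}^2$) and a quadric component $\mathbb{P}^2\times Q$ (from a quadric $Q\subset\mathbb{P}^{a-1}$ pulled back from the fiber direction), meeting in $\mathbb{P}^1\times Q$.

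Each of these two components, together with their intersection, has mixed Tate cohomology, since products of projective spaces and (possibly singular) quadric hypersurfaces do; applying the Mayer--Vietoris sequence of Proposition~\ref{p:mayer-vietoris} gives that the exceptional divisor has mixed Tate cohomology. Corollary~\ref{c:blowup} then produces a morphism of mixed Hodge structures $\mathrm{H}^*(X_{(a,1,2);D};\mathbb{Q})\to \mathrm{H}^*(\widetilde{X}_{(a,1,2);D};\mathbb{Q})$ with mixed Tate kernel and cokernel, which is the claim. There is no serious obstacle: the only calculation to be checked is the factorization displayed above, which is immediate from the equations of $\widetilde{\bf F}_{(a,1,2);D}$.
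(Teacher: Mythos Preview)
Your proof is correct and follows essentially the same approach as the paper: the paper's proof simply invokes Lemma~\ref{l:blup}, and your argument is exactly the $L_y$-case of that lemma's proof carried out explicitly in the specialization $c=2$, computing the exceptional divisor as a union of a hyperplane-type component and a quadric-type component and then applying Corollary~\ref{c:blowup}.
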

The projection onto $\mathbb{P}^2$ induces a quadric bundle structure on $\widetilde{X}_{(a,1,2);D}$. The second step is to describe precisely the discriminant locus of the map $\pi : \widetilde{X}_{(a,1,2);D}\rightarrow \mathbb{P}^2$.
\begin{proposition}\label{p:prepa12}
For the quadric fibration $\pi: \widetilde{X}_{(a,1,2);D}\rightarrow \mathbb{P}^2$, the discriminant curve is of the form
\begin{equation}
\Disc_{(a,1,2);D} = (z + x_1 + x_2)^{a-1}{\bf G}_{(a,1,2);D}(z,x_1,x_2)
\end{equation}
where ${\bf G}_{(a,1,2);D}(z,x_1,x_2)$ is a homogeneous quartic polynomial whose vanishing locus has geometric genus either 0 or 1. 
\end{proposition}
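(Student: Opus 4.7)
The plan is to write the discriminant as the determinant of the symmetric Gram matrix of $\widetilde{\bf F}_{(a,1,2);D}$ viewed as a quadratic form in the fibre coordinates $(y_1,\dots,y_a,w)$, exhibit the factor $L^{a-1}$ (with $L := z+x_1+x_2$) from a $2\times 2$ block structure, and locate two nodes of the residual quartic by restricting to the coordinate line $V(z)$.

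First I would write the Gram matrix of $\widetilde{\bf F}_{(a,1,2);D}$ in block form
\[
M = \begin{pmatrix} L A & \mathbf{b} \\ \mathbf{b}^{T} & c \end{pmatrix}.
\]
Direct inspection of the formulas for $\widetilde{\bf U}_{(a,1,2)}$ and $\widetilde{\bf V}_{(a,1,2);D}$ shows that every $y_iy_j$ coefficient (including diagonal terms) is divisible by $L$, because both $\widetilde{\bf U}_{(a,1,2)}$ and the $q_{ij}^2$ contribution to $\widetilde{\bf V}_{(a,1,2);D}$ carry $L$ as a factor whenever the $y$-variables appear quadratically. Hence the upper-left $a\times a$ block factors as $L\cdot A$ with $A$ a constant symmetric matrix depending only on the masses $m_1,\dots,m_a$ and the $q_{ij}$. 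The column $\mathbf{b}$ is an $a$-vector of homogeneous quadrics in $(z,x_1,x_2)$, and the scalar $c$ is a homogeneous cubic divisible by $z$. Applying the block-determinant identity $\det M = c\det(LA) - \mathbf{b}^{T}\mathrm{adj}(LA)\mathbf{b}$ together with $\det(LA)=L^{a}\det A$ and $\mathrm{adj}(LA)=L^{a-1}\mathrm{adj}(A)$ yields
\[
\det M = L^{a-1}\bigl(cL\det A - \mathbf{b}^{T}\mathrm{adj}(A)\mathbf{b}\bigr) \;=:\; L^{a-1}\,{\bf G}_{(a,1,2);D},
\]
and a degree count confirms ${\bf G}_{(a,1,2);D}$ is homogeneous of degree~$4$.

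The main step is the genus bound. I would evaluate ${\bf G}_{(a,1,2);D}$ at $z=0$. Using $c|_{z=0}=0$ and the striking identity $\mathbf{b}_i|_{z=0}=\tfrac{1}{2}Q(x_1,x_2)$ for \emph{every} $i$, with
\[
Q(x_1,x_2) := (x_1+x_2)(m_{a+1}^{2}x_1+m_{a+2}^{2}x_2) - p^{2}x_1x_2,
\]
one obtains $\mathbf{b}|_{z=0}=\tfrac{1}{2}Q\cdot e$ with $e=(1,\dots,1)^{T}$, and therefore
\[
{\bf G}_{(a,1,2);D}|_{z=0} = -\tfrac{1}{4}\bigl(e^{T}\mathrm{adj}(A)e\bigr)\,Q(x_1,x_2)^{2},
\]
a perfect square. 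Consequently $\partial_{x_1}{\bf G}|_{z=0}$ and $\partial_{x_2}{\bf G}|_{z=0}$ are manifestly divisible by $Q$; a short calculation using $\partial_z c|_{z=0}=Q$ gives
\[
\partial_z{\bf G}|_{z=0} = Q\cdot\Bigl((x_1+x_2)\det A - (\partial_z\mathbf{b}|_{z=0})^{T}\mathrm{adj}(A)\,e\Bigr),
\]
also divisible by $Q$. The two points $V(z)\cap V(Q)\subset\mathbb{P}^{2}$ are therefore singular points of the plane quartic $V({\bf G}_{(a,1,2);D})$.

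To finish, for generic kinematic parameters $Q$ has two distinct roots, and a Hessian check identifies both singularities as ordinary double points, yielding $\delta$-invariant at least $2$; since a plane quartic has arithmetic genus $3$, this forces the geometric genus to be at most $1$. In non-generic situations (coincident roots of $Q$, higher-order tangencies, or reducibility of ${\bf G}_{(a,1,2);D}$) the $\delta$-invariant only increases, and the geometric genus drops to $0$. The main technical obstacle is the divisibility of $\partial_z{\bf G}|_{z=0}$ by $Q$: it works precisely because $\mathbf{b}|_{z=0}$ lies in the one-dimensional span of $e$, which collapses the adjugate term to a scalar multiple of $Q$; without this collapse there would be no structural reason for the residual quartic to acquire two nodes.
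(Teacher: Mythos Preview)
Your derivation of the factorization $\Disc_{(a,1,2);D} = L^{a-1}{\bf G}$ via the block Gram matrix is essentially identical to the paper's argument, and your identification $\mathbf{b}_i|_{z=0}=\tfrac12 Q$ independent of $i$, together with $c|_{z=0}=0$ and $\partial_z c|_{z=0}=Q$, is exactly the structural input the paper uses.

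Where you diverge is in extracting the genus bound. The paper observes that your three computations amount to the decomposition
\[
{\bf G}_{(a,1,2);D} \;=\; A\,z^2 \;+\; B\,Q\,z \;+\; C\,Q^2
\]
with $A$ a quadric, $B$ linear, $C$ constant (in all of $z,x_1,x_2$), and then uses the substitution $X=Q/z$ to map $V({\bf G})$ birationally to the complete intersection $V(A+BX+CX^2)\cap V(zX-Q)\subset\mathbb P^3$, which is a $(2,2)$ curve and hence has geometric genus $\le 1$. Your route instead reads off two singular points at $V(z)\cap V(Q)$ directly from this decomposition and invokes the genus--defect formula for plane quartics. Both are valid; the paper's formulation is more uniform because it sidesteps the case analysis you flag at the end.

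On that case analysis: your claim that in non-generic situations ``the $\delta$-invariant only increases'' is correct but deserves one more sentence. When $Q$ has a double root you have a single singular point rather than two, and you need $\delta\ge 2$ there. This follows because in local coordinates the decomposition gives ${\bf G}=a z^2+b z x_1^2+c x_1^4+(\text{h.o.t.})$, which after completing the square is at worst an $A_3$ singularity (hence $\delta\ge 2$). Alternatively, upper semicontinuity of geometric genus in a flat family of plane curves lets you pass from the generic two-node case to all specializations. Either remark closes the gap; without it your ``Hessian check'' only certifies two separate nodes in the generic case.
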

\begin{proof}
We can write the coefficients of the $y_iy_j$ variables in
$\widetilde{\bf F}_{(a,1,2);D}$ in~\eqref{e:UVFa12} as coefficients of
\begin{equation}
(z + x_1 + x_2)\left[ \sum_{1\leq i<j\leq a} q_{ij}^2 y_i y_j + \left(
    \sum_{i=1}^a m_i^2 y_i \right)\left( \sum_{i=1}^a y_i \right)\right].
\end{equation}
Let $M$ denote the symmetric matrix whose coefficients are the
coefficients of the constant quadratic form $\sum q_{ij}^2 y_i y_j +
\left( \sum m_i^2 y_i \right)\left( \sum_{i=1}^a y_i \right)$. We can write
the coefficients of $wy_j$ in $\widetilde{\bf F}_{(a,1,2);D}$ as 
\begin{equation}
R_j= z\left(\sum_{i=1}^2 r_{ij}^2 x_i\right) + m_i^2 z(x_1 + x_2) + (m_{a+1}^2 x_1 + m_{a+2}^2 x_2 + m_{a+3}^2 z)(z + x_1 + x_2).
\end{equation}
Let $R$ be the $1\times a$ matrix with coefficients $R_i$. The
coefficient of $w^2$ in $\widetilde{\bf F}_{a,1,2}$  is 
\begin{equation}
S = z\left[(x_1 + x_2)(m_{a+1}^2x_1 + m_{a+2}^2 x_2 + m_{a+3}^2z) + p^2 x_1x_2\right].
\end{equation}
The quadratic form determining the fibres of $\widetilde{X}_{a,1,2}$ is then the matrix
\begin{equation}\label{e:m}
\left( \begin{matrix} (z + x_1 + x_2)M & R^T \\ R & S\end{matrix}\right).
\end{equation}
It is straightforward to see that the determinant of such a matrix is of the form
\begin{equation}\label{eq:quartic}
(z + x_1 + x_2)^{a-1}\left[(z + x_1 + x_2)S \det M + \sum_{1\leq
    i,j\leq a} \det M^{i,j}R_iR_j \right]
\end{equation}
where $M^{i,j}$ denotes the $i,j$ minor of $M$. This proves that $\Disc_{(a,1,2);D}$ has the required form.

Finally, we address the genus of the vanishing locus of the quartic factor. The equation for $\Disc_{(a,1,2);D}$ in~(\ref{eq:quartic}) can be rearranged so that it is of the form 
\begin{equation}
A z^2 + B Q z + Q^2 
\end{equation}
for a quadric polynomial $A$, a linear polynomial $B$, and a quadric polynomial $Q = p^2x_1x_2 + (x_1 + x_2)(m_{a+1}^2x_1 + m_{a+2}^2 x_2)$. Such a hypersurface is birational to the intersection of quadrics in $\mathbb{P}^3$ given by 
\begin{equation}
A + BX + X^2 = zX - Q = 0.
\end{equation}
Therefore it is either elliptic or it is rational. 
\end{proof}
\begin{remark}
The quartic factor in~\eqref{eq:quartic} depends heavily on the structure of $M$. For instance, if $\mathrm{rank}\, M = a$ then it is generically nonzero; if $\mathrm{rank}\, M = a-1$ then it is generically of the form $\sum_{i,j} \det M^{i,j}R_i R_j$; and if $\mathrm{rank} \, M < a-1$ it is generically zero. 
\end{remark}

\begin{remark}
For arbitrary $(a,1,c)$ graph, the chain of $a$ edges induces a quadratic fibration $\pi : \widetilde{X}_{(a,1,c);D}\rightarrow \mathbb{P}^{c}$, following Lemma~\ref{lemma:quadfib}. The discriminant locus of this map is generally of the form
\begin{equation}
(z + x_1 + \dots +x_c)^{a-1} \Disc_{(a,1,c);D}(z,x_1,\dots ,x_n)
\end{equation}
where $\Disc_{(a,1,c);D}$ is a quartic polynomial in $z,x_1,\dots, x_n$. Furthermore, we can show that there are, in general, quadratic polynomials $A, Q$ and a linear polynomial $B$ so that 
\begin{equation}
\Disc_{(a,1,c);D} = Az^2 + BQz + Q^2. 
\end{equation}
and hence that $V(\Disc_{(a,1,c);D})$ is birational to an intersection of quadrics in $\mathbb{P}^{c+1}$. It is known that, at least generically,  the cohomology of the intersection of a pair of quadrics is either Tate or isomorphic to the cohomology of a hyperelliptic curve,~\cite{reid}. This provides an alternate approach to Theorem~\ref{thm:sunsetmot} starting with this fact but we believe that the proof presented in Section~\ref{s:mainthm} is simpler.
\end{remark}

Now we may prove the main theorem of this section. 
\begin{theorem}\label{thm:212}
For any value of $a$ and $D$, $\mathrm{H}^{a+1}(X_{(a,1,2);D};\mathbb{Q})$ is in $\bf{MHS}^\mathrm{ell}_\mathbb{Q}$. Furthermore, suppose that $a+2 > 3D/2$. Then the mixed Hodge structure on $\mathrm{H}^{a+2}(\mathbb{P}_\Gamma- \widetilde{X}_\Gamma, B_\Gamma - (\widetilde{X}_\Gamma \cap B_\Gamma))$ is contained in ${\bf MHS}^\mathrm{ell}_\mathbb{Q}$.
\end{theorem}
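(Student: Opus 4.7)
The plan is to exploit the quadric fibration $\pi \colon \widetilde{X}_{(a,1,2);D} \to \mathbb{P}^2$ from Lemma~\ref{lemma:quadfib}, combined with the precise description of its discriminant in Proposition~\ref{p:prepa12}. By Proposition~\ref{prop:21ared}, proving the first statement for $\widetilde{X}_{(a,1,2);D}$ implies it for $X_{(a,1,2);D}$. The fibration $\pi$ has relative dimension $n = a-1$, generic corank $0$, and discriminant $V(L^{a-1}{\bf G}_{(a,1,2);D})$, where $L = z + x_1 + x_2$. Along $V(L)$ the top-left block of the matrix in~\eqref{e:m} vanishes identically, so the fiberwise corank jumps by $a - 1$, far larger than the rank-one jump required by Proposition~\ref{prop:quadfib}. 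The first step is therefore to excise $V(L)$ from the base.

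I would restrict $\pi$ to $\pi^{-1}(U') \to U'$, where $U' \subseteq \mathbb{P}^2 \setminus V(L)$ is an open subset over which the reduced part of the discriminant $V({\bf G}_{(a,1,2);D})$ is smooth, and then apply Proposition~\ref{prop:quadfib}. If $a$ is odd, so that $n = a-1$ is even, the only non-Tate contribution comes from a double cover $V \to U'$ ramified in $V({\bf G}_{(a,1,2);D}) \cap U'$; since $L^{a-1}$ is a perfect square in the discriminant, the natural compactification of $V$ is a double cover of $\mathbb{P}^2$ ramified in the quartic $V({\bf G}_{(a,1,2);D})$, whose cohomology lies in ${\bf MHS}^\mathrm{ell}_\mathbb{Q}$ by Proposition~\ref{p:quarticcovers}. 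If $a$ is even, so that $n = a-1$ is odd, the contribution is the anti-invariant piece $\mathbb{M}^-$ of the cohomology of an unramified double cover of $V({\bf G}_{(a,1,2);D}) \cap U'$; since ${\bf G}_{(a,1,2);D}$ has geometric genus at most $1$, Riemann--Hurwitz forces this double cover to have geometric genus at most $1$, so its $\mathrm{H}^1$ also lies in ${\bf MHS}^\mathrm{ell}_\mathbb{Q}$.

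Next I would verify that the excised strata contribute only mixed Tate cohomology. Along $V(L)$ the fiberwise equation factors as $w \cdot \ell(y, w) = 0$ for some linear form $\ell$, so $\pi^{-1}(V(L))$ is a union of two projective bundles over $\mathbb{P}^1 = V(L)$ glued along a projective subbundle; the finitely many isolated fibers over singular points of $V({\bf G}_{(a,1,2);D})$ are individual quadrics. Both pieces have mixed Tate cohomology, and assembling via the long exact sequence in compactly supported cohomology gives the first statement. For the second statement, I would follow the proof of Theorem~\ref{thm:bek-b} essentially verbatim, using the first statement (together with Theorem~\ref{t:mainc=1} for $c=1$ contractions and Lemma~\ref{l:a0c} for $c=0$ contractions) in place of Theorem~\ref{thm:sunsetmot} to bound each stratum in the Mayer--Vietoris spectral sequence by an object of ${\bf MHS}^\mathrm{ell}_\mathbb{Q}$. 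The hardest part will be the large-corank stratum $\pi^{-1}(V(L))$, where Proposition~\ref{prop:quadfib} does not apply directly and one must verify by direct analysis of the reducible fibers that no non-mixed-Tate contribution survives.
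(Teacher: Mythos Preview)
Your approach differs from the paper's and has a gap in the $a$ even case.

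When $a$ is even you bound the genus of the double cover of $\Delta = V({\bf G}_{(a,1,2);D}) \cap U'$ via Riemann--Hurwitz. But $\Delta$ is an \emph{open} curve: it misses the four points of $V({\bf G}_{(a,1,2);D}) \cap V(L)$ and any singularities of the quartic. Nothing you have written controls the monodromy of $\mathbb{M}^-$ around these punctures, so the compactified double cover may acquire ramification there; Riemann--Hurwitz then gives $g' = 2g - 1 + r/2$, and already for $g = 1$, $r \geq 2$ the anti-invariant $\mathrm{H}^1$ is a Prym-type Hodge structure not obviously in ${\bf MHS}^\mathrm{ell}_\mathbb{Q}$. (Your ``$L^{a-1}$ is a perfect square'' heuristic in the odd case has a related weakness: parity of the vanishing order of the discriminant only determines the monodromy when the corank jumps by one, not by $a-1$ as it does along $V(L)$.)

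The paper avoids all of this. It first applies the birational map ${\bm h}$ of~\eqref{a:h}, which over $U'$ is merely a rescaling of $w$ but globally replaces the badly degenerate stratum $\pi^{-1}(V(L))$ by a quadric bundle $Z \to \mathbb{P}^1$ with at most four singular fibres (hence in ${\bf MHS}^\mathrm{ell}_\mathbb{Q}$ by Corollary~\ref{cor:mhshyp}). Crucially, after a \emph{constant} linear change in the $y_i$, the resulting $\overline{X}_{(a,1,2);D}$ takes the standard form $y_1^2 + \cdots + y_a^2 + {\bf G}_{(a,1,2);D}\,v^2 = 0$: the degenerate fibre over every point of $V({\bf G}_{(a,1,2);D})$ is the \emph{same} cone $y_1^2 + \cdots + y_a^2 = 0$, so the local system along the discriminant is visibly trivial and the issue you face simply does not arise. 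The paper then finishes not by Proposition~\ref{prop:quadfib} at all, but by an induction on $a$ that rewrites the form as $y_1y_2 + y_3^2 + \cdots$, projects away from the hyperbolic pair $y_1,y_2$, and reduces $a$ by $2$; the base cases are $a = 0$ (the quartic curve, genus $\leq 1$) and $a = 1$ (the quartic double cover of Proposition~\ref{p:quarticcovers}). This handles both parities uniformly and never requires the delicate monodromy analysis your approach leaves open.
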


\begin{proof}

We are reduced to looking at the cohomology of $\widetilde{X}_{(a,1,2);D}$, since it is the same as that of $X_{(a,1,2);D}$ up to mixed Tate factors by Proposition~\ref{prop:21ared}. If the fibration map $\pi$ has generically singular fibres then by Corollary~\ref{cor:mhshyp} all cohomology of $\mathrm{H}^{a+1}(\widetilde{X}_{(a,1,2);D};\mathbb{Q})$ is mixed Tate and we are done.

We keep the notation of Proposition~\ref{p:prepa12}. We see that the fibres over the line $x_1 + x_2 + z = 0$ are generically of rank 1, therefore consist of a pair of projective bundles over $\mathbb{P}^1$, one defined by $w=0$ in homogeneous coordinates, and the other is $V(w + \sum R_i y_i)$. The rank of these fibres degenerates to 0 precisely at the common vanishing locus of $R_1,\dots, R_a$ on $V(z + x_1 + x_2)$. This consists of at most 2 points, therefore by Remark~\ref{r:leq2} the cohomology of $\pi^{-1}V(z + x_1 + x_2)$ is mixed Tate.

There is a birational transformation
\begin{equation}
{\bm h}: \mathbb{P}(\mathcal{O}^{a}_{\mathbb{P}^2} \oplus \mathcal{O}_{\mathbb{P}^2}(-2))\longrightarrow \mathbb{P}(\mathcal{O}_{\mathbb{P}^2}^{a} \oplus \mathcal{O}_{\mathbb{P}^2}(-1)) 
\end{equation}
defined by 
\begin{align}\label{a:h}
{\bm h} : (y_1,\dots,y_a,x_1,x_2,z,v) \longmapsto &(y_1,\dots, y_a,x_1,x_2,z, v(x_1 + x_2 + z )) \cr & = (y_1,\dots, y_a,x_1,x_2,z,w).
\end{align}
under which the proper transform of $\widetilde{X}_{(a,1,2);D}$, which we denote $\overline{X}_{(a,1,2);D}$, has generically smooth fibres over the locus $z + x_1 + x_2$. An important observation is that, under the assumption that the generic corank of $\pi:\overline{X}_{(a,1,2);D}\rightarrow \mathbb{P}^2$ is 0, a linear change of variables in the $y_i$-coordinates expresses  $\overline{X}_{(a,1,2);D}$ as 
\begin{equation}\label{e:chov}
y_1^2 + y_2^2 +\dots + y_a^2 + G(z,x_1,x_2)w^2 = 0.
\end{equation}
Geometrically, this birational transform replaces $\pi^{-1}V(z + x_1 + x_2)$ with a generically smooth quadric fibration $Z$ over $\mathbb{P}^1 = V(z + x_1 + x_2)\subseteq \mathbb{P}^2$ with singular fibres only occurring at the intersection of $V(z + x_1 + x_2)$ and the discriminant curve $\Disc_{(a,1,2);D}$. Therefore, by Corollary~\ref{cor:mhshyp}, 
\begin{enumerate}
\item If $a$ is even, then $\mathrm{H}^i(Z;\mathbb{Q})$ is mixed Tate,
\item If $a$ is odd, then $\Gr^a_W\mathrm{H}^{a}(Z;\mathbb{Q}) \cong \mathrm{H}^1(C;\mathbb{Q})$ where $C$ is a double cover of $\mathbb{P}^1$ ramified in at most four points. Therefore $C$ is rational or elliptic, and the rest of $\mathrm{H}^i(Z;\mathbb{Q})$ is mixed Tate. 
\end{enumerate}
 
\noindent We may now complete the proof by induction on $a$. More precisely, we show, by induction, that a hypersurface $X$ of $\mathbb{P}(\mathcal{O}_{\mathbb{P}^2}^{a} \oplus \mathcal{O}_{\mathbb{P}^2}(-2))$ is in the form~(\ref{e:chov}) with $G(z,x_1,x_2)$ an arbitrary quartic of geometric genus 0 or 1, then $\mathrm{H}^*(X;\mathbb{Q})$ is in ${\bf MHS}^\mathrm{ell}_\mathbb{Q}$. 

Our inductive step reduces $a$ by 2, so our base step must be done for both $a=0$ and $a=1$. Certainly, if $a=0,1$ then $X$ is either $V(G(z,x_1,x_2))$ or $V(y_1^2 + G(z,x_1,x_2)w^2)$, which is to say that it is either a curve of geometric genus 0 or 1 by Proposition~\ref{p:prepa12}, or it is a double cover of $\mathbb{P}^2$ ramified along a quartic curve, and we showed in Proposition~\ref{p:quarticcovers} that such varieties have cohomology in ${\bf MHS}^\mathrm{ell}_\mathbb{Q}$.

We may rewrite~(\ref{e:chov}), after a small change of variables, as
\begin{equation}\label{e:chov2}
y_1y_2 + y_3^2 +\dots + y_a^2 + G(z,x_1,x_2)w^2 = 0.
\end{equation}
The map $(X- V(y_2)) \rightarrow (\mathbb{P}(\mathcal{O}_{\mathbb{P}^2}^a \oplus \mathcal{O}_{\mathbb{P}^2}(-2)) - V(y_2))$ projecting 
\begin{equation}
[y_1: y_2 :\dots : y_a : z : x_1 : x_2 : w] \mapsto [ y_2 :\dots : y_a : z : x_1 : x_2 : w]
\end{equation}
is an isomorphism. Since $(\mathbb{P}(\mathcal{O}^{a}_{\mathbb{P}^2} \oplus \mathcal{O}_{\mathbb{P}^2}(-2)) - V(y_2))$ is the complement of a toric divisor in a toric variety, its compactly supported cohomology is mixed Tate. Therefore, up to mixed Tate factors, $\mathrm{H}^{a+1}(X;\mathbb{Q})$ is isomorphic to $\mathrm{H}^{a+1}(X\cap V(y_2);\mathbb{Q})$. We will use the shorthand $T$ to denote $V(y_2) \cap X$. We see that $T$ is stratified into subvarieties $Y = V(y_1) \cap T$ and $U = T - Y$. 

Observe that projection onto $\mathbb{P}(\mathcal{O}_{\mathbb{P}^2}^{a}\oplus \mathcal{O}_{\mathbb{P}^2}(-2))$ away from $V(y_1)$ makes $U$ into an $\mathbb{A}^1$-bundle over $Y$. By the K\"unneth formula for compactly supported cohomology, we have that $\mathrm{H}_c^i(U;\mathbb{Q}) \cong \mathrm{H}^{i-1}(Y;\mathbb{Q})$. Therefore, if $\mathrm{H}^*(Y;\mathbb{Q}) \in {\bf MHS}^\mathrm{ell}_\mathbb{Q}$ then the same is true for $\mathrm{H}^*(X;\mathbb{Q})$. This finishes the proof since $Y$ is a hypersurface of $\mathbb{P}(\mathcal{O}^a_{\mathbb{P}^2}\oplus \mathcal{O}_{\mathbb{P}^2}(-2))$ of the form~(\ref{e:chov}). 
\end{proof}

%-------------------------------------------------------------------------
\subsection{The house graph, $(3,1,2)$}\label{sec:Motive213}

\begin{figure}[h]
\begin{tikzpicture}[scale=0.6]
\filldraw [color = black, fill=none, very thick] (0,0) circle (2cm);
\draw [black,very thick] (-2,0) to (2,0);
\filldraw [black] (2,0) circle (2pt);
\filldraw [black] (-2,0) circle (2pt);
\filldraw [black] (0,2) circle (2pt);
\filldraw [black] (1.414,-1.414) circle (2pt);
\filldraw [black] (-1.414,-1.414) circle (2pt);
\draw [black,very thick] (-2,0) to (-3,0);
\draw [black,very thick] (2,0) to (3,0);
\draw [black,very thick] (0,2) to (0,3);
\draw [black,very thick] (1.414,-1.414) to (2.25,-2.25);
\draw [black,very thick] (-1.414,-1.414) to (-2.25,-2.25);
\end{tikzpicture}
\caption{The house graph}\label{fig:house}
\end{figure}
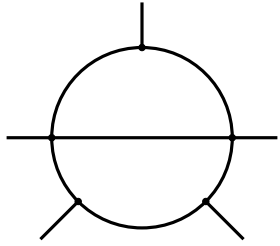
In this section we look at the house graph depicted in Figure~\ref{fig:house} from the
perspective of the discussion in the previous section.
The graph polynomials read 
    \begin{align}\label{eq:UVF213}
{\bf U}_{(3,1,2)} & = \left(  x_1+x_2\right) \left( y_1+y_2+y_3 \right) + z \left( x_1+x_2+ y_1+y_2+y_3\right) ,\cr
{\bf V}_{(3,1,2);D} & = \left( z +  y_1+y_2+y_3\right)\, p^2 x_1  x_2 
 + \left(z+ x_1+x_2\right)\left(
                  k^2y_1 y_2+(k+r)^2y_1y_3+r^2y_2y_3 \right) \cr&+zx_1\left(q^2y_1+(k+q)^2y_2+(k+q+r)^2 y_3\right)\cr
&+zx_2\left((p-q)^2y_1+(k-p+q)^2y_2+(k-p+q+r)^2y_3\right)\cr
{\bf F}_{(3,1,2);D}(t)& = {\bf U}_{(3,1,2)}\left( \sum_{i=1}^3 m^2_i y_i +\sum_{i=1}^2 m_{i+3}^2 x_i + m^2_{7} z\right)-t {\bf V}_{(3,1,2);D}.
\end{align}
where $k,p,q,r$ are vectors of $\mathbb{C}^D$.

\begin{proposition}\label{p:house}
Suppose $D = 4$. For generic kinematic parameters,
\[
\Gr^W_i\mathrm{H}^4(X_{(3,1,2);4};\mathbb{Q}) =\begin{cases}\mathrm{H}^1(E;\mathbb{Q}) & \text{ if } i = 3 \\
\mathbb{Q}(-2)^r & \text{ if } i =4 \\
0 & \text{ otherwise}
\end{cases}
\]
where $E$ is an elliptic curve depending on kinematic parameters, and $r$ is an integer.
\end{proposition}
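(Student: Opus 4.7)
I would follow the strategy used to prove Theorem~\ref{thm:212} for $a=3$ and refine it in the $D=4$ specialization. By Proposition~\ref{prop:21ared}, $\mathrm{H}^4(X_{(3,1,2);4};\mathbb{Q})$ agrees up to mixed Tate factors with $\mathrm{H}^4(\widetilde{X}_{(3,1,2);4};\mathbb{Q})$, and $\widetilde{X}_{(3,1,2);4}$ carries a quadric surface fibration over $\mathbb{P}^2$. Applying the birational transformation $\bm{h}$ from~\eqref{a:h} and the linear change of variables used in the proof of Theorem~\ref{thm:212} yields $\overline{X}_{(3,1,2);4}\subset \mathbb{P}(\mathcal{O}_{\mathbb{P}^2}^{3}\oplus\mathcal{O}_{\mathbb{P}^2}(-2))$ with defining equation in normal form
\begin{equation}
y_1y_2 + y_3^2 + G(z,x_1,x_2)\,v^2 = 0,
\end{equation}
where $G$ is the explicit quartic of Proposition~\ref{p:prepa12}. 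The open set $\overline{X}_{(3,1,2);4}\setminus V(y_2)$ is isomorphic to an open subset of a toric variety and hence has mixed Tate compactly supported cohomology, so the compact-support long exact sequence reduces the computation to $\mathrm{H}^4(V(y_2)\cap \overline{X}_{(3,1,2);4};\mathbb{Q})$.

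The intersection $V(y_2)\cap \overline{X}_{(3,1,2);4}$ has defining equation $y_3^2 + G\,v^2=0$, which is independent of $y_1$, so it is a $\mathbb{P}^1$-bundle in $y_1$ over the double cover $S$ of $\mathbb{P}^2$ branched along $V(G)$. The projective bundle formula then gives, up to mixed Tate summands,
\begin{equation}
\mathrm{H}^4(X_{(3,1,2);4};\mathbb{Q})\ \cong\ \mathrm{H}^2(S;\mathbb{Q})(-1),
\end{equation}
so the problem is reduced to analyzing $\mathrm{H}^2(S;\mathbb{Q})$. By Proposition~\ref{p:prepa12}, $V(G)$ is a plane quartic of geometric genus $0$ or $1$, and by Proposition~\ref{p:quarticcovers} the mixed Hodge structure on $\mathrm{H}^2(S;\mathbb{Q})$ is mixed Tate except when $V(G)$ degenerates to a union of four concurrent lines, in which case $\Gr^W_1\mathrm{H}^2(S;\mathbb{Q})\cong \mathrm{H}^1(E;\mathbb{Q})$ for an elliptic curve $E$ obtained by resolving the elliptic singularity of $S$ lying over the quadruple point, while $\Gr^W_2\mathrm{H}^2(S;\mathbb{Q})$ is pure Tate.

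The crux of the proof is then to verify, using the explicit expression $G = \det M\,((z+x_1+x_2)S - R M^{-1}R^{T})$ from the proof of Proposition~\ref{p:prepa12} together with the polynomials in~\eqref{e:UVFa12}, that the specialization to $D=4$ forces $V(G)$ to develop a quadruple singularity for generic kinematic and mass parameters, thereby triggering the elliptic case of Proposition~\ref{p:quarticcovers}. Combined with the Tate twist $(-1)$ above, this places $\mathrm{H}^1(E;\mathbb{Q})(-1)$ in $\Gr^W_3\mathrm{H}^4(X_{(3,1,2);4};\mathbb{Q})$, matching the Hodge types $(1,2)$ and $(2,1)$ of the diamond, while $\Gr^W_4\mathrm{H}^4(X_{(3,1,2);4};\mathbb{Q})\cong \mathbb{Q}(-2)^r$ collects the pure Tate part of $\mathrm{H}^2(S;\mathbb{Q})(-1)$ together with the residual Tate contribution from the initial reduction. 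The vanishing of $\Gr^W_j$ for $j\notin\{3,4\}$ is automatic from the weight bounds on $\mathrm{H}^2$ of the surface $S$. The main obstacle is the explicit polynomial computation confirming the quadruple-point structure of $V(G)$ in dimension $D=4$ (this is the step that distinguishes $D=4$ from higher dimensions, where $V(G)$ would only be a generic genus-$1$ quartic); the remaining steps (projective bundle formula, compact-support exact sequences, and Tate-twist bookkeeping) are routine.
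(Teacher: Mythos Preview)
Your reduction to $\overline{X}_{(3,1,2);4}$ and then to the quartic double cover $S$ follows the paper's strategy, but there is a genuine gap in how you track the birational map $\bm{h}$, and this leads you to locate the elliptic curve in the wrong place.

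The map $\bm{h}$ is not an isomorphism: it replaces the preimage $D_{\bm h}=\pi^{-1}V(z+x_1+x_2)$ in $\widetilde{X}_{(3,1,2);4}$ (a union of two $\mathbb{P}^2\times\mathbb{P}^1$'s, mixed Tate) by a new divisor $Z\subset\overline{X}_{(3,1,2);4}$, namely the quadric surface fibration over the line $V(z+x_1+x_2)\subset\mathbb{P}^2$. You pass silently from $\widetilde{X}$ to $\overline{X}$ as if this were a mixed-Tate modification, but for $a=3$ (odd) the divisor $Z$ is \emph{not} mixed Tate: by Corollary~\ref{cor:mhshyp} one has $\mathrm{H}^3(Z;\mathbb{Q})\cong\mathrm{H}^1(E;\mathbb{Q})$ for the double cover $E$ of $\mathbb{P}^1$ ramified at the four intersection points of $V(z+x_1+x_2)$ with $V(G)$. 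This is exactly where the elliptic curve in the proposition comes from, and your argument misses it entirely.

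Correspondingly, your claim that specializing to $D=4$ forces $V(G)$ to acquire a quadruple point is false. A direct computation (which the paper records) shows that for generic $D=4$ parameters the quartic $V(G)$ has only two $A_1$ singularities. By Proposition~\ref{p:quarticcovers} the double cover $S$ then has \emph{pure Tate} cohomology, so your formula $\mathrm{H}^4(X)\cong\mathrm{H}^2(S)(-1)$ (up to Tate) would yield a mixed Tate $\mathrm{H}^4(X_{(3,1,2);4})$ with no elliptic piece at all. The correct argument computes $\mathrm{H}^4(\overline{X})$ as pure Tate (via $S$, as you do), but then recovers the elliptic contribution from the long exact sequence for $\overline{X}_{(3,1,2);4}\supset Z$, which feeds $\mathrm{H}^3(Z;\mathbb{Q})\cong\mathrm{H}^1(E;\mathbb{Q})$ into $\Gr^W_3\mathrm{H}^4_c(\overline{X}-Z)\cong\Gr^W_3\mathrm{H}^4_c(\widetilde{X}-D_{\bm h})$ and hence into $\Gr^W_3\mathrm{H}^4(X_{(3,1,2);4})$.
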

\begin{proof}
We follow the two birational transformations described in the proof of Theorem~\ref{thm:212} in detail and describe the effects that they have in cohomology. The precise computations can be found in the SAGE worksheet~\href{https://nbviewer.org/github/pierrevanhove/MotivesFeynmanGraphs/blob/main/House.ipynb}{House.ipynb}. 
\smallskip 

\noindent (a) There is first a blow up 
\begin{equation}
{\bm f} : \widetilde{X}_{(3,1,2);D} \longrightarrow X_{(3,1,2);D}
\end{equation}
which replaces a copy of $\mathbb{P}^2$ with a union of $\mathbb{P}^2\times \mathbb{P}^1$ and $Q \times \mathbb{P}^2$ where $Q$ is a quadric in $\mathbb{P}^2$ meeting along $Q\times \mathbb{P}^1$. Call the exceptional divisor $D_{\bm f}$. The cohomology of both the linear subspace $L$ and $D_{\bm f}$ are pure Tate. Therefore the long exact sequence 
\begin{equation}
\dots \rightarrow \mathrm{H}^3(L;\mathbb{Q}) \cong 0 \rightarrow \mathrm{H}_c^4(X_{(3,1,2);D}-L;\mathbb{Q})\rightarrow \mathrm{H}^4(X_{(3,1,2);D};\mathbb{Q}) \rightarrow \mathrm{H}^4(L;\mathbb{Q}) \cong \mathbb{Q}(-2) \rightarrow \cdots 
\end{equation}
implies that:

\begin{center}
{\em $\mathrm{H}^4(X_{(3,1,2);D};\mathbb{Q})$ has the properties quoted in the statement of the proposition if $\mathrm{H}^4_c({X}_{(3,1,2);4}- L;\mathbb{Q})$ also has the same properties.}
\end{center}

\noindent A similar long exact sequence, applied to $\widetilde{X}_{(3,1,2);D}$ and $D_{\bm f}$, tells us that:

\begin{center}
{\em $\mathrm{H}^4(X_{(3,1,2);D};\mathbb{Q})$ has the properties quoted in the statement of the proposition if $\mathrm{H}^4_c(\widetilde{X}_{(3,1,2);4};\mathbb{Q})$ also has the same properties.}
\end{center}

\smallskip

\noindent (b) We analyze the effect on cohomology of the second birational transformation. The second birational transformation
\begin{equation}
{\bm h} : \overline{X}_{(3,1,2);D} \longrightarrow \widetilde{X}_{(3,1,2);D}
\end{equation}
replaces a union of a copy of $\mathbb{P}^2 \times \mathbb{P}^1$ and a variety isomorphic to $\mathbb{P}^2 \times \mathbb{P}^1$ meeting along a copy of $\mathbb{P}^1 \times \mathbb{P}^1$ with a quadric fibration $\rho : Z \rightarrow \mathbb{P}^1$. Let $D_{\bm h}$ be the divisor removed from $\widetilde{X}_{(3,1,2);D}$. One can see again that this divisor has only pure Tate cohomology, so a long exact sequence in cohomology applied to the pair $\widetilde{X}_{(3,1,2);D}$ and $D_{\bm g}$ 
\begin{equation}
\dots \rightarrow \mathrm{H}^3(D_{\bm g};\mathbb{Q}) \cong 0 \rightarrow \mathrm{H}_c^4(\widetilde{X}_{(3,1,2);D}-D_{\bm g};\mathbb{Q})\rightarrow \mathrm{H}^4(\widetilde{X}_{(3,1,2);D};\mathbb{Q})\rightarrow \mathrm{H}^4(D_{\bm g};\mathbb{Q}) \cong \mathbb{Q}(-2)^r \rightarrow \cdots 
\end{equation}
tells us that:

\begin{center}
{\em $\mathrm{H}^4(X_{(3,1,2);4};\mathbb{Q})$ has the properties quoted in the statement of the proposition if $\mathrm{H}^4_c(\overline{X}_{(3,1,2);4} - Z;\mathbb{Q})$ also has the same properties.}
\end{center}
The following paragraphs compute $\mathrm{H}^4(\overline{X}_{(3,1,2);4} - Z;\mathbb{Q})$.

\smallskip 

\noindent (c) {\em Show that $\mathrm{H}^3(Z;\mathbb{Q}) \cong \mathrm{H}^1(E;\mathbb{Q})$ for an elliptic curve $E$}. We can compute that $Z$ is generically smooth, and that the quadric fibration $\rho : Z\rightarrow \mathbb{P}^1$ has precisely four singular fibres which are simple vanishing loci of the discriminant. Therefore, combining Corollary~\ref{cor:mhshyp} and Lemma~\ref{l:monlem} we see that $\mathrm{H}^i(Z;\mathbb{Q})$ is pure Tate except if $i = 3$ in which case $\mathrm{H}^3(Z;\mathbb{Q})\cong \mathrm{H}^1(E;\mathbb{Q})$ for an elliptic curve $E$. 
\smallskip 

\noindent (d) {\em Show that $\overline{X}_{(3,1,2);D}$ has pure Tate cohomology}.  The hypersurface $\overline{X}_{(3,1,2);D}$ also admits a quadric fibration $\overline{\pi} :\overline{X}_{(3,1,2);D}\rightarrow \mathbb{P}^2$ whose discriminant locus is a quartic curve admitting two $A_1$ singularities. Furthermore, the variable transformations described in~(\ref{e:chov2}) allow us to write $\overline{X}_{(3,1,2);D}$ in the form
\begin{equation}
y_1y_2 + y_3^3 + {\bf G}_{(3,1,2);D}(z,x_1,x_2)w^2 = 0.
\end{equation}
Following the same argument as before, we have that $R := \overline{X}_{(3,1,2);D} - V(y_2)$ is isomorphic to the complement of $V(y_2)$ in an $\mathbb{A}^2$ bundle over $\mathbb{P}^2$ which is smooth and has pure Tate cohomology. The hyperplane section $T = \overline{X}_{(3,1,2);D}\cap V(y_2)$ is, as before, stratified into $Y = \overline{X}_{(3,1,2);D}\cap V(y_1,y_2)$ and $U = T - Y$. We note that $Y$ is a double cover of $\mathbb{P}^2$ ramified in a bi-nodal quartic curve. A binodal quartic curve has only orbifold singularities thus it has at pure Tate cohomology. $U$ is an $\mathbb{A}^1$ bundle over $Y$. Therefore it too has only pure Tate cohomology and compactly supported cohomology. Consequently, the cohomology of $T$ is pure Tate. The cohomology of $\overline{X}_{(3,1,2);D}$ sits in a long exact sequence
\begin{equation}\label{e:quartichouse}
\cdots \rightarrow \mathrm{H}^{i}_c(R;\mathbb{Q}) \rightarrow \mathrm{H}^i(\overline{X}_{(3,1,2);D};\mathbb{Q}) \rightarrow \mathrm{H}^i(T;\mathbb{Q}) \rightarrow \cdots 
\end{equation}
Since $R$ and $T$ have pure Tate cohomology, the same is true for $\overline{X}_{(3,1,2);D}$.

\smallskip

\noindent (e) {\em Compute the cohomology of $\mathrm{H}^4_c(\overline{X}_{(3,1,2);D}-Z;\mathbb{Q})$}. This is a consequence of the long exact sequence
\begin{equation}
\cdots \rightarrow \mathrm{H}^3(Z;\mathbb{Q}) \rightarrow \mathrm{H}^4_c(\overline{X}_{(3,1,2);4}-Z;\mathbb{Q}) \rightarrow \mathrm{H}^4(\overline{X}_{(3,1,2);4};\mathbb{Q}) \rightarrow \cdots 
\end{equation} along with (c) and (d).
\end{proof}
%------------------------------------------------------------------------
\subsubsection{Picard--Fuchs operators}
\label{sec:picard-fuchs-house}

In $D=4$ dimensions,
to the
house graph of type $(3,1,2)$ in Figure~\ref{fig:house} one
associates the rational  differential form in $H^5(\mathbb{P}^5 - V({\bf F}_{(3,1,2);D}(t)))$
on the complement of the vanishing locus of ${\bf F}_{(3,1,2);D}(t)$
\begin{equation}
  \label{e:omega213}
  \omega_{(3,1,2);4}(t)= {\Omega_0\over ({\bf
      F}_{(3,1,2)}(t))^2} 
\end{equation}
with $\Omega_0$ the differential form on $\mathbb{P}^5[x_1,x_2,y_1,y_2,y_3,z]$.
An application of the extended Griffiths--Dwork algorithm
of~\cite{Lairez:2022zkj} to the differential form in four dimensions 
produces an order 8 Picard--Fuchs operator of degree 99
such that
\begin{equation}
  \mathscr{L}_{(3,1,2);4}  \omega_{(3,1,2);4}(t)= d\beta_{(3,1,2);4},
\end{equation}
where the poles of the differential form $\beta_{(3,1,2);4}$ are
already contained in the poles of $\omega_{(3,1,2);4}$ (see the
discussion in Section~3 of~\cite{Lairez:2022zkj}).

The factorisation algorithm~\cite{chyzak2022symbolic,goyer2021sage}
gives a factorisation of the operator $\mathscr{L}_{(3,1,2);4}$
factors into
\begin{equation}
\mathscr{L}_{(3,1,2);4} = \mathscr{L}_{3}\mathscr{L}_{1}^2 \mathscr{L}_{1}^3 \mathscr{L}_{1}^4\mathscr{L}_{2}
\end{equation}
 into a
product of one third order $\mathscr{L}_{3} $, three first order operators $\mathscr{L}_{1}^2,\dots,
\mathscr{L}_{1}^4$  and a second order operator $\mathscr{L}_{2}$.
For the numerical examples detailed on the SAGE
worksheet~\href{https://nbviewer.org/github/pierrevanhove/MotivesFeynmanGraphs/blob/main/House.ipynb}{House.ipynb},  the order 3 operator has no
monodromy and only rational solutions. 

Therefore, the local system $\Sol(\mathscr{L}_{(3,1,2);4})$ has an increasing filtration where $\mathrm{rank}\, P_0 = 2$ and $\mathrm{rank}\, P_i/P_{i-1}  =1$. This is precisely what one would expect if $\Sol(\mathscr{L}_{(3,1,2);4})$ were in fact a quotient of the homological variation of mixed Hodge structure underlying the family $X_{(3,1,2);4}(t)$.

Recall from Proposition~\ref{p:house} that each $X_{(3,1,2);4}(t)$ is associated to an elliptic curve. Therefore, there is a family of elliptic curves attached to the pencils of graph hypersurfaces. Following the proof of Proposition~\ref{p:house} this family of elliptic curves is described as follows. Let
\begin{equation}
T_j= (x_1 + x_2)\left[\left(\sum_{i=1}^2 tr_{ij}^2 x_i\right) + m_i^2 (x_1 + x_2)\right] + tp^2 x_1x_2.
\end{equation}
Then the elliptic curve attached to $X_{(3,1,2);4}(t)$ is the double cover of $\mathbb{P}^1$ ramified in the vanishing locus of
\begin{equation}\label{eq:quartic2}
 \sum_{1\leq
    i,j\leq a} \det M^{i,j}T_iT_j
\end{equation}
where $M^{i,j}$ is the $(i,j)$-minor of the matrix given in~\eqref{e:m}. After a birational change of variables described in Appendix~\ref{sec:PFellipticcurve}, we may write this family of $t$-dependent elliptic curves as a family of elliptic curves in Weierstrass form. We denote by $\mathbf{Q}_{(3,1,2);4}(t)$ this family of Weierstrass equations. One then computes the Picard--Fuchs operator annihilating the form $\Omega_0/\mathbf{Q}_{(3,1,2);4}(t)$, again using the method described in Appendix~\ref{sec:PFellipticcurve}. Let $\widehat{\mathscr{L}}_{(3,1,2);D}$ denote this operator.

The normal forms\footnote{\label{f:normalform2} The normal form of the second order ordinary differential operator $L=\sum_{i=0}^2 q_i(t) (d/dt)^i$ is obtained after applying the scaling $f(t)\to f(t) \exp(-\int q_1(t) dt/(2q_2(t)))$ and reads $\tilde L=(d/dt)^2-\frac{-4 q_{0}(t) q_{2}(t)+2 \frac{dq_1(t)}{d t} q_{2}(t) +q_{1}(t )^{2}-2 q_{1}(t) \frac{dq_2(t)}{d t}}{4 q_{2}(t)^{2}}
$.
}
of the differential operators $\widehat{\mathscr{L}}_{(3,1,2);D}$ and of the $\mathscr{L}_2$ obtained
from the extended Griffiths--Dwork algorithm in~\cite{Lairez:2022zkj} are the same, which implies that that the two
differential operators differ by the multiplication by $t$:
$\mathscr{L}_1=\widehat{\mathscr{L}}_{(3,1,2)}\, t$.
This comparison is made explicit on some numerical examples on  SAGE
worksheet~\href{https://nbviewer.org/github/pierrevanhove/MotivesFeynmanGraphs/blob/main/House.ipynb}{House.ipynb}.

%-------------------------------------------------------------------------
\subsection{The kite graph motive, $(2,1,2)$}\label{sec:kite}

\begin{figure}[h]
\begin{tikzpicture}[scale=0.6]
\filldraw [color = black, fill=none, very thick] (0,0) circle (2cm);
\draw [black,very thick] (-2,0) to (2,0);
\filldraw [black] (2,0) circle (2pt);
\filldraw [black] (-2,0) circle (2pt);
\filldraw [black] (0,2) circle (2pt);
\filldraw [black] (0,-2) circle (2pt);
\draw [black,very thick] (-2,0) to (-3,0);
\draw [black,very thick] (2,0) to (3,0);
\draw [black,very thick] (0,2) to (0,3);
\draw [black,very thick] (0,-2) to (0,-3);
\end{tikzpicture}
\caption{The kite graph}\label{fig:kite}
\end{figure}
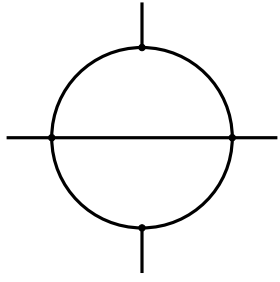

Let us analyze the situation when $a = c = D = 2$. This is the simplest situation described by the discussion above, however we will see that in some ways it is atypical. In this case, the graph hypersurfaces in question are given by the equations
\begin{align}
{\bf U}_{(2,1,2)} & = \left(z +  x_1 + x_2\right) \left( y_1 + y_2 \right) + z \left(x_1 + x_2\right), \cr
{\bf V}_{(2,1,2);D} & = \left( z + y_1 + y_2\right)\, p^2 x_1
                  x_2  + \left(z+x_1 + x_2\right)\, q^2
                  y_1y_2 \\
                  &+ z \left(r_{11}^2 x_1y_2 + r_{12}^2x_1y_2 + r_{21}^2x_2y_1 + r_{22}^2x_2y_2 \right),\cr
\nonumber {\bf F}_{(2,1,2);D}(t) & = {\bf U}_{(2,1,2)}\left( m_1^2y_1 + m_2^2y_2 + m_{3}^2x_1 + m_{4}^2x_2 + m^2_{5} z\right) -t  {\bf V}_{(2,1,2);D},
\end{align}
where the mass parameters are real non-vanishing positive number
$m_i\in\mathbb{R}_{>0}$ for $1\leq i\leq 5$, and the kinematic parameters $r_{11}^2=k^2$,
$r_{12}=(k+q)^2$, $r_{21}=(k+p)^2$ and $r_{22}=(k+p+q)^2$.
Following the recipe in the previous section, we may prove the
following result.

\begin{proposition}\label{p:212} 
For any $D \geq 2$ the quadric fibration $\pi : \widetilde{X}_{(2,1,2);D} \rightarrow \mathbb{P}^2$ is nondegenerate. If $D=2$ curve $V(\Disc_{(2,1,2);D})$ geometric genus 0, and if $D > 2$ it has geometric genus 1. Therefore, the cohomology of $X_{(2,1,2);2}$ is mixed Tate, and $\Gr^W\mathrm{H}^3(X_{(2,1,2);4};\mathbb{Q}) \cong \mathrm{H}^1(E;\mathbb{Q})(-1)$ for an elliptic curve $E$ depending on mass and kinematic parameters.
\end{proposition}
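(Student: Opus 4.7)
The plan is to specialise the machinery of Section~\ref{sec:Ia12} to $a=c=2$, and then to perform an explicit genus calculation in the two cases $D=2$ and $D>2$. By Lemma~\ref{lemma:quadfib}, blowing up the line $L = V(z,x_1,x_2) \subset \mathbb{P}^4$ produces a threefold $\widetilde{X}_{(2,1,2);D}$ together with a conic fibration $\pi : \widetilde{X}_{(2,1,2);D} \to \mathbb{P}^2$. For non-degeneracy of $\pi$, I would compute the $2\times 2$ matrix $M$ from the proof of Proposition~\ref{p:prepa12}: in the $a=2$ case this is the symmetric matrix
\[
M = \begin{pmatrix} m_1^2 & \frac{1}{2}(q^2 + m_1^2 + m_2^2) \\ \frac{1}{2}(q^2 + m_1^2 + m_2^2) & m_2^2 \end{pmatrix},
\]
whose determinant is generically nonzero, so by formula~\eqref{eq:quartic} the discriminant locus of $\pi$ decomposes as $(z+x_1+x_2)\,{\bf G}_{(2,1,2);D}$ with ${\bf G}_{(2,1,2);D}$ a nonzero homogeneous quartic, independently of $D$.

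Next I would analyse the geometric genus of $V({\bf G}_{(2,1,2);D}) \subset \mathbb{P}^2$. From the last paragraph of the proof of Proposition~\ref{p:prepa12}, this quartic has the shape $Az^2 + BQz + Q^2$ with $Q = p^2 x_1x_2 + (x_1+x_2)(m_3^2 x_1 + m_4^2 x_2)$, and is birational to the complete intersection of two quadrics
\[
A + BX + X^2 = 0, \qquad zX - Q = 0
\]
in $\mathbb{P}^3$. A smooth complete intersection of two quadrics in $\mathbb{P}^3$ is an elliptic curve, and a direct computation of the Jacobian ideal for generic kinematic and mass parameters with $D \geq 3$ confirms smoothness, so the geometric genus is $1$. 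When $D = 2$, the ambient Minkowski space forces a Gram determinant relation~\cite{Asribekov:1962tgp} among the three independent external momenta of the $(2,1,2)$ graph; substituting this relation into the explicit form of $A, B, Q$ one checks that the intersection of quadrics acquires an additional node, forcing the geometric genus of $V({\bf G}_{(2,1,2);2})$ to drop to $0$.

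For the cohomological conclusion, I would follow the argument of the proof of Theorem~\ref{thm:212} specialised to $a=2$. Lemma~\ref{l:blup} controls the effect of the blow-up ${\bm b}$, and an analysis in the style of the proof of Theorem~\ref{thm:doublebox} shows that the birational modification ${\bm h}$ of~\eqref{a:h} adds and removes only strata with mixed Tate cohomology. Consequently $\mathrm{H}^3(X_{(2,1,2);D};\mathbb{Q})$ agrees, up to mixed Tate factors, with $\mathrm{H}^3(\overline{X}_{(2,1,2);D};\mathbb{Q})$, where after a linear change in $y_1, y_2$ the proper transform $\overline{X}_{(2,1,2);D}$ has equation $y_1 y_2 + {\bf G}_{(2,1,2);D}(z,x_1,x_2)\,w^2 = 0$. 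The hyperplane section $T = \overline{X}_{(2,1,2);D} \cap V(y_2)$ then decomposes as the union of the section $V(y_2,w) \cong \mathbb{P}^2$ and the product $V({\bf G}_{(2,1,2);D}) \times \mathbb{P}^1_{[y_1:w]}$, glued along $V({\bf G}_{(2,1,2);D})$. Solving for $y_1$ exhibits $\overline{X}_{(2,1,2);D} - V(y_2)$ as the complement of a toric divisor in a toric variety, so exactly as in the proof of Theorem~\ref{thm:212} the cohomology of $\overline{X}_{(2,1,2);D}$ agrees up to mixed Tate factors with that of $T$. A Mayer--Vietoris computation for $T$ then yields $\mathrm{H}^3(T;\mathbb{Q})$ as an extension of a Tate Hodge structure by $\mathrm{H}^1(V({\bf G}_{(2,1,2);D});\mathbb{Q})(-1)$. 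When $D=2$ the curve $V({\bf G}_{(2,1,2);2})$ is rational, so this is entirely mixed Tate; when $D \geq 3$ one has $\Gr^W_1 \mathrm{H}^1(V({\bf G}_{(2,1,2);D});\mathbb{Q}) \cong \mathrm{H}^1(E;\mathbb{Q})$ for $E$ the normalisation of $V({\bf G}_{(2,1,2);D})$, and the claimed identification $\Gr^W \mathrm{H}^3(X_{(2,1,2);4};\mathbb{Q}) \cong \mathrm{H}^1(E;\mathbb{Q})(-1)$ follows.

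The main obstacle will be verifying that the single $D=2$ Gram determinant relation genuinely produces an extra node on the generically bi-nodal quartic $V({\bf G}_{(2,1,2);D})$ rather than merely specialising an existing node. Although elementary in principle, the explicit algebraic forms of $A, B,$ and $Q$ are opaque enough that this step is most cleanly handled by symbolic computation, for instance by substituting the Gram relation into the Jacobian ideal of ${\bf G}_{(2,1,2);2}$ and confirming that an extra component appears in its zero locus.
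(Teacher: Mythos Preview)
Your proposal is correct and takes essentially the same approach as the paper, whose proof simply defers to a SAGE worksheet implementing the Section~\ref{sec:Ia12} machinery for $a=2$; you have made those steps explicit. (Two cosmetic slips that do not affect the argument: the off-diagonal entry of $M$ should be $\tfrac12(m_1^2+m_2^2-q^2)$, cf.\ the analogous matrix in Section~\ref{sec:icecream}, and $V(y_2,{\bf G})\subset\overline{X}_{(2,1,2);D}$ is a $\mathbb{P}^1$-bundle over the plane quartic rather than a literal product, though Leray--Hirsch gives the same cohomological conclusion.)
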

\begin{proof}
Straightforward computation in {SAGE}~\cite{sagemath} given on the worksheet~\href{https://nbviewer.org/github/pierrevanhove/MotivesFeynmanGraphs/blob/main/Kite.ipynb}{Kite.ipynb}. In this worksheet it is shown that the elliptic curve $E$ is the same for the blow up of the $x_0=x_1=z$ or $y_0=y_1=z$, reflecting the symmetry of the graph with respect to the middle edge.
\end{proof}

\begin{remark}
Note that since the graph $(2,1,2)$ is symmetric, there are two choices of quadric fibration on $X_{(2,1,2);D}$ corresponding to the two distinct chains of edges of length 2. If $D > 2$ then both choices produce elliptic curves attached to $X_{(2,1,2);D}$. By Theorem~\ref{thm:212}, these curves must have isomorphic $\mathbb{Q}$ cohomology, thus they are isogenous. In fact, one can check that they are isomorphic. Compare this to Remark~\ref{r:db-ec}.
\end{remark}
However we have the following observation; the differential form used in computing the kite Feynman integral is of the form
\begin{equation}
\omega_{(2,1,2);2} (t)=  \dfrac{{\bf U}_{(2,1,2)}^{5 - 3D/2}}{{\bf F}_{(2,1,2);D}(t)^{5-D}} \Omega_0,
\end{equation}
where $\Omega_0$ is the canonical differential form on $\mathbb P^4$.

 The singular locus of $\omega_{(2,1,2);D}(t)$ is equal to
$X_{(2,1,2);2}$ only when space-time dimension $D=2$. As we have seen, in that case the mixed Hodge
structure on $\mathrm{H}^3(X_{(2,1,2);2})$ is mixed Tate. In fact, as one
can compute (see Section~6.1.1 of~\cite{Lairez:2022zkj}), the differential form $\omega_{(2,1,2);2}$ is exact and thus represents the trivial class in cohomology. On the other hand, in space-time dimension $D=4$ the form is written as 
\begin{equation}\label{e:w212def}
\omega_{(2,1,2);4}(t) = \dfrac{ \Omega_0}{{\bf U}_{(2,1,2)}{\bf F}_{(2,1,2);4}(t)}
\end{equation}
which represents an element in the de Rham cohomology of the complement of of $Y_{(2,1,2)} \cup X_{(2,1,2);4}$ where $Y_{(2,1,2)} = V({\bf U}_{(2,1,2)})$. By Proposition~\ref{p:212}, $\mathrm{H}^i(X_{(2,1,2);4};\mathbb{Q})$ is generically mixed Tate unless $i=3$, in which case
\[
W_{2}\mathrm{H}^i(X_{(2,1,2);4};\mathbb{Q}) \text{ is mixed Tate},\qquad \Gr^W_3\mathrm{H}^{3}(X_{(2,1,2);4};\mathbb{Q}) \cong \mathrm{H}^1(E;\mathbb{Q})
\]
for an elliptic curve $E$ depending on kinematic parameters. A quick computation shows that $Y_{(2,1,2)}$ is a 3-dimensional quadric of corank 1 so its cohomology groups can be computed easily as
\[
\mathrm{H}^{i}(Y_{(2,1,2)};\mathbb{Q}) = \begin{cases} 0 & \text{ if } i \text{ is odd} \\
\mathbb{Q} & \text{ if } i=0,2,6 \\
\mathbb{Q}^2 & \text{ if } i = 4 \\
\end{cases}
\]
We would like to apply the Mayer--Vietoris long exact sequence to describe the mixed Hodge structure on the cohomology of $X_{(2,1,2);4}\cup Y_{(2,1,2)}$. To do this, we must describe the cohomology of $Z = Y_{(2,1,2)} \cap X_{(2,1,2);4}$.
\begin{lemma}
    The cohomology of $Z$ is mixed Tate. 
\end{lemma}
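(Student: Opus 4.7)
The plan is to analyze $Z = Y_{(2,1,2)}\cap X_{(2,1,2);4}$ via a conic bundle structure on $Y_{(2,1,2)}$ in the spirit of Lemma~\ref{lemma:quadfib}. Writing $u = x_1+x_2$ and $v = y_1+y_2$, the polynomial ${\bf U}_{(2,1,2)} = uv+vz+zu$ depends only on $(u,v,z)$, so $Y_{(2,1,2)}$ is the cone over the smooth plane conic $C = V(uv+vz+zu)\subset\mathbb{P}^2$ with vertex line $V_Y = V(z,x_1+x_2,y_1+y_2)$. The projection from $V_Y$ yields a rational map $Y_{(2,1,2)}\dashrightarrow C$ whose fibers are the $\mathbb{P}^2$'s containing $V_Y$, coordinatized by $[a:b:t]$ with $V_Y = V(t)$. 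A direct expansion shows that in each fiber over $c\in C$,
\[
4\,{\bf V}_{(2,1,2);4}\big|_{\text{fiber}} \;=\; t\cdot\bigl(Q_2(a,b) + tL(a,b) + t^2P_0\bigr),
\]
where $Q_2,L,P_0$ are of degrees $2,1,0$ in $(a,b)$ with coefficients varying along $C$. Hence $Z\cap(\text{fiber}) = V_Y\cup C_c$ with $C_c$ a plane conic; since each point of $V_Y$ lies on $C_c$ for two values of $c$, one finds $V_Y\subset W:=\overline{\bigcup_c C_c}$ and therefore $Z = W$ set-theoretically.

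The surface $W$ is non-normal along $V_Y$, where two sheets meet (one for each conic through the point). Its normalization $\pi:\tilde W\to W$ is an honest conic bundle $\tilde W\to C\cong\mathbb{P}^1$ with generically smooth fibers, and the preimage $E:=\pi^{-1}(V_Y)\subset V_Y\times C$ is the conductor curve, cut out by $Q_2(a,b)=0$. As a bi-degree $(2,2)$ divisor in $\mathbb{P}^1\times\mathbb{P}^1$, $E$ is generically an elliptic curve; the hard step is to show that for the specific polynomial ${\bf V}_{(2,1,2);4}$ it degenerates to a reducible rational curve. Using the identity $(z+u)(z+v)=z^2$ on $C$ (an immediate consequence of $uv+vz+zu=0$), the discriminant of $Q_2$ as a quadratic form in $(a,b)$ reduces on $C$ to a constant multiple of $z^2$. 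Consequently $Q_2$ factors globally as a product of two linear forms in $(a,b)$, so $E$ decomposes as $C_+\cup C_-$, two $(1,1)$-curves meeting transversely in two points. Then $E$ has mixed Tate cohomology, with $H^1(E;\mathbb{Q})$ a weight-zero Tate piece arising from the combinatorial loop in the dual graph of $C_+\cup C_-$.

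To conclude, $\tilde W$ has mixed Tate cohomology by Corollary~\ref{cor:mhshyp} applied with relative dimension $n=1$ and generic corank $s_\mathcal{Q}=0$, since $n+s_\mathcal{Q}+1 = 2$ is even and no hyperelliptic contribution arises; similarly $V_Y\cong\mathbb{P}^1$ and $E$ are mixed Tate. The Mayer--Vietoris long exact sequence associated to the normalization pushout
\[
\cdots\to H^{i-1}(E;\mathbb{Q})\to H^i(W;\mathbb{Q})\to H^i(\tilde W;\mathbb{Q})\oplus H^i(V_Y;\mathbb{Q})\to H^i(E;\mathbb{Q})\to\cdots
\]
then implies that $H^*(Z;\mathbb{Q})=H^*(W;\mathbb{Q})$ is mixed Tate. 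The main obstacle is precisely the conic identity on $C$ forcing $E$ to collapse from an elliptic to a reducible rational curve; without this algebraic collapse, $Z$ would carry a non-Tate elliptic contribution.
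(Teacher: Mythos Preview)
Your argument is correct and takes a genuinely different route from the paper. The paper first blows up the point $[0{:}0{:}0{:}0{:}1]$, then projects $Z$ birationally onto a quartic surface $Z'\subset\mathbb{P}^3_{[x_1:x_2:y_1:y_2]}$, and finally blows up the line $x_1+x_2=y_1+y_2=0$ in $Z'$ to obtain a quadric fibration over $\mathbb{P}^1$; the exceptional $(2,2)$ curve there is
\[
p^{2} X_{2}^{2} X_{3}^{2}  + (r_{11}^{2}-r_{12}^{2}-r_{21}^{2}+r_{22}^{2}) X_{1} X_{2} X_{3} X_{4}  + q^{2} X_{1}^{2} X_{4}^{2}=0,
\]
which is visibly singular (indeed reducible) for all parameters. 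You instead exploit the cone structure of $Y_{(2,1,2)}$ over the conic $C$ directly, producing the conic bundle $\tilde W\to C$ after a single blowup of the vertex line $V_Y$; your identity $(z+u)(z+v)=z^2$ on $C$ is exactly what forces the discriminant of $Q_2$ to be a perfect square and hence the analogue curve $E$ to split into two $(1,1)$ components. The two arguments converge on the same phenomenon---a would-be elliptic $(2,2)$ curve that collapses---but yours reaches it with fewer birational steps and a cleaner algebraic reason, while the paper's fits its systematic quadric-fibration template. One small refinement: rather than asserting that $\tilde W$ is the normalization of $W$ (which would require checking $\tilde W$ is normal), it suffices to invoke the abstract-blowup long exact sequence, which only needs $\tilde W\to W$ proper and an isomorphism off $E$ and $V_Y$; both conditions you have already verified.
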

\begin{proof}
As usual, the proof of this result comes from a sequence of birational transformations where we carefully keep track of the cohomological contributions from each map.

\smallskip

\noindent (1) {\em Blow up at $[0:0:0:0:1]$.} First, we blow up at the point $[0:0:0:0:1]$. This can be represented in toric homogeneous coordinates as 
\begin{align}
{\bf U}'_{(2,1,2)} & = \left(z +  w(x_1 + x_2)\right) \left( y_1 + y_2 \right) + z \left(x_1 + x_2\right), \cr
{\bf V}'_{(2,1,2);D} & = \left( z + w(y_1 + y_2)\right)\, p^2 x_1
                  x_2  + \left(z+ w(x_1 + x_2)\right)\, q^2
                  y_1y_2 \\
                  &+ z \left(r_{11}^2 x_1y_2 + r_{12}^2x_1y_2 + r_{21}^2x_2y_1 + r_{22}^2x_2y_2 \right),\cr
\nonumber {\bf F}'_{(2,1,2);D} & = {\bf U}'_{(2,1,2)}\left( m_1^2y_1 w+ m_2^2y_2w + m_{3}^2x_1w + m_{4}^2x_2w + m^2_{5} z\right) -t w{\bf V}'_{(2,1,2);D},
\end{align}
The exceptional divisor of this blow up in $Z$ is the vanishing locus of $w$ in ${\bf U}'_{(2,1,2)} = {\bf V}'_{(2,1,2);D} = 0$. This is the smooth quadric 
\[
x_1 + x_2 + y_1 + y_2 = p^2x_1x_2 + q^2y_1y_2 + r_{11}^2 x_1y_2 + r_{12}^2x_1y_2 + r_{21}^2x_2y_1 + r_{22}^2x_2y_2 =0.
\]
This makes only a mixed Tate contribution to the cohomology of $Z$.

\smallskip

\noindent (2) {\em Project onto $\mathbb{P}^3$.} The projection onto the $\mathbb{P}^3$ with variables $x_1,x_2,y_1,y_2$ is one-to-one on $Y_{(2,1,2)}$ away from $x_1 + x_2= y_1 + y_2 = 0,$ along which it is a $\mathbb{P}^1$ fibre bundle.  There is an inverse bijection well-defined away from the image of this locus, given by
\[
[x_1: x_2 : y_1 : y_2] \mapsto \left[x_1 : x_2 : y_1 : y_2 : (x_1 + x_1)(y_1 + y_2) : (x_1 + x_2 + y_1 + y_2) \right].
\]
Using this, we see that the image of $Z$ under the projection map is a quartic hypersurface in $\mathbb{P}^3$ given by the equation
\begin{multline}
{\bf W}_{(2,1,2);D} = (x_1 + x_2 + y_1 + y_2)\left(p^2 x_1 x_2(y_1 + y_2) +
                    q^2y_1y_2(x_1 + x_2)\right) \cr+(x_1 + x_2)(y_1 + y_2)(p^2 x_1 x_2 + q^2 y_1 y_2 +
                    r_{11}^2 x_1y_1 + r_{12}^2x_1y_2 + r_{21}^2x_2y_1
                    + r_{22}^2x_2y_2 ) 
                                            . 
\end{multline}
Since $Z$ is a hypersurface in $Y$, the projection is generically one-to-one away from its intersection with the contraction locus. One sees that this map sends a quadric in $Z$ to the line $x_1 + x_2 = y_1 + y_2$ in $Z' = V({\bf W}_{(2,1,2);D})$. Again, this birational map makes only mixed Tate contributions to the cohomology of $Z$.

\smallskip 

\noindent (3) {\em Blow up the line $x_1 + x_2 = y_1 + y_2 =0$ in $Z'$. } We see that $Z'$ is a quartic hypersurface in $\mathbb{P}^3$ containing a line $x_1 + x_2 = y_1 + y_2 =0$ of $A_1$ singularities. We make a change of variables, $X_1 = x_1 + x_2, X_2 = y_1 + y_2,  X_3 = x_2, X_4=y_2$ to get
\begin{multline}
r_{11}^{2} X_{1}^{2} X_{2}^{2}  - p^{2} X_{1} X_{2}^{2} X_{3} - r_{11}^{2} X_{1} X_{2}^{2} X_{3} + r_{21}^{2} X_{1} X_{2}^{2} X_{3} + p^{2} X_{2}^{2} X_{3}^{2} \cr - q^{2} X_{1}^{2} X_{2} X_{4} - r_{11}^{2} X_{1}^{2} X_{2} X_{4}  + r_{12}^{2} X_{1}^{2} X_{2} X_{4} + r_{11}^{2} X_{1} X_{2} X_{3} X_{4} \cr - r_{12}^{2} X_{1} X_{2} X_{3} X_{4} - r_{21}^{2} X_{1} X_{2} X_{3} X_{4} + q^{2} X_{1}^{2} X_{4}^{2} + r_{22} X_{1} X_{2} X_{3} X_{4}
\end{multline}
This is quadric in $X_3,X_4$ so we may blow up along the subvariety $X_1 = X_2 =0$ to get a quadric fibration over $\mathbb{P}^1$ which we call $Z''$. This quadric fibration is generically nondegenerate therefore it has mixed Tate cohomology by Corollary~\ref{cor:mhshyp}. The exceptional divisor of the blow up is a $(2,2)$ curve in $\mathbb{P}_{X_1,X_2}^1 \times \mathbb{P}_{X_3,X_4}^1$ determined by the equation
\begin{equation}
p^{2} X_{2}^{2} X_{3}^{2}  + r_{11}^{2} X_{1} X_{2} X_{3} X_{4} - r_{12}^{2} X_{1} X_{2} X_{3} X_{4}   - r_{21}^{2} X_{1} X_{2} X_{3} X_{4}  + q^{2} X_{1}^{2} X_{4}^{2} + r_{22} X_{1} X_{2} X_{3} X_{4} = 0
\end{equation}
A quick check shows that this curve is singular for all choices of parameters. Hence its cohomology is mixed Tate, and thus, up to mixed Tate factors, the quartic hypersurface $Z'$ and the quadric bundle $Z''$ have the same cohomology. 
\end{proof}

\begin{corollary}\label{c:kite}
The Hodge structure on $\Gr^W_{i}\mathrm{H}^4(\mathbb{P}^4 - (X_{(2,1,2);4} \cup Y_{(2,1,2)});\mathbb{Q})$  is pure Tate except when $i = 5$, in which case,
\[
\Gr^W_{5}\mathrm{H}^4(\mathbb{P}^4 - (X_{(2,1,2);4} \cup Y_{(2,1,2)});\mathbb{Q}) \cong \mathrm{H}^1(E;\mathbb{Q})(-2)
\]
for an elliptic curve $E$ depending on the mass and kinematic parameters.
\end{corollary}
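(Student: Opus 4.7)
The plan is to combine the Mayer--Vietoris long exact sequence for the decomposition $X \cup Y$ (writing $X := X_{(2,1,2);4}$ and $Y := Y_{(2,1,2)}$ for brevity) with the long exact sequence for the compactly supported cohomology of $U := \mathbb{P}^4 - (X \cup Y)$, and to conclude via Poincar\'e duality for the smooth $4$-fold $U$.

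First I would assemble the three Hodge-theoretic inputs. From Proposition~\ref{p:212}, the unique non-Tate graded piece of $\mathrm{H}^*(X;\mathbb{Q})$ is $\Gr^W_3\mathrm{H}^3(X;\mathbb{Q})\cong \mathrm{H}^1(E;\mathbb{Q})(-1)$. The explicit rank calculation immediately preceding the corollary shows that $\mathrm{H}^*(Y;\mathbb{Q})$ is pure Tate, and the lemma just proved shows that $\mathrm{H}^*(X\cap Y;\mathbb{Q})$ is mixed Tate. Plugging these into Proposition~\ref{p:mayer-vietoris} and applying strictness of morphisms of mixed Hodge structures weight by weight, all odd-weight graded pieces of $\mathrm{H}^*(X)\oplus \mathrm{H}^*(Y)$ and $\mathrm{H}^*(X\cap Y)$ vanish except $\Gr^W_3\mathrm{H}^3(X;\mathbb{Q})$. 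Since $\mathrm{H}^3(Y;\mathbb{Q})=0$ and $\Gr^W_3\mathrm{H}^3(X\cap Y;\mathbb{Q})=0$, the image of $\mathrm{H}^1(E;\mathbb{Q})(-1)$ survives; thus $\mathrm{H}^i(X\cup Y;\mathbb{Q})$ is mixed Tate for every $i\neq 3$, and $\Gr^W_3\mathrm{H}^3(X\cup Y;\mathbb{Q})\cong \mathrm{H}^1(E;\mathbb{Q})(-1)$ is the sole non-Tate graded piece.

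Next, I would feed this into the long exact sequence
\[
\cdots \to \mathrm{H}^{i-1}(X\cup Y;\mathbb{Q})\to \mathrm{H}^i_c(U;\mathbb{Q})\to \mathrm{H}^i(\mathbb{P}^4;\mathbb{Q})\to \cdots
\]
Since $\mathrm{H}^*(\mathbb{P}^4;\mathbb{Q})$ is pure Tate, $\mathrm{H}^i_c(U;\mathbb{Q})$ must be mixed Tate for $i\neq 4$, and the only non-Tate graded piece of $\mathrm{H}^4_c(U;\mathbb{Q})$ is $\Gr^W_3\cong \mathrm{H}^1(E;\mathbb{Q})(-1)$, injected from $\mathrm{H}^3(X\cup Y;\mathbb{Q})$ (using that $\mathrm{H}^3(\mathbb{P}^4;\mathbb{Q})=0$). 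Finally, Poincar\'e duality gives $\mathrm{H}^4(U;\mathbb{Q})\cong \mathrm{H}^4_c(U;\mathbb{Q})^\vee(-4)$; combined with the self-duality $\mathrm{H}^1(E;\mathbb{Q})^\vee\cong \mathrm{H}^1(E;\mathbb{Q})(1)$ of the elliptic curve, this transports the sole non-Tate piece to $\Gr^W_5\mathrm{H}^4(U;\mathbb{Q})\cong \mathrm{H}^1(E;\mathbb{Q})(-2)$, as claimed.

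There is no genuine conceptual difficulty here; the geometry has already been done in Proposition~\ref{p:212} and in the lemma preceding this corollary, and both long exact sequences split (after taking graded pieces for the weight filtration) into easily controlled fragments. The only finicky point, and thus the main thing to get right, is tracking Tate twists through the Poincar\'e duality isomorphism, which accounts for the shift from $\mathrm{H}^1(E;\mathbb{Q})(-1)$ in $\mathrm{H}^4_c(U;\mathbb{Q})$ to $\mathrm{H}^1(E;\mathbb{Q})(-2)$ in $\mathrm{H}^4(U;\mathbb{Q})$.
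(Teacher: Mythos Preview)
Your proof is correct and follows exactly the approach implicit in the paper. The paper does not spell out a proof of the corollary, but the surrounding text assembles the same three inputs (Proposition~\ref{p:212} for $X$, the explicit rank computation for $Y$, and the preceding Lemma for $X\cap Y$) and the passage from $\mathrm{H}^*(X\cup Y)$ to $\mathrm{H}^*(\mathbb{P}^4-(X\cup Y))$ via compactly supported cohomology and Poincar\'e duality is precisely the mechanism recalled in Section~\ref{sect:gen}.
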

An application of the extended Griffiths--Dwork algorithm to  the
kite differential form in~\eqref{e:w212def} leads to a reducible order
7 Picard--Fuchs operator $\mathscr{L}_{(2,1,2);4}$ given 
in Section~6.1.2 of~\cite{Lairez:2022zkj}. 
For the numerical cases studied reported
on the
page~\href{https://nbviewer.org/github/pierrevanhove/PicardFuchs/blob/main/PF-kite.ipynb}{PF-Kite.ipynb},
we have checked that the non-apparent singularities
of the Picard--Fuchs operator
$\mathscr{L}_{(2,1,2);4}$  occur at the location of the vanishing of
the discriminant of the elliptic curve $E$ determined by the kite
graph polynomial. The differential operator $\mathscr{L}_{(2,1,2);4}$  sees the non-elliptic non-anomalous contribution evaluated  in~\cite{Broadhurst:2022bkw} .

\begin{remark}\label{r:kite}

This case presents a minor riddle when comparing the structure of the differential operator $\mathscr{L}_{(2,1,2);4}$ to the variation of mixed Hodge structure $\mathcal{H}_{(2,1,2);4}$. The local system $\Sol(\mathscr{L}_{(2,1,2);4})$ is a quotient of $\mathcal{H}_{(2,1,2);4}^\vee$, and by Corollary~\ref{c:kite}, there is a weight filtration $\mathcal{W}_{-4}\subseteq \mathcal{W}_{-3} \subseteq \mathcal{W}_{-2} \subseteq \mathcal{W}_{-1} \subseteq \mathcal{W}_{0}$ on $\mathcal{H}_{(2,1,2);4}$ where $\Gr^\mathcal{W}_{-4}, \Gr^\mathcal{W}_{-2}$ are pure Tate, and $\Gr^\mathcal{W}_{-1}$ is isomorphic to the variation of Hodge structure underlying the family of elliptic curves $E$ appearing in Corollary~\ref{c:kite}. All other $\Gr^\mathcal{W}_i$ are trivial.

\end{remark}
%%%%%%%%%%%%%%%%%%%%%%%%%%%%%%%%%%%%%%%%%%%%%%%%%%%%%%%%%%%%%%%%%%

\section{Elliptic motives of $(a,1,1)$ graph hypersurfaces}\label{s:Ellmot}

\begin{figure}[h]
\begin{tikzpicture}[scale=0.6]
\filldraw [color = black, fill=none, very thick] (0,0) circle (2cm);
\draw [black,very thick] (-2,0) to (2,0);
\filldraw [black] (2,0) circle (2pt);
\filldraw [black] (-2,0) circle (2pt);
\filldraw [black] (0,2) circle (2pt);
\filldraw [black] (1.414,1.414) circle (2pt);
\filldraw [black] (-1.414,1.414) circle (2pt);
\draw [black,very thick] (-2,0) to (-3,0);
\draw [black,very thick] (2,0) to (3,0);
\draw [black,very thick] (0,2) to (0,3);
\draw [black,very thick] (1.414,1.414) to (2.25,2.25);
\draw [black,very thick] (-1.414,1.414) to (-2.25,2.25);
\end{tikzpicture}
\caption{A two-loop graphs of type $(a,1,1)$ with $a=4$}\label{fig:a11graphs}
\end{figure}
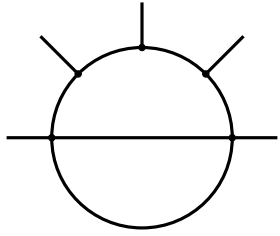
In this section, we analyze the graphs of type $(a,1,1)$ represented
in Figure~\ref{fig:a11graphs} and prove Theorems~\ref{t:mainc=1}
stating that the motives of a graph hypersurface attached to an $(a,1,1)$ graph come from elliptic curves or are mixed Tate. The polynomial associated to such graphs are given by 
\begin{align}
{\bf U}_{(a,1,1)} & = \left(z +  x\right) \left( \sum_{i=1}^a y_i \right) + z x, \cr
{\bf V}_{(a,1,1);D} & = \left(z+x\right)\left( \sum_{1\leq i<j\leq a} q_{ij}^2 y_i y_j \right) + zx\left( \sum_{i=1}^a r_{j}^2 y_j \right),\\
\nonumber {\bf F}_{(a,1,1);D} & = {\bf U}_{(a,1,1)}\left(  \sum_{i=1}^a m^2_i y_i + m^2_{a+1}x + m^2_{a+2} z\right) -{\bf V}_{(a,1,1);D}.
\end{align}
The mass parameters are non-vanishing real positive numbers
$m_i^2\in\mathbb{R}_{>0}$ for $1\leq i\leq a+2$. The coefficients of
${\bf V}_{(a,1,1);D}$ depend on the external momenta $p,k_1,\dots,k_a
\in\mathbb{C}^{D}$ with 
\begin{equation}\label{e:vecdef}
r_i^2=\left(p+\sum_{j=1}^{i-1} k_i\right)^2,\,\, 1\leq i\leq a, \quad  q_{ij}^2=\left(\sum_{r=i}^{j-1} k_r\right)^2,\,\, 1\leq i<j\leq a. 
\end{equation}
When $a+1>D$ we have
$a+1$ vectors in a $D$ dimensional vector space. That implies there are
linear relations between the momentum vectors. This implies relations
between the coefficients of the quadratic form ${\bf
  V}_{(a,1,1);D}$. The minimal set of such relations
can be determined by computing various Gram determinants between well
chosen sets of  momentum vectors~\cite{Asribekov:1962tgp}. We will make
this precise in the example considered in the paper.

\subsection{The mixed Hodge structure of $X_{(a,1,1);D}$}

We begin by blowing up $X_{(a,1,1);D}$ at the linear subspace $L$ written as $x=z=0$. The proper transform of this blow up can be viewed as a hypersurface in the toric variety $\mathrm{Bl}_L\mathbb{P}^{a+1}$ with new homogeneous coordinate $w$ and morphism
\begin{equation}
(y_1,\dots, y_a,x,z,w) \longmapsto [y_1:\dots : y_a: xw : zw].
\end{equation}
The proper transform $\widetilde{X}_{(a,1,1);D}$ of $X_{(a,1,1);D}$ is given by the equations
\begin{align}
\widetilde{\bf U}_{(a,1,1)} & = \left(z +  x\right) \left( \sum_{i=1}^a y_i \right) + z xw, \cr
\widetilde{\bf V}_{(a,1,1);D} & = \left(z+x\right)\left( \sum_{1\leq
                            i,j\leq a} q_{ij}^2 y_i y_j \right) + zxw\left( \sum_{i=1}^a r_{j}^2 y_j \right),\\
\nonumber \widetilde{\bf F}_{(a,1,1);D} & = \widetilde{\bf U}_{(a,1,1)}\left(  \sum_{i=1}^a m^2_i y_i + m^2_{a+1}xw + m^2_{a+2} zw\right) -\widetilde{\bf V}_{(a,1,1);D}.
\end{align}
We observe that $\widetilde{X}_{(a,1,1);D}$ admits a quadric fibration over $\mathbb{P}^1$ which we denote, as usual, 
\begin{equation}\label{eq:pitil}
\pi : \widetilde{X}_{(a,1,1);D}\rightarrow \mathbb{P}^1.
\end{equation}
\begin{proposition}
Up to mixed Tate factors, the cohomology of $X_{a,1,1}$ agrees with that of $\widetilde{X}_{a,1,1}$. 
\end{proposition}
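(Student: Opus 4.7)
The plan is a direct application of Corollary~\ref{c:blowup}: it suffices to show that the linear subspace $L = V(x,z)$ is contained in $X_{(a,1,1);D}$, and that the exceptional divisor $E$ of $\widetilde{X}_{(a,1,1);D} = \Bl_L X_{(a,1,1);D} \to X_{(a,1,1);D}$ has mixed Tate cohomology.

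The first point is immediate: setting $x = z = 0$ in ${\bf U}_{(a,1,1)}$ and ${\bf V}_{(a,1,1);D}$ yields zero in both cases, since every monomial in these polynomials contains at least one of $x$ or $z$. Therefore ${\bf F}_{(a,1,1);D}$ also vanishes on $L$, and $L \subseteq X_{(a,1,1);D}$.

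For the second point, I will read off the exceptional divisor from the toric model. Since $L$ has codimension $2$ in $\mathbb{P}^{a+1}$, the blow up $\Bl_L \mathbb{P}^{a+1}$ is a $\mathbb{P}^1$-bundle over $L \cong \mathbb{P}^{a-1}$, and the total exceptional divisor is $L \times \mathbb{P}^1_{[x:z]}$, cut out by $w = 0$ in the coordinates of~\eqref{eq:pitil}. Setting $w = 0$ in $\widetilde{\bf F}_{(a,1,1);D}$ gives
\[
\widetilde{\bf F}_{(a,1,1);D}\big|_{w=0} = (z+x)\left[\left(\sum_{i=1}^a y_i\right)\left(\sum_{i=1}^a m_i^2 y_i\right) - \sum_{1\le i<j\le a} q_{ij}^2 y_i y_j\right],
\]
so that $E$ decomposes as the union of the slice $\{[x:z]=[1:-1]\} \times \mathbb{P}^{a-1}$, isomorphic to $\mathbb{P}^{a-1}$, and a product $Q \times \mathbb{P}^1$ where $Q \subset L$ is the quadric cut out by the bracketed factor. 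These pieces meet along $Q$.

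Each of $\mathbb{P}^{a-1}$, $Q \times \mathbb{P}^1$, and $Q$ has mixed Tate cohomology (projective spaces and quadric hypersurfaces have only Tate Hodge classes, and the Künneth formula preserves mixed Tateness). A Mayer--Vietoris argument, using Proposition~\ref{p:mayer-vietoris}, then shows that $\mathrm{H}^*(E;\mathbb{Q})$ is mixed Tate. Applying Corollary~\ref{c:blowup} completes the proof. The computation is essentially formal; the only point that required attention was verifying that the two components of $E$ and their intersection are all mixed Tate, which followed cleanly from the factorization above.
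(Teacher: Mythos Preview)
Your proof is correct and takes essentially the same approach as the paper. The paper's own argument is a one-line citation of Lemma~\ref{l:blup}, whose proof (for $L_z$) is the same exceptional-divisor computation you carry out here for $L_y = V(x,z)$; you have simply written out explicitly the ``similar argument'' that Lemma~\ref{l:blup} only alludes to. One small slip of phrasing: $\Bl_L\mathbb{P}^{a+1}$ is not itself a $\mathbb{P}^1$-bundle over $L$ --- only its exceptional divisor is --- but your subsequent sentence makes clear this is what you meant, and the computation of $E$ as $\{[1:-1]\}\times\mathbb{P}^{a-1}$ union $Q\times\mathbb{P}^1$ is correct.
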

\begin{proof}
 The blow up map discussed above is in fact one of the three blow ups involved in ${\bm b} : \bX_{(a,1,1);D} \rightarrow X_{(a,1,1);D}$ described in Section~\ref{sect:BEK-Brown}, therefore this is a consequence of Lemma~\ref{l:blup}.
\end{proof}
\begin{theorem}\label{t:mainc=1}
For any $a,D$, and arbitrary kinematic parameters, $\mathrm{H}^{a}(X_{(a,1,1);D};\mathbb{Q})$ is in ${\bf MHS}^\mathrm{ell}_\mathbb{Q}$. Consequently, if $a + 2  \geq 3D/2$ then the Feynman motive of $(a,1,1)$ is in ${\bf MHS}^\mathrm{ell}_\mathbb{Q}$.
\end{theorem}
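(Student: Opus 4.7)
\medskip
\noindent\textbf{Proof proposal.}
The plan is to exploit the quadric fibration $\pi:\widetilde{X}_{(a,1,1);D}\to\mathbb{P}^1$ of~\eqref{eq:pitil} together with a mild birational modification of the ambient projective bundle in order to produce a fibration whose discriminant has degree four; the hyperelliptic curve supplied by Corollary~\ref{cor:mhshyp} must then have genus at most one. Throughout, the Proposition preceding the theorem is decisive: it lets us replace $X_{(a,1,1);D}$ by $\widetilde{X}_{(a,1,1);D}$ at the cost of mixed Tate factors.

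First I would write the fiberwise quadratic form of $\widetilde{\mathbf{F}}_{(a,1,1);D}$ in the variables $(y,w)$ as a symmetric matrix
\[
A(x,z)=\begin{pmatrix}(z+x)M & R^T \\ R & S\end{pmatrix},
\]
where $M$ is the constant symmetric $a\times a$ matrix arising from $(\sum y_i)(\sum m_i^2 y_i)-\sum q_{ij}^2 y_i y_j$, each $R_i=(z+x)(m_{a+1}^2 x+m_{a+2}^2 z)+zx(m_i^2-r_i^2)$ is quadratic in $(x,z)$, and $S=zx(m_{a+1}^2 x+m_{a+2}^2 z)$ is cubic. A block-determinant computation gives
\[
\det A(x,z)=(z+x)^{a-1}\bigl[(z+x)\,S\,\det M - R\,\mathrm{adj}(M)\,R^T\bigr],
\]
so the fiber over $V(z+x)$ is highly degenerate (corank $a-1$), while the remaining zero locus of $\det A$ is cut out by a quartic in $(x,z)$. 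I would then perform the birational transformation $w\mapsto v(x+z)$ on the projective bundle, in the style of~\eqref{a:h}, and divide by $(z+x)$ to obtain a hypersurface $\overline{X}_{(a,1,1);D}\subset\mathbb{P}(\mathcal{O}_{\mathbb{P}^1}^{a}\oplus\mathcal{O}_{\mathbb{P}^1}(-2))$ with fiber matrix
\[
\overline{A}(x,z)=\begin{pmatrix}M & R^T \\ R & (z+x)\,S\end{pmatrix}
\]
and homogeneous quartic discriminant $\det M\cdot(z+x)S-R\,\mathrm{adj}(M)\,R^T$. As in the proof of Theorem~\ref{thm:212}, the strata added and removed over $V(z+x)$ are fibered by hyperplanes or low-dimensional quadrics over rational bases and therefore have mixed Tate cohomology; together with Lemma~\ref{l:compcoh} this shows that $\mathrm{H}^*(\widetilde{X}_{(a,1,1);D};\mathbb{Q})$ and $\mathrm{H}^*(\overline{X}_{(a,1,1);D};\mathbb{Q})$ agree up to mixed Tate factors.

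Applying Corollary~\ref{cor:mhshyp} to $\overline{X}_{(a,1,1);D}\to\mathbb{P}^1$, which has relative dimension $a-1$ and discriminant consisting of at most four points, then yields: if $a$ is even the cohomology of $\overline{X}_{(a,1,1);D}$ is mixed Tate, and if $a$ is odd the only non-Tate contribution to $\mathrm{H}^{a}$ is the weight-$a$ piece of the cohomology of a double cover of $\mathbb{P}^1$ branched in at most four points, i.e. a curve of genus at most one, hence rational or elliptic. This proves the first assertion. For the Feynman-motive statement under $a+2\ge 3D/2$, one applies the Mayer--Vietoris spectral sequence to $(\mathbb{P}_\Gamma-\bX_{(a,1,1);D},B_\Gamma-(\bX_{(a,1,1);D}\cap B_\Gamma))$ exactly as in the proof of Theorem~\ref{thm:bek-b}; the boundary strata reduce to $\bX$ of contracted subgraphs of type $(a',1,1)$ or $(a',0,1)$ with $a'<a$, which are handled inductively together with Lemma~\ref{l:a0c}.

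The hard part will be verifying that the birational modification $w\mapsto v(x+z)$ contributes only mixed Tate cohomology for \emph{arbitrary} kinematic parameters, and controlling the corank of $\overline{A}(x,z)$ at points where $M$ may drop rank. In that situation the generic corank of $\overline{X}_{(a,1,1);D}\to\mathbb{P}^1$ jumps and one must iterate the birational reduction, at each step certifying via Proposition~\ref{prop:quadfib} and Remark~\ref{r:leq2} that the effective branch locus of the associated double cover stays bounded by four points.
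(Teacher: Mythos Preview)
Your proposal is correct and follows the same quadric-fibration strategy as the paper, but you do more than is needed here. The paper does \emph{not} pass to $\overline{X}_{(a,1,1);D}$ via $w\mapsto v(x+z)$ in this proof; that modification is reserved for the sharper computation in Theorem~\ref{thm:main2}. Instead, after reducing to $\widetilde{X}_{(a,1,1);D}$ the paper first disposes of the case where $\pi$ has generic corank $\geq 1$ (mixed Tate immediately by Corollary~\ref{cor:mhshyp}), and in the corank-$0$ case applies Corollary~\ref{cor:mhshyp} \emph{directly} to $\widetilde{X}_{(a,1,1);D}$: the discriminant $(x+z)^{a-1}\cdot(\text{quartic})$ has at most five roots, so the associated double cover of $\mathbb{P}^1$ is branched in at most five---hence, by parity (equivalently, because $a-1$ is even in the relevant odd-$a$ case, so $[1:-1]$ is not a branch point), at most four---points, giving genus at most one. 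This sidesteps entirely the bookkeeping you flag as ``the hard part'': the highly degenerate fibre over $[1:-1]$ is simply one of the fibres excised in the proof of Corollary~\ref{cor:mhshyp} and contributes only mixed Tate cohomology, and your worry about $M$ dropping rank is precisely the generic-corank-$\geq 1$ case already handled at the outset. For the Feynman-motive statement under $a+2\geq 3D/2$ the paper, like you, simply invokes the argument of Theorem~\ref{thm:bek-b}.
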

\begin{proof}
If $\pi$ has corank $\geq 1$ then $\widetilde{X}_{(a,1,1);D}$ has mixed Tate cohomology by Corollary~\ref{cor:mhshyp}. So we assume that $\pi$ has generic corank 0. We can collect quadratic $y_i$ and $w$ terms from
$\widetilde{\bf F}_{(a,1,1);D}$ as
\begin{equation}
  \widetilde{\bf F}_{(a,1,1);D}=(x+z) {\bf A}+ w \sum_{i=1}^a y_i {\bf
    R}_i+w^2 {\bf B}
\end{equation}
where,
if $c^2_{ij}$ denotes the coefficient of
$y_iy_j$, we define the quadratic form 
\begin{equation}\label{eq:qform}
{\bf A}=\sum_{1\leq i\leq j\leq a} c^2_{ij}y_iy_j = \sum_{1\leq
    i,j\leq a} q_{ij}^2y_i y_j+\left( \sum_{i=1}^a y_i \right) \left( \sum_{i=1}^a m_i^2 y_i \right),
\end{equation}
and
\begin{equation}
{\bf B}=zx(m_{a+1}^2 x + m_{a+2}^2 z),
\end{equation}
and the finally the coefficients 
\begin{equation}
{\bf R}_i =(m_i^2 +r_i^2)zx + (z+x)(m_{a+1}^2x + m_{a+2}^2z).
\end{equation}
One can compute that the discriminant of $\pi$ is 
\begin{equation}\label{e:disca1}
(x+z)^{a}\left((x+z){\bf B}\det {\bf A} + \sum_{1\leq i,j\leq a} {\bf A}^{i,j} {\bf R}_i {\bf R}_j\right).
\end{equation}
where ${\bf A}^{i,j}$ denotes the $(i,j)$-minor of ${\bf A}$. This polynomial has at most five roots.

Therefore, by Corollary~\ref{cor:mhshyp}, we have mixed Tate cohomology if $a$ is even. If $a$ is odd,\\ $W_{a-1}\mathrm{H}^a(\widetilde{X}_{(a,1,1);D};\mathbb{Q})$ is mixed Tate, and $\Gr^W_a\mathrm{H}^a(\widetilde{X}_{(a,1,1);D};\mathbb{Q})\cong \mathrm{H}^1(C;\mathbb{Q})$ for a curve $C$ which is a double cover of $\mathbb{P}^1$ ramified in at most 5 points. Consequently $C$ is rational or elliptic. 

The second statement is proved almost identically to Theorem~\ref{thm:sunsetmot}, so it is omitted.
\end{proof}

%------------------------------------------------------------------------
\subsection{Space-time dimension and motives}\label{sect:stime}

While preparing this article we observed that space-time dimension $D$ is closely related to the complexity of the motive of $X_{\Gamma;D}$. In this section, we make this precise by showing that if $D$ is small enough relative to $a$, the motivic contribution from $X_{(a,1,1);D}$ is mixed Tate. Later on, we will compute the cohomology of $X_{(a,1,1);D}$ when certain genericity assumptions are satisfied. These assumptions seem to be satisfied when $D$ is large enough relative to $a$.

\begin{proposition}\label{p:stime}
If $D < a - 3$ then the quadric fibration $\pi :  \widetilde{X}_{(a,1,1);D}\rightarrow \mathbb{P}^1$ has generic corank $\geq 1$.
\end{proposition}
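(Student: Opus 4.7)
The plan is to exhibit the symmetric matrix defining the quadric fibration $\pi$ as a small modification of a Gram matrix attached to the kinematic vectors, and to read off the rank bound directly from the dimension of the ambient momentum space $\mathbb{C}^D$.

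First, I would use the formulas preceding~\eqref{e:disca1} to write the symmetric matrix of the quadric $\widetilde{\bf F}_{(a,1,1);D}$ in the variables $(y_1,\dots,y_a,w)$ over a generic point $(x,z) \in \mathbb{P}^1$ as
\[
\tilde M(x,z) \;=\; \begin{pmatrix}(x+z)\,M & \tfrac{1}{2}R(x,z)^T \\ \tfrac{1}{2}R(x,z) & {\bf B}(x,z)\end{pmatrix},
\]
where $M$ is the $a \times a$ symmetric matrix representing the form ${\bf A}$ from~\eqref{eq:qform} and is independent of $(x,z)$. Since passing from an $a \times a$ block to an $(a{+}1)\times(a{+}1)$ symmetric matrix by attaching one row and one column can raise the rank by at most $2$, one has $\operatorname{rank}\tilde M(x,z) \le \operatorname{rank} M + 2$ for generic $(x,z)$ (where $x+z\neq 0$).

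The key step is then a rank bound $\operatorname{rank} M \le D+2$. To prove this I would realize $M$ as a (generalized) Gram matrix: set $K_i = \sum_{r=1}^{i-1}k_r \in \mathbb{C}^D$, so that $K_1=0$ and $q_{ij}^2 = (K_i - K_j)^2$; define auxiliary vectors
\[
v_i \;=\; \bigl(K_i,\; K_i^2 + m_i^2,\; 1\bigr) \;\in\; \mathbb{C}^{D+2}, \qquad i=1,\dots,a,
\]
and equip $\mathbb{C}^{D+2}$ with the symmetric bilinear form $\beta\bigl((u,t,r),(u',t',r')\bigr) = -\,u\cdot u' + \tfrac{1}{2}(tr' + rt')$. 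A direct expansion using $q_{ij}^2 = K_i^2 - 2 K_i\cdot K_j + K_j^2$ and the expression ${\bf A} = \sum_i m_i^2 y_i^2 + \sum_{i<j}(q_{ij}^2 + m_i^2 + m_j^2) y_i y_j$ will show that $M_{ij} = \beta(v_i,v_j)$ for all $i,j$. Since $M$ is then a Gram matrix with respect to $\beta$, its rank is at most $\dim\operatorname{span}(v_1,\dots,v_a) \le D+2$.

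Combining these two bounds, $\operatorname{rank}\tilde M(x,z) \le D+4$ at a generic point of $\mathbb{P}^1$, so the generic corank of $\pi$ is at least $(a+1) - (D+4) = a-D-3$. Under the hypothesis $D < a-3$ this is at least $1$, which is exactly what the proposition claims. I do not expect a serious obstacle: the verification of the Gram-matrix identity is a routine algebraic manipulation, and the corank bound is immediate from it. The main conceptual content of the plan is the choice of the auxiliary vectors $v_i$ and the bilinear form $\beta$ that package the kinematic data ($K_i$, $m_i^2$) into a single ambient $(D{+}2)$-dimensional space.
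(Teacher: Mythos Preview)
Your argument is correct and shares its core with the paper's proof: both establish the bound $\operatorname{rank} M \le D+2$ for the $a\times a$ block $M$ (the paper calls this matrix ${\bf A}$), from which the conclusion follows at once. The two executions differ in detail. The paper splits ${\bf A} = {\bf A}_1 + {\bf A}_2$ where ${\bf A}_1$ is the Euclidean distance form attached to the vectors $q_i=\sum_{r<i}k_r$ and ${\bf A}_2 = (\sum y_i)(\sum m_i^2 y_i)$ has rank $\le 2$; it then invokes a rank bound on ${\bf A}_1$ and uses subadditivity of rank, and finishes by observing that when $\operatorname{rank}{\bf A}\le a-2$ all $(a{-}1)$-minors in the discriminant formula~\eqref{e:disca1} vanish, so the discriminant is identically zero. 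Your single Gram-matrix realization $M_{ij}=\beta(v_i,v_j)$ in $\mathbb{C}^{D+2}$ packages both pieces at once and makes the bound $\operatorname{rank} M\le D+2$ immediate; you then go to the full $(a{+}1)\times(a{+}1)$ matrix $\tilde M$ and read off corank $\ge a-D-3$, which is even slightly sharper than the bare ``$\ge 1$''. The paper's route has the advantage of isolating the physically meaningful piece (the Euclidean distance form of the momenta), while your route is algebraically cleaner and sidesteps the somewhat imprecise statement the paper makes about the rank of a Euclidean distance form.
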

\begin{proof}
We show that under the conditions of the proposition, the discriminant polynomial in~(\ref{e:disca1}) vanishes. Observe that if $\mathrm{rank}({\bf A}) \leq a-2$ then $\det({\bf A})$ and ${\bf A}^{i,j}$ vanish for all $i,j$ so it is enough to compute the rank of ${\bf A}$. Write ${\bf A}$ as a sum of two forms 
\begin{equation}
{\bf A}_1 = \sum_{1\leq
    i,j\leq a} q_{ij}^2y_i y_j , \qquad {\bf A}_2= \left( \sum_{i=1}^a y_i \right) \left( \sum_{i=1}^a m_i^2 y_i \right)
\end{equation}
We will show, briefly, that the matrix ${\bf A}_1$ has rank at most $D + 2$.  Note that we may write $q_{ij}^2 =(q_i-q_j)^2$
with $q_i:=\sum_{r=1}^{i-1} p_r$.
Thus ${\bf A}_1 =\sum_{1\leq i<j\leq a} q_{ij}^2y_iy_j$ is the Euclidean distance quadratic form
attached to the sequence of vectors $q_1,\dots, q_{a}$. The rank of a
Euclidean distance quadratic form has rank $\min\{a,\dim(\mathrm{span}(q_1,\dots,
q_a))\}$. The span of $q_1,\dots, q_{a}$ is the same as that of
$k_1,\dots, k_{a-1}$ defined in~(\ref{e:vecdef}) which is $\leq D$. If $Q_1,Q_2$ are quadratic forms on a $\Bbbk$-vector space of dimension $N$, then $\mathrm{rank}(Q_1 + Q_2) \leq  \min\{N,\mathrm{rank}(Q_1) + \mathrm{rank}(Q_2)\}$. Thus $\mathrm{rank}({\bf A}) \leq \mathrm{min}\{a, D+2\}$. Therefore if $D+2 \leq a-2$ then the discriminant of $\pi$ vanishes and $\pi$ has generic corank $\geq 1$.
\end{proof}
\begin{corollary}
    Suppose $D < a - 3$. Then $\mathrm{H}^{a}(X_{(a,1,1);D};\mathbb{Q})$ is mixed Tate.
\end{corollary}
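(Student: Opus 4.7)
The plan is to combine Proposition~\ref{p:stime} with Corollary~\ref{cor:mhshyp} applied to the quadric fibration $\pi : \widetilde{X}_{(a,1,1);D} \to \mathbb{P}^1$ introduced in~\eqref{eq:pitil}. Since $X_{(a,1,1);D}$ is a hypersurface in $\mathbb{P}^{a+1}$ of dimension $a$, and the blow up $\widetilde{X}_{(a,1,1);D}$ has the same dimension, the relative dimension of $\pi$ is $n = a-1$.

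First I would appeal to the Proposition stated just before Theorem~\ref{t:mainc=1}, which says that $\mathrm{H}^*(X_{(a,1,1);D};\mathbb{Q})$ agrees with $\mathrm{H}^*(\widetilde{X}_{(a,1,1);D};\mathbb{Q})$ up to mixed Tate factors. Since the subcategory of mixed Tate Hodge structures is closed under extensions, kernels and cokernels that are mixed Tate, it is enough to show that $\mathrm{H}^a(\widetilde{X}_{(a,1,1);D};\mathbb{Q})$ is mixed Tate.

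Next, Proposition~\ref{p:stime} guarantees that under the hypothesis $D < a - 3$ the generic corank $s_\mathcal{Q}$ of the quadric fibration $\pi$ satisfies $s_\mathcal{Q} \geq 1$. I then invoke Corollary~\ref{cor:mhshyp}: the only index $i$ at which $\mathrm{H}^i(\widetilde{X}_{(a,1,1);D};\mathbb{Q})$ can fail to be mixed Tate is $i = n + s_\mathcal{Q} + 1 = a + s_\mathcal{Q}$, and even then only if this index is odd. Because $s_\mathcal{Q} \geq 1$, this critical index is strictly greater than $a$, so the degree $i = a$ falls in case (2) of Corollary~\ref{cor:mhshyp}. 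Hence $\mathrm{H}^a(\widetilde{X}_{(a,1,1);D};\mathbb{Q})$ is mixed Tate, and by the first step so is $\mathrm{H}^a(X_{(a,1,1);D};\mathbb{Q})$.

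There is no genuine obstacle to overcome here: all the geometric work is already done in Proposition~\ref{p:stime} (the rank bound on the Euclidean distance quadratic form coming from momentum vectors in a low-dimensional ambient space), and the corollary is merely the observation that, once the fibration is forced to have positive generic corank, the relevant non-mixed-Tate cohomological degree slides out of the range we care about.
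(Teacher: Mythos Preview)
Your argument is correct and is exactly the route the paper takes: the corollary is stated without proof precisely because the first sentence of the proof of Theorem~\ref{t:mainc=1} already records that ``if $\pi$ has corank $\geq 1$ then $\widetilde{X}_{(a,1,1);D}$ has mixed Tate cohomology by Corollary~\ref{cor:mhshyp}'', and Proposition~\ref{p:stime} supplies the corank hypothesis. Your explicit observation that the critical index $n+s_\mathcal{Q}+1=a+s_\mathcal{Q}\geq a+1$ lies strictly above $a$ makes transparent why degree $a$ lands in case~(2) of Corollary~\ref{cor:mhshyp}.
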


\noindent From~(\ref{e:disca1}) one sees that the quadric fibration on $\widetilde{X}_{(a,1,1);D}$ over $\mathbb{P}^1$ has at most five singular fibres. There is one singular fibre over $[1:-1]$ of rank 2, and other singular fibres determined by the vanishing of a degree 4 polynomial 
\begin{equation}\label{e:Disca11def}
\Disc_{(a,1,1);D}(x,z) := (x+z){\bf B} \det {\bf A} +
  \sum_{1\leq i,j\leq a} {\bf A}^{i,j} {\bf R}_i {\bf R}_j.
\end{equation}

\begin{theorem}\label{thm:main2}
Suppose the quadric fibration has generic corank 0 and $(x+z)\Disc_{a,1,1}(x,z)$ has five distinct roots, then 
\begin{enumerate}[(1)]
    \item If $a$ is even then $\mathrm{H}^{a}(\widetilde{X}_{(a,1,1);D};\mathbb{Q}) \cong \mathbb{Q}(-a/2)^6$.
    \item If $a$ is odd then there is a short exact sequence 
    \[
    0 \longrightarrow \mathbb{Q}((1-a)/2) \longrightarrow \mathrm{H}^a(\widetilde{X}_{(a,1,1);D};\mathbb{Q})\longrightarrow \mathrm{H}^1(E;\mathbb{Q}) \longrightarrow 0
    \]
    where $E$ is the double cover of $\mathbb{P}^1$ ramified along the four simple roots of $\Disc_{(a,1,1);D}$.
\end{enumerate}
\end{theorem}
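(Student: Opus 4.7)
The plan is to exploit the quadric fibration $\pi:\widetilde{X}_{(a,1,1);D}\to \mathbb{P}^{1}$ provided by Lemma~\ref{lemma:quadfib}. Under the five-distinct-root hypothesis, the four roots of $\Disc_{(a,1,1);D}$ form a set $\Delta$ of simple corank-$1$ degenerations, while the root $[1:-1]$ of $x+z$ produces a single very degenerate fibre $F$ of corank $a-1$. First I identify $F$ by substituting $x=-z$ in $\widetilde{\bf F}_{(a,1,1);D}$: the top-left $a\times a$ block of the matrix in~\eqref{e:m} vanishes identically, so $F$ is cut out in $\mathbb{P}^{a}$ by a product of two linear forms in $w$ and the $y_i$'s, giving $F=H_1\cup H_2$, the union of two hyperplanes in $\mathbb{P}^{a}$ meeting in a $\mathbb{P}^{a-2}$. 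A Mayer--Vietoris computation shows $\mathrm{H}^{2k}(F;\mathbb{Q}) \cong \mathbb{Q}(-k)$ for $0\leq k\leq a-2$, $\mathrm{H}^{2a-2}(F;\mathbb{Q}) \cong \mathbb{Q}(-(a-1))^{2}$, and zero in odd degrees. Setting $B=\mathbb{P}^{1}-\{[1:-1]\}$ and $U=\pi^{-1}(B)$, I will apply Proposition~\ref{prop:quadfib} to $\pi|_U$ and feed the resulting description of $\mathrm{H}^*_c(U;\mathbb{Q})$ into the long exact sequence
\[
\cdots \to \mathrm{H}^{i-1}(F) \xrightarrow{\partial} \mathrm{H}^{i}_c(U) \to \mathrm{H}^{i}(\widetilde{X}_{(a,1,1);D}) \to \mathrm{H}^{i}(F) \to \mathrm{H}^{i+1}_c(U) \to \cdots.
\]

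For $a$ even the generic fibre is a smooth quadric of odd dimension $a-1$, and Proposition~\ref{prop:quadfib} gives $R^{a}\pi'_*\underline{\mathbb{Q}}_U \cong \underline{\mathbb{Q}}_B(-a/2) \oplus \iota_{*}\mathbb{M}^{-}$ with $\mathbb{M}^-$ a rank-$1$ skyscraper supported on $\Delta$, while all other $R^p\pi'_*\underline{\mathbb{Q}}_U$ are twisted constant. The Leray spectral sequence degenerates at $E_2$ for parity and support reasons and yields $\mathrm{H}^{a}_c(U;\mathbb{Q}) \cong \mathbb{Q}(-a/2)^{5}$ (four copies from $E_2^{a,0}$ and one from $E_2^{a-2,2}$). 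Since $\mathrm{H}^{a-1}(F)=0$, $\mathrm{H}^{a+1}_c(U)=0$, and $\mathrm{H}^{a}(F)\cong \mathbb{Q}(-a/2)$, the long exact sequence becomes a short exact sequence of Tate Hodge structures of the same weight, which splits because $\mathrm{Ext}^{1}_{{\bf MHS}_\mathbb{Q}}(\mathbb{Q}(-a/2),\mathbb{Q}(-a/2))=0$. This gives $\mathrm{H}^{a}(\widetilde{X}_{(a,1,1);D}) \cong \mathbb{Q}(-a/2)^{6}$, proving~(1).

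For $a$ odd the interesting sheaf is $R^{a-1}\pi'_*\underline{\mathbb{Q}}_U \cong R^{0}f_*\underline{\mathbb{Q}}_V(-(a-1)/2)$, where $f:V\to B$ is the double cover ramified along $\Delta$. Since the local monodromy of $R^{a-1}\pi'_*\underline{\mathbb{Q}}_U$ at each point of $\Delta$ is the vanishing-cycle exchange $\bigl(\begin{smallmatrix}0&1\\1&0\end{smallmatrix}\bigr)$ of order $2$, the monodromy around $[1:-1]$, which equals the product of the four such exchange generators in $\pi_1(B-\Delta)$, is trivial, so the compactification $\bar f:\bar V\to \mathbb{P}^1$ has the same four branch points and $\bar V$ is precisely the elliptic curve $E$ of the statement. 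Consequently $V=E-\{p_1,p_2\}$ and $\mathrm{H}^{1}_c(V;\mathbb{Q})$ is a non-split extension of $\mathrm{H}^{1}(E;\mathbb{Q})$ by $\mathbb{Q}(0)$. The Leray spectral sequence for $\pi|_U$ then singles out only one potentially-nonzero differential, namely $d_2:E_2^{a-3,2}\cong \mathbb{Q}(-(a-1)/2)\to E_2^{a-1,1}\cong \mathrm{H}^{1}_c(V)(-(a-1)/2)$, and yields $\mathrm{H}^{a}_c(U) \cong \mathrm{H}^{1}_c(V)(-(a-1)/2)$ precisely when $d_2=0$.

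The hard step is to pin down both $d_2$ and the boundary map $\partial$ simultaneously, which I plan to do via two independent classes in $\mathrm{H}^{a-1}(\widetilde{X}_{(a,1,1);D})$. Let $\tau \in \mathrm{H}^{2}(\widetilde{X}_{(a,1,1);D})$ be the pullback of the tautological class of the projective bundle $\mathbb{P}(\mathcal{O}_{\mathbb{P}^{1}}^{a}\oplus \mathcal{O}_{\mathbb{P}^{1}}(-1))$ and let $\eta \in \mathrm{H}^{2}(\mathbb{P}^{1})$ be the hyperplane class. Since $\tau$ restricts to the hyperplane class of $\mathbb{P}^{a}$ on each ambient fibre, $\tau^{(a-1)/2}|_{F}$ is a generator of $\mathrm{H}^{a-1}(F;\mathbb{Q}) \cong \mathbb{Q}(-(a-1)/2)$, while $\pi^{*}\eta|_{F}=0$ because $\pi(F)$ is a single point, so $(\pi^{*}\eta \cdot \tau^{(a-3)/2})|_F=0$. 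The projection formula $\int_{\widetilde{X}_{(a,1,1);D}} \pi^{*}\eta \cdot \tau^{a-1} = \int_{\mathbb{P}^{1}} \eta \cdot \pi_*(\tau^{a-1}) = 2$ (since the generic quadric fibre has degree $2$) certifies that $\pi^{*}\eta\cdot \tau^{(a-3)/2}$ is itself non-zero, so $\tau^{(a-1)/2}$ and $\pi^{*}\eta\cdot \tau^{(a-3)/2}$ are linearly independent in $\mathrm{H}^{a-1}(\widetilde{X}_{(a,1,1);D})$. This forces the restriction $\mathrm{H}^{a-1}(\widetilde{X}_{(a,1,1);D}) \to \mathrm{H}^{a-1}(F)$ to be surjective (so $\partial=0$) and $\mathrm{H}^{a-1}_c(U) \neq 0$ (so $d_2=0$). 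Combining these yields $\mathrm{H}^{a}(\widetilde{X}_{(a,1,1);D}) \cong \mathrm{H}^{a}_c(U) \cong \mathrm{H}^{1}_c(V)(-(a-1)/2)$, which is precisely the extension $0 \to \mathbb{Q}((1-a)/2) \to \mathrm{H}^{a}(\widetilde{X}_{(a,1,1);D}) \to \mathrm{H}^{1}(E;\mathbb{Q})(-(a-1)/2) \to 0$ asserted in~(2).
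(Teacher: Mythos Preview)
Your argument is correct and reaches the same conclusion as the paper, but by a genuinely different route. The paper introduces an auxiliary model $\overline{X}_{(a,1,1);D}$ via the birational map ${\bm h}$ (rescaling $w$ by $x+z$), which replaces the corank-$(a-1)$ fibre $F$ over $[1:-1]$ by a \emph{smooth} fibre $F'$; it then computes the Hodge numbers of $\overline{X}$ directly and compares $\widetilde{X}$ and $\overline{X}$ through their common open piece $U=\widetilde{X}-F=\overline{X}-F'$. You instead compute $\mathrm{H}^*_c(U)$ straight from Proposition~\ref{prop:quadfib}, identify $\mathrm{H}^a_c(U)$ with $\mathrm{H}^1_c(V)(-(a-1)/2)$ for $V=E\setminus\{p_1,p_2\}$, and close with the long exact sequence of the pair $(\widetilde{X},F)$. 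Your route avoids the auxiliary model but requires you to pin down the monodromy of $R^{a-1}\pi'_*\underline{\mathbb{Q}}_U$ at the deleted point $[1:-1]$, which you do correctly via the $\mathbb{Z}/2$-valued monodromy and the product relation in $\pi_1(\mathbb{P}^1-5\text{ pts})$.

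One correction: the differential $d_2:E_2^{a-3,2}\to E_2^{a-1,1}$ you single out does not exist. In the paper's convention $E_2^{p,q}=\mathrm{H}^q_c(B,R^p\pi'_*\underline{\mathbb{Q}}_U)$ (with $p$ and $q$ swapped from the usual Leray indexing), the differential runs $d_2:E_2^{p,q}\to E_2^{p-1,q+2}$, not $E_2^{p+2,q-1}$. Since $B$ is a curve, $\mathrm{H}^q_c(B,-)=0$ for $q\ge 3$, so every $d_2$ has vanishing target and the spectral sequence degenerates at $E_2$ automatically. Hence $\mathrm{H}^a_c(U)\cong E_2^{a-1,1}\cong \mathrm{H}^1_c(V)(-(a-1)/2)$ and $\mathrm{H}^{a-1}_c(U)\cong E_2^{a-3,2}\cong\mathbb{Q}(-(a-1)/2)$ with no further work. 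Your construction of $\pi^*\eta\cdot\tau^{(a-3)/2}$ is therefore superfluous for killing $d_2$, though it does give an independent confirmation that $\mathrm{H}^{a-1}_c(U)\neq 0$. The only essential input beyond Proposition~\ref{prop:quadfib} is the surjectivity of $\mathrm{H}^{a-1}(\widetilde{X})\to \mathrm{H}^{a-1}(F)$, which you obtain from $\tau^{(a-1)/2}|_F$ and which is exactly the hyperplane-class observation the paper uses for the same step. (As in the paper, the even case as written tacitly assumes $a\ge 4$, since for $a=2$ one has $\mathrm{H}^a(F)\cong\mathbb{Q}(-1)^2$ rather than $\mathbb{Q}(-1)$.)
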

\begin{proof}
We take the birational transformation obtained from the map ${\bm h}: \mathbb{P}(\mathcal{O}_{\mathbb{P}^1}^a \oplus \mathcal{O}_{\mathbb{P}^1}(-2)) \dashrightarrow \mathbb{P}(\mathcal{O}_{\mathbb{P}^1}^a \oplus \mathcal{O}_{\mathbb{P}^1}(-1))$ given by the homogeneous variable transformation
\[
{\bm h} :(y_1,\dots,y_a,x,z,v) \longmapsto (y_1,\dots, y_a,x,z v(x + z )) = (y_1,\dots, y_a,x,z,w).
\] 
The proper transform of $\widetilde{X}_{a,1,1}$ under this birational map is given by polynomials 
\begin{align}
\overline{\bf U}_{(a,1,1)} & =  \left( \sum_{i=1}^a y_i \right) + (x + z) z xw, \cr
\overline{\bf V}_{(a,1,1);D} & = \left( \sum_{1\leq i,j\leq a} q_{ij}^2 y_i y_j \right) + zxw\left( \sum_{i=1}^a r_{j}^2 y_j \right),\\
\nonumber \overline{\bf F}_{(a,1,1);D} & = \overline{\bf U}_{(a,1,1)}\left(  \sum_{i=1}^a m^2_i y_i + (m^2_{a+1}x + m^2_{a+2} z)(x+z)\right) - \overline{\bf V}_{(a,1,1);D}.
\end{align}
Under the condition that $D+3 \geq a$, this birational transformation has the property that the singular fibre over the point $[1:-1]$ is replaced with a smooth fibre. Furthermore, under the assumptions of the statement of the Theorem, $\overline{X}_{(a,1,1);D} := V(\overline{\bf F}_{(a,1,1);D})$ is smooth and has four singular fibres located at the roots of $\Disc_{(a,1,1)}$. In fact, after change in $y$ variables, we may rewrite $\overline{\bf F}_{(a,1,1);D}$ as  
\begin{equation}
y_1^2 + \dots + y_a^2 + \Disc_{(a,1,1);D}(x,z) v^2.
\end{equation}
By Lemma~\ref{l:monlem} and our assumptions on $\Disc_{(a,1,1);D}$, if $a$ is odd, the monodromy of the local system $R^{a-1}\pi_*\mathbb{Q}$ is nontrivial around each point in the vanishing locus of $\Disc_{(a,1,1);D}(x,z)$. Therefore, a straightforward Leray spectral sequence computation shows that the Hodge numbers of $\overline{X}_{(a,1,1);D}$ are as follows. If $a$ is even then
\begin{equation}
h^{p,q}(\overline{X}_{(a,1,1);D}) = \begin{cases} 0 & \text{ if } p \neq q \\
1 & \text{ if } p = q = 0, a\\
2 & \text{ if } p = q \neq a/2, 0, a \\
6 & \text{ if } p=q= a/2.
\end{cases},
\end{equation}
If $a$ is odd then 
\begin{equation}
  h^{p,q}(\overline{X}_{(a,1,1);D}) = \begin{cases}0 & \text{ if } p\neq q, \text{ or } p,q \neq (\tfrac{a+1}{2}, \tfrac{a-1}{2}), (\tfrac{a-1}{2}, \tfrac{a+1}{2}) \\ 
1 & \text{ if } p = q= 0, a\\ 
2 & \text{ if } p=q \neq 0, a \\ 
1 & \text{ if } p,q =  (\tfrac{a+1}{2}, \tfrac{a-1}{2}), (\tfrac{a-1}{2}, \tfrac{a+1}{2})
\end{cases}
\end{equation}
and in fact $\mathrm{H}^{a}(\overline{X}_{(a,1,1);D};\mathbb{Q})\cong \mathrm{H}^{1}(E;\mathbb{Q})$ where $E$ is the double cover of $\mathbb{P}^1$ ramified along the vanishing locus of $\Disc_{(a,1,1);D}(x,z)$. Our assumptions imply that $\Disc_{a,1,1}(x,z)$ is separable, so $E$ is an elliptic curve. We can then see that there are two long exact sequences relating the middle dimensional cohomology of $\overline{X}_{(a,1,1);D}$ to that of $\widetilde{X}_{(a,1,1);D}$. 
\begin{equation}\label{ses1}
\cdots \rightarrow \mathrm{H}^{a-1}(\widetilde{X}_{(a,1,1);D})\rightarrow \mathrm{H}^{a-1}(\mathrm{F})\rightarrow \mathrm{H}^a_c(\widetilde{X}_{(a,1,1);D} - \mathrm{F}) \rightarrow \mathrm{H}^a(\widetilde{X}_{(a,1,1);D})\rightarrow \mathrm{H}^a(\mathrm{F})\rightarrow \cdots 
\end{equation}
where $\mathrm{F}$ indicates the singular fibre over $[1:-1]$ in $\widetilde{X}_{(a,1,1);D}$. If $a$ is odd, we note that $\mathrm{H}^{a-1}(\mathrm{F}) \cong \mathbb{Q}$ and $\mathrm{H}^{a-1}(\widetilde{X}_{(a,1,1);D}) \rightarrow \mathrm{H}^{a-1}(\mathrm{F})$ is surjective, since the image contains the hyperplane class and $\mathrm{H}^{a-1}(\mathrm{F}) \cong \mathbb{Q}$ is generated by the hyperplane class. Therefore we obtain an isomorphism
\begin{equation}\label{ses2}
\mathrm{H}^{a}_c(\widetilde{X}_{(a,1,1);D}- \mathrm{F}) \cong \mathrm{H}^a(\widetilde{X}_{(a,1,1);D}).
\end{equation}
The analogous long exact sequence for $\overline{X}_{(a,1,1);D}$ is of the form 
\begin{equation}
\cdots \rightarrow \mathrm{H}^{a-1}(\overline{X}_{(a,1,1);D})\rightarrow \mathrm{H}^{a-1}(\mathrm{F}')\rightarrow \mathrm{H}^a_c(\overline{X}_{(a,1,1);D} - \mathrm{F}') \rightarrow \mathrm{H}^a(\overline{X}_{(a,1,1);D})\rightarrow \mathrm{H}^a(\mathrm{F}')\rightarrow  \cdots 
\end{equation}
In this case, however, we have that $\mathrm{H}^{a-1}(\mathrm{F}')\cong \mathbb{Q}^2$. The image of the map $\mathrm{H}^{a-1}(\overline{X}_{(a,1,1);D})\rightarrow \mathrm{H}^{a-1}(\mathrm{F}')$ has image isomorphic to $\mathbb{Q}$ by the structure of the monodromy representation on $\mathrm{H}^{a-1}(\mathrm{F}')$, and by the local invariant cycles theorem. Taking into account the vanishing of $\mathrm{H}^{a}(\mathrm{F}')$ we obtain the short exact sequence in the second part of the theorem.

The same technique suffices to prove the first part of the theorem as well. In this case, the long exact sequence~\eqref{ses1} produces the short exact sequence 
\begin{equation}
0 \rightarrow \mathrm{H}^a_c(\widetilde{X}_{a,1,1} - \mathrm{F}) \rightarrow \mathrm{H}^a(\widetilde{X}_{a,1,1})\rightarrow \ker\left(\mathrm{H}^a(\mathrm{F})\rightarrow \mathrm{H}^{a+1}_c(\widetilde{X}_{a,1,1}- \mathrm{F}) \right)\rightarrow 0
\end{equation}
and there is a similar short exact sequence coming from~\eqref{ses2},
\begin{equation}
0 \rightarrow \mathrm{H}^a_c(\overline{X}_{(a,1,1);D} - \mathrm{F}') \rightarrow \mathrm{H}^a(\overline{X}_{(a,1,1);D}) \rightarrow \ker\big(\mathrm{H}^a(\mathrm{F}') \rightarrow \mathrm{H}^{a+1}_c(\overline{X}_{(a,1,1);D}- \mathrm{F}') \big)\rightarrow 0
\end{equation}
In the first case, we see that if $a \neq 2$ then $\mathrm{H}^a(\mathrm{F}) \cong \mathbb{Q}$, since we know that $\mathrm{F}$ is a quadric whose rank is $2$ and thus has even rank cohomology isomorphic to $q$ except $\mathrm{H}^{2(a-1)}(\mathrm{F}) \cong \mathbb{Q}^2$. Since $\mathrm{F}'$ is a smooth quadric of even dimension, the restriction map $\mathrm{H}^a(\overline{X}_{(a,1,1);D}) \rightarrow \mathrm{H}^a(\overline{\mathrm{F}}')$ is surjective, and $\mathrm{H}^a(\mathrm{F}') \cong \mathbb{Q}$, so we have $\mathrm{H}^a(\widetilde{X}_{a,1,1})\cong \mathrm{H}^a(\overline{X}_{(a,1,1);D}) \cong  \mathbb{Q}(a/2)^{\oplus 6}$. 
\end{proof}
%------------------------------------------------------------------------
\subsection{The ice cream cone  graph local systems $(2,1,1)$}\label{sec:icecream}

In the simplest possible case of the ice cream cone graph in Figure~\ref{fig:icecream}, where $a = 2$, we will describe the
structure of cohomology and make stronger statements about
periods and Picard--Fuchs equations. Similar computations can be done for $X_{(2b,1,1);D}$ for arbitrary $b$ and large enough $D$. A multi-loop generalisation is
discussed in Section~\ref{sec:multiscoop}. In this section, we
study pencils of ice cream cone graph hypersurfaces described by graph polynomials
\begin{align}\label{e:UVF112}
{\bf U}_{(2,1,1)} & = \left(  y_1 + y_2\right)(x_1+z) + z x_1, \cr
{\bf V}_{(2,1,1);D} & = p_2^2 y_1 y_2\left( z + x_1\right) + zx_1\left(p_1^2y_1+p_3^2y_2 \right),\\
\nonumber {\bf F}_{(2,1,1);D}(t) & = {\bf U}_{(2,1,1)}\left(  m_{1}^2y_1 + m_{2}^2y_2 + m_3^2 x_1 + m^2_{4} z\right) - t {\bf V}_{(2,1,1);D},
\end{align}
where the mass parameters $m_1^2,\dots,m_4^2$ are real positive
numbers, and the momenta are $p_1$, $p_2$ and
$p_3=-p_1-p_2$ where $p_1$ and $p_2$ are vectors in
$\mathbb{C}^D$ respectively attached to the left and the bottom of the
graph $(2,1,1)$. Here $t$ is a parameter which can be thought of as a
scaling parameter on all momenta. Let $X_{(2,1,1);D}(t) = V({\bf
  F}_{(2,1,1);2}(t))$. The ice cream cone differential
form reads
\begin{equation}\label{e:omega211}
  \omega_{(2,1,1);D}(t)= {{\bf U}_{(2,1,1)}^{4-{3D\over2}}\over ({\bf
      F}_{(2,1,1);D}(t))^{4-D}} \Omega_0
\end{equation}
is a rational differential form in $D=2$ with the ${\bf
  F}_{(2,1,1);2}$ in the denominator. In this case, the Picard--Fuchs operator
$\mathscr{L}_{(2,1,1);2}$ has been derived  using the
techniques~\cite{Lairez:2022zkj}, where it has been observed that
$\mathscr{L}_{(2,1,1);2}$ has rank 2 and Liouvillian solutions. 

\begin{figure}[t]
\begin{tikzpicture}[scale=0.6]
\filldraw [color = black, fill=none, very thick] (0,0) circle (2cm);
\draw [black,very thick] (-2,0) to (2,0);
\filldraw [black] (2,0) circle (2pt);
\filldraw [black] (0,-2) circle (2pt);
\filldraw [black] (-2,0) circle (2pt);
\draw [black,very thick] (-2,0) to (-3,0);
\draw [black,very thick] (2,0) to (3,0);
\draw [black,very thick] (0,-2) to (0,-3);
\end{tikzpicture}
\caption{The ice cream cone graph}\label{fig:icecream}
\end{figure}
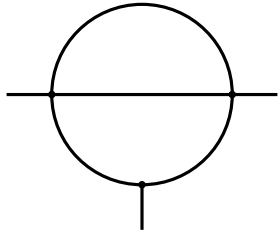

Applying the discussion of the previous section to the case $a=2$
leads us to consider the specialisation polynomial in
Equation~\eqref{e:Disca11def} 

\begin{equation}
    \Disc_{(2,1,1);D}(x,z,t)=(x_1+z)  {\bf B} \det {\bf A}+\sum_{1\leq
      i,j\leq 2} {\bf A}^{i,j} {\bf R}_i{\bf R}_j
  \end{equation}
  with
  \begin{align}
    {\bf A}&=
    \begin{pmatrix}
      m_1^2& \frac12\left(m_1^2+m_2^2-p_2^2t\right)\cr
      \frac12\left(m_1^2+m_2-p_2^2t\right)& m_2^2
    \end{pmatrix},\cr
                                          {\bf R}_1&=(m_1^2-p_1^2t)x_1z+(x_1+z)(m_3^2x_1+m_4^2),\\
                                                     {\bf
                                                     R}_2&=(m_2-p_3^2t)x_1z+(x_1+z)(m_3^2x_1+m_4^2),\cr
    \nonumber                                                       {\bf B}&=x_1z(m_3^2x_1+m_4^2)    .                               
  \end{align}
One
checks the following claim directly.
\begin{proposition}
The polynomial $\Disc_{(2,1,1);D}(x,z,t)$ has
four  distinct roots for generic masses and kinematic parameters. Therefore, $\mathrm{H}^2(X_{(2,1,1);2}(t);\mathbb{Q})$ is generically pure Tate.
\end{proposition}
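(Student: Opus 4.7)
The plan is to prove both halves of the proposition by reducing to Theorem~\ref{thm:main2}(1) applied to the blow-up $\widetilde{X}_{(2,1,1);2}(t)$, and then transferring the resulting pure-Tate structure back to $X_{(2,1,1);2}(t)$ via the blow-up exact sequences. The proof of the first assertion is essentially a computation. I would view the discriminant of $\Disc_{(2,1,1);D}(x,z,t)$ as a degree-$4$ polynomial in $x/z$, take its one-variable discriminant, and interpret the result as a polynomial in the formal parameters $m_i^2$, $p_i^2$, and $t$. Exhibiting one explicit parameter specialization for which this polynomial is nonzero (easily done numerically, in the style of the SAGE worksheets throughout the paper) proves that the vanishing locus is a proper Zariski closed subset of parameter space, which is the content of the separability claim.

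Having established this, I would then verify the two remaining hypotheses of Theorem~\ref{thm:main2}(1) for $a=2$: the quadric fibration $\pi$ of equation~\eqref{eq:pitil} has generic corank $0$ (inspecting the quadratic form~\eqref{eq:qform} shows that $\det{\bf A}\neq 0$ generically, which is equivalent to generic corank $0$), and $[1:-1]$ is not a root of $\Disc_{(2,1,1);D}$, so that the full singular set $V((x+z)\Disc_{(2,1,1);D})$ consists of five distinct points of $\mathbb{P}^1$. The latter is verified by direct substitution at $x=1, z=-1$, giving a nonzero polynomial in the parameters for generic choices. Theorem~\ref{thm:main2}(1) then yields $\mathrm{H}^2(\widetilde{X}_{(2,1,1);2}(t);\mathbb{Q})\cong \mathbb{Q}(-1)^6$, which is pure Tate of weight $2$.

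To transfer this to $X_{(2,1,1);2}(t)$, I would compute the exceptional divisor $E$ of the blow-up ${\bm b}:\widetilde{X}_{(2,1,1);2}(t)\to X_{(2,1,1);2}(t)$ explicitly. Setting $w=0$ in the equation for $\widetilde{\bf F}_{(2,1,1);2}$ derived from~\eqref{e:UVF112} produces the vanishing locus of $(x_1+z)[(y_1+y_2)(m_1^2 y_1 + m_2^2 y_2) - tp_2^2 y_1 y_2]$ inside $\mathbb{P}^1_{x_1,z}\times \mathbb{P}^1_{y_1,y_2}$. For generic parameters the quadratic factor in $y_1,y_2$ splits into two distinct linear factors, so $E$ is a union of three copies of $\mathbb{P}^1$ meeting pairwise in at most one point; in particular $\mathrm{H}^{\ast}(E;\mathbb{Q})$ is pure Tate and $\mathrm{H}^1(E;\mathbb{Q})=0$. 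The blown-up locus is $L=V(x_1,z)\cong \mathbb{P}^1$, which also has pure Tate cohomology.

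Combining this with the isomorphism $\widetilde{X}_{(2,1,1);2}(t)- E \cong X_{(2,1,1);2}(t)- L$ and the two long exact sequences of mixed Hodge structures
\begin{align*}
0 = \mathrm{H}^1(E) &\to \mathrm{H}^2_c(\widetilde{X}- E) \to \mathrm{H}^2(\widetilde{X}) \to \mathrm{H}^2(E), \\
0 = \mathrm{H}^1(L) &\to \mathrm{H}^2_c(X- L) \to \mathrm{H}^2(X) \to \mathrm{H}^2(L),
\end{align*}
one sees that $\mathrm{H}^2_c(X-L)\cong \mathrm{H}^2_c(\widetilde{X}-E)$ embeds into $\mathbb{Q}(-1)^6$, and $\mathrm{H}^2(X_{(2,1,1);2}(t);\mathbb{Q})$ is an extension of a sub-Hodge structure of $\mathrm{H}^2(L)\cong \mathbb{Q}(-1)$ by this pure Tate sub-Hodge structure of $\mathbb{Q}(-1)^6$. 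Since $\operatorname{Ext}^1_{{\bf MHS}_\mathbb{Q}}(\mathbb{Q}(-1),\mathbb{Q}(-1))=0$, any such extension splits, giving pure Tateness of $\mathrm{H}^2(X_{(2,1,1);2}(t);\mathbb{Q})$. The only delicate step in this program is the explicit determination of $E$ and the verification that its components meet in a way that forces $\mathrm{H}^1(E)=0$; everything else is a routine application of results already established in the paper.
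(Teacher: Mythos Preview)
Your approach is correct and is precisely the kind of ``direct check'' the paper intends---the paper's own proof consists only of the sentence preceding the proposition. Your reduction to Theorem~\ref{thm:main2}(1), the verification of its hypotheses (generic corank $0$ and $[1:-1]\notin V(\Disc_{(2,1,1);D})$), the explicit computation of the exceptional divisor $E$ as a chain of three $\mathbb{P}^1$'s with $\mathrm{H}^1(E)=0$, and the transfer along the blow-up via the two compactly-supported cohomology sequences are all sound.

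One minor caveat: the proof of Theorem~\ref{thm:main2}(1) explicitly flags the restriction $a\neq 2$ at the step where it evaluates $\mathrm{H}^a(\mathrm{F})$. For $a=2$, the degenerate fiber $\mathrm{F}$ over $[1:-1]$ is a pair of lines, so $\mathrm{H}^2(\mathrm{F})\cong\mathbb{Q}(-1)^2$ rather than $\mathbb{Q}(-1)$; carrying the exact sequences of that proof through for $a=2$ actually gives $\mathrm{H}^2(\widetilde{X}_{(2,1,1);2}(t);\mathbb{Q})\cong\mathbb{Q}(-1)^7$, not $\mathbb{Q}(-1)^6$ (equivalently, $\widetilde{X}$ is a smooth conic bundle with five nodal fibers, of Euler characteristic $9$). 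This does not affect your conclusion, since you only use the pure Tateness and not the specific rank, but you should avoid quoting the rank-$6$ statement literally in the case $a=2$.
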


This variation of Hodge structure has a very simple description in terms of the singular fibres of the quadric fibration $\widetilde{\pi} : \widetilde{X}_{(2,1,1);2}\rightarrow \mathbb{P}^1$ described in~(\ref{eq:pitil}). The singular fibres of $\widetilde{\pi}$ are determined by the vanishing of a polynomial $\Disc_{(2,1,1);D}(x,z,t)$ for each value of $t$. Here we suppress subscripts for to simplify notation. 
We introduce the graph polynomial for the one-loop sunset of the scoop
of the ice cream cone graph
\begin{equation}
{\bf U}_{[1]^2} = x_1 + z, \qquad {\bf V}_{[1]^2} = x_1z,\qquad {\bf L}_{[1]^2} = m_3^2x_1 + m_4^2z.
\end{equation}
This notation will be used for the general multi-scoop case in Section~\ref{sec:multiscoop}.
Using these notations we have
\begin{equation}\label{e:discice}
 \Disc_{(2,1,1);D}(x,z,t)= t(A {\bf V}_{[1]^2}^2  + B {\bf V}_{[1]^2}{\bf U}_{[1]^2} {\bf L}_{[1]^2} + C {\bf U}_{[1]^2}^2 {\bf L}_{[1]^2}^2)
\end{equation}
with
\begin{align}\label{e:ABCicecream}
    A & = p_2^2 p_{1}^{2} p_3^2 t^{2} + t \left(m_{2}^2 p_1^2 (p_1^2-p_3^2-p_2^2)-m_{1}^2 p_3^2
        (p_1^2-p_3^2+p_2^2)\right) +m_1^4p_3^2+m_2^4p_1^2+m_1^2m_2^2(p_2^2-p_1^2-p_3^2),\cr
    B & = p_2^2 t (p_2^2-p_1^2-p_3^2) -m_{1}^2 (p_2^2-p_1^2+p_3^2)-m_{2}^2
   (p_1^2-p_3^2+p_2^2),\cr 
    C & = p_2^2 .
\end{align}
Since the roots of $\Disc_{(2,1,1);D}(x,z,t)$ vary with respect to $t$, they give rise to a local system by letting $Z = V(\Disc_{(2,1,1);D}(x,z,t)) \subseteq \mathbb{P}^1 \times \mathbb{A}^1$ and we may define $\mathbb{V} := \pi_*\underline{\mathbb{Q}}_Z$. 
\begin{proposition}\label{prop:icecreamsplit}
The local system $\mathbb{V}$ is isomorphic to the direct sum $\mathbb{U}_1 \oplus \mathbb{U}_2\oplus  \underline{\mathbb{Q}}_B^2$ where $\mathbb{U}_1$ is a rank 1 local system and $\mathbb{U}_2$ is a rank 2 local system
\end{proposition}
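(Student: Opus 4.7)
The plan is to express $Z$ as a tower of two double covers $Z\to W\to B$ and decompose $\mathbb{V}=\pi_*\underline{\mathbb{Q}}_Z$ using the associated sheet-swapping involutions. First, I would rewrite the factorization in~\eqref{e:discice} as
\[
\Disc/t = A\bigl({\bf V}_{[1]^2}-\lambda_1(t)\,{\bf U}_{[1]^2}{\bf L}_{[1]^2}\bigr)\bigl({\bf V}_{[1]^2}-\lambda_2(t)\,{\bf U}_{[1]^2}{\bf L}_{[1]^2}\bigr),
\]
where $\lambda_1,\lambda_2$ are the two roots of $A\lambda^2+B\lambda+C=0$. Define the intermediate curve $W = V(A\lambda^2+B\lambda+C)\subseteq\mathbb{A}^1_\lambda\times\mathbb{A}^1_t$, so that $\pi_W:W\to\mathbb{A}^1_t$ is a degree-2 ramified cover; the rational map $[x_1:z]\mapsto[{\bf V}_{[1]^2}:{\bf U}_{[1]^2}{\bf L}_{[1]^2}]$ on $\mathbb{P}^1$ then induces a degree-2 morphism $f:Z\to W$ with $\pi=\pi_W\circ f$.

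Next, I would identify the sheet-swapping involution of $f$ by a direct computation. The involution $\sigma:[x_1:z]\mapsto[m_4^2 z:m_3^2 x_1]$ of $\mathbb{P}^1$ satisfies $\sigma^*{\bf V}_{[1]^2}=m_3^2 m_4^2\,{\bf V}_{[1]^2}$ and $\sigma^*({\bf U}_{[1]^2}{\bf L}_{[1]^2})=m_3^2 m_4^2\,{\bf U}_{[1]^2}{\bf L}_{[1]^2}$, so $\sigma$ preserves each individual quadric $V({\bf V}_{[1]^2}-\lambda\,{\bf U}_{[1]^2}{\bf L}_{[1]^2})$ and hence acts fibrewise as the covering involution of $f$. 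The standard decomposition of the pushforward along a double cover then gives $f_*\underline{\mathbb{Q}}_Z=\underline{\mathbb{Q}}_W\oplus\mathcal{N}$, where $\mathcal{N}$ is a rank-1 sign local system on (an open subset of) $W$.

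Pushing forward along $\pi_W$ and applying the same decomposition once more, $(\pi_W)_*\underline{\mathbb{Q}}_W=\underline{\mathbb{Q}}_B\oplus\mathbb{L}$ with $\mathbb{L}$ a rank-1 local system, while $(\pi_W)_*\mathcal{N}$ is rank 2. Assembling these decompositions,
\[
\mathbb{V}=(\pi_W)_*f_*\underline{\mathbb{Q}}_Z=\underline{\mathbb{Q}}_B\oplus\mathbb{L}\oplus(\pi_W)_*\mathcal{N},
\]
so setting $\mathbb{U}_1=\mathbb{L}$ and $\mathbb{U}_2=(\pi_W)_*\mathcal{N}$ produces the claimed splitting; the second trivial summand $\underline{\mathbb{Q}}_B$ arises from the $(x_1+z)^a$ factor of the full quadric-fibration discriminant, which contributes a $t$-independent point in $\mathbb{P}^1$ and hence an extra constant rank-1 sub-local system.

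The main obstacles are twofold. First, the involution $\sigma$ must descend to a single globally defined automorphism of $Z$ over $W$; this is precisely the content of the scalar identity $\sigma^*({\bf V}_{[1]^2}-\lambda{\bf U}_{[1]^2}{\bf L}_{[1]^2})\propto{\bf V}_{[1]^2}-\lambda{\bf U}_{[1]^2}{\bf L}_{[1]^2}$ holding uniformly in $\lambda$, and motivates the specific form of $\sigma$. Second, one must verify that $\mathbb{U}_2=(\pi_W)_*\mathcal{N}$ does not split further into two rank-1 pieces, equivalently that the monodromy of $\mathbb{V}$ on $B$ is the full wreath product $(\mathbb{Z}/2)\wr(\mathbb{Z}/2)$ rather than its abelian quotient; this can be confirmed for generic mass and kinematic parameters using the explicit $t$-dependence of the coefficients $A(t),B(t),C(t)$ given in~\eqref{e:ABCicecream}.
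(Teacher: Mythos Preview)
Your argument is correct and rests on the same structural observation as the paper's proof: the quartic $\Disc_{(2,1,1);D}/t$ is a binary quadratic form in $({\bf V}_{[1]^2},{\bf U}_{[1]^2}{\bf L}_{[1]^2})$, so the four-sheeted cover $Z\to B$ factors through the intermediate double cover $W=V(A\lambda^2+B\lambda+C)\to B$ (the paper calls this curve $C$ and introduces it as the base-change along which the quartic splits, with branch locus $B^2-4AC=0$). The extra constant summand from the $t$-independent root $[1:-1]$ coming from the $(x_1+z)$ factor is handled identically in both arguments.

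Where you diverge is in the mechanism for the second splitting. The paper pulls $Z$ back along $W\to B$, labels the four sheets $a,b,c,d$, and reads off the permutation monodromy around both the branch points of $W\to B$ and the branch points of $Z\to W$; from this it exhibits the invariant vector $a+b+c+d$, the anti-invariant $a+b-c-d$, and the residual rank-$2$ piece. You instead produce the deck involution $\sigma:[x_1:z]\mapsto[m_4^2z:m_3^2x_1]$ of $Z/W$ explicitly and decompose $f_*\underline{\mathbb{Q}}_Z$ into $\sigma$-eigensheaves before pushing to $B$. Your route is cleaner and yields the concrete automorphism $\sigma$, which the paper never writes down; the paper's route in exchange produces the explicit $2\times 2$ monodromy matrices of $\mathbb{U}_2$ (recorded in the remark following the proposition), which are used downstream. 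Your final caveat about checking that $(\pi_W)_*\mathcal{N}$ does not split is not actually required for the proposition as stated, since $\mathbb{U}_2$ is only asserted to have rank $2$; irreducibility is established separately in the paper via those monodromy matrices.
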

\begin{proof}
The monodromy of the local system $\mathbb{V}$ is identified with the standard permutation representation of the monodromy of the cover $q: V(\Delta_t)\rightarrow \mathbb{A}^1_t$, so it is enough to study the monodromy of this cover and its invariants. Clearly the irreducible component $V(x + z)$ has no monodromy, so it corresponds to a constant component. 

We may view the vanishing locus of~(\ref{e:discice}) as a curve in $\mathbb{P}^1_{x,z}\times \mathbb{A}^1_t$. We perform base-change along the two-to-one map $g: C \rightarrow \mathbb{A}^1_t$ ramified in the vanishing locus of 
\begin{equation}\label{e:concdis}
  B^2 - 4AC = ((p_1^2-p_3^2)^2-2(p_1^2+p_3^2)p_2^2+(p_2^2)^2)
  \times(p_2^2t-(m_1+m_2)^2)(p_2^2t-(m_1-m_2)^2)
\end{equation}
and we see in particular that, after this operation, the vanishing locus of $\Delta_t$ has three components. We have a commutative diagram
\[
\begin{tikzcd}
C \times_{B} V(\Delta_t) \ar[r,"k"] \ar[d,"h"] & V(\Delta_{t}) \ar[d,"g"] \\
C \ar[r,"f"] & B
\end{tikzcd}
\]
Choose a point $o$ in $C$. Then $h^{-1}o$ consists of four points which we label $a,b,c,d$ so that monodromy of the map $p$ exchanges $a$ and $b$, and exchanges $c$ and $d$ and so that no other exchanges occur. Then $g^{-1}f(o)$ consists of four points again, $k(a), k(b), k(c), k(d)$ so that in a small loop around points in the ramification locus of $f$, $k(a) \mapsto k(c), k(b)\mapsto k(d)$, and that around points which are images of points in the ramification locus of $h$, $k(a) \mapsto k(b)$ and $k(c)\mapsto k(d)$.

From this we see that the homology class $k(a) + k(b) + k(c) + k(d)$ is invariant under the monodromy representation of $g$ and $k(a) + k(b) - k(c) - k(d)$ is anti-invariant under the monodromy representation of $g$. Thus the monodromy representation of $g$ decomposes into a direct sum of a trivial summand, a nontrival rank 1 summand, and a nontrivial rank 2 summand, where the nontrivial rank 2 summand is generated by $k(a) - k(b)$ and $k(c) - k(d)$. 
\end{proof}
\begin{remark}
Precisely, the monodromy representation of $\mathbb{U}_2$ is as follows. For each of the following matrices, there are two points at which the monodromy matrix is of the following forms,
\begin{equation}
\begin{pmatrix}
   -1 & 0 \\ 0 & 1
\end{pmatrix} , \quad  \begin{pmatrix}
   1 & 0 \\ 0 & -1
\end{pmatrix} , \quad 
\begin{pmatrix}
   -1 & 0 \\ 0 & -1
\end{pmatrix} ,\quad 
\begin{pmatrix}
   0 & 1 \\ 1 & 0
\end{pmatrix}.
\end{equation}
So this representation is indeed irreducible.
\end{remark}
\noindent For each $t$, the form $[\omega_{(2,1,1);2}(t)]$ is an element of $\mathrm{H}^3(\widetilde{W}_{(2,1,1);2}(t);\mathbb{Q})$ where 
\begin{equation}
{W}_{(2,1,1);2}(t) = \mathbb{P}(\mathcal{O}_{\mathbb{P}^1} \oplus \mathcal{O}_{\mathbb{P}^1}(-1)) - \widetilde{X}_{(2,1,1);2}(t).
\end{equation}
Let $\mathcal{W}$ be the total space of the family of quasiprojective varieties $W_{(2,1,1);2}(t)$ over $\mathbb{A}^1$. The varieties $W_{(2,1,1);2}(t)$ form a locally constant family of hypersurfaces over a nonempty open subset $M$ of $\mathbb{A}^1$. We let $\mathcal{W}_M = \pi^{-1}(M) \cap \mathcal{W}$, and denote
\begin{equation}
\mathbb{W} = R^3 \pi_*\underline{\mathbb{Q}}_{\mathcal{W}_M}.
\end{equation}
Note that $[\omega_{(2,1,1);2}(t)]$ determines a section of $\mathbb{W}\otimes \mathcal{O}_M$.
\begin{proposition}
The local system $\mathbb{W}$ is isomorphic to $\mathbb{V}$.
\end{proposition}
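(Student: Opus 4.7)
The plan is to realize both $\mathbb{W}$ and $\mathbb{V}$ as invariants of the relative conic bundle $\widetilde{\pi} : \widetilde{\mathcal{X}} \to \mathbb{P}^1 \times M$ underlying the family, where $\widetilde{\mathcal{X}}$ denotes the total space of the family $\{\widetilde{X}_{(2,1,1);2}(t) : t \in M\}$.

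First, I would use a relative Gysin sequence to rewrite $\mathbb{W}$ in terms of the family $\widetilde{\mathcal{X}}/M$. Under the genericity assumptions in force in this section and the analysis of Theorem~\ref{thm:main2}, the pair $(P_t, \widetilde{X}_{(2,1,1);2}(t))$ is smooth for every $t \in M$, and the Gysin sequence produces a canonical isomorphism
\[
\mathrm{H}^{3}(W_{(2,1,1);2}(t);\mathbb{Q}) \;\cong\; \ker\!\bigl(\mathrm{H}^{2}(\widetilde{X}_{(2,1,1);2}(t);\mathbb{Q}) \to \mathrm{H}^{4}(P_t;\mathbb{Q})\bigr)(-1).
\]
Relativizing in $t$ identifies $\mathbb{W}$ with the primitive part of $R^{2}\pi_{\widetilde{\mathcal{X}}/M *}\underline{\mathbb{Q}}$, Tate-twisted by $(-1)$.

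Next, I would compute this primitive local system by running the Leray spectral sequence for $\widetilde{\pi}$. Since the relative dimension is one and the generic corank is zero, Proposition~\ref{prop:quadfib} and Lemma~\ref{l:monlem} yield $R^{0}\widetilde{\pi}_{*}\underline{\mathbb{Q}} = \underline{\mathbb{Q}}$ and $R^{1}\widetilde{\pi}_{*}\underline{\mathbb{Q}} = 0$, while $R^{2}\widetilde{\pi}_{*}\underline{\mathbb{Q}}$ is generically of rank one and jumps to rank two along the total discriminant locus $Z \subset \mathbb{P}^{1} \times M$, fitting into an exact sequence
\[
0 \longrightarrow \underline{\mathbb{Q}}(-1) \longrightarrow R^{2}\widetilde{\pi}_{*}\underline{\mathbb{Q}} \longrightarrow \iota_{*}\mathcal{L}(-1) \longrightarrow 0,
\]
where $\iota : Z \hookrightarrow \mathbb{P}^{1} \times M$ is the closed embedding and $\mathcal{L}$ is a rank-one local system on $Z$ recording the potential interchange of the two components of each reducible conic fibre. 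Pushing forward along $\mathrm{pr}_{M}$, using that $\mathrm{H}^{*}(\mathbb{P}^{1};\mathbb{Q})$ is pure Tate and that all contributions from $R^{0}$ are Tate, the non-Tate part of the resulting local system on $M$ is $(\mathrm{pr}_{M}|_{Z})_{*}\mathcal{L}$. This yields an isomorphism $\mathbb{W}(1) \cong (\mathrm{pr}_{M}|_{Z})_{*}\mathcal{L}$.

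The main obstacle, and final step, is to show that $\mathcal{L}$ is the trivial local system, so that $(\mathrm{pr}_{M}|_{Z})_{*}\mathcal{L} \cong \mathbb{V}$. Nontriviality of $\mathcal{L}$ would amount to a global obstruction to coherently labeling the two lines in each reducible conic fibre as $(x,z,t)$ moves along loops inside $Z$. I would argue this by exhibiting, from the explicit polynomial $\widetilde{\mathbf{F}}_{(2,1,1);2}(t)$, a geometric criterion that distinguishes the two components of every reducible fibre consistently across $Z$. Concretely, a suitably generic fixed section of the ambient projective bundle meets each reducible fibre in a point lying on exactly one of the two components, and this intersection provides a global trivialization of the double cover of $Z$ that defines $\mathcal{L}$. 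Establishing the existence of such a distinguishing section is the technical core of the argument; once it is verified, we conclude $\mathcal{L} \cong \underline{\mathbb{Q}}_{Z}$ and therefore $\mathbb{W} \cong \mathbb{V}$, with the isomorphism being compatible with the decompositions of both sides given by Proposition~\ref{prop:icecreamsplit} and its cohomological counterpart.
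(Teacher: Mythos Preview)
Your approach differs from the paper's. The paper works directly on the complement $W_{(2,1,1);2}(t)$: it stratifies $W$ into the locus $W^\circ$ lying over $\mathbb{P}^1$ minus the discriminant, whose fibres $\mathbb{P}^2 \setminus (\text{smooth conic})$ have cohomology concentrated in degree $0$, and the locus $S$ lying over the five discriminant points, each component of which is $\mathbb{P}^2$ minus a pair of lines and hence has the homotopy type of $\mathbb{C}^\times$. The Gysin sequence for the smooth divisor $S \subset W$ then gives $\mathrm{H}^3(W_t) \cong \mathrm{H}^1(S_t)(-1)$, and the paper identifies the resulting local system with $\mathbb{V}$. Your route---Gysin to $\widetilde{X}_t$ followed by Leray for the conic bundle---is longer but has the virtue of making the potential monodromy obstruction explicit through your local system $\mathcal{L}$.

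You have correctly located the crux, and in fact the paper's own proof tacitly relies on the same triviality: swapping the two lines of a reducible conic acts by $-1$ on $\mathrm{H}^1(\mathbb{P}^2\setminus T)$, so the paper's assertion $R^1\widetilde{\pi}_*\underline{\mathbb{Q}}_S \cong \mathbb{V}$ is not automatic either. Where your proposal falls short is the resolution you sketch. A generic section of the ambient $\mathbb{P}^2$-bundle lies on \emph{neither} component of a reducible conic, so it cannot distinguish them; what would work is a section of the conic bundle $\widetilde{\mathcal{X}} \to \mathbb{P}^1\times M$ itself, but its existence globally over the family is nontrivial and you do not establish it. A smaller point: your appeal to ``the non-Tate part'' is imprecise since every local system here is pointwise pure Tate; the identification of the primitive $\mathrm{H}^2(\widetilde{X}_t)$ with $\mathrm{H}^0(\iota_*\mathcal{L})$ inside the Leray filtration follows from computing the fibrewise Gysin map on $R^2\widetilde{\pi}_*\underline{\mathbb{Q}}$, not from a Hodge-type argument.
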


\begin{proof}
We let $S$ denote the subscheme of  ${W}_{(2,1,1);2}(t)$ given by  
\begin{equation}
V(\Disc_{(2,1,1);D}(x,z,t)) \cap {W}_{(2,1,1);2}(t)
\end{equation}
where $\Disc_{(2,1,1);D}(x,z,t)$ is considered as a section of a line bundle on $\mathbb{P}(\mathcal{O}_{\mathbb{P}^1}^{2} \oplus \mathcal{O}_{\mathbb{P}^1})$ varying with $t$. Let ${W}^\circ_{(2,1,1);2} = {W}_{(2,1,1);2} - S$. The map
\begin{equation}
\widetilde{\pi} : {W}^\circ_{(2,1,1);2}(t) \rightarrow \mathbb{P}^1 - V(\Disc_{(2,1,1);D}(x,z,t))
\end{equation}
is a fibration by varieties diffeomorphic to $\mathbb{P}^2 - Q$ where $Q$ is a conic curve. A quick computation shows that $\mathrm{H}^i(\mathbb{P}^2- Q;\mathbb{Q}) = 0$ unless $i = 0$ in which case $\mathrm{H}^0(\mathbb{P}^2 - Q;\mathbb{Q}) \cong \mathbb{Q}(0).$ Therefore, an application of the Leray spectral sequence tells us that 
\begin{equation}
\mathrm{H}^*(\widetilde{W}^\circ_{(2,1,1);2};\mathbb{Q}) \cong \mathrm{H}^*(\mathbb{P}^1 - V(\Disc_{(2,1,1);D}(x,z,t));\mathbb{Q}).
\end{equation}
Furthermore, $S$ is a union of five copies of $\mathbb{P}^2 \setminus T$ where $T$ is a union of two lines. Therefore, $\mathrm{H}^*(S;\mathbb{Q}) \cong \mathrm{H}^*(\mathbb{C}^\times;\mathbb{Q})^{\oplus 5}$. Therefore we may observe that $R^{i}\widetilde{\pi}_*\underline{\mathbb{Q}}_S \cong \mathbb{V}$ if $i = 0,1$ and is trivial otherwise. Finally, we write out the residue exact sequence for the triple $(W_{(2,1,1);2}(t), W^\circ_{(2,1,1);2}(t), S)$ to see that 
\[
0 \rightarrow \mathrm{H}^1(S;\mathbb{Q}) \cong  \mathbb{Q}(-2)^{\oplus 5} \rightarrow \mathrm{H}^3(W_{(2,1,1);2}(t);\mathbb{Q}) \rightarrow \mathrm{H}^3(W^\circ_{(2,1,1);2}(t);\mathbb{Q})\cong 0 .
\]
Therefore we identify the cohomological local system of the family $W_{(2,1,1);2}(t)$ with $\mathbb{V}$. 
\end{proof}
\noindent Given a local system $\mathbb{L}$ we use the notation $\mathbb{L}^\vee$ to denote its dual.
\begin{corollary}
The local system $\Sol(\mathscr{L}_{(2,1,1);2})$ is isomorphic to quotient of $\mathbb{V}^\vee$. 
\end{corollary}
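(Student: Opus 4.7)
The plan is to obtain the corollary as an essentially formal consequence of Lemma~\ref{l:qotapp} and the preceding proposition. The key is to recognize $\omega_{(2,1,1);2}(t)$, after pulling it back through the blow-up map $\bm{b}\colon\mathrm{Bl}_L\mathbb{P}^3\to\mathbb{P}^3$, as a section of $\mathbb{W}\otimes\mathcal{O}_M$ (as indicated in the discussion preceding the corollary, the form acquires no new poles on the exceptional locus). Once this is done, Lemma~\ref{l:qotapp} will produce $\Sol(\mathscr{L}_{(2,1,1);2})$ as a quotient of $\mathbb{W}^\vee$, and applying the identification $\mathbb{W}\cong\mathbb{V}$ from the previous proposition gives the required quotient of $\mathbb{V}^\vee$.

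The one point requiring care is a compatibility check: strictly speaking, Definition~3.4 defines $\mathscr{L}_{(2,1,1);2}$ as the minimal annihilator of $\omega_{(2,1,1);2}(t)$ in $\mathcal{H}_{(2,1,1);2}\otimes\mathcal{O}_M$, where $\mathcal{H}_{(2,1,1);2}$ is the variation on $\mathbb{P}^3\setminus X_{(2,1,1);2}(t)$, rather than on the blown-up complement. To reconcile these, I would use the open inclusion $\mathbb{P}^3\setminus X_{(2,1,1);2}(t)\hookrightarrow\mathrm{Bl}_L\mathbb{P}^3\setminus\widetilde{X}_{(2,1,1);2}(t)$ (valid because $L\subseteq X_{(2,1,1);2}(t)$, so outside of $X_{(2,1,1);2}(t)$ the map $\bm{b}$ is an isomorphism). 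This inclusion induces a Gauss--Manin-compatible restriction map $\mathbb{W}\to\mathcal{H}_{(2,1,1);2}$ carrying $\bm{b}^*\omega_{(2,1,1);2}$ to $\omega_{(2,1,1);2}$. A long exact sequence computation shows that the kernel and cokernel of this restriction map come from the cohomology of the stratum $E-(E\cap\widetilde{X}_{(2,1,1);2}(t))$, where $E=\bm{b}^{-1}(L)$ is a $\mathbb{P}^1$-bundle over the rational curve $L$. This stratum has mixed Tate cohomology, so neither the kernel nor the cokernel can affect the minimal annihilator of a section whose periods generate a non-Tate local system, and consequently the two minimal operators coincide.

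With the compatibility in hand, Lemma~\ref{l:qotapp} applied to $\bm{b}^*\omega_{(2,1,1);2}(t)\in\mathbb{W}\otimes\mathcal{O}_M$ delivers $\Sol(\mathscr{L}_{(2,1,1);2})$ as a quotient of $\mathbb{W}^\vee$, and the preceding proposition completes the argument via $\mathbb{W}^\vee\cong\mathbb{V}^\vee$. The main obstacle in the plan is the compatibility check in the middle paragraph; an expedient alternative would be simply to declare that, in this section, $\mathscr{L}_{(2,1,1);2}$ is defined directly as the minimal annihilator of $\bm{b}^*\omega_{(2,1,1);2}(t)$ in $\mathbb{W}\otimes\mathcal{O}_M$, in which case the corollary reduces to a single invocation of Lemma~\ref{l:qotapp} together with the identification $\mathbb{W}\cong\mathbb{V}$.
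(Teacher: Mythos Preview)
Your proposal is correct and follows the same route as the paper: invoke Lemma~\ref{l:qotapp} with $\bm{b}^*\omega_{(2,1,1);2}(t)$ viewed as a section of $\mathbb{W}\otimes\mathcal{O}_M$, then use the identification $\mathbb{W}\cong\mathbb{V}$ from the preceding proposition. The paper's proof is a single line pointing to Remark~\ref{r:solquot}; your version is more explicit about the compatibility between the two models for $\mathcal{H}_{(2,1,1);2}$ (on $\mathbb{P}^3$ versus on the blow-up), which the paper handles implicitly via the sentence ``$[\omega_{(2,1,1);2}(t)]$ determines a section of $\mathbb{W}\otimes\mathcal{O}_M$'' just before the statement of the proposition.
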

\begin{proof}
This follows from the discussion in Remark~\ref{r:solquot}. 
\end{proof}

\noindent In Section~5.2 of~\cite{Lairez:2022zkj}, it is shown that $\mathscr{L}_{(2,1,1);2}$ is an irreducible differential operator of rank 2 for generic values of kinematic and mass parameters. Therefore, $\Sol(\mathscr{L}_{(2,1,1);2})$ is an irreducible local system of rank 2 and $\Sol(\mathscr{L}_{(2,1,1);2}) \cong \mathbb{U}_2$. 
\begin{corollary}
For generic kinematic and mass parameters, $\Sol(\mathscr{L}_{(2,1,1);2})$ is isomorphic to $\mathbb{U}_2\cong \mathbb{U}_2^\vee$.
\end{corollary}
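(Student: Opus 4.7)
The plan has two pieces, corresponding to the two isomorphisms asserted in the statement.

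First, I would identify the factor of $\mathbb{V}^\vee$ that $\Sol(\mathscr{L}_{(2,1,1);2})$ realizes. By the previous corollary, $\Sol(\mathscr{L}_{(2,1,1);2})$ is a quotient of $\mathbb{V}^\vee$, and by Proposition~\ref{prop:icecreamsplit} we have the decomposition
\[
\mathbb{V}^\vee \cong \mathbb{U}_1^\vee \oplus \mathbb{U}_2^\vee \oplus (\underline{\mathbb{Q}}_M)^{\oplus 2}.
\]
The result of Lairez--Vanhove quoted just before the statement tells us that $\mathscr{L}_{(2,1,1);2}$ is irreducible of rank $2$ for generic kinematic and mass parameters, so $\Sol(\mathscr{L}_{(2,1,1);2})$ is an irreducible rank $2$ local system. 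An irreducible rank $2$ local system can only be a quotient of the displayed direct sum by mapping isomorphically onto one of its irreducible rank $2$ summands. Since the only such summand is $\mathbb{U}_2^\vee$, we conclude that $\Sol(\mathscr{L}_{(2,1,1);2}) \cong \mathbb{U}_2^\vee$.

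Second, I would show the self-duality $\mathbb{U}_2 \cong \mathbb{U}_2^\vee$. The fastest route is to use the explicit monodromy description given in the remark following Proposition~\ref{prop:icecreamsplit}. Each of the four listed matrices
\[
\begin{pmatrix} -1 & 0 \\ 0 & 1 \end{pmatrix},\quad \begin{pmatrix} 1 & 0 \\ 0 & -1 \end{pmatrix},\quad \begin{pmatrix} -1 & 0 \\ 0 & -1 \end{pmatrix},\quad \begin{pmatrix} 0 & 1 \\ 1 & 0 \end{pmatrix}
\]
is orthogonal with respect to the standard symmetric bilinear pairing $(x,y)\mapsto x_1y_1+x_2y_2$, and these matrices generate the monodromy group of $\mathbb{U}_2$. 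Hence the standard pairing gives a monodromy-invariant, non-degenerate symmetric form on $\mathbb{U}_2$, which is the same thing as an isomorphism $\mathbb{U}_2 \xrightarrow{\sim} \mathbb{U}_2^\vee$ of local systems. (Conceptually, this reflects the fact that $\mathbb{U}_2$ was cut out of a $\mathbb{Q}$-linear permutation representation of the monodromy acting on the sheets of the cover $V(\Delta_t)\to \mathbb{A}^1_t$, and such permutation representations are always self-dual via the canonical basis pairing.)

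Combining the two steps gives $\Sol(\mathscr{L}_{(2,1,1);2})\cong \mathbb{U}_2^\vee \cong \mathbb{U}_2$, as claimed. Neither step is particularly delicate: step one is essentially pigeonhole once irreducibility is granted, and step two is a direct verification on explicit $2\times 2$ matrices. The only point requiring care is the appeal to genericity, which is needed solely to guarantee the irreducibility of $\mathscr{L}_{(2,1,1);2}$ and the structure of the discriminant factorization used to produce the summand $\mathbb{U}_2$ of $\mathbb{V}$; both were established earlier.
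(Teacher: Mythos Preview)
Your proposal is correct and follows essentially the same approach as the paper. Both identify $\Sol(\mathscr{L}_{(2,1,1);2})$ with $\mathbb{U}_2^\vee$ by combining the quotient description with irreducibility of the rank~$2$ operator: the paper phrases this as showing the kernel $\mathbb{K}$ must contain $\mathbb{U}_1^\vee\oplus\underline{\mathbb{Q}}_M^{\oplus 2}$, while you phrase it as the equivalent ``only irreducible rank~$2$ summand'' pigeonhole. The one difference is that the paper leaves the self-duality $\mathbb{U}_2\cong\mathbb{U}_2^\vee$ implicit, whereas you supply an explicit argument via the orthogonality of the monodromy matrices from the remark; this is a welcome addition rather than a deviation.
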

\begin{proof}
The local system $\Sol(\mathscr{L}_{(2,1,1);2})$ is a quotient of $\mathbb{U}_1^\vee \oplus \mathbb{U}_2^\vee \oplus  \underline{\mathbb{Q}}_M^{\oplus 2}$ by a rank 3 local subsystem $\mathbb{K}$. If $\mathbb{U}_1^\vee \nsubseteq \mathbb{K}$ then $\Sol(\mathscr{L}_{(2,1,2);2})$ is reducible. Similarly, if $\underline{\mathbb{Q}}_M^{\oplus 2}$ is not contained in $\mathbb{K}$ then $\underline{\mathbb{Q}}_M^{\oplus 2}/(\mathbb{K} \cap \underline{\mathbb{Q}}_M^{\oplus 2})$ is a direct summand of $\Sol(\mathscr{L}_{(2,1,1);2})$, contradicting irreduciblity. Therefore, $\mathbb{U}_1^\vee \oplus  \underline{\mathbb{Q}}_M^{\oplus 2} = \mathbb{K}$ and $\Sol(\mathscr{L}_{(2,1,1);2}) \cong \mathbb{U}_2^\vee$.
\end{proof}
%-----------------------------------------------------------------
\subsubsection{Picard--Fuchs equations for the ice cream cone graph}

We explain how the split of the local system in
Proposition~\ref{prop:icecreamsplit} allows the construction of the Picard--Fuchs operator for the
ice cream cone graph from the Picard--Fuchs operators of the one-loop sunset
graphs.

\begin{lemma}\label{l:dirsum}
Let $\mathbb{L}_1,\mathbb{L}_2$ be local systems of rank 1, and suppose that ${\bm s}_1$ and ${\bm s}_2$ are sections of $\mathbb{L}_1\otimes\mathcal{O}_M$ and $\mathbb{L}_2\otimes \mathcal{O}_M$ respectively. If
\begin{equation}
\mathscr{L}_{\bm{s}_1} = \dfrac{d}{ds} -f_1(s),\qquad \mathscr{L}_{{\bm s}_2} = \dfrac{d}{ds} - f_2(s)
\end{equation}
are the differential equations associated to ${\bm s}_1$ and ${\bm s}_2$ respectively, then the differential equation associated to the section ${\bm s}_1 \oplus {\bm s}_2$ of $(\mathbb{L}_1 \oplus \mathbb{L}_2) \otimes \mathcal{O}_M$ is  
  \begin{multline}\label{e:dirsum}
\mathscr{L}_{\bm{s}_1\oplus\bm{s}_2}= (f_1(s)-f_2(s))\dfrac{d^2}{ds^2} + (f_2(s)^2 - f_1(s)^2 - f_1'(s) + f_2'(s))\dfrac{d}{ds} \cr
 + f_2(s)f_1'(s) - f_1(s)f_2'(s)+ f_1(s)^2f_2(s) -f_1(s)f_2(s)^2
 \end{multline}
\end{lemma}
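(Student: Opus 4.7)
The plan is direct verification via the period-pairing description from Section~\ref{s:vmhsode}. Recall that periods of $\bm{s}_i$ are multivalued functions $\pi_i(s)$ obtained from $\bm{s}_i$ by pairing against flat sections of $\mathbb{L}_i^\vee$, and the first-order operators $\mathscr{L}_{\bm{s}_i}=\tfrac{d}{ds}-f_i(s)$ annihilate them. Thus every period satisfies $\pi_i'=f_i\,\pi_i$ and consequently
\[
\pi_i'' = \left(f_i'+f_i^2\right)\pi_i.
\]
Since $(\mathbb{L}_1\oplus\mathbb{L}_2)\otimes\mathcal{O}_M$ has rank two, by Lemma~\ref{l:qotapp} (or equivalently by the definition of $\mathscr{L}_{\bm s}$) the minimal operator $\mathscr{L}_{\bm{s}_1\oplus\bm{s}_2}$ has order at most~$2$, and its solution space is spanned by the periods $\pi_1+\pi_2$ as $\pi_1,\pi_2$ range independently over periods of $\bm{s}_1$ and $\bm{s}_2$.

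First I would write an ansatz $\mathscr{L}=a(s)\tfrac{d^2}{ds^2}+b(s)\tfrac{d}{ds}+c(s)$ and impose $\mathscr{L}(\pi_1+\pi_2)=0$. Substituting the relations above gives
\[
\bigl[a(f_1'+f_1^2)+bf_1+c\bigr]\pi_1 + \bigl[a(f_2'+f_2^2)+bf_2+c\bigr]\pi_2 = 0.
\]
Since $\pi_1$ and $\pi_2$ are independent, each bracket must vanish separately. Subtracting the two linear equations yields
\[
b\,(f_1-f_2) = -a\bigl(f_1'-f_2'+f_1^2-f_2^2\bigr),
\]
so the normalization $a=f_1-f_2$ (which matches the coefficient of $d^2/ds^2$ in~\eqref{e:dirsum}) gives $b=f_2^2-f_1^2+f_2'-f_1'$. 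Substituting back into either bracket and simplifying gives $c=f_2 f_1' - f_1 f_2' + f_1^2 f_2 - f_1 f_2^2$.

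This is precisely the operator in~\eqref{e:dirsum}. Finally I would note that $a=f_1-f_2\not\equiv 0$ (otherwise $\bm{s}_1$ and $\bm{s}_2$ would satisfy the same first-order equation, contradicting the rank-two hypothesis for generic ${\bm s}_1\oplus{\bm s}_2$), so the operator is genuinely of order two and hence minimal. The only step requiring care is the algebraic simplification producing $c$; there is no conceptual obstacle.
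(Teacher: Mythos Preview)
Your proof is correct and follows the same approach as the paper, which simply records ``This is a direct computation.'' You have supplied the details of that computation, and they are accurate.
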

\begin{proof}
This is a direct computation.
\end{proof}
\noindent As a consequence, any local system of rank 2 which is a direct sum of two local systems of rank 1 can be written in the form~\eqref{e:dirsum}. In particular, we argued in the proof of Proposition~\ref{prop:icecreamsplit} that after mild base-change, the local system $\mathbb{U}_2 \cong \Sol(\mathscr{L}_{(2,1,1);2})$ decomposes as a direct sum of rank 1 local systems, so Lemma~\ref{l:dirsum} applies.

 \begin{proposition}
After the
 base change
  \begin{equation}\label{e:cov}
t = \dfrac{(m_1 -m_2)^2s^2 + (m_1+m_2)^2}{p_2^2(s^2+1)},
\end{equation}
the differential operator $\mathscr{L}_{(2,1,1);2}$ is of the form
given in~\eqref{e:dirsum} 
 where $f_i(s)$ with $i=1,2$ are obtained from the application of the
change of variables $T=\rho_{i}(s)$ with $i=1,2$ to the differential operator
 \begin{equation}
   {d\over dT} - {(m_3^2+m_4^2-T)\over
     ((m_3-m_4)^2-T)((m_3+m_4)^2-T)}\Longrightarrow {d\over ds}-f_i(s)
   \qquad \textrm{for}~T=\rho_i(s)~i=1,2,
 \end{equation}
with $\rho_i(s)$ the roots of the discriminant~\eqref{e:concdis}
\begin{multline}\label{e:factor-disc}
    \frac{-(m_{1}^2 p_3^2+m_{2}^2 p_1^2)\left(s^2+1\right)+m_{1} m_{2} \left(s^2-1\right)
   (p_1^2+p_3^2-p_2^2)}{p_2^2 \left(s^2+1\right)}\cr
\pm \frac{ 2 m_{1} m_{2} s
   \sqrt{-(p_1^2)^2+2 p_1^2 p_3^2+2 p_1^2 p_2^2-(p_3^2)^2+2
   p_3^2 p_2^2-(p_2^2)^2}}{p_2^2 \left(s^2+1\right)}.
\end{multline}
 
 \end{proposition}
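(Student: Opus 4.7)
The strategy is to combine three ingredients: (i) the identification $\Sol(\mathscr{L}_{(2,1,1);2}) \cong \mathbb{U}_2^\vee$ together with the decomposition of $\mathbb{U}_2$ into rank-one pieces after base change (Proposition~\ref{prop:icecreamsplit}); (ii) a factorization of the discriminant $\Disc_{(2,1,1);2}$ into a product of two one-loop sunset polynomials; and (iii) Lemma~\ref{l:dirsum}, which assembles two first-order operators on rank-one local systems into a second-order operator on their direct sum.

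First I would make precise the factorization of the discriminant. Writing $v = {\bf V}_{[1]^2}$ and $w = {\bf U}_{[1]^2}{\bf L}_{[1]^2}$, the expression $Av^2 + Bvw + Cw^2$ appearing in~\eqref{e:discice} is, as a quadratic form in $w/v$, discriminated by $B^2 - 4AC$. The base change $t = \phi(s)$ in~\eqref{e:cov} was constructed precisely so that, by~\eqref{e:concdis}, $B^2 - 4AC$ becomes a perfect square in $s$, and the two roots $\rho_1(s), \rho_2(s)$ of $Cr^2 - Br + A$ are rational in $s$ with the explicit form~\eqref{e:factor-disc}. Consequently, over the $s$-line,
\[
\Disc_{(2,1,1);2}(x,z,s) \;\propto\; \bigl({\bf U}_{[1]^2}{\bf L}_{[1]^2} - \rho_1(s)\, {\bf V}_{[1]^2}\bigr)\cdot\bigl({\bf U}_{[1]^2}{\bf L}_{[1]^2} - \rho_2(s)\, {\bf V}_{[1]^2}\bigr),
\]
and each factor is exactly the one-loop sunset polynomial ${\bf F}_{(1,1);2}(T)$ evaluated at $T = \rho_i(s)$.

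Next I would match this factorization with the splitting $\mathbb{U}_2 \cong \mathbb{L}_1 \oplus \mathbb{L}_2$ from the proof of Proposition~\ref{prop:icecreamsplit}. The rank-two summand $\mathbb{U}_2$ arose from two pairs of points in a fiber of the cover $V(\Disc_{(2,1,1);2}) \to \mathbb{A}^1_t$ which, after the base change~\eqref{e:cov}, are exchanged only within each pair. Geometrically, each pair consists of the two roots (in $x/z$) of one of the two factors above, so $\mathbb{L}_i$ is identified, as a local system on the base, with the variation of cohomology of the zero locus of the sunset polynomial ${\bf F}_{(1,1);2}(\rho_i(s))$, i.e.\ with the one-loop sunset local system pulled back along the map $s \mapsto \rho_i(s)$. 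The form $[\omega_{(2,1,1);2}(s)]$ then decomposes correspondingly as a sum of two sections $\bm{s}_1 \oplus \bm{s}_2$ of $\mathbb{L}_1 \oplus \mathbb{L}_2$, where each $\bm{s}_i$ is the specialization at $T = \rho_i(s)$ of the one-loop sunset period.

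The Picard--Fuchs equation for the one-loop sunset period $\pi(T)$ is a first-order ODE $\frac{d}{dT}\pi - g(T)\pi = 0$ with $g(T) = (m_3^2 + m_4^2 - T)/\bigl(((m_3-m_4)^2-T)((m_3+m_4)^2-T)\bigr)$, as one checks directly by residue computation for ${\bf F}_{(1,1);2}(T) = (x+z)(m_3^2 x + m_4^2 z) - T xz$. Applying the chain rule to $T = \rho_i(s)$ gives $\frac{d}{ds}\bm{s}_i - f_i(s)\bm{s}_i = 0$ with $f_i(s) = \rho_i'(s) g(\rho_i(s))$, which is the operator claimed in the statement. Finally, Lemma~\ref{l:dirsum} applied to $\bm{s}_1 \oplus \bm{s}_2$ yields the explicit second-order operator~\eqref{e:dirsum}. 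Since this operator and the base-change of $\mathscr{L}_{(2,1,1);2}$ are both monic of order two in $d/ds$ and annihilate the same section of the rank-two local system, they coincide.

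\textbf{Main obstacle.} The principal technical point is step (ii)--(iii): verifying that under the splitting $\mathbb{U}_2 \cong \mathbb{L}_1\oplus\mathbb{L}_2$, the two components $\bm{s}_1, \bm{s}_2$ of the ice cream cone period are genuinely proportional to the pulled-back one-loop sunset periods (and in particular both nonzero). Rather than argue abstractly via monodromy, the cleanest route is to perform a local residue analysis of $\omega_{(2,1,1);2}$ along the two irreducible components of $V(\Disc_{(2,1,1);2})$ after base change, using the quadric fibration structure of $\pi : \widetilde{X}_{(2,1,1);2}\to\mathbb{P}^1$ from Section~\ref{s:qfib} to identify each residue with the corresponding sunset period integrand.
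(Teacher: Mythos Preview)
Your approach is essentially the same as the paper's: factor the discriminant $\Disc_{(2,1,1);2}$ as a product of two one-loop sunset second Symanzik polynomials after the base change~\eqref{e:cov}, identify the rank-one summands $\mathbb{L}_i$ with the pulled-back sunset local systems, pull back the known first-order sunset Picard--Fuchs operator along $T=\rho_i(s)$, and assemble the second-order operator via Lemma~\ref{l:dirsum}. The paper's proof carries out precisely these steps, with the explicit factorization~\eqref{e:spliticecream} and the sunset operator~\eqref{e:PF1sunset}.

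The one substantive difference is how the final identification with $\mathscr{L}_{(2,1,1);2}$ is made. You propose to verify that the components $\bm{s}_1,\bm{s}_2$ of $[\omega_{(2,1,1);2}]$ are genuinely the pulled-back sunset periods by a local residue computation along the two discriminant components. The paper does not do this; instead, in the paragraph immediately following the proof, it constructs the operator $\widehat{\mathscr{L}}_{\bm{s}_1\oplus\bm{s}_2}$ from~\eqref{e:dirsum}, compares its normal form directly with that of $\mathscr{L}_{(2,1,1);2}$ as computed in~\cite{Lairez:2022zkj}, and finds they agree up to the explicit scaling factor $\sqrt{((m_1+m_2)^2-tp_2^2)((m_1-m_2)^2-tp_2^2)}$. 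Your route is more conceptual and would explain \emph{why} the match occurs; the paper's route is a direct verification. Note in particular that the operators do not literally coincide but differ by this multiplicative factor, so your claim that ``they coincide'' because both are monic needs adjustment (the operator in~\eqref{e:dirsum} has leading coefficient $f_1-f_2$, not $1$).
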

\begin{proof} 
After making the change of variables the discriminant
 locus of the quadric fibration $\pi :
 \widetilde{X}_{(2,1,1);2}\rightarrow \mathbb{P}^1$ decomposes as a
 product of a pair double coverings of the line $\mathbb{A}^1_s$, and
 the local system $\mathbb{U}_2$ decomposes as a direct sum
 $\mathbb{L}_1 \oplus \mathbb{L}_2$.
% First we observe that the local
% system $\mathbb{U}_2$ decomposes as a direct sum of rank 1 local
% systems after base-change along the given map.
%
This follows from the
fact that~\eqref{e:discice} is quadric in ${\bf V}_{[1]^2}$ and ${\bf
  U}_{[1]^2}{\bf L}_{[1]^2}$. The base change along the map~\eqref{e:cov} is enough for the
polynomial in~\eqref{e:discice} to factor as a polynomial in $x,z,s$ as
\begin{equation}\label{e:spliticecream}
 A{\bf V}^2_{[1]^2}+B   {\bf V}_{[1]^2}{\bf U}_{[1]^2}{\bf
   L}_{[1]^2}+C {\bf U}^2_{[1]^2} {\bf L}^2_{[1]^2}
 =C \left({\bf U}_{[1]^2}{\bf
   L}_{[1]^2}-\xi_1{\bf V}_{[1]^2}\right)\left({\bf U}_{[1]^2}{\bf
   L}_{[1]^2}-\xi_2{\bf V}_{[1]^2}\right)
\end{equation}
where $\xi_1$ and $\xi_2$ are the roots of the polynomial $C x^2+B
x+A$.  Under the map~\eqref{e:cov}  these roots are polynomial in $s$.
In other words that~\eqref{e:concdis} has a square root in the
variable $s$. The two factors ramify along the roots of~\eqref{e:factor-disc}. 
Each factor of~\eqref{e:spliticecream} defines the second graph
polynomial of a one-loop sunset graph
\begin{equation}
  {\bf F}= {\bf U}_{[1]^2}{\bf L}_{[1]^2} - T {\bf
    V}_{[1]^2}=(m_3^2x_1+m_4^2z)(x_1+z)-T x_1z  .
\end{equation}
The first order Picard-Fuchs operator
\begin{equation}\label{e:PF1sunset}
  \mathscr{L}_{[1]^2}= {d\over dT} - \frac{m_{3}^2+m_{4}^2-T}{\left((m_{3} -m_{4})^2-T\right)
   \left((m_{3}+ m_{4})^2-T\right)}
\end{equation}
This operator acts on the differential form
\begin{equation}
  \omega_{[1]^2}= {dx_1\wedge dz\over    {\bf F}_{[1]^2}}
\end{equation}
as
\begin{equation}
  \mathscr{L}_{[1]^2}\omega_{[1]^2}=- d\left({2m_4^2z\over {\bf F}_{[1]^2}}dz+{(m_3^2+m_4^2-T)z\over {\bf F}_{[1]^2}}dx_1\right).
\end{equation}
In the case of interest we have the two first order differential
operator obtained from~\eqref{e:PF1sunset} after applying $T=\xi_1(s)$
and $T=\xi_2(s)$ respectively, leading to operators of the form
\begin{equation}
  {d\over ds}- \sum_{r=1}^6 {f_i^{(r)}\over
      s-\rho_r}\qquad \textrm{for}~i=1,2
 \end{equation}
 Doing the base change along~\eqref{e:cov} to the $t$ variables we get
  rank 1 local system is the solution set to the differential equation
 \begin{equation}
 \dfrac{d}{dt} - f_i(t)\qquad\textrm{for}~i=1,2
 \end{equation}
 These are both integral local systems whose monodromy is nontrivial
 around points of ramification of the two double covers.
\end{proof}
Changing variable from $s$ to $t$ using~\eqref{e:cov} in~\eqref{e:dirsum} one gets a
second order differential equation
$\widehat{\mathscr{L}}_{\bm{s}_1\oplus\bm{s}_2}$. Its normal form matches  the normal form (see footnote~\ref{f:normalform2}) of  the second
order differential operator $\mathscr{L}_{(2,1,1);2}$ for  the one-scoop ice cream cone given in
Section~5.2 of~\cite{Lairez:2022zkj}. Therefore  the two operators are related by a scaling  factor 
\begin{equation}
  \mathscr{L}_{(2,1,1);2}=  \widehat{\mathscr{L}}_{\bm{s}_1\oplus\bm{s}_2}\times\sqrt{((m_1+m_2)^2-t p_2^2)}\times\sqrt{ ((m_1-m_2)^2-tp_2^2)}.
\end{equation}

%-------------------------------------------------------------------------
\subsection{The observatory graph, $(3,1,1)$}\label{sec:Motive311}

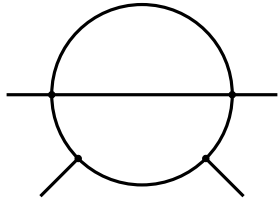
\begin{figure}[h]
\begin{tikzpicture}[scale=0.6]
\filldraw [color = black, fill=none, very thick] (0,0) circle (2cm);
\draw [black,very thick] (-2,0) to (2,0);
\filldraw [black] (2,0) circle (2pt);
\filldraw [black] (-2,0) circle (2pt);
\filldraw [black] (1.414,-1.414) circle (2pt);
\filldraw [black] (-1.414,-1.414) circle (2pt);
\draw [black,very thick] (-2,0) to (-3,0);
\draw [black,very thick] (2,0) to (3,0);
\draw [black,very thick] (1.414,-1.414) to (2.25,-2.25);
\draw [black,very thick] (-1.414,-1.414) to (-2.25,-2.25);
\end{tikzpicture}
\caption{The observatory graph}\label{fig:observatory}
\end{figure}
In this section we look at the observatory graph depicted in Figure~\ref{fig:observatory} from the
perspective of the discussion in the previous section.
The graph polynomials read 
    \begin{align}\label{eq:UVF311}
{\bf U}_{(3,1,1)} & = \left( z+ x_1\right) \left( y_1+y_2+y_3 \right) + z  x_1 ,\cr
{\bf V}_{(3,1,1);D} & = z x_1 \left(p_1^2 y_1 + (p_1 + p_2 )^2 y_2 + (p_1  - p_4)^2 y_3\right) + (z + x_1) \left(p_2^2 y_1 y_2 +  p_4^2 y_1 y_3 + (p_2 +  p_4 )^2 y_2 y_3\right),\cr
{\bf F}_{(3,1,1);D}(t)& = {\bf U}_{(3,1,2)}\left( \sum_{i=1}^3 m^2_i y_i + m_{4}^2 x_i + m^2_{5} z\right)-t {\bf V}_{(3,1,1);D}.
\end{align}
where $k,p,q,r$ are vectors of $\mathbb{C}^D$.

\begin{proposition}\label{p:X}
If $D\geq 4$ Then $\Gr^W_3\mathrm{H}^3(X_{(3,1,1);D};\mathbb{Q})$ is isomorphic to $\mathrm{H}^3(E;\mathbb{Q})$ for an elliptic curve depending on the mass and kinematic parameters. If $D \leq 2$ then $\mathrm{H}^3(X_{(3,1,1);D};\mathbb{Q})$ is mixed Tate.
\end{proposition}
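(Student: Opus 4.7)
The proposition is a specialization of the general analysis of $(a,1,1)$ graphs developed in Section~\ref{sect:stime} to $a=3$, combined with a low-dimensional degeneration coming from the Gram determinant relations among the external momenta. I would split the proof into the two stated ranges of $D$.

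For $D\geq 4$, I would first reduce to $\widetilde{X}_{(3,1,1);D}$: by Lemma~\ref{l:blup} the blow up $\widetilde{X}_{(3,1,1);D}\to X_{(3,1,1);D}$ along $V(x_1,z)$ changes the cohomology only by mixed Tate factors. The resulting quadric fibration $\pi:\widetilde{X}_{(3,1,1);D}\to\mathbb{P}^1$ has relative dimension $2$. I would then verify by direct inspection of the formulas of Section~\ref{sect:stime} that for generic mass and kinematic parameters with $D\geq 4$ (a Zariski-open condition), the quadratic form $\mathbf{A}$ of~\eqref{eq:qform} has maximal rank so that $\pi$ has generic corank $0$, and moreover that the quartic $\Disc_{(3,1,1);D}(x,z)$ defined in~\eqref{e:Disca11def} has four distinct roots none equal to $[1:-1]$. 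Theorem~\ref{thm:main2} then applies verbatim and yields the short exact sequence
\[
0\longrightarrow \mathbb{Q}(-1)\longrightarrow \mathrm{H}^3(\widetilde{X}_{(3,1,1);D};\mathbb{Q})\longrightarrow \mathrm{H}^1(E;\mathbb{Q})\longrightarrow 0,
\]
where $E$ is the smooth double cover of $\mathbb{P}^1$ ramified along the four roots of $\Disc_{(3,1,1);D}$. Passing to $\Gr^W_3$ gives the claimed identification.

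For $D\leq 2$, the three independent momenta $p_1,p_2,p_4\in\mathbb{C}^D$ span a subspace of dimension at most $2$ and hence satisfy the Gram determinant relation
\[
\det\bigl(p_i\cdot p_j\bigr)_{i,j\in\{1,2,4\}}=0.
\]
I would substitute this relation into the explicit expression for $\Disc_{(3,1,1);D}(x,z)$ obtained from~\eqref{e:Disca11def} and verify using a computer algebra system that the resulting binary quartic acquires a repeated root (in parallel with the observation for the kite case in Proposition~\ref{p:212} that $V(\Disc_{(2,1,2);D})$ has geometric genus $0$ in $D=2$ but $1$ in $D>2$). Consequently the double cover of $\mathbb{P}^1$ branched at the zero locus of $\Disc_{(3,1,1);D}$ is a nodal rational curve. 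Applying Corollary~\ref{cor:mhshyp} together with Remark~\ref{r:leq2} to the quadric fibration $\pi$ yields that $\mathrm{H}^\ast(\widetilde{X}_{(3,1,1);D};\mathbb{Q})$ is mixed Tate, and Lemma~\ref{l:blup} transfers this conclusion to $X_{(3,1,1);D}$.

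The hard part is the explicit algebraic identity that shows $\Disc_{(3,1,1);D}$ degenerates to a square-containing quartic once the Gram determinant vanishes. Although this is a purely elementary computation, the resultant of $\Disc_{(3,1,1);D}$ and its derivative is a large polynomial in the mass and momentum variables, and one needs to exhibit the Gram determinant as a factor of this resultant. This is best carried out by computer algebra, in the same spirit as the numerical verifications used in Sections~\ref{sec:Motive313},~\ref{sec:pentabox} and~\ref{sec:kite}.
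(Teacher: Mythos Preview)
Your proposal is correct and follows essentially the same route as the paper, whose proof is the single sentence ``This is a direct computation applying the techniques described in the proofs of Theorems~\ref{t:mainc=1} and~\ref{thm:main2}.'' You have spelled out exactly that computation for $a=3$: reduce to $\widetilde{X}_{(3,1,1);D}$ via Lemma~\ref{l:blup}, invoke Theorem~\ref{thm:main2} when the quartic $\Disc_{(3,1,1);D}$ is separable, and in low space-time dimension use the Gram determinant constraint to force a repeated root, in parallel with the discussion in Section~\ref{sec:Motive313}.

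One minor correction: your appeal to Remark~\ref{r:leq2} is not quite right, since even with a repeated root the full discriminant locus of $\pi$ (which includes $[1:-1]$) may consist of more than two points. The conclusion for $D\le 2$ follows from Corollary~\ref{cor:mhshyp} alone: once the double cover branched over the zero locus of $\Disc_{(3,1,1);D}$ has geometric genus $0$, its $\mathrm{H}^1$ vanishes and the only possible non-Tate graded piece disappears.
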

\begin{proof}
This is a direct computation applying the techniques described in the proofs of Theorems~\ref{t:mainc=1} and~\ref{thm:main2}.
\end{proof}

As in the case of the kite graph $(2,1,2)$ (Section~\ref{sec:kite}) in the case where space-time dimension is 4, we have 
\begin{equation}
\omega_{(3,1,1);4} (t)= \dfrac{\Omega_0}{{\bf F}_{(3,1,1);4}(t){\bf U}_{(3,1,1)}}.
\end{equation}
Therefore the Feynman integral $I_{(3,1,1);4}(t)$ is a period related to the motive of the reducible hypersurface $V({\bf F}_{(3,1,1);4}(t){\bf U}_{(3,1,1)})$. By observation we have the following result.
\begin{proposition}\label{p:Y}
The quadric ${\bf U}_{(3,1,1)}$ has corank 2 and contains the singular line $V(z,x_1,y_1+y_2+y_3)$.
\end{proposition}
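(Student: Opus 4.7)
The plan is to verify both claims by a direct linear-algebraic computation, exploiting the observation that ${\bf U}_{(3,1,1)}$ depends on $y_1, y_2, y_3$ only through their sum $Y := y_1 + y_2 + y_3$. First I would introduce new linear coordinates on $\mathbb{P}^4$ that replace $(y_1, y_2, y_3)$ by $(Y, u, v)$ with $u := y_2 - y_1, v := y_3 - y_1$. In these coordinates
\[
{\bf U}_{(3,1,1)} = (z+x_1)Y + zx_1,
\]
which is manifestly independent of $u$ and $v$. Consequently the symmetric matrix representing ${\bf U}_{(3,1,1)}$ in the basis $(z, x_1, Y, u, v)$ is block-diagonal with a $2\times 2$ zero block corresponding to $(u,v)$.

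Next I would read off the rank from the remaining $3\times 3$ block, which is the symmetric matrix
\[
\tfrac{1}{2}\begin{pmatrix} 0 & 1 & 1 \\ 1 & 0 & 1 \\ 1 & 1 & 0 \end{pmatrix}
\]
of the quadric $q(z, x_1, Y) = zY + x_1 Y + z x_1$ in $\mathbb{P}^2_{z, x_1, Y}$. A one-line determinant computation gives $\det = 1/4 \neq 0$, so this block has full rank $3$. Hence ${\bf U}_{(3,1,1)}$ has rank $3$ and corank $5 - 3 = 2$.

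Finally, since the singular locus of a projective quadric is the projectivization of the kernel of its symmetric matrix, the singular locus of $V({\bf U}_{(3,1,1)})$ is precisely the linear subspace cut out by $z = x_1 = Y = 0$, i.e.\ $V(z, x_1, y_1 + y_2 + y_3)$. This is a $\mathbb{P}^1$ in $\mathbb{P}^4$, which is the claimed line. There is no real obstacle in this argument; the one thing to take care of is merely confirming that the change of variables $(y_1,y_2,y_3) \mapsto (Y, y_2 - y_1, y_3 - y_1)$ is invertible, which it is over $\mathbb{Q}$.
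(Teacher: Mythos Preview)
Your argument is correct and is exactly the direct computation the paper has in mind: the paper states this proposition ``by observation'' without writing out a proof, and your change of variables $(y_1,y_2,y_3)\mapsto (Y,u,v)$ simply makes that observation explicit. The rank computation and the identification of the singular locus with the kernel $z=x_1=Y=0$ are both fine.
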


Finally, we can compute the mixed Hodge structure of the surface $V({\bf U}_{(3,1,1)},{\bf V}_{(3,1,1);D}) = V({\bf U}_{(3,1,1)},{\bf F}_{(3,1,1);D}(t))$. 

\begin{proposition}\label{p:Z}
The cohomology of $Z = V({\bf U}_{(3,1,1)},{\bf V}_{(3,1,1);4})$ is mixed Tate. 
\end{proposition}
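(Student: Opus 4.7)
My plan is to exploit a factorisation of $F := {\bf V}_{(3,1,1);4}$ when restricted to $Q := V({\bf U}_{(3,1,1)})$ to reduce the statement to the cohomology of a conic fibration over $\mathbb{P}^1$. Writing $y := y_1 + y_2 + y_3$, the formula for $F$ in~\eqref{eq:UVF311} groups as $F = zx_1\cdot R + (z+x_1)\cdot S$ with $R$ linear and $S$ quadratic in $y_1, y_2, y_3$ alone. Since the relation $zx_1 = -(z+x_1)y$ holds on $Q$, this yields $F|_Q = (z+x_1)(S - yR) = (z+x_1)\, G$, where $G$ is a quadric in $y_1, y_2, y_3$ only. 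Hence $Z$ decomposes as $Z = P \cup Z'$, where $P = V(z, x_1) \cong \mathbb{P}^2$ (a short calculation shows that $Q \cap V(z+x_1)$ is set-theoretically this plane) and $Z' := Q \cap V(G)$. The intersection $P \cap Z' = V(z, x_1, G)$ is a plane conic.

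By the Mayer--Vietoris sequence of Proposition~\ref{p:mayer-vietoris}, and the fact that $\mathrm{H}^*(P;\mathbb{Q})$ and $\mathrm{H}^*(P \cap Z';\mathbb{Q})$ are both mixed Tate, it will suffice to show that $\mathrm{H}^*(Z';\mathbb{Q})$ is mixed Tate. Because $G$ involves only $y_1, y_2, y_3$, the threefold $V(G) \subset \mathbb{P}^4$ is the cone with vertex $L_G := V(y_1, y_2, y_3)$ over the plane conic $C_G := V(G) \subset \mathbb{P}^2_{y_1, y_2, y_3}$. Blowing up $L_G$ in $\mathbb{P}^4$ resolves $V(G)$ to a $\mathbb{P}^2$-bundle $\widetilde{V(G)} \to C_G \cong \mathbb{P}^1$; on the fibre over $[a:b:c] \in C_G$, coordinatized by $[\mu : x_1 : z]$, the pullback of ${\bf U}_{(3,1,1)}$ becomes $\mu(a+b+c)(z+x_1) + zx_1$, a ternary quadric whose determinant is proportional to $(a+b+c)^2$ and hence nonzero generically. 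The strict transform $\widetilde{Z}' \subset \widetilde{V(G)}$ is therefore a conic fibration over $\mathbb{P}^1$ of relative dimension $n = 1$ and generic corank $s_\mathcal{Q} = 0$. Since $n + s_\mathcal{Q} + 1 = 2$ is even, Corollary~\ref{cor:mhshyp} will give that $\mathrm{H}^*(\widetilde{Z}';\mathbb{Q})$ is mixed Tate.

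The last step is to transfer this conclusion from $\widetilde{Z}'$ back to $Z'$. The morphism $\widetilde{Z}' \to Z'$ is an isomorphism away from $Z' \cap L_G$, which is the zero locus of $zx_1$ on $L_G$, hence two points for generic kinematic data; the exceptional fibres above these points are plane conics, which have mixed Tate cohomology. Standard long exact sequences in compactly supported cohomology (in the spirit of Lemma~\ref{l:compcoh}) then transfer the mixed Tate property to $\mathrm{H}^*(Z';\mathbb{Q})$, and a final application of Mayer--Vietoris returns the conclusion for $\mathrm{H}^*(Z;\mathbb{Q})$. The main obstacle is to verify the genericity hypotheses: that $C_G$ is a smooth conic for generic parameters, that $y_1 + y_2 + y_3$ does not vanish identically on $C_G$ (so that the discriminant of the conic fibration is a proper subscheme of $C_G$), and that the exceptional fibres behave as claimed. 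In the degenerate subcases that remain, either the conic fibration simplifies further or the relevant cohomology becomes manifestly Tate.
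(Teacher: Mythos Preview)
Your argument is correct and follows a genuinely different route from the paper's. You first exhibit the factorisation ${\bf V}_{(3,1,1);4} \equiv (z+x_1)\,G \pmod{{\bf U}_{(3,1,1)}}$ with $G$ a quadric in $y_1,y_2,y_3$ alone, split $Z = P \cup Z'$ with $P \cong \mathbb{P}^2$, and then blow up the vertex line $L_G = V(y_1,y_2,y_3)$ of the cone $V(G)$ to realise $\widetilde{Z}'$ as a conic bundle over the plane conic $C_G \subset \mathbb{P}^2_{y_1,y_2,y_3}$. The paper instead blows up the singular line $L = V(z, x_1, y_1+y_2+y_3)$ of the quadric $Q = V({\bf U}_{(3,1,1)})$ and obtains the proper transform $\widetilde{Z}$ directly as a conic bundle over the conic $V((Z+X_1)Y + ZX_1)$ in $\mathbb{P}^2_{z,x_1,y}$, without needing a Mayer--Vietoris decomposition. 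Both routes finish with Corollary~\ref{cor:mhshyp} applied with $n=1$ and $s_{\mathcal{Q}}=0$. Your approach costs the extra splitting step and the verification that $C_G$ is generically smooth, but it has the advantage of making the reducible structure of $Z$ explicit and of producing a concrete formula for the conic $G$; the paper's approach is slightly more economical since it handles $Z$ in one piece. The two constructions are in a sense dual: your base sits in $\mathbb{P}^2_{y_1,y_2,y_3}$ with fibre coordinates $[\mu:z:x_1]$, while the paper's base sits in the $\mathbb{P}^2$ spanned by $z,x_1,y_1+y_2+y_3$ with fibre coordinates coming from the remaining $y$-directions.
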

\begin{proof}
First, we note that $Z$ contains the line $L = V(z,x_1,y_1+y_2+y_3)$. Let $\Bl_L \mathbb{P}^4$ be the blow up of $\mathbb{P}^4$ in the line $L$. The proper transform of $Y_{(3,1,1)}$ in $\Bl_L\mathbb{P}^4$ is a $\mathbb{P}^1$ bundle over a smooth conic in $\mathbb{P}^2$, which we denote $\widetilde{Y}_{(3,1,1)}$. The proper transform of $X_{(3,1,1);4}$ is a quadric bundle over $\mathbb{P}^2$, therefore the proper transform of $Z$, which we denote $\widetilde{Z}$ is a conic bundle over a smooth conic in $\mathbb{P}^2$. From Corollary~\ref{cor:mhshyp}, it follows that $\widetilde{Z}$ has mixed Tate cohomology. Since the birational transformation relating $Z$ to $\widetilde{Z}$ replaces a line with a pair of lines meeting in a point. Therefore, by Corollary~\ref{c:blowup} the cohomology of $Z$ is mixed Tate.
\end{proof}
As a consequence of Propositions~\ref{p:X},~\ref{p:Y}, and~\ref{p:Z}, along with the Mayer--Vietoris exact sequence (Proposition~\ref{p:mayer-vietoris}), we have the following result.
\begin{proposition}\label{p:observatory}
The cohomology of $Y_{(3,1,1)}\cup X_{(3,1,1);4} = V({\bf U}_{(3,1,1)},{\bf F}_{(3,1,1);4}(t))$ has the following description.
\begin{enumerate}[(1)]
    \item $\Gr^W_3\mathrm{H}^3(Y_{(3,1,1)}\cup X_{(3,1,1);4};\mathbb{Q}) \cong \mathrm{H}^{1}(E;\mathbb{Q})$ for $E$ an elliptic curve depending on mass and kinematic parameters.
    \item $W_2\mathrm{H}^3(Y_{(3,1,1)}\cup X_{(3,1,1);4};\mathbb{Q})$ is mixed Tate.
\end{enumerate}
\end{proposition}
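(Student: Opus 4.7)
The natural tool is Mayer--Vietoris. Write $X_1 = X_{(3,1,1);4}$, $X_2 = Y_{(3,1,1)}$, $W = X_1 \cup X_2$, and $Z = X_1 \cap X_2$, which equals $V({\bf U}_{(3,1,1)},{\bf V}_{(3,1,1);4})$ since ${\bf F}_{(3,1,1);4}(t) \equiv -t\,{\bf V}_{(3,1,1);4} \pmod{{\bf U}_{(3,1,1)}}$. Proposition~\ref{p:mayer-vietoris} produces a long exact sequence of mixed Hodge structures whose key segment is
\[
\mathrm{H}^2(X_1) \oplus \mathrm{H}^2(X_2) \xrightarrow{\alpha} \mathrm{H}^2(Z) \xrightarrow{\delta} \mathrm{H}^3(W) \xrightarrow{\beta} \mathrm{H}^3(X_1) \oplus \mathrm{H}^3(X_2) \xrightarrow{\gamma} \mathrm{H}^3(Z).
\]
The plan is to feed into this sequence the three inputs already collected: Proposition~\ref{p:Y} (together with the fact that a rank-3 quadric in $\mathbb{P}^4$ is a cone over a smooth conic, hence has Tate cohomology), Proposition~\ref{p:Z} ($\mathrm{H}^*(Z;\mathbb{Q})$ mixed Tate), and Proposition~\ref{p:X} ($\Gr^W_3\mathrm{H}^3(X_1;\mathbb{Q}) \cong \mathrm{H}^1(E;\mathbb{Q})$ with the remaining weight-graded pieces of $\mathrm{H}^3(X_1;\mathbb{Q})$ mixed Tate, as extracted from Theorem~\ref{t:mainc=1}).

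For part (1), apply $\Gr^W_3$ to the above sequence, using strictness of morphisms of MHS. Since $\mathrm{H}^2(Z;\mathbb{Q})$ is pure Tate of weight $\leq 2$, one has $\Gr^W_3\mathrm{H}^2(Z;\mathbb{Q}) = 0$; similarly $\Gr^W_3\mathrm{H}^3(Z;\mathbb{Q}) = 0$ and $\Gr^W_3\mathrm{H}^3(X_2;\mathbb{Q}) = 0$ since $Z$ and $X_2$ have mixed Tate cohomology. The graded sequence therefore collapses to give $\Gr^W_3\mathrm{H}^3(W;\mathbb{Q}) \cong \Gr^W_3\mathrm{H}^3(X_1;\mathbb{Q}) \cong \mathrm{H}^1(E;\mathbb{Q})$, as claimed.

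For part (2), extract from the long exact sequence the short exact sequence of MHS
\[
0 \to \mathrm{H}^2(Z)/\mathrm{Im}(\alpha) \to \mathrm{H}^3(W) \to \ker(\gamma) \to 0,
\]
and apply the exact functor $W_2$. The left term $W_2\bigl(\mathrm{H}^2(Z)/\mathrm{Im}(\alpha)\bigr)$ is all of $\mathrm{H}^2(Z)/\mathrm{Im}(\alpha)$, which is mixed Tate as a quotient of a mixed Tate MHS. The right term $W_2\ker(\gamma)$ is a sub-MHS of $W_2\mathrm{H}^3(X_1;\mathbb{Q}) \oplus W_2\mathrm{H}^3(X_2;\mathbb{Q})$, and each summand is mixed Tate: the second by Proposition~\ref{p:Y}, the first because the elliptic contribution to $\mathrm{H}^3(X_1;\mathbb{Q}) \in {\bf MHS}^{\mathrm{ell}}_{\mathbb{Q}}$ is entirely concentrated in weight $3$ (Proposition~\ref{p:X}). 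Since mixed Tate MHS are closed under extensions, subquotients, and direct sums, $W_2\mathrm{H}^3(W;\mathbb{Q})$ is mixed Tate.

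The one point that needs care, and which I view as the main obstacle, is the assertion that the only elliptic contribution to $\mathrm{H}^3(X_1;\mathbb{Q})$ lies in $\Gr^W_3$. Proposition~\ref{p:X} is only stated in the top weight, so one must track through the birational argument of Theorem~\ref{thm:main2} (blow up along the chain linear subspace, pass to the quadric fibration $\widetilde{X}_{(3,1,1);4} \to \mathbb{P}^1$, then apply the ${\bm h}$-transformation) to confirm that lower weight-graded pieces are mixed Tate, using the Leray spectral sequence for the quadric fibration and the fact that the generic fibre is a smooth even-dimensional quadric. Once this is in hand, the Mayer--Vietoris bookkeeping above closes the argument.
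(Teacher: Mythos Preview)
Your proof is correct and follows exactly the approach indicated in the paper, which simply records the proposition as a consequence of Propositions~\ref{p:X}, \ref{p:Y}, \ref{p:Z} together with Mayer--Vietoris; you have merely spelled out the bookkeeping. The concern you flag about $W_2\mathrm{H}^3(X_{(3,1,1);4};\mathbb{Q})$ being mixed Tate is in fact already handled in the proof of Theorem~\ref{t:mainc=1}, which states explicitly that for odd $a$ one has $W_{a-1}\mathrm{H}^a(\widetilde{X}_{(a,1,1);D};\mathbb{Q})$ mixed Tate, so no further tracking through Theorem~\ref{thm:main2} is required.
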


\subsubsection{Picard--Fuchs equations for the observatory graph}

The observatory differential form in four dimensions is defined by
\begin{equation}\label{e:omega311def}
\omega_{(3,1,1);4}(t)={\Omega_0\over {\bf U}_{(3,1,1)}{\bf F}_{(3,1,1);4}(t)} 
\end{equation}
where $\Omega_0$ is the canonical differential form on $\mathbb{P}^4$ and the graph polynomials are defined in~\eqref{eq:UVF311}.
An  application of the algorithm of~\cite{Lairez:2022zkj} gives the differential operator $\mathscr{L}_{(3,1,1);4} \omega_{(3,1,1);4}(t) = d\beta_{(3,1,1)}$  of order 4 that is factorised using the algorithms~\cite{chyzak2022symbolic,goyer2021sage} as
\begin{equation}
\mathscr{L}_{(3,1,1);4}=\mathscr{L}_2\mathscr{L}^a_1\mathscr{L}_1^b
\end{equation}
where the operators $\mathscr{L}^a_1$ and $\mathscr{L}_1^b$ are first order differential operators and $\mathscr{L}_2$ is a second order operator with elliptic solutions.
Various numerical results are given on the worksheet~\href{https://nbviewer.org/github/pierrevanhove/MotivesFeynmanGraphs/blob/main/Observatory.ipynb}{Observatory.ipynb}.

\smallskip
As a consequence of Proposition~\ref{p:observatory}, the homology local system underlying the variation of Hodge structure $\mathcal{H}_{(3,1,1);4}^\vee$ admits a weight filtration $\mathcal{W}_{-4} \subseteq \mathcal{W}_{-3} \subseteq \mathcal{W}_{-2}\subseteq \mathcal{W}_{-1} \subseteq \mathcal{W}_{0}$ where the only nonzero weight-graded pieces are $\Gr^\mathcal{W}_{-4}, \Gr^\mathcal{W}_{-2}$ (which are pure Tate) and $\Gr^\mathcal{W}_{-1} \cong \mathrm{H}^1(E(t);\mathbb{Q})(1)$ for a family of elliptic curves. If one assumes that all integrals 
\begin{equation}
    \int_{\gamma} \omega_{(3,1,1);D}(t),\qquad [\gamma] \in \mathrm{H}_4(\mathbb{P}^4 - X_{(3,1,1);4}(t);\mathbb{Q})
\end{equation}
are nonzero, then $\Sol(\mathscr{L}_{(3,1,1);4})$ is isomorphic to $\mathcal{H}_{(3,1,1);4}^\vee$.  It then follows from Proposition~\ref{p:ab-1} that $\Sol(\mathscr{L})$ factors as 
$
\mathscr{L}_2 \mathscr{L}
$
where $\mathscr{L}_2$ is a Picard--Fuchs operator for some holomorphic family of differential 1-forms on the family of elliptic curves $E(t)$, and $\mathscr{L}$ controls a mixed Tate variation of Hodge structure. This is consistent with the computations described above.

%%%%%%%%%%%%%%%%%%%%%%%%%%%%%%%%%%%%%%%%%%%%%%%%%%%%%%%%%%%%%%%%%%%

\section{The tardigrade graph motive, $(2,2,2)$}\label{sec:tardigrade}

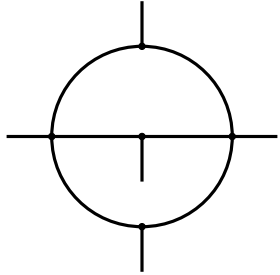
\begin{figure}[h]
\begin{tikzpicture}[scale=0.6]
\filldraw [color = black, fill=none, very thick] (0,0) circle (2cm);
\draw [black,very thick] (-2,0) to (2,0);
\filldraw [black] (2,0) circle (2pt);
\filldraw [black] (0,2) circle (2pt);
\filldraw [black] (0,-2) circle (2pt);
\filldraw [black] (0,0) circle (2pt);
\filldraw [black] (-2,0) circle (2pt);
\draw [black,very thick] (-2,0) to (-3,0);
\draw [black,very thick] (2,0) to (3,0);
\draw [black,very thick] (0,2) to (0,3);
\draw [black,very thick] (0,-2) to (0,-3);
\draw [black,very thick] (0,0) to (0,-1);
\end{tikzpicture}
\caption{The tardigrade graph}\label{fig:tardigrade}
\end{figure}

\subsection{Mixed Hodge structure for $X_{(2,2,2);D}$}
It is a general fact (see, e.g., Section~1.4 of~\cite{huybrechts-cubic} for
a convenient reference) that for a smooth cubic fourfold $X$, the
cohomology $\mathrm{H}^4(X)$ is of K3-type, that is $h^{4,0}(X) = 0$
and $h^{3,1}(X) = 1$ and $h^{2,2}(X) = 21$. In general, there is no
summand of $\mathrm{H}^4(X)$ which is isomorphic to
$\mathrm{H}^2(S)(-1)$ for a K3 surface $S$; however, if $X$ admits an
isolated double point, then such a K3 surface does exist. From this
perspective the following result is not particularly surprising,
however we have seen in previous sections, for instance $(3,1,2)$ and
$(4,1,1)$, that cubic fourfold graph hypersurfaces attached to
two-loop graphs can have singularities which force their cohomology to
be mixed Tate. In light of this, the following result implies that the
cohomology of the tardigrade graph hypersurface is more general than
that of the $(3,1,2)$ or $(4,1,1)$ graph hypersurfaces.
We have that the graph hypersurfaces $X_{(2,2,2);D}$ are presented as
\begin{align}\label{e:tardigrade}
{\bf U}_{(2,2,2)} &=  (x_0 + x_1)(y_0 + y_1) + (x_0 + x_1)(z_0 + z_1) + (y_0 + y_1)(z_0 + z_1) ,\cr
{\bf V}_{(2,2,2);D} &= p^2 x_0x_1(y_0 + y_1 + z_0 + z_1) + q^2 y_0y_1(x_0 + x_1 +
            z_0 + z_1) \cr
  &+ r^2 z _0z_1(x_0 + x_1 + y_0 + y_1) +k^2 x_0y_0z_0
           + (k+r)^2 x_0y_0z_1   \cr&  +(k+q)^2 x_0y_1z_0 + (k+q+r)^2 x_0y_1z_1 
                                                           +(k+p)^2
                                                                     x_1y_0z_0   \cr &  +(k+p+r)^2
                                                                     x_1y_0z_1      +(k+p+q)^2
                                                                    x_1y_1z_0
                                                                     +
                                                                     (k+p+q+r)^2
                                                                     x_1y_1z_1, \cr
 {\bf F}_{(2,2,2);D}(t)&=  {\bf U}_{(2,2,2)}(m_1^2x_0 +m_2^2x_1 +m_3^2 y_0 + m_4^2 y_1 + m_5^2z_0 + m_6^2 z_1)-t {\bf V}_{(2,2,2);D}
\end{align}
where the mass parameters are real non-vanishing positive numbers
$m_i^2\in\mathbb{R}_{>0}$ for $1\leq i\leq 6$, and $p,q,k,r$ are
vectors in $\mathbb{C}^{D}$.

\begin{theorem}\label{t:tardigrade}
Let $X_{(2,2,2);D}$ be the tardigrade hypersurface for generic mass and momentum parameters and $D \geq 2$.  Then there is a quartic K3 surface with six $A_1$ singularities so that $\Gr^W_4\mathrm{H}^{4}(X_{(2,2,2);D};\mathbb{Q})$ is isomorphic to  $\mathrm{H}^2(S;\mathbb{Q})(-1)$ up to mixed Tate factors. 
\end{theorem}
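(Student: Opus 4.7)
The proof will follow the quadric-fibration strategy of Theorems~\ref{thm:212} and~\ref{t:mainc=1}. The graph $(2,2,2)$ admits a chain of two bivalent edges (three such chains, in fact, by symmetry); fixing the $y$-chain and letting $L = V(x_0, x_1, z_0, z_1) \subseteq \mathbb{P}^5$, Lemma~\ref{lemma:quadfib} provides a conic fibration $\pi : \widetilde{X}_{(2,2,2);D} \to \mathbb{P}^3$ on the blow up $\widetilde{X}_{(2,2,2);D} := \Bl_L X_{(2,2,2);D}$, and the exceptional-divisor analysis of Lemma~\ref{l:blup} ensures that $\mathrm{H}^4(X_{(2,2,2);D}; \mathbb{Q})$ and $\mathrm{H}^4(\widetilde{X}_{(2,2,2);D}; \mathbb{Q})$ agree up to mixed Tate factors.

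Writing out the $3 \times 3$ symmetric matrix $M$ of the fibration from~\eqref{e:tardigrade}, its upper $2 \times 2$ block factors as $\sigma$ times a matrix constant in $(x_0, x_1, z_0, z_1)$, where $\sigma := x_0 + x_1 + z_0 + z_1$; consequently $\det M = \sigma \cdot G$ for a quartic $G \in \mathbb{C}[x_0, x_1, z_0, z_1]$. The linear factor $\sigma$ reflects a spurious degeneration over $V(\sigma)$, where the conic splits off the canonical line $w = 0$. Following the template of the map $\bm{h}$ of~\eqref{a:h}, I would clear this factor by the birational modification $w \mapsto \sigma v$, producing a conic fibration $\overline{\pi} : \overline{X}_{(2,2,2);D} \to \mathbb{P}^3$ whose discriminant is exactly the quartic $S := V(G) \subseteq \mathbb{P}^3$; the strata added and removed are quadric bundles over lower-dimensional bases, hence have mixed Tate cohomology by Corollary~\ref{cor:mhshyp}. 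For generic mass and kinematic parameters $S$ is irreducible with exactly six $A_1$ singularities, located where the rank of $M$ drops from $2$ to $1$; the count of six follows from a Chern-class computation for the rank-one locus of a symmetric $3 \times 3$ matrix over $\mathbb{P}^3$, or by direct computer algebra. Thus $S$ is a (singular) quartic K3 surface with six $A_1$ singularities.

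Applying Proposition~\ref{prop:quadfib} to $\overline{\pi}$ with $n=1$, $s=0$, the only non-Tate contribution comes from $R^2 \overline{\pi}_* \underline{\mathbb{Q}} \cong \underline{\mathbb{Q}}_{\mathbb{P}^3} \oplus j_* \mathbb{L}$, where $\mathbb{L}$ is the rank-$1$ local system on the smooth part of $S$ coming from the double cover $\widetilde{S} \to S$ parametrizing the two rulings of each degenerate conic. The Leray spectral sequence then yields, up to mixed Tate factors,
\[
\Gr^W_4 \mathrm{H}^4(\overline{X}_{(2,2,2);D}; \mathbb{Q}) \cong \mathrm{H}^2(S, \mathbb{L})(-1),
\]
the Tate twist being inherited from the natural Hodge structure on the relative $\mathrm{H}^2$ of the conic fibres. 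Identifying $\mathrm{H}^2(S, \mathbb{L})$ with a direct summand of $\mathrm{H}^2(\widetilde{S}; \mathbb{Q})$ and checking that this summand agrees with $\mathrm{H}^2(S; \mathbb{Q})$ modulo Tate classes yields the desired isomorphism.

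The main technical obstacle is understanding the discriminant double cover $\widetilde{S} \to S$: one must either show it is trivial over the smooth part of $S$ (so $\mathbb{L}$ is constant and $\mathrm{H}^2(S, \mathbb{L}) = \mathrm{H}^2(S; \mathbb{Q})$), or else identify the relevant summand of $\mathrm{H}^2(\widetilde{S}; \mathbb{Q})$ with $\mathrm{H}^2(S; \mathbb{Q})$ up to Tate factors. The canonical ruling $w = 0$ of the conic over $V(\sigma)$ in the original fibration suggests that the monodromy of the cover is concentrated over $V(G) = S$, making the analysis amenable to a local computation along $S$.
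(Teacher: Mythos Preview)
Your setup through the birational move $\bm{h}$ matches the paper exactly (up to the inessential choice of which chain to fibre). The divergence comes in the final step, and it is precisely there that your flagged obstacle dissolves.

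After $\bm{h}$, the upper-left $2\times 2$ block of the Gram matrix is the \emph{constant} matrix $K$, the factor $\sigma$ having been cleared. A constant linear change in the fibre coordinates diagonalizes $K$, and completing the square in $v$ puts $\overline{X}_{(2,2,2);D}$ globally in the form $V(x_0x_1 + {\bf G}_{(2,2,2)}\,v^2)$. The paper then bypasses the Leray spectral sequence and instead runs the algebraic reduction from the proof of Theorem~\ref{thm:212}: the complement of $V(x_1)$ in $\overline{X}$ projects isomorphically onto an open toric piece (mixed Tate), the hyperplane section $V(x_1)\cap\overline{X}$ is stratified by $Y=V(x_0,x_1)\cap\overline{X}$ and an $\mathbb{A}^1$-bundle over $Y$, and $Y$ is literally the quartic $S=V({\bf G}_{(2,2,2)})\subseteq\mathbb{P}^3$. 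This yields $\Gr^W_4\mathrm{H}^4(\overline{X};\mathbb{Q})\cong\mathrm{H}^2(S;\mathbb{Q})(-1)$ directly, with no local system to identify. Your Leray route would also close once you have the diagonal form: over $S$ the degenerate conic is $V(x_0x_1)$, whose components $V(x_0)$ and $V(x_1)$ are globally distinguished, so the cover $\widetilde{S}\to S$ is trivial and $\mathbb{L}=\underline{\mathbb{Q}}_S$. The ingredient missing from your sketch is exactly this diagonalization.

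One statement needs correction. You locate the six $A_1$ singularities of $S$ at the rank-one locus of the Gram matrix and propose a Chern-class count. But after $\bm{h}$ and diagonalization the Gram matrix is $\mathrm{diag}(1,1,{\bf G}_{(2,2,2)})$, which has rank $\geq 2$ everywhere; the nodes of $S$ are intrinsic to the quartic and are not rank-one fibres of $\overline{\pi}$, so the degeneracy-locus heuristic does not apply. The paper instead exhibits the six nodes explicitly---they sit in three pairs on the lines $V(y_0,y_1)$, $V(z_0,z_1)$, and $V(y_0+y_1,\,z_0+z_1)$, each pair cut out by a sunset-type quadric---and verifies the $A_1$ type by a direct local computation.
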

\begin{proof}

Using Lemma~\ref{lemma:quadfib}, we blow up the locus $y_0 = y_1 = z_0
= z_1$ to obtain a conic fibration on $\widetilde{X}_{(2,2,2);D} = \mathrm{Bl}_{L}X_{(2,2,2);D}$ over
$\mathbb{P}^3$ whose base variables are $y_0,y_1,z_0,z_1$ and whose
fibre variables are $x_0, x_1,w$. The critical locus of this fibration is 
\begin{equation}
(y_0 + y_1 + z_0 + z_1) {\bf G}_{(2,2,2)} = 0,
\end{equation}
where ${\bf G}_{(2,2,2)}$ is a homogeneous quartic in
$y_0,y_1,z_0,z_1$. First we show that the vanishing locus of ${\bf G}_{(2,2,2)}$ is a generically an ADE singular K3 surface $S$ with six $A_1$ singularities. ${\bf G}_{(2,2,2)}$ is the determinant of a matrix that we write as follows. Let 
\begin{equation}
K = \left(\begin{matrix} 2m_1^2  &  m_1^2 +m_2^2 -t p^2 \\ m_1^2 + m_2^2 -t p^2 &2m_2^2 \end{matrix}\right).
\end{equation}
Let
\begin{equation}
L =-t r^2 (y_0+y_1)z_0z_1-t q^2 (z_0 + z_1) y_0 y_1+(y_0+y_1)(z_0+z_1)(m_3^2y_0+m_4^2y_1+m_5^2z_0+m_6^2z_1)
\end{equation}
and let 
\begin{align}
R_1 &= (y_0 + y_1 + z_0 + z_1)(m_3^2y_0 + m_4^2y_1 + m_5^2z_0 + m_6^2
      z_1)+ m_1^2(y_0 + y_1)(z_0 + z_1)  \\ & -t q^2  y_0y_1  -t k^2
                                               y_0z_0 -t (k+r)^2y_0z_1
                                               -t(k+q)^2  y_1z_0  -t
                                               (k+q+r)^2y_1z_1-t r^2 z_0z_1,\cr
R_2 &= (y_0 + y_1 + z_0 + z_1)(m_3^2y_0 + m_4^2y_1 + m_5^2z_0 + m_6^2
      z_1)+ m_2^2(y_0 + y_1)(z_0 + z_1)  \cr
\nonumber    & -t q^2  y_0y_1  -t (k+p)^2
                                               y_0z_0 -t (k+r+p)^2y_0z_1
                                               -t(k+p+q)^2  y_1z_0  -t
                                               (k+p+q+r)^2y_1z_1-t r^2 z_0z_1.
\end{align}
Then 
\begin{equation}
{\bf G}_{(2,2,2)} = \det(K)L(y_0 + y_1 + z_0 + z_1) +m_2^2 R_1^2 - (m_1^2 + m_2^2 -t p^2)R_1R_2 + m_1^2 R_2^2.
\end{equation}
The vanishing locus of ${\bf G}_{(2,2,2)}$ is generically only singular at the points 
\begin{align}
  y_0=y_1&=0,\qquad   (z_0 + z_1) (m_5^2 z_0 + m_6^2 z_1) - t z_0 z_1 r^2=0,\cr
  z_0=z_1&=0,\qquad   (y_0 + y_1) (m_3^2 y_0 + m_4^2 y_1) - t y_0 y_1 q^2=0,\\
\nonumber  y_0+y_1&=0,\qquad  z_0+z_1=0,\qquad  (q y_0 -  r z_0)^2=0.
\end{align}  
Now we check that these singularities are of $A_1$-type. We explain
the argument for the first pair of singularities. The other four
singularities are similar.  We observe that ${\bf G}$ can be written as 
\begin{equation}
\label{eq:tardigradeG222}
{\bf G}_{(2,2,2)}= {\bf H}(y_0,y_1,z_0,z_1) +  {\bf S} (y_1{\bf L}_1(z_0,z_1)
 + y_2 {\bf L}_2(z_0,z_1)) + {\bf S}^2,
\end{equation}
where 
\begin{equation}
{\bf S} = (z_0 + z_1) (m_5^2 z_0 + m_6^2 z_1) - t z_0 z_1 r^2,
\end{equation}
and ${\bf H}(y_0,y_1,z_0,z_1)$ has no linear or constant terms when viewed as a polynomial in $y_0,y_1$. Look at the chart where $z_0 = 1$. In this chart, we may view ${\bf G}_{(2,2,2)}$ as a homogeneous quadratic form in $y_0,y_1, {\bf S}$ and with coefficients which are functions of $y_0,y_1,z_1$. One can check that at the points ${\bf S}=0$ this quadratic form has rank 2. Therefore, the singularities at these points are of type $A_1$.

The remainder of the argument is similar to the arguments in, e.g., Theorem~\ref{thm:212} or Proposition~\ref{p:house}. We first note that the blow up ${\bm f} : \widetilde{X}_{(2,2,2);D}\rightarrow X_{(2,2,2);D}$ replaces a linear subspace with a union of a copy of $\mathbb{P}^2 \times \mathbb{P}^1$ and $\mathbb{P}^3 \times \{\mathrm{pt}_1, \mathrm{pt}_2\}$. Here $\mathrm{pt}_1, \mathrm{pt}_2$ denote a pair of points. An application of the Mayer--Vietoris long exact sequence (Proposition~\ref{p:mayer-vietoris}) and Corollary~\ref{c:blowup} show that the cohomology of $\widetilde{X}_{(2,2,2);D}$ and $X_{(2,2,2);D}$ agree up to mixed Tate factors, and in particular that $\Gr^4_W\mathrm{H}^4(X_{(2,2,2);D};\mathbb{Q})$ and $\Gr^4_W\mathrm{H}^4(\widetilde{X}_{(2,2,2);D};\mathbb{Q})$ agree up to pure Tate factors.

We then apply a birational transformation similar to that of~\eqref{a:h},
\begin{equation}
{\bm h}: \mathbb{P}(\mathcal{O}^2_{\mathbb{P}^3} \oplus \mathcal{O}_{\mathbb{P}^3}(-2))\longrightarrow \mathbb{P}(\mathcal{O}^2_{\mathbb{P}^3} \oplus \mathcal{O}_{\mathbb{P}^3}(-1)) ,
\end{equation}
defined by 
\begin{align}\label{a:h2}
{\bm h} : (x_1,x_2,y_1,y_2,z_1,z_2,v) \longmapsto &(x_1,x_2,y_1,y_2,z_1,z_2, v(y_1 + y_2 + z_1 + z_2)) \cr & = (x_1,x_2,y_1,y_2,z_1,z_2,w)
\end{align}
which is an isomorphism away from the divisor  $D_{\bm h} = \pi^{-1}(V(y_1+y_2 + z_1 + z_2))$. Let 
\[
\overline{X}_{(2,2,2);D} = \overline{{\bm h}^{-1}(\widetilde{X}_{(2,2,2);D} - D_{\bm h})},\,\,
D'_{\bm h} = \overline{X}_{(2,2,2);D} - {\bm
  h}^{-1}(\widetilde{X}_{(2,2,2);D} - D_{\bm h}). 
\]
By direct computation, we see the following.
\begin{enumerate}[(1)]
    \item $D_{\bm h}$ is a union of two $\mathbb{P}^1$ bundles over $\mathbb{P}^2$ meeting along a copy of $\mathbb{P}^2$. Therefore its cohomology is mixed Tate. 
    \item $D_{\bm h}'$ admits a quadric fibration over $\mathbb{P}^2$ of generic corank 0. Therefore, by Corollary~\ref{cor:mhshyp}, $\Gr^W_j\mathrm{H}^k(D_{\bm h}';\mathbb{Q})$ is pure Tate except if $j = k = 3$. 
\end{enumerate}
By two applications of Lemma~\ref{l:compcoh} we see that $\Gr^W_4\mathrm{H}^4(\overline{X}_{(2,2,2);D};\mathbb{Q})$ is isomorphic to $\Gr^W_4\mathrm{H}^4(\widetilde{X}_{(2,2,2);D};\mathbb{Q})$ up to mixed Tate factors. So it remains to compute the cohomology of $\Gr^W_4\mathrm{H}^4(\overline{X}_{(2,2,2);D};\mathbb{Q})$. Direct computation shows that, after change of variables like those of Theorem~\ref{thm:212} and Proposition~\ref{p:house},
\begin{equation}
\overline{X}_{(2,2,2);D} = V(x_1x_2 + {\bf G}_{(2,2,2)}v^2),
\end{equation}
in the toric homogeneous coordinates on $\mathbb{P}(\mathcal{O}^{2}_{\mathbb{P}^3} \oplus \mathcal{O}_{\mathbb{P}^3}(-2))$. Applying the same argument as in Theorem~\ref{thm:212}, we see that $\Gr^W_4\mathrm{H}^4(\overline{X}_{(2,2,2);D};\mathbb{Q})$ is isomorphic to $\mathrm{H}^2(S;\mathbb{Q})(-1)$ up to pure Tate factors.

\end{proof}
\begin{remark}
    The singularities of a generic tardigrade hypersurface are isolated and of $A_1$ type. Therefore $X_{(2,2,2);D}$ is generically an orbifold and the Hodge structure on its cohomology is pure.  
\end{remark}
%---------------------------------------------------------------------
\subsection{ Picard--Fuchs operators}

Considering the differential form
\begin{equation}
  \label{e:Omega222}
  \omega_{(2,2,2);4}(t)={\Omega_0\over {\bf F}_{(2,2,2);4}(t)^2} ,
\end{equation}
with $\Omega$ the canonical differential form in $\mathbb{P}^5$ and
the graph polynomial
\begin{equation}
  {\bf F}_{(2,2,2);4}(t)= {\bf U}_{2,2,2}(m_1^2x_0 +m_2^2x_1 +m_3^2 y_0 + m_4^2 y_1 + m_5^2z_0 + m_6^2 z_1) -t {\bf V}_{2,2,2}  ,
\end{equation}
defined on $\mathbb{P}^5- V( {\bf F}_{(2,2,2);4}(t))$,  
the application of the extended Griffiths--Dwork algorithm  on the
differential form $\omega_{(2,2,2);4}(t)$ leads to the 
Picard--Fuchs differential operator $\mathscr{L}_{(2,2,2);4}$ given in Section~6.2 of~\cite{Lairez:2022zkj}.

Performing a linear change of variables from $(x_0,x_1)$ to
$(\xi_0,\xi_1)$ so that
\begin{equation}\label{e:x0x1xi0xi1}
  m_1^2 x_0^2+m_2^2x_1^2+(m_1^2+m_2^2-t p^2)x_0x_1= \xi_0\xi_1   
\end{equation}
the graph polynomial ${\bf F}_{(2,2,2);4}(t)$ takes the form 
\begin{equation}
    {\bf F}_{(2,2,2);4}(t)= \alpha_{01}\xi_0\xi_1+\alpha_0\xi_0+\alpha_1\xi_1+\alpha.
  \end{equation}
  The coefficient $\alpha_{01}$ is  homogeneous of degree 1,    $\alpha_0$  and $\alpha_1$ are homogeneous of
  degree 2 and
  $\alpha$ is homogeneous of degree 3 as polynomials in the variables
  $(y_1,y_2,y_3,z)$.
Integrating the variables $\xi_0$ and $\xi_1$ leads to the  differential
form in $\mathbb{P}^3$
\begin{equation}\label{e:omega222t}
\tilde \omega_{(2,2,2);4}(t)=f(\alpha_{01},\alpha_0,\alpha_1,\alpha)
{\widehat \Omega\over  {\bf
    G}_{(2,2,2)}(t)},
\end{equation}
with the quartic in $\mathbb{P}^3$ 
\begin{equation}
  {\bf G}_{(2,2,2)}(t)=  \alpha\alpha_{12}-\alpha_0\alpha_1.
\end{equation}
This quartic is the same as the one constructed in the proof of
Theorem~\ref{t:tardigrade} applied to ${\bf F}_{(2,2,2);4}(t)$
in~\eqref{e:tardigrade} after the re-scaling
$(p,q,k,r)\to\sqrt{t}(p,q,k,r)$. 
The differential form in~\eqref{e:omega222t} is not a rational
differential form because the function
$f(\alpha_0,\alpha_1,\alpha_{01},\alpha)$ contains logarithms and
square roots. In
order to apply the (extended) Griffiths--Dwork reduction we consider the rational differential form on $\mathbb{P}^3 - V({\bf G}_{(2,2,2)}(t))$
\begin{equation}
  \widehat\omega_{(2,2,2)}(t)=  {\widehat\Omega\over {\bf G}_{(2,2,2)}(t)}, 
\end{equation}
to obtain a Picard--Fuchs operator $\widehat{\mathscr{L}}_{(2,2,2)}$.
On all numerical examples analyzed in~\cite{Lairez:2022zkj}, the
Picard--Fuchs operators $\mathscr{L}_{(2,2,2);4}$ and
$\widehat{\mathscr{L}}_{(2,2,2)}$ have the same order,  and the same
non-apparent regular singularities.  But the differential operators are
not the same because the differentials are not the same; nevertheless, this shows that the 
singular locus is defined by the same K3 surface.
 In
Appendix~\ref{appendix:Eric} the Picard 
rank of the K3 surface is shown to be 11 and its Néron-Severi lattice is determined.

For the numerical cases studied in~\cite{Lairez:2022zkj} and reported on~\href{https://nbviewer.org/github/pierrevanhove/PicardFuchs/blob/main/PF-Tardigrade.ipynb}{PF-Tardigrade.ipynb} we have checked, on the worksheet~\href{https://nbviewer.org/github/pierrevanhove/MotivesFeynmanGraphs/blob/main/Tardigrade.ipynb}{Tardigrade.ipynb}, that the Picard--Fuchs operator $\mathscr{L}_{(2,2,2);4}$ acting on the Feynman integral differential form $\omega_{(2,2,2);4}(t)$ in~\eqref{e:Omega222} and the Picard--Fuchs operator $\widehat{\mathscr{L}}_{(2,2,2)}$ have the same normal form\footnote{
Consider the order 11 differential operator 
$
    L=\sum_{i=0}^{11} q_i(t) \left(d\over dt\right)^i
$
and perform the change of variables $f(z)\to f(z)\exp\left(-{1\over 11}\int {q_{10}(t)\over q_{11}(t)}dt\right)$, the resulting differential operator is the so-called projective normal form is given by 
$
    \tilde L=\sum_{i=0}^{9}\tilde q_i(z)\left(d\over dz\right)^i+ \left(d\over dz\right)^{11}.
$}. This implies that the two operators are related by the scaling factor 
\begin{equation}
    \lambda(t)=\exp\left(-{1\over11}\int \left({\hat q_{10}(t)\over\hat q_{11}(t)}-{q_{10}(t)\over q_{11}(z)}\right)dt\right)= \sqrt{(p^2t-(m_1+m_2)^2)(p^2t-(m_1-m_2)^2)}
\end{equation}
so that 
\begin{equation}\label{e:dif-id-tardigrade}
   {\mathscr{L}}_{(2,2,2)}= \widehat{\mathscr{L}}_{(2,2,2);4}\times \sqrt{(p^2t-(m_1+m_2)^2)(p^2t-(m_1-m_2)^2)}.
\end{equation} 
The polynomial under the square is the discriminant of~\eqref{e:x0x1xi0xi1}.

We may interpret this computation cohomologically in the following way. For the sake of simplicity we work on the affine chart $z_1 = 1$. The same computations can be done in projective coordinates at the expense of using more complicated homogeneous differential forms as in ~\cite{griffiths1969periods}. The change of of variables from $x_0,x_1$ to $\xi_0,\xi_1$ in~\eqref{e:x0x1xi0xi1} has the effect of taking
\begin{equation}
\omega_{(2,2,2);4} \mapsto \dfrac{1}{\lambda(t)(\alpha_{01}\xi_0\xi_1+\alpha_0\xi_0+\alpha_1\xi_1+\alpha)} d\xi_0 \wedge d\xi_1 \wedge dy_0 \wedge dy_1 \wedge dz_0.
\end{equation}
Here, as above, we let $\lambda(t)$ denote $\sqrt{(p^2t-(m_1+m_2)^2)(p^2t-(m_1-m_2)^2)}.$ One sees easily that the form $\omega_{(2,2,2);4}$ is exact when restricted to the complement of the vanishing locus of the quadric $\alpha_{01} \xi_1 +\alpha_0$:
\begin{equation}
d\left(- \dfrac{1}{\lambda(t)(\alpha_{01} \xi_1 +\alpha_0)( \alpha_{01}\xi_0\xi_1+\alpha_0\xi_0+\alpha_1\xi_1+\alpha) } \right) d\xi_1\wedge dy_0\wedge dy_1 \wedge dz_0 = \omega_{(2,2,2);4}.
\end{equation}
In other words, when restricted to the subset of $\mathbb{P}^5 - X_{(2,2,2);4}$ on which $\alpha_{01}\xi_1 + \alpha_0$ vanishes, the form $\omega_{(2,2,2);4}$ is trivial. Let $P$ denote this variety. We have the residue long exact sequence 
\begin{equation}
\cdots \rightarrow \mathrm{H}^3_{\mathrm{dR}}(P;\mathbb{C}) \rightarrow \mathrm{H}^5_{\mathrm{dR}}(\mathbb{P}^5 - X_{(2,2,2);4}) \rightarrow \mathrm{H}^5_{\mathrm{dR}}(\mathbb{P}^5 - (X_{(2,2,2);4} + P)) \rightarrow \cdots 
\end{equation}
Knowing that $[\omega_{(2,2,2);4}(t)|_P] = 0$ means that there is a lift of $\omega_{(2,2,2);4}(t)$ to $\Omega^3_P$. This lift is obtained by taking the residue of $\beta$ along $P$. Note that $P$ is contained in the singular quadric $V(\alpha_{01}\xi_1 + \alpha_0)$. Since $P$ is the complement of a cone, the restriction to the hyperplane section $P' = P \cap V(\xi_0)$ induces an isomorphism between $\mathrm{H}^3_\mathrm{dR}(P)\rightarrow \mathrm{H}^3_{\mathrm{dR}}(P')$. In other words, we have a pair of maps
\begin{equation}
\mathrm{H}^3_{\mathrm{dR}}(P') \xleftarrow{\,\,\sim\,\,} \mathrm{H}_{\mathrm{dR}}^3(P) \xrightarrow{\mathrm{Gys}} \mathrm{H}^5_\mathrm{dR}(\mathbb{P}^5 - X_{(2,2,2);4}).
\end{equation}
and a form $\mathrm{res}(\beta)$ in $\mathrm{H}^3_\mathrm{dR}(P)$ mapping to $\omega_{(2,2,2);4}$ under the Gysin homomorphism. Since $P'$ is the complement of a hypersurface in a quadric hypersurfaces we may represent $\mathrm{res}(\beta)|_{P'}$ as a rational differential form on $\mathbb{A}^3$. To do this, take the birational change of variables,
\begin{equation}
\phi:  \mathbb{A}^5 \dashrightarrow \mathbb{A}^5,\quad (\xi_0,\xi_1,y_0,y_1,x_1) = \left(\xi_0, \dfrac{\eta - \alpha_0 }{\alpha_{01}}, y_0,y_1,x_1\right)
\end{equation}
under which $\beta$ becomes
\begin{equation}
\phi^* \beta = -\dfrac{1}{\lambda(t)\eta(\alpha_{01}\eta \xi_0 + \alpha_1 \eta + (\alpha \alpha_{01} - \alpha_0 \alpha_1))} d\eta\wedge dy_0 \wedge dy_1 \wedge dz_0.
\end{equation}
The proper transform of $P$ is $V(\eta)$ so we may compute the residue on this subvariety to be
\begin{equation}
-\dfrac{1}{\lambda(t)(\alpha \alpha_{01} - \alpha_0 \alpha_1)} dy_0 \wedge dy_1 \wedge dz_1.
\end{equation}
This form is constant in the $\xi_0$ direction, so restriction to the hyperplane $\xi_1=0$ is represented by the same equation. Note that this form is precisely the restriction of 
\begin{equation}
-\dfrac{1}{\lambda(t){\bf G}_{(2,2,2)}(t)} \widehat{\Omega}
\end{equation}
to the chart $z_1$. The identification of differential operators in~\eqref{e:dif-id-tardigrade} is a consequence of this.

%%%%%%%%%%%%%%%%%%%%%%%%%%%%%%%%%%%%%%%%%%%%%%%%%%%%%%%%%%%%%%%
\section{ The $n$-scoop ice cream cone graph motive, $(2,1,\dots, 1)$}\label{sec:multiscoop}

\begin{figure}[h]
\begin{tikzpicture}[scale=0.6]
\filldraw [color = black, fill=none, very thick] (0,0) circle (2cm);
\draw [black,very thick] (-2,0) to (2,0);
\filldraw [black] (2,0) circle (2pt);
\filldraw [black] (0,-2) circle (2pt);
\filldraw [black] (-2,0) circle (2pt);
\draw [black,very thick] (-2,0) to (-3,0);
\draw [black,very thick] (2,0) to (3,0);
\draw [black,very thick] (0,-2) to (0,-3);
\draw[very thick] (0,0) [partial ellipse=0:180:2cm and .5cm];
\draw[very thick] (0,0) [partial ellipse=0:180:2cm and 1cm];
\draw[very thick] (0,0) [partial ellipse=0:180:2cm and 1.5cm];
\node [below=2cm, align=flush center,text width=4cm] at (0,0)
        {
            $(A)$
        };
\end{tikzpicture}
\begin{tikzpicture}[scale=0.6]
\filldraw [color = black, fill=none, very thick] (0,0) circle (2cm);
\filldraw [black] (2,0) circle (2pt);
\filldraw [black] (-2,0) circle (2pt);
\draw [black,very thick] (-2,0) to (-3,0);
\draw [black,very thick] (2,0) to (3,0);
\draw[very thick] (0,0) [partial ellipse=0:360:2cm and .5cm];
\draw[very thick] (0,0) [partial ellipse=0:360:2cm and 1cm];
\draw[very thick] (0,0) [partial ellipse=0:360:2cm and 1.5cm];
\node [below=2cm, align=flush center,text width=4cm] at (0,0)
        {
            $(B)$
        };
\end{tikzpicture}
\caption{(A) The multi-loop ice cream cone graphs, (B) The multi-loop sunset graph.}\label{fig:icecreammultiscoop}
\end{figure}
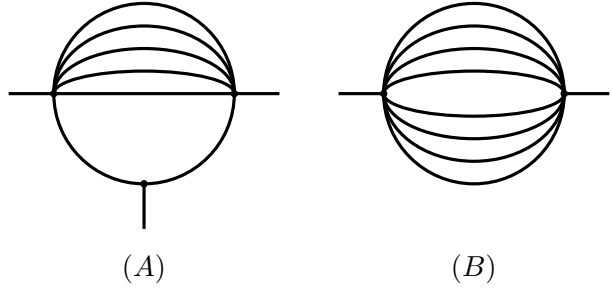

In this section we generalise the construction of the two-loop
ice cream cone in Section~\ref{sec:icecream} to the multi-loop ice cream
graph in Figure~\ref{fig:icecreammultiscoop}(A). We recall that the $n$-loop sunset graph in
Figure~\ref{fig:icecreammultiscoop}(B) has graph polynomials
\begin{equation}
{\bf U}_{[1]^n} = \prod_{i=1}^n x_i\left(\sum_{j=1}^n
  \dfrac{1}{x_j}\right),\qquad {\bf V}_{[1]^n} = \prod_{i=1}^n
x_i,\qquad {\bf L}_{[1]^n} = \sum_{i=1}^n m_{2+i}^2 x_i
\end{equation}
and 
\begin{equation}
{\bf F}_{[1]^n} = {\bf U}_{[1]^n}{\bf L}_{[1]^n} + q^2 {\bf V}_{[1]^n},
\end{equation}
where the mass parameters are real non-vanishing positive numbers
$m_i\in\mathbb{R}_{>0}$ and $q$ is a vector in $\mathbb{C}^{D}$.

For any $(2,[1]^n )$  graph with $n$-loops as in Figure~\ref{fig:icecream}, the
associated graph polynomials ${\bf U}_{2,[1]^{n}}, {\bf V}_{2,[1]^{n}}$, and
${\bf F}_{2,[1]^{n}}$ can be expressed using the Symanzik polynomials of the sunset  graph
\begin{align}
    {\bf U}_{(2,[1]^{n})} & = (y_1 + y_2){\bf U}_{[1]^{n}} + {\bf V}_{[1]^{n}},\cr
    {\bf V}_{(2,[1]^{n});D} & = p_2^2 y_1y_2 {\bf U}_{[1]^{n}} + (p_1^2y_1 + p_3^2y_2)  {\bf V}_{[1]^{n}}, \\
\nonumber    {\bf F}_{(2,[1]^{n});D} (t)& = {\bf U}_{(2,[1]^n)}\left( m_1^2y_1 + m_2^2y_2+{\bf L}_{[1]^n} \right)-t {\bf V}_{(2,[1]^n);D}.
\end{align}
with $p_3=-p_1-p_2$  and $p_1$ and $p_2$  are vector in $\mathbb{C}^D$ associated to
the left and bottom vertices of the graph $(2,[1]^n)$.  In $D=2$
dimensions the multi-scoop ice cream cone rational differential form reads
\begin{equation}
  \label{e:w21n}
  \omega_{(2,[1]^n);2}(t)= {{\bf U}_{(2,[1]^n)}^{n-1}\over ({\bf
      F}_{(2,[1]^n);2}(t))^{n}}\Omega_0
\end{equation}
where $\Omega_0$ is the canonical differential form on
$\mathbb{P}^{n+1}$ with coordinate $[y_1,y_2,x_1,\dots,x_n]$.
Blowing up the linear subspace $L = Z(x_0,\dots, x_n)$ we obtain a hypersurface in a $\mathbb{P}^2$ bundle over $\mathbb{P}^n$ written in homogeneous coordinates as 
\begin{align}
    {\bf U}'_{(2,[1]^n)} & = (y_1 + y_2){\bf U}_{[1]^n} + w{\bf V}_{[1]^n}, \cr
    {\bf V}'_{(2,[1]^n);D} & = p_2^2 y_1y_2{\bf U}_{[1]^n}  + w(p_1^2y_1 + p_3^2y_2) {\bf V}_{[1]^n}, \\
  \nonumber  {\bf F}'_{(2,[1]^n):D} (t)& = {\bf U}'_{(2,[1]^n)}(w {\bf L}_{[1]^n}  + m_1^2y_1 + m_2^2y_2) -t {\bf V}'_{(2,[1]^n);D}.
\end{align}
Therefore we can collect coefficients of this quadratic form in $y_1,y_2,w$ into the following symmetric matrix
\begin{equation} 
\left(\begin{matrix}2m_1^2{\bf U}_{[1]^n} & (m_1^2 +m_2^2 -t p_2^2){\bf
      U}_{[1]^n} & {\bf U}_{[1]^n} {\bf L}_{[1]^n} + (m_1^2-t p_1^2 ) {\bf V}_{[1]^n} \\ (m_1^2 +m_2^2 - tp_2^2){\bf U}_{[1]^n} &  2m_2^2 {\bf U}_{[1]^n} &{\bf U}_{[1]^n} {\bf L}_{[1]^n}+ (m_2^2-tp_3^2 ) {\bf V}_{[1]^n}  \\
{\bf U}_{[1]^n} {\bf L}_{[1]^n} + ( m_1^2-tp_1^2 ) {\bf V}_{[1]^n} & {\bf
  U}_{[1]^n} {\bf L}_{[1]^n} + ( m_2^2-t p_3^2 ) {\bf V}_{[1]^n} & 2{\bf
  V}_{[1]^n} {\bf L}_{[1]^n}\end{matrix}\right).
\end{equation}
For generic choices of $p_1^2,p_2^2$ this matrix is nondegenerate. The determinant of this matrix is the product of ${\bf U}_{[1]^n}$ and a homogeneous quadric in the terms ${\bf U}_{[1]^n} {\bf L}_{[1]^n}$ and ${\bf V}_{[1]^n}$. A direct computation allows us to check the following result. 
\begin{proposition}\label{prop:spliticecream}
For a generic choice of $p_1,p_2, m_1,m_2$, the discriminant locus of the quadric fibration above is a union of two (distinct) sunset Calabi--Yau $(n-1)$-folds and the vanishing locus of ${\bf U}_{[1]^n}$.
\end{proposition}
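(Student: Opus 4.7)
The plan is to compute the determinant of the symmetric $3 \times 3$ matrix displayed before the proposition and identify its irreducible factors. Expanding $\det M$ along the third row, the two cofactors coming from the entries of position $(3,1)$ and $(3,2)$ each involve a single factor of ${\bf U}_{[1]^n}$ drawn from the first two rows of the $2\times 2$ minors, while the cofactor from position $(3,3)$ yields a factor of ${\bf U}_{[1]^n}^2$ coming from the full upper-left $2\times 2$ block. Setting $A = {\bf U}_{[1]^n} {\bf L}_{[1]^n}$, $B = {\bf V}_{[1]^n}$, and $\gamma = m_1^2 + m_2^2 - tp_2^2$, $\alpha = m_1^2 - tp_1^2$, $\beta = m_2^2 - tp_3^2$, a direct expansion should yield
\[
\det M \;=\; -2\,{\bf U}_{[1]^n} \,\Bigl[\,m_2^2 A^2 + m_1^2 B'^{\,2} - \gamma AB' - \Delta_0 \,{\bf U}_{[1]^n}{\bf V}_{[1]^n}{\bf L}_{[1]^n}\Bigr],
\]
where $B' = A + (\beta - \alpha) B$ or an analogous shift, and $\Delta_0 = 4m_1^2 m_2^2 - \gamma^2$. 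Rearranging the bracketed expression as a homogeneous quadratic form $Q(A,B) = c_{20} A^2 + c_{11} AB + c_{02} B^2$ in $A$ and $B$ produces the factorisation
\[
\det M \;=\; c_{20}\, {\bf U}_{[1]^n}\bigl({\bf U}_{[1]^n}{\bf L}_{[1]^n} - \lambda_1 {\bf V}_{[1]^n}\bigr)\bigl({\bf U}_{[1]^n}{\bf L}_{[1]^n} - \lambda_2 {\bf V}_{[1]^n}\bigr),
\]
where $\lambda_1, \lambda_2$ are the roots of $c_{20} x^2 - c_{11} x + c_{02} = 0$.

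Next, each factor $ {\bf U}_{[1]^n}{\bf L}_{[1]^n} - \lambda_i {\bf V}_{[1]^n}$ is precisely the second Symanzik polynomial ${\bf F}_{[1]^n}$ of the $n$-loop sunset graph, with the single kinematic invariant $q^2$ specialised to $-\lambda_i$. The vanishing locus of such a polynomial is a sunset hypersurface in $\mathbb{P}^{n-1}$, known to be a (singular) Calabi--Yau $(n-1)$-fold for generic mass and momentum parameters. Combined with the additional factor ${\bf U}_{[1]^n}$, this exhibits the discriminant locus as the union claimed in the proposition.

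Finally, we must check distinctness. The two sunset varieties coincide if and only if $\lambda_1 = \lambda_2$, which occurs along the vanishing locus of the discriminant $c_{11}^2 - 4 c_{20} c_{02}$. The main technical step, and the place where genericity enters essentially, is showing that this discriminant is not identically zero as a polynomial in $(t, m_1, m_2, p_1, p_2)$. Specialising to the case $n=2$ already treated in Section~\ref{sec:icecream}, where the analogous discriminant was computed to be the nonzero expression in~\eqref{e:concdis}, provides a concrete limit in which $\lambda_1 \neq \lambda_2$, and a short continuity/specialisation argument would then confirm non-vanishing for generic choices when $n > 2$. The main obstacle is the bookkeeping of the explicit coefficients $c_{20}, c_{11}, c_{02}$; once these are in hand, all remaining steps are either direct algebraic identifications or appeals to standard facts about sunset graph hypersurfaces.
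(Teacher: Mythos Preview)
Your approach is essentially the paper's: expand the $3\times 3$ determinant, pull out a factor of ${\bf U}_{[1]^n}$, recognise the remainder as a homogeneous quadratic in the two quantities ${\bf U}_{[1]^n}{\bf L}_{[1]^n}$ and ${\bf V}_{[1]^n}$, and factor. The identification of each linear factor with a sunset second Symanzik polynomial is also exactly what the paper does.

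Where you diverge is in the handling of the coefficients. You treat the bookkeeping of $c_{20},c_{11},c_{02}$ as an obstacle to be slogged through, and then propose a separate ``specialisation to $n=2$'' argument for distinctness. The paper short-circuits both steps with a single structural observation: because every entry of the $3\times 3$ matrix is a $\mathbb{Q}[t,m_1,m_2,p_i]$-linear combination of the three scoop polynomials ${\bf U}_{[1]^n},{\bf V}_{[1]^n},{\bf L}_{[1]^n}$, the coefficients of the resulting quadratic in $({\bf U}_{[1]^n}{\bf L}_{[1]^n},{\bf V}_{[1]^n})$ depend only on the cone data $(t,m_1,m_2,p_1,p_2,p_3)$ and are therefore \emph{literally the same} constants $A,B,C$ already computed in~\eqref{e:ABCicecream} for the one-scoop case. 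No new bookkeeping is needed, and distinctness of $\lambda_1,\lambda_2$ is immediate from the nonvanishing of~\eqref{e:concdis}. Your ``continuity/specialisation in $n$'' is not quite a well-posed manoeuvre (one does not deform the integer $n$); what makes it work is precisely this $n$-independence of the coefficients, which you should state outright.

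A minor point: your tentative intermediate display with $B'$ and $\Delta_0$ is not correct as written (and you flag as much). Once you substitute $P={\bf U}_{[1]^n}{\bf L}_{[1]^n}$, the bracket is already a clean quadratic $-2t(CP^2+BPV+AV^2)$; there is no residual $\Delta_0\,{\bf U}_{[1]^n}{\bf V}_{[1]^n}{\bf L}_{[1]^n}$ term separate from the $PV$ term, since ${\bf U}_{[1]^n}{\bf V}_{[1]^n}{\bf L}_{[1]^n}=PV$.
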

\begin{proof}
A computation shows that there are constants $A,B,$ and $C$ depending on kinematic parameters so that the discriminant locus of the quadratic fibration described above is
\begin{equation}
\Disc_{2,[1]^n} =-2t {\bf U}_{[1]^n} \left(A{\bf V}_{[1]^n}^2 +B{\bf U}_{[1]^n}{\bf L}_{[1]^n} {\bf V}_{[1]^n} + C {\bf U}_{[1]^n}^2{\bf L}_{[1]^n}^2 \right).
\end{equation}
where the coefficients $A$, $B$ and $C$ are the same as in the
one-scoop ice cream cone case~\eqref{e:ABCicecream}. 
Observe that $\Disc_{2,[1]^n}$ therefore factors as $t {\bf U}_{[1]^n}
C ({\bf
  U}_{[1]^n}{\bf L}_{[1]^n} -\xi_1 {\bf V}_{[1]^n}) \times ({\bf
  U}_{[1]^n}{\bf L}_{[1]^n} - \xi_2 {\bf V}_{[1]^n})$ where $\xi_1$  and $\xi_2$ are the roots of the polynomial $Cx^2+Bx+A$ as in the one-scoop case of Section~\ref{sec:icecream}
 which depend on kinematic parameters. Since the factors only depend on the cone part of the ice cream cone graph, they are the same as the one of Section~\ref{sec:icecream}. The base change in~\eqref{e:cov} rationalizes the square roots, and one can apply the same construction as in the one-scoop case.
\end{proof}
In~\cite{Duhr:2022dxb}  it has
been found that the Gauss--Manin connection associated with the single scale case, $p_1^2=p_3^2=0$ and $p_2^2\neq0$ and all equal internal masses $m_1=\cdots=m_{n+2}=m$, for the 
ice cream cone integrals takes a lower triangle form, and that the associated Gauss--Manin system of differential equations splits as two (inhomogeneous) differential equations for the $(n-1)$-loop sunset integrals, in agreement
with the result of Proposition~\ref{prop:spliticecream}. 
%%%%%%%%%%%%%%%%%%%%%%%%%%%%%%%%%%%%%%%%%%%%%%%%%%%%%%%%%%%%%%%%%
\appendix
\section{Elliptic curves}
\label{sec:PFellipticcurve}

We explain how to compute the Picard--Fuchs equation for a family of elliptic curves presented as a double cover of $\mathbb{P}^1$ ramified over four points.

\subsection{Double covers of $\mathbb{P}^1$ and Weierstrass form}

Throughout the paper, we have considered families of elliptic curves which are presented as double covers of $\mathbb{P}^1$ ramified along four points. In an affine chart, such a family of elliptic curves over the unit disc $U$ is presented as
\begin{equation}\label{e:quartic}
y^2 = b_4(t)x^4 + b_3(t)x^3 + b_2(t)x^2 +b_1(t)x + b_0(t). 
\end{equation}
In order to compute the Picard--Fuchs equation of this family of curves, we may apply a standard set of formulas, however these formulas require that the family of elliptic curves be in Weierstrass form,
\begin{equation}\label{e:wf}
  y^2=4x^3-g_2(t) x-g_3(t).  
\end{equation}
We now outline a procedure for turning a family of elliptic curves in the for \eqref{e:quartic} in Weierstrass form. The first step is to write our family of curves as a family of cubic curves. This can be done by applying appropriate changes of variables. We get 
\begin{equation}\label{e:ell-cov}
y^2 = x^3  + b_2(t)x^2 + (b_1(t)b_3(t)-4b_0(t)b_4(t))x + (b_1(t)^2b_4(t) +b_0(t)b_3(t)^2-4b_0(t)b_2(t)b_4(t)).
\end{equation}
By completing the cube, $x = X - b_2(t)/3$, and letting $y = Y/2$ we obtain an equation in Weierstrass form. 
\begin{remark}
A subtle point in this computation is that the transformation from~\eqref{e:quartic} to~\eqref{e:ell-cov} requires squaring a square root of $b_4(t)$. Consequently, the families of elliptic curves in~\eqref{e:quartic} and~\eqref{e:ell-cov} have isomorphic fibres and represent the same variation of Hodge structure, but may not correspond to the same underlying elliptic surface.
\end{remark}

%-----------------------------------------------------------------------

\subsection{Picard--Fuchs equations of the Weierstrass family}

If one considers the family of elliptic curves $E_t$ 
\begin{equation}
  y^2=4x^3-g_2(t) x-g_3(t)  
\end{equation}
over the unit disc $U$
where $g_2(t)$ and $g_3(t)$ are  holomorphic and the discriminant $\Delta(t)=g_2^3(t)-27g_3(t)^2$
is non-vanishing, it is well-known that the periods $f_1(t)=\int_\gamma
dx/y$ and $f_2(t)=\int_\gamma xdx/y$ satisfy the differential system of
equations
\begin{equation}
  {d\over dt}
  \begin{pmatrix}
    f_1(t)\cr f_2(t)
  \end{pmatrix}
=
\begin{pmatrix}
  -{1\over 12}{d\over dt}\log\Delta(t)& {3\delta(t)\over2\Delta(t)}\cr
  -{g_2(t)\delta(t)\over 8\Delta(t)}& {1\over12}{d\over dt}\log \Delta(t)
\end{pmatrix}
 \begin{pmatrix}
    f_1(t)\cr f_2(t)
  \end{pmatrix}
\end{equation}
with
\begin{equation}
  \delta(t)= 3g_3(t){d\over dt}g_2(t)-2g_2(t){d\over dt}g_3(t).
\end{equation}
The Picard--Fuchs operator acting on the period integral $\int_\gamma
dx/y$ is 
\begin{multline}\label{e:PFellipticcurve}
\mathscr L_{\rm ell}=  144\Delta(t)^2 \delta(t)  {d^2\over
  dt^2 }+144\Delta(t)\left(\delta(t){d\Delta(t)\over dt}-
\Delta(t){d\delta(t)\over dt}\right){d\over dt}\cr+27 g_2(t) \delta \! \left(t \right)^{3}+12 \frac{d^{2}\Delta(t)}{d t^{2}} \delta \! \left(t \right) \Delta \! \left(t \right)-\left(\frac{d\Delta(t)}{d t}\right)^{2} \delta \! \left(t \right)-12 \frac{d\delta(t)}{d t} \Delta \! \left(t \right) \frac{d\Delta(t)}{d t}.
\end{multline}
The regular singularities of this differential operator are the
zeroes of the discriminant $\Delta(t)=0$ and those of $\delta(t)=0$. The
zeroes of $\delta(t)$ are apparent singularities whereas the zeroes of
the discriminant $\Delta(t)$ are non-apparent and correspond to
poles of the $j$-invariant
\begin{equation}
  j(t)= {g_2(t)^3\over \Delta(t)}  .
\end{equation}
%%%%%%%%%%%%%%%%%%%%%%%%%%%%%%%%%%%%%%%%%%%%%%%%%%%%%%%%%%%%%%%%%%
%%%%%%%%%%%%%%%%%%%%%%%%%%%%%%%%%%%%%%%%%%%%%%%%%%%%%%%%%%%%%%%%%%
\newpage
\section[Computing an embedding of the Néron-Severi lattice of a quartic K3 surface in its full homology lattice
  by Eric Pichon-Pharabod]{Computing an embedding of the Néron-Severi lattice of a quartic K3 surface in its full homology lattice\\
  by Eric Pichon-Pharabod\except{toc}{\protect\footnote{This research was supported in part by the National Science Foundation under Grant No. NSF PHY-1748958 and by the European Research Council under the European Union’s Horizon Europe research and innovation programme, grant agreement 101040794 (10000 DIGITS).}}}
\centerline{\small Universit\'e Paris-Saclay, Inria, 91120 Palaiseau, France}
\centerline{\small Institut de Physique Théorique, Université Paris-Saclay, CEA, CNRS,}
\centerline{\small F-91191 Gif-sur- Yvette Cedex, France.}
\centerline{\texttt{eric.pichon-pharabod@inria.fr}}\label{appendix:Eric}
\vspace{1em}

In this section, we give an overview of a numerical method for computing the generic Picard lattice of the family of singular quartic surfaces ${\bf G}_{(2,2,2)}(t)$ of~\eqref{eq:tardigradeG222}. The method we use relies on computing the periods of this surface with very high numerical precision. Previous work relying on a similar approach is described in~\cite{Sertoz2018ComputingPO} and~\cite{lairez2019numerical}, which give an algorithm for computing the periods of a generic smooth projective hypersurface by deforming them from the Fermat variety. While this method is in theory applicable to the Tardigrade surface, it requires numerical integration of operators of high order (21) and high degree differential (more than 1700), which is too computationally expensive to be done in reasonable time (at least several days). The method we will instead use in this appendix is a generalization of the  method for arbitrary smooth hypersurfaces presented in~\cite{Eric2023}, which allows carrying out the computation in less than a minute. A static SAGE worksheet reproducing the computations mentioned in this paper is available at~\href{https://nbviewer.org/github/pierrevanhove/MotivesFeynmanGraphs/blob/main/Tardigrade-Lattice-K3.ipynb}{Tardigrade-Lattice-K3.ipynb}.\\

We begin by recalling the relevant results of~\cite{Eric2023}.

\subsection{The smooth case}

Let $P\in \mathbb Q(x,y,z,w)$ be a homogeneous polynomial of degree $4$ defining a smooth complex projective quartic surface $X = V(P) \subset \mathbb P^3$. Our aim is to recover the periods of $X$, and the way we achieve this is by giving a description of the middle homology group $H_2(X)$ that is well-suited for numerical integration.

For this consider a degree $1$ map $X\dashrightarrow \mathbb P^1$ given by 
\begin{equation}
	[x,y,z,w] \mapsto [\lambda(x,y,z,w), \mu(x,y,z,w)]\,,
\end{equation}
where $\lambda$ and $\mu$ are some non-colinear linear maps. This map is not well defined as $X\cap \ker\lambda\cap\ker\mu$ is not empty (it consists of precisely $\deg X =4$ points), but this can be fixed by instead considering the modification $Y$ of $X$, i.e. taking blowups at these points. 

Assuming some genericity conditions, the resulting map $f:Y\to \mathbb P^1$ allows us to describe $X$ in the following manner. Aside from the $36$ critical values of $f$, for $t\in \mathbb P^1$, the fibre $Y_t = f^{-1}(t)$ is a smooth quartic curve, and deforms continuously with respect to $t$. The deformation of the homology of such a fibre along a small loop around one critical point is well understood and given explicitly by Picard-Lefschetz theory. 
In~\cite{Eric2023}, we show that this monodromy can be recovered exactly using numerical methods. 
We may then recover the homology of $X$ from the monodromy in the following manner.\\

Fix some regular (i.e. not critical) basepoint $b\in \mathbb P^1$, consider a critical point $t\in \mathbb P^1$, and a simple loop $\ell$ around $t$ with endpoint $b$, that separates $t$ from other critical values. Then the deformation of the fibre $X_b$ along $\ell$ will induce an isomorphism of the homology $H_1(Y_b,\mathbb Z)$ (which has rank $6$, as it is a genus $3$ curve), which can be encoded by an integral matrix $M\in \operatorname{GL}_6(\mathbb Z)$. By Picard-Lefschetz theory, this matrix has the form $M=\mathbb{I}_6 + N$ where $N$ is a rank $1$ matrix and $\mathbb{I}_6$ is the identity matrix of size 6. The cycle $w\in H_1(Y_b)$ that spans the image of $M-\mathbb{I}_6$ is called the {\em vanishing cycle} at $t$. It not only depends on $t$, but also on the choice of $\ell$.\\

Let $\gamma\in H_1(Y_b)$ and choose a representative $\sigma\subset Y_b$ of this cycle. The deformation of $X_b$ along $\ell$ induces a deformation of $\sigma$, which passes to homology. Namely, this yields a cycle $\tau(\gamma)$ of the relative homology group $H_2(Y, Y_b)$. In the same way that there is a unique vanishing cycle, the image of the map $\tau$ has rank $1$, and the generator of the image of this map is called the {\em thimble} at $t$. This is represented in Figure~\ref{fig:thimbles}.\\

\begin{figure}
\begin{tikzpicture}
	\coordinate (t) at (3,-2);
	\coordinate (x) at (3,1);
	\coordinate (b) at (0,-2);
    \draw[very thick] (0,0) ellipse (0.25 and 0.5);
	\draw (-0.25, 0) node[left] {$p$};
    \draw[very thick] (0,2) ellipse (0.25 and 0.5);
	\draw (-0.25, 2.5) node[left] {$\ell_*p$};
	\draw[fill=gray!20] (4,1) edge[in=0, out=-125] (0,0.5) edge[in=0, out=125] (0,1.5);
	\draw (5,1) to[in=0, out=-90] (3,-0.7) to[in=0, out=180] (0,-0.5);
	\draw (5,1) to[in=0, out=90] (3,2.7) to[in=0, out=180] (0,2.5);
	\draw (3,2.2) node {$\tau(p)$};
	\draw[dashed] (x) node {$\bullet$} node[left] {$x$} -- (t) node {$\bullet$} node[left] {$t$};
	\draw (b) node {$\bullet$} node[left] {$b$};
	\draw (b) --node[midway, below, sloped] {$\longrightarrow$} (3,-2.5) arc (-90:90:0.5 and 0.5)  --node[midway, above, sloped] {$\longleftarrow$} (b);
	\draw (3.5,-2) node[right] {$\ell$};
\end{tikzpicture}
\caption{At the bottom of the picture, a critical value $t\in\mathbb P^1$, the basepoint $b$, and a simple loop $\ell$ around $t$. Above, the critical point $x$, a cycle $p\in H_1(Y_b)$, its extension $\tau(p)\in H_2(Y, Y_b)$ along $\ell$, and its monodromy $\ell_*p$ along $\ell$. Notice that the border of $\tau(p)$ is $\ell_*p - p$. The subgroup of $H_2(Y, Y_b)$ generated by extensions along $\ell$ has rank 1. Its generator (up to sign) is called the {\em Lefschetz thimble at $t$}, and its boundary is the {\em vanishing cycle at $t$}.}
\label{fig:thimbles}
\end{figure}
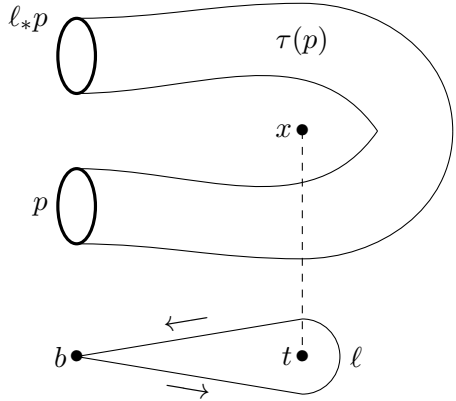

Thimbles serve as building blocks of $H_2(Y)$. Indeed, taking linear combinations of thimbles in such a way that the boundary of the resulting element in $H_2(Y, Y_b)$ is trivial, we obtain cycles of $H_2(Y)$ up to the fibre $H_2(Y_b)$. It turns out that doing so, we only miss two generators of $H_2(Y)$:  one fibre $h$ coming from $H_2(Y_b)$, and one section $s:\mathbb P^1\to Y$ of $f$. However the periods of the holomorphic form of $X$ on these cycles are always $0$. Indeed $h$ is an algebraic cycle, and $s$ corresponds to one of the blowups to get from $X$ to $Y$.\\

This allows us to obtain a description of the cycles of $H_2(Y)$ that are well suited to perform numerical integration. Doing so therefore allows us to recover the periods of $H_2(Y)$ coming from forms of $X$. As $Y$ is the blowup of $X$ at certain points, these periods are in fact also the periods of $X$. We are able to compute the periods with very high numerical precision (several hundreds of digits). We may then use the LLL algorithm (see~\cite{Lenstra1982FactoringPW}) to heuristically recover the integer relations between these periods. In turn this allows us to estimate the Picard rank of $X$. This method may go wrong in two ways: we may recover fake integer relations that are satisfied to very high precision, or we may miss high degree relations between the periods (see~\cite{lairez2019numerical} for more details).\\

\subsection{The singular family ${\bf G}_{(2,2,2)}(t)$}

We now turn to the case of the singular family ${\bf G}_{(2,2,2)}(t)$ described in~\eqref{eq:tardigradeG222}. To compute the generic Picard rank, we may simply evaluate the Picard rank of the family specialized at generic values $t_0$ of $t$. The variety $X = V({\bf G}_{(2,2,2)}(t_0))$ has $6$ singularities of type $A_1$ by a direct computation. In order to resolve these singularities, it is sufficient to blowup $X$ at these points. The resulting variety $\tilde X$ is a smooth K3 surface. We will compute the holomorphic periods of $\tilde X$ using the above methods. However, as we are not in the smooth case any more, the description of monodromy given above is no longer valid and we thus have to inspect closely what happens at the critical points.\\

While we could apply the method directly to a pencil of hyperplane sections of this quartic surface, it is numerically more advantageous to instead consider the K3 surface directly as an elliptic fibration in Weierstrass form.\\

Doing so, we find that the K3 surface is an elliptic fibration with 17 singular fibres, 14 of which are $I_1$'s, two are $I_4$'s and the last one is an $I_2$ (see~\cite{ShuettShoida} for terminology). Using the same machinery as above, we fix a basepoint $b$, and simple loops around each critical value of the fibration. We remain able to compute the monodromy around these critical points and find that all these matrices are the sum of a rank one matrix with the identity matrix, as in the smooth case. \\

However, for the $I_4$ (resp. $I_2$) singularities, the corresponding vanishing cycles are four times (resp. twice) some integral cycle (whereas the vanishing cycle of the $I_1$ singularities are primitive).
This is because these fibres arise as the merging of four (resp. two) critical points. This is explained by the following observation. Embedding $X = X_0$ in the pencil $X_t$ of generically smooth quartic surfaces generated by $X$ and some some small deformation of $X$ with only $I_1$ fibres, we get a family of elliptic fibrations that generically only has $I_1$ fibres. The value of the singular fibres of $X_t$ vary continuously with respect to $t$ and there are generically $24$ of them. When $t$ approaches $0$, two groups of four and one pair of $I_1$ fibres of $X$ merge to give rise to the $I_4$ and $I_2$ fibres.\\

The monodromy matrices around the $I_1$ fibres in the smooth case satisfy the relation $(M-\mathbb{I}_2)^2=0$, i.e. $M^2-\mathbb{I}_2 = 2(M-\mathbb{I}_2)$, which shows why the vanishing cycles at the $I_2$ fibre is two times the vanishing cycle at one of the merging $I_1$ fibre. Additionally, we see that if $t_1$ and $t_2$ are the values of the two $I_1$ fibres that merge to give rise to the $I_2$, and $\ell_1$ and $\ell_2$ are two corresponding simple loops such that their composition is a simple loop around $t_1$ and $t_2$, then the corresponding monodromy matrices $M_1$ and $M_2$ are equal (and thus the vanishing cycles are also the same). In particular, there is some cycle $\gamma\in H_2(X_t)$ that is an extension around $\ell_1*\ell_2$ for small value of $t$, and which vanishes when $t\to 0$. A similar result shows there are three vanishing cycles for each of the $I_4$ fibres. 

In fact, by replacing $X_0$ by $\tilde X$ in the family of elliptic fibration, one sees that the limit of the vanishing cycles at the $I_2$ and $I_4$ fibres are precisely the exceptional divisors of the desingularization of the $A_1$ and $A_3$ fibres. What's more, a basis of $H_2(X_t)$ will deform to a basis of $H_2(\tilde X)$ as $t\to 0$, and integral relations we compute in $H_2(X_t)$ still hold in $H_2(\tilde X)$. This observation will be useful when we will want to recover the full cycle lattice of $\tilde X$.\\

Nevertheless, we may still reconstruct cycles $H_2(\tilde X)$ by taking linear combinations of thimbles with zero boundary, along with the fibre and section. We recover $17-2-2 = 13$ cycles as extensions, in addition to the 2-cycle coming from the fibre $f$, and the cycle coming from a section of the fibration, yielding a total of $15$ cycles. Adding the $2\times 3+1 = 7$ exceptional divisors coming from the desingularization of the $I_4$'s and $I_2$, we find a rank $22$ submodule of the cycles of the K3 surface. As $22$ is also the rank of $H_2(\tilde X)$, this is sufficient to recover the Picard rank of $\tilde X$. It should also be noted that the periods of the holomorphic form on the exceptional divisors of the desingularization are $0$, and thus do not impact our computation of the Picard rank.\\

Finally, carrying out this computation, we find a generic Picard rank of $11$ for the family ${\bf G}_{(2,2,2)}(t)$.\\

\subsection{Embedding the Néron-Severi lattice into the full K3 lattice}

In this section we explain how to recover the full cycle lattice, as
well as the intersection product, and this allows us to give an
explicit embedding of the Néron-Severi lattice in the full K3 lattice, and identify the different components in the intersection matrix. 

While we managed to recover a full rank sublattice of the cycle lattice in the previous section, we are still off by a finite index. We are indeed missing some of the cycles which correspond to extensions that get ``pinched" at the $I_2$ and $I_4$ fibres. 

To circumvent this, we consider a formal smoothing of the elliptic fibration by splitting the multiple singular values in the following way. For an $I_i$ fibre, $i\ge 2$, the monodromy matrix has the form $\mathbb{I}_2 + iU$ for some rank $1$ matrix $U$. We formally split this singular fibre by turning it into $i$ distinct $I_1$ fibres, each of which has monodromy matrix $\mathbb{I}_2+U$. Doing this construction, we recover $20$ cycles as extensions, to which we add a section and the generic smooth fibre to recover a full rank sublattice of $H_2(\tilde X)$. Following methods described in~\cite{Eric2023}, we may recover the intersection product between extensions. As mentionned in the previous section, the intersection product of $H_2(X_t)$ is the same as that of $H_2(\tilde X)$, so we have indeed computed the intersection product of the K3 surface.

To add the intersection products with the fibre and section it is sufficient to add a $\left(\begin{array}{rr}0 & 1 \\1 & -2\end{array}\right)$ block to the intersection matrix. We finally have the full intersection matrix of the sublattice, and find that its determinant is $-1$, which implies that the sublattice we have computed is in fact all of $H_2(\tilde X)$. What's more, as we know what the coordinates of the algebraic cycles are in this basis, we may identify the algebraic and transcendental blocks in the full lattice. Applying a change of basis to make the components visible, we obtain the matrix given in Fig.~\ref{fig:ip_tardigrade}.\\

\begin{figure}
{\scriptsize
\[
\left(\begin{array}{rrrrrrrrrrrrrrrrrrrrrr}
-2 & 1 & 0 & 0 & 0 & 0 & -1 & 0 & 0 & 1 & 0 & -1 & 0 & 0 & 0 & 0 & 0 & 0 & 0 & 0 & 0 & 0 \\
1 & -2 & 1 & 0 & 0 & 0 & 0 & 0 & 0 & 0 & 0 & 0 & 1 & 0 & 0 & 0 & 0 & 0 & 0 & 0 & 0 & 0 \\
0 & 1 & -2 & 1 & 0 & 0 & 0 & 0 & 0 & -1 & 0 & 0 & 0 & 0 & 0 & 0 & 0 & 0 & 0 & 0 & 0 & 0 \\
0 & 0 & 1 & -2 & -1 & 0 & 1 & 0 & 0 & 0 & 0 & 0 & 3 & 0 & 0 & 1 & 0 & 0 & 1 & 0 & 0 & 1 \\
0 & 0 & 0 & -1 & -2 & 1 & 0 & 0 & 0 & 0 & 1 & 0 & 0 & 0 & 1 & 0 & 0 & 0 & 0 & 0 & 0 & 0 \\
0 & 0 & 0 & 0 & 1 & -2 & 1 & 0 & 0 & 0 & 0 & 0 & 0 & 0 & 0 & 0 & 0 & 0 & 0 & -1 & 1 & 0 \\
-1 & 0 & 0 & 1 & 0 & 1 & -2 & 0 & 0 & 1 & 0 & 0 & 0 & 0 & 0 & 0 & 0 & 0 & 0 & 0 & 0 & 0 \\
0 & 0 & 0 & 0 & 0 & 0 & 0 & 0 & 1 & 1 & 0 & 0 & 0 & 0 & 1 & 0 & 0 & 0 & 0 & 0 & 0 & 0 \\
0 & 0 & 0 & 0 & 0 & 0 & 0 & 1 & 0 & 2 & 1 & 0 & 0 & 0 & -1 & 0 & 0 & 0 & 0 & 0 & 0 & 0 \\
1 & 0 & -1 & 0 & 0 & 0 & 1 & 1 & 2 & 0 & 0 & 0 & 0 & 0 & 1 & 0 & 0 & 0 & 0 & 0 & 0 & 0 \\
0 & 0 & 0 & 0 & 1 & 0 & 0 & 0 & 1 & 0 & -2 & -1 & 0 & 0 & 0 & 0 & 0 & 0 & 0 & 1 & 0 & 0 \\
-1 & 0 & 0 & 0 & 0 & 0 & 0 & 0 & 0 & 0 & -1 & \tikzmark{leftalgcyc}\tikzmark{leftsection}-2 & 4 & 0 & 0 & 0 & 1 & 0 & 0 & 1 & 0 & 1 \\
0 & 1 & 0 & 3 & 0 & 0 & 0 & 0 & 0 & 0 & 0 & 4 & -2 & 0 & 0 & 0 & 0 & 0 & 0 & 0 & 0 & 1 \\
0 & 0 & 0 & 0 & 0 & 0 & 0 & 0 & 0 & 0 & 0 & 0 & 0 & -2 \tikzmark{rightsection}& 0 & 0 & 0 & 0 & 0 & 0 & 0 & 1 \\
0 & 0 & 0 & 0 & 1 & 0 & 0 & 1 & -1 & 1 & 0 & 0 & 0 & 0 & \tikzmark{leftA1}-2 \tikzmark{rightA1} & 0 & 0 & 0 & 0 & 0 & 0 & 0 \\
0 & 0 & 0 & 1 & 0 & 0 & 0 & 0 & 0 & 0 & 0 & 0 & 0 & 0 & 0 & \tikzmark{leftA31}-2 & 1 & 0 & 0 & 0 & 0 & 0 \\
0 & 0 & 0 & 0 & 0 & 0 & 0 & 0 & 0 & 0 & 0 & 1 & 0 & 0 & 0 & 1 & -2 & 1 & 0 & 0 & 0 & 0 \\
0 & 0 & 0 & 0 & 0 & 0 & 0 & 0 & 0 & 0 & 0 & 0 & 0 & 0 & 0 & 0 & 1 & -2\tikzmark{rightA31} & 0 & 0 & 0 & 0 \\
0 & 0 & 0 & 1 & 0 & 0 & 0 & 0 & 0 & 0 & 0 & 0 & 0 & 0 & 0 & 0 & 0 & 0 & \tikzmark{leftA32}-2 & 1 & 0 & 0 \\
0 & 0 & 0 & 0 & 0 & -1 & 0 & 0 & 0 & 0 & 1 & 1 & 0 & 0 & 0 & 0 & 0 & 0 & 1 & -2 & 1 & 0 \\
0 & 0 & 0 & 0 & 0 & 1 & 0 & 0 & 0 & 0 & 0 & 0 & 0 & 0 & 0 & 0 & 0 & 0 & 0 & 1 & -2\tikzmark{rightA32} & 0 \\
0 & 0 & 0 & 1 & 0 & 0 & 0 & 0 & 0 & 0 & 0 & 1 & 1 & 1 & 0 & 0 & 0 & 0 & 0 & 0 & 0 & 0\tikzmark{rightalgcyc}
\end{array}\right)
\DrawBox[thick, gray ]{leftalgcyc}{rightalgcyc}{}
\DrawBox[thick, orange ]{leftA1}{rightA1}{}
\DrawBox[thick, red ]{leftA31}{rightA31}{}
\DrawBox[thick, red ]{leftA32}{rightA32}{}
\DrawBox[thick, blue ]{leftsection}{rightsection}{}
\]
}
\caption{The intersection matrix of $H_2(\tilde X)$.
The lower left $11\times11$ block (oulined in grey) corresponds to the algebraic cycles.
In this block, we single out, from top left to bottom right, 3
sections ({\color{blue} blue}), the exceptional component of the $I_2$ fibre ({\color{orange}orange}), the exceptional components of the two $I_4$ fibres ({\color{red}red}), and the generic smooth fibre in the lower right. Interestingly, we see that, despite the symmetry of their mutual intersections, only one of the sections intersects the  $I_4$ fibres in the same  component.
}
\label{fig:ip_tardigrade}
\end{figure}

We can compute an equivalence between this lattice $\Lambda$ and the standard K3 lattice $(-E_8)\oplus(-E_8)\oplus H\oplus H\oplus H$. In turn, this will allow us to give an explicit embedding of the algebraic cycles in this lattice. Of course, these coordinates are only given up to an isometry of the standard K3 lattice.

In order to compute this equivalence, we proceed as follow. We first single out two $H$ components~--~the first $H_1$ is directly given by the fiber and section of our fibration, and to recover a second one, $H_2$, we may simply look for cycles with self-intersection $0$ that are in the orthogonal complement of $H_1$, and find among these a pair which has intersection $1$.
We then take the orthogonal complement $\tilde\Lambda$ of $H_1\oplus H_2$ in $\Lambda$, which is isomorphic to $(-E_8)\oplus(-E_8)\oplus H \cong (-D_{16}^+)\oplus H$ (see~\cite{Nikulin1980}).

We then need to identify an embedding of the last hyperbolic component $H$ in $\tilde\Lambda$ that has $(-E_8)\oplus(-E_8)$ as its orthogonal complement. The way we achieve this is by finding a family of cycles of $\tilde \Lambda$ that satisfy the intersections given in diagram~(35) of~\cite{CligherDoran2012}, reproduced here:

\begin{equation}
\label{D16_to_E82}
\resizebox{\textwidth}{!}{
\def\objectstyle{\scriptstyle}
\def\labelstyle{\scriptstyle}
\xymatrix @-0.9pc  {
\stackrel{a_1}{\bullet} \ar @{-} [r] & 
\stackrel{a_2}{\bullet} \ar @{-} [r] &
\stackrel{a_4}{\bullet} \ar @{-} [r] \ar @{-} [d] &
\stackrel{a_5}{\bullet} \ar @{-} [r] &
\stackrel{a_6}{\bullet} \ar @{-} [r] &
\stackrel{a_7}{\bullet} \ar @{-} [r] &
\stackrel{a_8}{\bullet} \ar @{-} [r] &
\stackrel{a_9}{\bullet} \ar @{-} [r] &
\stackrel{a_{10}}{\bullet} \ar @{-} [r] &
\stackrel{a_{11}}{\bullet} \ar @{-} [r] &
\stackrel{a_{12}}{\bullet} \ar @{-} [r] &
\stackrel{a_{13}}{\bullet} \ar @{-} [r] &
\stackrel{a_{14}}{\bullet} \ar @{-} [r] &
\stackrel{a_{15}}{\bullet} \ar @{-} [r] &
\stackrel{a_{16}}{\bullet} \ar @{-} [r] \ar @{-} [d] &
\stackrel{a_{18}}{\bullet} & \stackrel{a_{19}}{\bullet} \ar @{-} [l] \\
 &   & \stackrel{a_3}{\bullet} & & & & & & &  &  &&  & & \stackrel{a_{17}}{\bullet}  & &   \\
} 
}
\end{equation}
\vspace{0.em}

We start by taking any embedding of the hyperbolic lattice $H_3\subset \tilde\Lambda$, and compute its orthogonal complement. We will very likely obtain $-D_{16}^+$ (if not, it means we have $(-E_8)\oplus(-E_8)$ and we are done). As $-D_{16}^+$ is definite, we may compute its isometry group $G$. We can also compute one of its roots $r$, and are then able to recover all the $480$ roots of $-D_{16}^+$ by computing the orbit $G.r$. 

Once the roots are computed we inductively pick out a family of $16$ roots of $-D_{16}^+$ satisfying the intersection products of $a_3, \dots, a_{18}$ of the diagram  in eq.~\eqref{D16_to_E82}. 
Then $a_{19}$ is given by the sum of a generator of the complement of $\langle a_3, \dots, a_{17} \rangle$ in $D_{16}^+$ (which is not a root) with a vector in $H_3$, in a way that makes the self intersection $-2$. Finally the observation that $a_1 \in \langle a_3,\dots, a_{19}\rangle^\perp$, and~$\langle a_1, a_2\rangle = \langle a_3, a_5, a_6\dots, a_{19}\rangle^\perp$ along with the condition between their intersection allow to recover $a_1$ and $a_2$.\\

With this system identified, we use the equations at the end of section 4.1 of~\cite{CligherDoran2012} to find an isometry $\tilde\Lambda \cong (-E_8)\oplus(-E_8)\oplus H$, which in turn yields an isometry $\Lambda \cong (-E_8)\oplus(-E_8)\oplus H\oplus H\oplus H$. As we have that isometry explicitely, we may then give an explicit embedding of the Néron-Severi lattice (with the different components identified) in the standard K3 lattice. This is shown in Fig.~\ref{fig:coords_NS}
Interestingly, we observe that each $A_3$ component lies in different disjoint copies of $(-E_8)\oplus H$, the $A_1$ component is embedded in one of the $-E_8$, and the two nontrivial sections intersect with both $-E_8$.

\begin{figure}
\resizebox{\textwidth}{!}{\begin{tikzpicture}[baseline = (M.center),% center with respect to the matrix center
        every left delimiter/.style={xshift=1ex},%tighter delimiter spacing
        every right delimiter/.style={xshift=-1ex}]
\matrix (M) [matrix of math nodes,left delimiter={(},right delimiter={)}, ampersand replacement=\&]{-1 \& -2 \& -2 \& -3 \& -2 \& -2 \& -2 \& -1 \& 0 \& 0 \& 1 \& 1 \& 1 \& 0 \& -1 \& -1 \& 1 \& 1 \& 0 \& 0 \& 1 \& 1 \\
-2 \& -4 \& -2 \& -5 \& -4 \& -3 \& -2 \& -1 \& 2 \& 4 \& 2 \& 5 \& 4 \& 3 \& 2 \& 1 \& 0 \& 1 \& 0 \& 0 \& 1 \& 1 \\
0 \& 0 \& 0 \& 0 \& 0 \& 0 \& 0 \& 0 \& 0 \& 0 \& 0 \& 0 \& 0 \& 0 \& 0 \& 0 \& 0 \& 0 \& 0 \& 0 \& -1 \& 1 \\
1 \& 1 \& 1 \& 1 \& 1 \& 0 \& 0 \& 0 \& 0 \& 0 \& 0 \& 0 \& 0 \& 0 \& 0 \& 0 \& 0 \& 0 \& 0 \& 0 \& 0 \& 0 \\
0 \& 0 \& 0 \& 0 \& 0 \& 0 \& 0 \& 0 \& 0 \& 0 \& 0 \& 0 \& 0 \& 0 \& 0 \& 1 \& 0 \& -1 \& 0 \& 0 \& 0 \& 0 \\
0 \& 0 \& 0 \& 0 \& 0 \& 0 \& 0 \& 0 \& 0 \& 0 \& 0 \& 0 \& 0 \& 0 \& 1 \& 0 \& 0 \& 0 \& 0 \& 0 \& 0 \& 0 \\
0 \& 0 \& 0 \& 0 \& 0 \& 0 \& 0 \& 0 \& 0 \& 0 \& 0 \& 0 \& 0 \& 1 \& 0 \& 0 \& 0 \& 0 \& 0 \& 0 \& 0 \& 0 \\
0 \& 0 \& 0 \& 0 \& 0 \& 0 \& 0 \& 1 \& 0 \& 0 \& 0 \& 0 \& 0 \& 0 \& 0 \& 0 \& 0 \& 0 \& 0 \& 0 \& 0 \& 0 \\
0 \& 0 \& 0 \& 0 \& 0 \& 0 \& 1 \& 0 \& 0 \& 0 \& 0 \& 0 \& 0 \& 0 \& 0 \& 0 \& 0 \& 0 \& -1 \& 0 \& 0 \& 0 \\
0 \& 0 \& 0 \& 0 \& 0 \& 0 \& 0 \& 0 \& 0 \& 0 \& 0 \& 0 \& 0 \& 0 \& 0 \& 0 \& 0 \& 0 \& 1 \& -1 \& 0 \& 0 \\
0 \& 0 \& 0 \& 0 \& 0 \& 0 \& 0 \& 0 \& 0 \& 0 \& 0 \& 0 \& 0 \& 0 \& 0 \& 0 \& 0 \& 0 \& 0 \& 0 \& 1 \& 0\\};
\path ($(M-11-8.south east)$) -- ($(M-11-1.south west)$)%
 node[midway,below] {$E_8$};
\path ($(M-11-16.south east)$) -- ($(M-11-9.south west)$)%
 node[midway,below] {$E_8$};
\path ($(M-11-18.south east)$) -- ($(M-11-17.south west)$)%
 node[midway,below] {$H$};
\path ($(M-11-20.south east)$) -- ($(M-11-19.south west)$)%
 node[midway,below] {$H$};
\path ($(M-11-22.south east)$) -- ($(M-11-21.south west)$)%
 node[midway,below] {$H$};
\path ($(M-1-1.north) - (0.55,0)$) -- ($(M-3-1.south) - (0.55,0)$)%
 node[midway,left] {sections};
 \path ($(M-4-1.north) - (0.55,0)$) -- ($(M-4-1.south) - (0.55,0)$)%
 node[midway,left] {$A_1$};
 \path ($(M-5-1.north) - (0.55,0)$) -- ($(M-7-1.south) - (0.55,0)$)%
 node[midway,left] {$A_3$};
 \path ($(M-8-1.north) - (0.55,0)$) -- ($(M-10-1.south) - (0.55,0)$)%
 node[midway,left] {$A_3$};
 \path ($(M-11-1.north) - (0.55,0)$) -- ($(M-11-1.south) - (0.55,0)$)%
 node[midway,left] {fiber};
\draw[dashed, gray] ($(M-1-9.west)+(0,0.2)$) -- ($(M-1-9.west) +(0,-5.5)$);
\draw[dashed, gray] ($(M-1-17.west)+(0,0.2)$) -- ($(M-1-17.west) +(0,-5.5)$);
\draw[dashed, gray] ($(M-1-19.west)+(-0.1,0.2)$) -- ($(M-1-19.west) +(-0.1,-5.5)$);
\draw[dashed, gray] ($(M-1-21.west)+(-0.1,0.2)$) -- ($(M-1-21.west) +(-0.1,-5.5)$);
\draw[dashed, gray] ($(M-4-1.north)-(0.3,0)$) -- ($(M-4-22.north)+(0.3,0)$);
\draw[dashed, gray] ($(M-5-1.north)-(0.3,0)$) -- ($(M-5-22.north)+(0.3,0)$);
\draw[dashed, gray] ($(M-8-1.north)-(0.3,0)$) -- ($(M-8-22.north)+(0.3,0)$);
\draw[dashed, gray] ($(M-11-1.north)-(0.3,0)$) -- ($(M-11-22.north)+(0.3,0)$);
\end{tikzpicture} }
\caption{The coordinates of the algebraic cycles in the standard K3 lattice. We see that the $A_3$ components each lie in disjoint copies of $(-E_8)\oplus H$, that the $A_1$ component is included in one of the $-E_8$'s, and that the non-trivial sections intersect both $-E_8$ components.
}
\label{fig:coords_NS}
\end{figure}

A static SAGE worksheet reproducing the computations mentionned in this appendix is available at~\href{https://nbviewer.org/github/pierrevanhove/MotivesFeynmanGraphs/blob/main/Tardigrade-Lattice-K3.ipynb}{Tardigrade-Lattice-K3.ipynb}.

%%%%%%%%%%%%%%%%%%%%%%%%%%%%%%%%%%%%%%%%%%%%%%%%%%%%%%%%%%%%%%%%%%
%\bibliography{2-loops}{}
%\bibliographystyle{plain}

\end{document}